\newtheorem{theorem}{Theorem}[section]
\newtheorem{lemma}[theorem]{Lemma}
\newtheorem{definition}[theorem]{Definition}
\newtheorem{assumption}[theorem]{Assumption}
\newcommand{\Q}{\mathcal{Q}}
\newcommand{\R}{\mathcal{R}}
\newcommand{\mL}{\mathcal{L}}
\newcommand{\ve}{\varepsilon}
\newcommand{\T}{\mathcal{T}}
\newcommand{\IR}{\mathbb{R}}
\def\XXintt#1#2#3{{\setbox0=\hbox{$#1{#2#3}{\int}$}
\vcenter{\hbox{$#2#3$}}\kern-.72\wd0}}
\def\Xinttt#1{\mathchoice
{\XXinttt\displaystyle\textstyle{#1}}%
{\XXinttt\textstyle\scriptstyle{#1}}%
{\XXinttt\scriptstyle\scriptscriptstyle{#1}}%
{\XXinttt\scriptscriptstyle\scriptscriptstyle{#1}}%
\!\int}
\def\XXinttt#1#2#3{{\setbox0=\hbox{$#1{#2#3}{\int}$}
\vcenter{\hbox{$#2#3$}}\kern-.52\wd0}}
\def\XXintttt#1#2#3{{\setbox0=\hbox{$#1{#2#3}{\int}$}
\vcenter{\hbox{$#2#3$}}\kern-.78\wd0}}
\def\ddashinttt{\Xinttt-}
\begin{document}

\title[Locally periodic unfolding method]{Locally periodic unfolding method and two-scale convergence on  surfaces of locally periodic microstructures
%\thanks{This research was supported by EPSRC First Grant ``Multiscale modelling and analysis of mechanical properties of plant cells and tissues''.}
}
\author{Mariya Ptashnyk}

\thanks{\noindent
 Division of Mathematics, University of Dundee, DD1 4HN,  Scotland, UK,  m.ptashnyk@dundee.ac.uk. \\
 This research was supported by EPSRC First Grant ``Multiscale modelling and analysis of mechanical properties of plant cells and tissues''
}

\maketitle
%\slugger{mms}{xxxx}{xx}{x}{x--x}

\begin{abstract}
In this paper we generalize the periodic unfolding method and the notion of  two-scale convergence on  surfaces of periodic microstructures to locally periodic situations. The   methods  that we introduce allow us  to consider  a wide range of non-periodic microstructures, especially to derive macroscopic equations for problems posed in domains with  perforations distributed non-periodically.    Using  the methods of locally periodic two-scale  convergence (l-t-s) on oscillating surfaces  and  the locally periodic (l-p) boundary unfolding operator, we are able to analyze  differential equations defined on  boundaries of non-periodic microstructures and consider non-homogeneous Neumann conditions on the boundaries of perforations, distributed non-periodically.  \\

 {\sc Key words.} {unfolding method, two-scale convergence, locally periodic homogenization, nonperiodic microstructures, signalling process} \\
%\AMS{35B27, 35D30, 35Kxx}\\

\end{abstract}

\section{Introduction} 
Many natural and man-made  composite materials comprise  non-periodic microscopic structures, e.g.\   
fibrous microstructures  in  heart muscles \cite{Costa_1999,Peskin},  exoskeletons \cite{Raabe2009_2},  industrial filters \cite{Schweers}, or space-dependent perforations in concrete \cite{Roy}. 
 An  important  special case of  non-periodic microstructures is that of the so-called  locally periodic microstructures, where spatial changes  are observed on a  scale smaller than the size of the domain under consideration, but  larger  than the characteristic  size of the microstructure.  
For many locally periodic microstructures  spatial changes   cannot be  represented by
periodic   functions  depending on slow and fast variables,  e.g.\ plywood-like structures  of gradually rotated planes of parallel aligned fibers \cite{Briane2}.  
Thus, in these situations the  standard two-scale convergence and periodic unfolding method cannot be  applied. 
Hence, for a multiscale analysis of problems posed in  domains with non-periodic perforations,  in this paper we extend  the periodic unfolding method and  two-scale convergence on  oscillating surfaces   to locally periodic situations (see Definition~\ref{l-p-unf-oper}--\ref{b-l-p-unf-oper}). These generalizations are motivated by the locally periodic two-scale  convergence  introduced in \cite{Ptashnyk13}.

 Two-scale convergence on surfaces of periodic microstructures was first introduced in \cite{Allaire_1996, Neuss_Radu_1996}.
An extension of two-scale convergence associated with a fixed periodic Borel measure   was considered in \cite{Zhikov}.  The unfolding operator   maps functions defined on  perforated domains, depending on small parameter $\ve$, onto functions defined on  the whole  fixed domain, see  \cite{Cioranescu_2008, Cioranescu_2012} and references therein.  This  helps to overcome one of the difficulties of perforated domains which is  the use of extension operators. 
Using the boundary unfolding operator  we can prove   convergence results for nonlinear equations posed on oscillating  boundaries of microstructures \cite{Ptashnyk2013, Cioranescu_2012, Damlamian:2008, Ptashnyk08, Onofrei:2006}. 
The unfolding method is also an efficient tool to derive error estimates, see e.g.\ \cite{Ptashnyk2012, Griso:2004, Griso:2006, Griso:2014, Onofrei:2007}. 
  
 The main novelty of this article is the derivation of new techniques for the multiscale analysis of non-linear problems posed in domains with non-periodic perforations and on the surfaces of non-periodic microstructures.  The l-p unfolding operator allows us to analyze  nonlinear differential equations posed on domains with non-periodic perforations. 
The l-t-s convergence on oscillating surfaces and the l-p boundary unfolding operator allow us to show strong convergence for sequences defined on oscillating boundaries of non-periodic microstructures and   to derive macroscopic equations for  nonlinear equations defined on  boundaries of non-periodic microstructures. Until now, this was not possible using existing methods.

The paper is structured as follows. First,  in Section \ref{section:LpM}, we present a mathematical description of  locally periodic microstructures and  state the definition of  a locally periodic approximation for a function $\psi \in C(\overline \Omega; C_{\rm per}(Y_x))$.  In Section \ref{Definitions} we introduce all the main definitions of the paper, i.e.\  the notion of  a l-p unfolding  operator,   two-scale convergence for sequences defined on oscillating boundaries of locally periodic microstructures, and the l-p boundary unfolding operator. The main results are summarized in Section~\ref{main_results}.    
 The central results of this paper are convergence results for sequences  bounded in $L^p$ and  $W^{1,p}$,  with $p\in (1,\infty)$ (see  Theorems~\ref{prop_conver_1},~\ref{theorem_cover_grad},~\ref{converge_11_perfor},~and~\ref{converg_unfolding_perforate}).
 The proofs of the main results for the l-p unfolding operator are presented in Section~\ref{unfolding_operator_1}.
The properties of the decomposition of a $W^{1,p}$-function with one part describing the macroscopic behavior and another part of order $\ve$, are shown in Section~\ref{macro_micro_1}.  The proofs of the main results for the l-p unfolding operator  in perforated domains are given in Section~\ref{unfolding_operator_2}. The  convergence results for locally periodic two-scale convergence on oscillating  surfaces and  the l-p boundary unfolding operator are proved  in Section~\ref{boundary_unfolding_operator}.  In  Section~\ref{Application}  we apply the l-p unfolding operator  to derive  macroscopic problems   for  microscopic models of signaling processes in  cell tissues comprising  locally periodic  microstructures.  As examples of tissues with locally periodic microstructures we  consider plant tissues, epithelial tissues,  and non-periodic fibrous structure of heart tissue.  
The last Section~\ref{discussion} contains some concluding remarks.

There are some existing results on  the homogenization of problems posed on locally periodic  media.  The homogenization of a heat-conductivity problem defined in  domains with non-periodic  microstructure consisting of spherical  balls   was studied in \cite{Briane3} using the Murat-Tartar $H-$convergence method \cite{Murat}, and in \cite{Alexandre} by applying the  $\theta-2$ convergence.  The  non-periodic distribution of  balls is given by a $C^2$- diffeomorphism  $\theta$, transforming the centers of the balls.  Estimates for a numerical approximation of this problem were derived in \cite{Shkoller}.  
 The notion of a Young measure was used in  \cite{Mascarenhas} to extend  the concept of periodic two-scale convergence and to define the so-called {\it scale convergence}. The definition of  scale convergence was   motivated by the derivation of  the $\Gamma$-limit for a sequence of nonlinear energy functionals involving non-periodic oscillations.  
 Formal asymptotic expansions  and the technique of two-scale convergence defined for periodic test functions, see e.g.\ \cite{Allaire, Nguetseng},  were used to derive macroscopic equations for  models posed on  domains with  locally periodic perforations, i.e.\  domains consisting of  periodic cells with  smoothly changing perforations
    \cite{Chechkin1, Chechkin, Mascarenhas2, Mascarenhas1,  Mascarenhas3,  AdrianTycho2}.  
    The  $H-$convergence method \cite{Briane1, Briane2},   the asymptotic expansion method \cite{Mikelic},  and 
 the method of locally periodic two-scale (l-t-s) convergence \cite{Ptashnyk13}
 were applied to analyze microscopic models posed on domains consisting of non-periodic fibrous materials.  The optimization of the elastic properties of  a material with locally periodic microstructure was considered in \cite{Toader_2012, Toader_2014}. 

To illustrate   the difference between the formulation of non-periodic microstructure by using periodic functions and the locally periodic formulation of the problem, we consider a plywood-like structure,  given as the superposition of  gradually rotated planes of  aligned parallel fibers.
We  consider layers of  cylindrical fibers of radius $\ve a$ orthogonal to the $x_3$-axis and  rotated around the $x_3$-axis by an angle $\gamma$, constant in each layer and changing from one layer to another, see Fig.~\ref{Fig1}.
\begin{figure}
\includegraphics[width=5.8 cm]{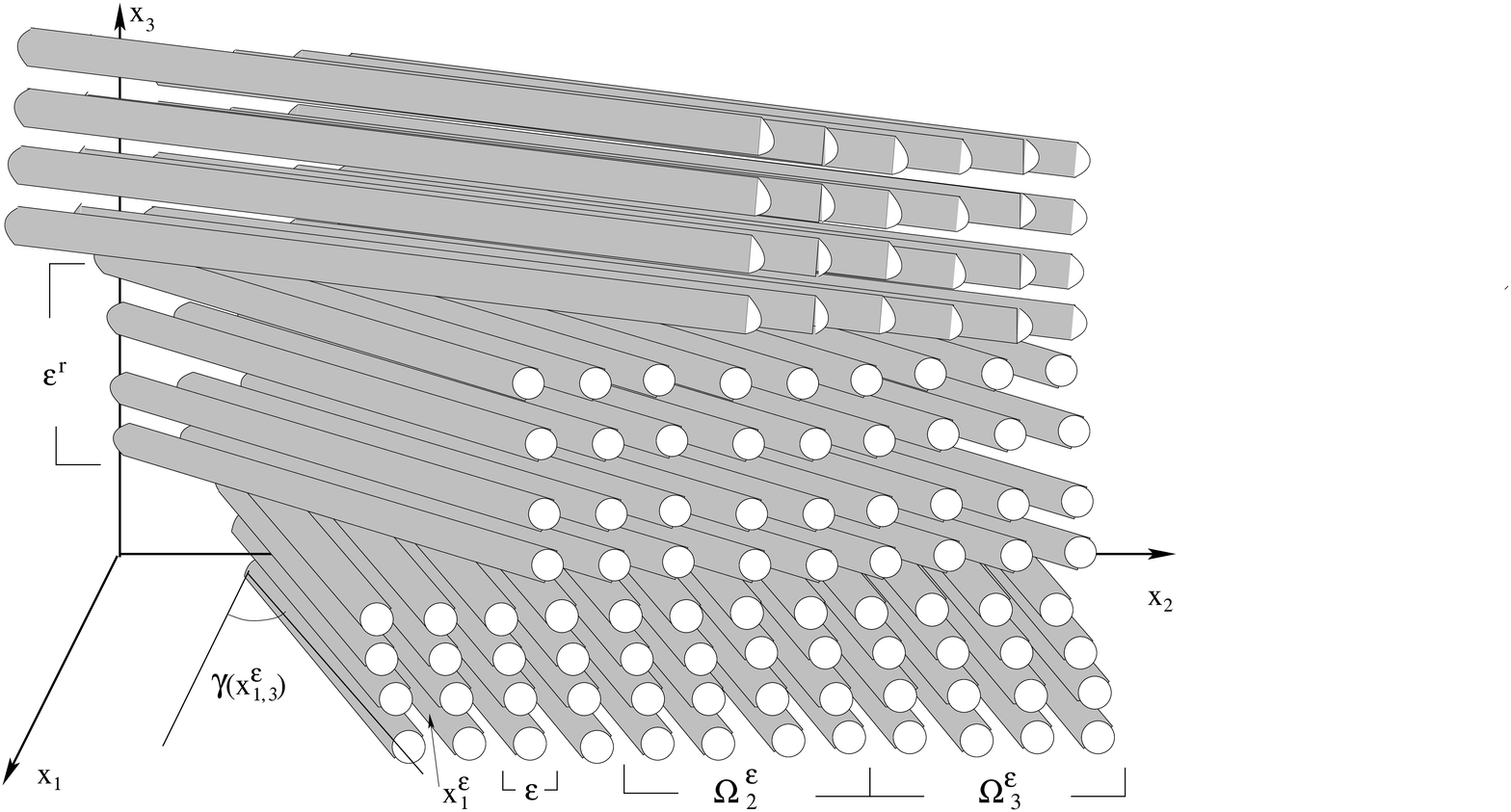}
\includegraphics[width=8.5 cm]{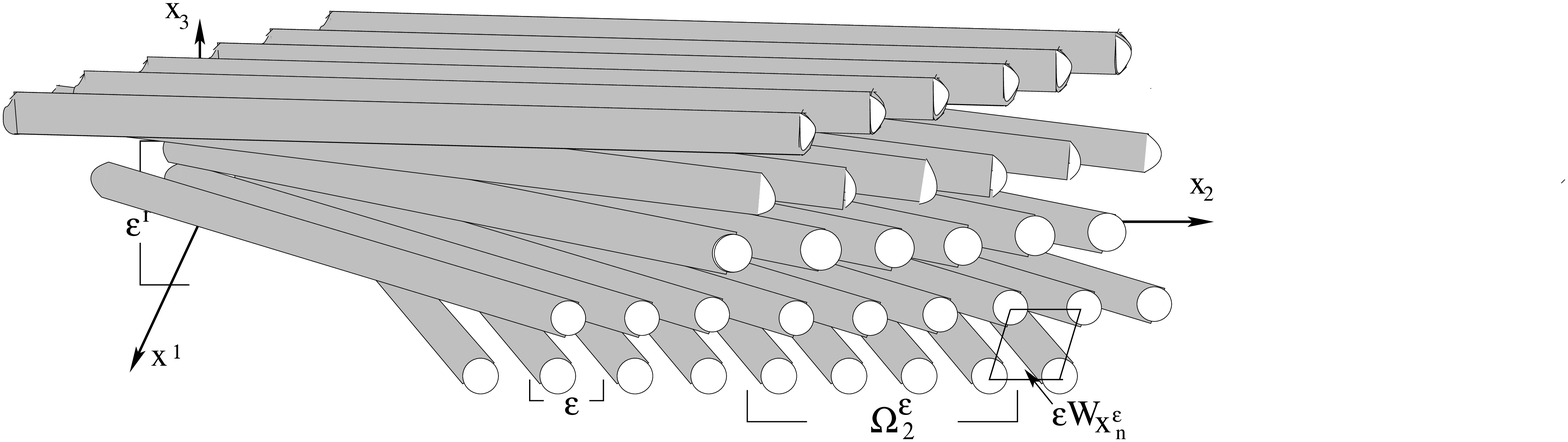}
\caption{Schematic representation of  slow rotating and fast rotating  plywood-like structures. }\label{Fig1}
\end{figure}
To describe  the difference in the material properties of fibers and  the inter-fibre space with the help of a periodic function,   we define a   function 
\begin{equation}\label{example1}
A^\ve(x) = A_1\tilde \eta\big(R(\gamma(x_3)) x /\ve\big)+ A_2\Big[1- \tilde \eta\big(R(\gamma(x_3))  x/ \ve\big)\Big],
\end{equation}
 where $A_1$,  $A_2$ are constant tensors and  $\tilde \eta$ is the characteristic functions  of a fibre of radius  $a$ in the direction of $x_1$-axis, i.e.\
\begin{equation}\label{chi1}
\tilde \eta(y) = \left\{ \begin{aligned}
1 \quad  \text{ for } \quad |\hat y- (1/2, 1/2) | \leq a , \\
0  \quad  \text{ for } \quad |\hat y- (1/2, 1/2)| >  a,
\end{aligned}
\right.
\end{equation}
  and extended $\hat Y$-periodic to the whole  $\mathbb R^3$, with  $a < 1/2$, $\hat y = (y_2, y_3)$,  $Y= (0,1)^3$, and $\hat Y =(0,1)^2$.  The  inverse of the rotation matrix around the $x_3$-axes with rotation angle $\alpha$ with the $x_1$-axis is defined as
  \begin{equation}\label{rotation_m}
  R(\alpha)=\begin{pmatrix} 
\; \; \cos(\alpha) & \sin(\alpha) & 0 \\
- \sin(\alpha) \; & \cos(\alpha) & 0 \\
 0 & 0 & 1
\end{pmatrix},
\end{equation}
and 
  $\gamma \in C^1 (\mathbb R)$ is a given function, such that  $0 \leq \gamma(s) \leq \pi$ for all $s \in \mathbb R$. 
  Then,  considering for example an elliptic problem with a diffusion coefficient or elasticity tensor in the form \eqref{example1}  and using a change of variables $\tilde x = R(\gamma(x_3))  x$, we can apply  periodic homogenization techniques to derive corresponding macroscopic equations (see \cite{Lions, Briane1} for details).  However, in the representation  of the microscopic structure by  \eqref{example1},   every point of a fibre is rotated differently and  the cylindrical structure of the fibers is deformed.  Hence,  $A^\ve$ represent the properties of  a material with  a different microstructure than the plywood-like structure, and  for a correct representation of a plywood-like structure, a locally periodic formulation of the microscopic problem is essential.  Also, applying periodic homogenization techniques we obtain  effective macroscopic coefficients different from the one  obtained by  using  methods of locally periodic homogenization (see \cite{Briane2, Ptashnyk13} for more details).

To define the characteristic function of the domain occupied by fibers in a domain with a locally periodic plywood-like  structure, we divide  $\mathbb R^3$   in 
layers $L_k^\ve=\mathbb R^2\times((k-1)\ve^r, k\ve^r)$  of  height  $\ve^r$ and  perpendicular to the $x_3$-axis, where $k \in \mathbb Z$ and  $0<r<1$. In each $L_k^\ve$ we choose  an arbitrary fixed point  $x_k^\ve\in L_k^\ve$. 
Using the locally periodic approximation of  $\eta \in C(\overline \Omega, L^\infty_{\rm per} (Y_x))$, with  $\eta(x,y) =\tilde  \eta ( R(x) y)$ for $x\in \Omega$ and $y \in Y_x$, given by 
\begin{equation*}
(\mathcal L^\ve \eta)(x)=    \sum\limits_{k \in \mathbb Z}  \tilde \eta\left(R(\gamma(x_{k,3}^\ve )) \, x  /\ve\right)\chi_{L_k^\ve}(x)  
\quad \text{ for } x\in \Omega,
\end{equation*}
 the characteristic function of the domain occupied by fibers   is given by 
\begin{equation}\label{charac_local_per}
\chi_{\Omega_f^\ve}(x)=\chi_\Omega(x)(\mathcal L^\ve \eta)(x). 
\end{equation}
Here   $\tilde \eta \in L^\infty_{\rm per} (Y)$ is as in \eqref{chi1} and $Y_x = R^{-1}(\gamma(x_3))Y$.
For a microstructure composed of fast rotating   planes of parallel aligned fibers, see Fig.~\ref{Fig1}, we consider  an approximation by locally periodic  plywood-like structure with  shifted periodicity 
 $D(x) Y = R^{-1}(x) W(x) Y$, see  \cite{Briane2, Ptashnyk13} for more details.

\section{Locally periodic  microstructures and locally periodic perforated domains}\label{section:LpM}

In this section we give a mathematical formulation of  locally periodic microstructures. We also define   the  approximation of functions, where the periodicity with respect to the fast variable is dependent on the slow variable, by locally periodic functions, i.e.\ periodic in subdomains smaller than the  domain under consideration  but larger than the representative size of the microstructure.

Let $\Omega\subset \mathbb R^d$ be a bounded Lipschitz domain. 
For each $x \in \mathbb R^d$ we  consider a transformation matrix   $D(x)\in \mathbb R^{d\times d}$ and its inverse   $D^{-1}(x)$,  such that 
$D, D^{-1} \in \text{Lip}(\mathbb R^d; \mathbb R^{d\times d})$ and  $0<D_1\leq |\det D(x)| \leq D_2<\infty$ for all $x\in \overline \Omega$.
 We consider  the  continuous family of  parallelepipeds   $Y_x=D_xY$ on $\overline\Omega$, where $Y=(0,1)^d$ is the `unit cell',  and denote $D_x:=D(x)$ and $D_x^{-1}:=D^{-1}(x)$.

 For $\ve >0$, in a manner similar to \cite{Briane3, Ptashnyk13}, we consider the partition covering of  $\Omega$ by a family of  open non-intersecting cubes $\{\Omega_n^\ve\}_{1\leq n\leq  N_\ve}$ of side $\varepsilon^r$, with $0<r<1$, 
 \begin{equation*}
  \Omega\subset \bigcup\limits_{n=1}^{N_\ve} \overline{\Omega_n^\ve} \qquad \text{and } \qquad 
 \Omega_n^\ve \cap \Omega \neq \emptyset.
 \end{equation*} 
 For arbitrary chosen fixed points $x_n^\ve, \tilde x_n^\ve \in \Omega_n^\ve\cap \Omega$ we consider a covering of $\Omega_n^\ve$ by parallelepipeds $\ve D_{x_n^\ve}Y$   
$$
\Omega_n^\ve \subset  \tilde x_n^\ve + \bigcup_{\xi \in \Xi_n^\ve} \ve D_{x_n^\ve}(\overline Y+ \xi), \;  \text{ where } 
 \, \Xi_n^\ve= \{ \xi \in  \mathbb Z^d : \; \tilde x_n^\ve +\ve D_{x_n^\ve}( Y+ \xi) \cap \Omega_n^\ve \neq \emptyset  \} ,
$$
with    $D_{x_n^\ve} = D(x_n^\ve)$ and  $1\leq n \leq N_\ve$. For each $n=1, \ldots, N_\ve$,  $\tilde x_n^\ve$ is a fixed shift in the representation of the microscopic structure of $\Omega_n^\ve$.  Often we can consider  $\tilde x_n^\ve= \ve D_{x_n^\ve}  \xi$ for some $\xi \in \mathbb Z^d$.

We  consider  the space   $C(\overline\Omega; C_{\rm per}(Y_x))$ given in a standard way, i.e.\  for any $\widetilde \psi \in C(\overline\Omega; C_{\rm per}(Y))$ the relation   $\psi(x,y)= \widetilde \psi(x, D_x^{-1}y)$  with $x\in \Omega$ and $y \in Y_x$ yields   $\psi \in C(\overline\Omega; C_{\rm per}(Y_x))$. In the same way  the spaces $L^p(\Omega; C_{\rm per}(Y_x))$, $L^p(\Omega; L^q_{\rm per}(Y_x))$  and  $C(\overline\Omega; L^q_{\rm per}(Y_x))$,  for $1\leq p\leq\infty$, $1\leq q <\infty$,  are defined.

To describe locally periodic  microscopic properties of  a composite material  and to specify  test functions associated with the locally periodic microstructure of a material,   as well as  for the definition of the locally periodic two-scale convergence, 
 we shall  consider a locally periodic  approximation of functions with space-dependent  periodi\-ci\-ty, i.e.\ of  functions in  $C(\overline\Omega; C_{\rm per}(Y_x))$, $L^p(\Omega; C_{\rm per}(Y_x))$,  or  $C(\overline\Omega; L^q_{\rm per}(Y_x))$.
Locally periodic  approximated functions  are $Y_{x_n^\ve}$-periodic in each subdomain $\Omega_n^\ve$, with $n=1, \ldots, N_\ve$,  and  are  related to  test functions associated  with  the periodic structure  of $\Omega_n^\ve$. 
Since the microscopic structure of $\Omega_n^\ve$ is represented by a union of periodicity cells $\ve Y_{x_n^\ve}$ shifted by a fixed point $\tilde x_n^\ve \in \Omega_n^\ve\cap \Omega$, with $n=1, \ldots, N_\ve$,   this  shift is also reflected in the definition of the locally periodic approximation. 

Often   coefficients in a microscopic model  posed in a domain with a locally periodic microstructure depend only on the microscopic  fast variables $x/\ve$ and the  points $x_n^\ve, \tilde x_n^\ve \in \Omega_n^\ve\cap \Omega$,   describing  the periodic microstructure in each $\Omega_n^\ve$, with $n=1, \ldots, N_\ve$, and are independent of  the macroscopic slow variables $x$.  To define such functions  we  shall introduce a notion of a   locally periodic approximation 
$\mathcal L_0^\ve$  of  a function  $\psi \in C(\overline\Omega; C_{\rm per}(Y_x))$ (or  in $L^p(\Omega; C_{\rm per}(Y_x))$,   $ C(\overline\Omega; L^q_{\rm per}(Y_x))$). 
In each $\Omega_n^\ve$  the  function   $\mathcal L_0^\ve (\psi)$  is $Y_{x_n^\ve}$-periodic and depend only on the fast variables $x/\ve$.   This specific   locally periodic approximation is important for  the derivation of  macroscopic equations for a microscopic problem with coefficients discontinuous with respect to the fast variables, since for  $\psi \in C(\overline \Omega; L^p(Y_x))$ we have that    $\mathcal L_0^\ve(\psi)$  converges strongly locally periodic (l-p) two-scale, see \cite{Ptashnyk13}. 

As a locally periodic (l-p) approximation  of $\psi$ 
we name   $\mathcal L^\ve: C(\overline\Omega; C_{\rm per}(Y_x))\to L^\infty(\Omega)$ given by 
\begin{equation}\label{loc-period-def}
(\mathcal L^\ve \psi)(x)=    \sum\limits_{n=1}^{N_\ve} \widetilde \psi\Big(x, \frac {D^{-1}_{x_n^\ve}(x-\tilde x_n^\ve) }\ve\Big)\chi_{\Omega_n^\ve}(x)  
\quad \text{ for } x\in \Omega.
\end{equation}
We   consider also  the map $\mathcal L^\ve_0: C(\overline\Omega; C_{\rm per}(Y_x)) \to L^\infty(\Omega)$ defined  for $x\in \Omega$ as
\[
(\mathcal L^\ve_0 \psi)(x)=\sum\limits_{n=1}^{N_\ve}  \psi\Big(x_n^\ve, \frac {x-\tilde x_n^\ve}\ve\Big)\chi_{\Omega_n^\ve}(x)=
 \sum\limits_{n=1}^{N_\ve} \widetilde \psi\Big(x_n^\ve, \frac {D^{-1}_{x_n^\ve}(x-\tilde x_n^\ve)}\ve\Big)\chi_{\Omega_n^\ve}(x).
 \]
 If we choose $\tilde x_n^\ve= D_{x_n^\ve} \ve \xi$ for some $\xi \in \mathbb Z^d$, then  the periodicity of $\widetilde \psi$ implies
\[
(\mathcal L^\ve \psi)(x)=   
 \sum\limits_{n=1}^{N_\ve} \widetilde \psi\Big(x, \frac {D^{-1}_{x_n^\ve}x }\ve\Big)\chi_{\Omega_n^\ve}(x) \; \text{ and } \;
  (\mathcal L^\ve_0 \psi)(x)=   
 \sum\limits_{n=1}^{N_\ve} \widetilde \psi\Big(x_n^\ve, \frac {D^{-1}_{x_n^\ve}x }\ve\Big)\chi_{\Omega_n^\ve}(x)
\]
 for  $x\in \Omega$. 
 
 In the following, we shall consider the case  $\tilde x_n^\ve=  \ve D_{x_n^\ve} \xi$, with $\xi\in \mathbb Z^d$.
 However, all results hold for arbitrary chosen   $\tilde x_n^\ve \in \Omega_n^\ve$ with $n=1,\ldots, N_\ve$, see  \cite{Ptashnyk13}. In a similar way  we define $\mathcal L^\ve\psi$ and  $\mathcal L^\ve_0\psi$ for $\psi$ in
  $C(\overline\Omega; L^q_{\rm per}(Y_x))$ or  $L^p(\Omega; C_{\rm per}(Y_x))$. 

The locally periodic approximation reflects the  microscopic properties of $\Omega$, where  in each $\Omega_n^\ve$ the microstructure is represented by a `unit cell' $Y_{x_n^\ve}= D_{x_n^\ve} Y$ for  an arbitrary fixed $x_n^\ve \in \Omega_n^\ve$, see Figs.~\ref{Fig1}~and~\ref{Fig2.2}. 
 
In the context of admissible test functions in  a weak formulation of partial differential equations,  we define a  regular approximation of  $\mathcal L^\ve \psi$ by
 \[
 (\mathcal L^\ve_{\rho} \psi)(x)= \sum\limits_{n=1}^{N_\ve} \widetilde \psi\Big(x, \frac {D^{-1}_{x_n^\ve} x}\ve\Big)\phi_{\Omega_n^\ve}(x)  
\quad \text{ for } x\in \Omega,
\]
where  $\phi_{\Omega_n^\ve}$  are approximations of    $\chi_{\Omega_n^\ve}$  
such that $\phi_{\Omega_n^\ve} \in C^\infty_0 (\Omega_n^\ve)$ and 
\begin{equation}\label{ApproxCharactF}
  \sum\limits_{n=1}^{N_\ve}|\phi_{\Omega_n^\ve}  -\chi_{\Omega_n^\ve}| \to 0 \, \text{ in }  L^2(\Omega),\,   \, 
||\nabla^m \phi_{\Omega_n^\ve}||_{L^\infty(\mathbb R^d)}\leq C \ve^{-\rho m} \; \text{  for  } 0<r<\rho<1,
\end{equation}
see e.g.\ \cite{Briane1, Briane3, Ptashnyk13}.
In the definition of the l-p unfolding operator we  shall use subdomains of $\Omega_n^\ve$ given by unit cells $\ve D_{x_n^\ve}(Y + \xi)$ that are completely included in $\Omega_n^\ve\cap \Omega$, see Fig.~\ref{Fig2.2}.
\begin{eqnarray}\label{Omega_hat}
\; \;  \hat \Omega^\ve= \bigcup_{n=1}^{N_\ve} \hat \Omega_n^\ve, \; & \text{ with }& \; \hat \Omega_n^\ve =\text{Int} \Big( \bigcup_{\xi \in \hat \Xi_n^\ve} \ve D_{x_n^\ve}(\overline{Y}+ \xi)\Big) \;   \text{ and } \; \Lambda^\ve= \bigcup_{n=1}^{N_\ve} \Lambda^\ve_n  \cap \Omega,
\end{eqnarray}
where $\Lambda^\ve_n = \Omega_n^\ve \setminus \hat \Omega_{n}^\ve$ and $\hat \Xi^\ve_n =\{\xi \in \Xi_n^\ve \; : \,  \, \ve D_{x_n^\ve}(Y+ \xi) \subset (\Omega_n^\ve \cap \Omega)\}$.

As it is known from the periodic case,  the unfolding operator  provides a powerful technique for  the  multiscale analysis of problems posed in perforated domains and nonlinear equations defined on oscillating  surfaces of  microstructures. Thus, the main emphasis of this work will be on the development of  the unfolding method for domains with locally periodic perforations.  
Therefore,  next  we introduce perforated domains with locally periodic changes in the distribution  and in the shape of perforations.

We  consider $ Y_0 \subset Y$  with a Lipschitz  boundary $\Gamma= \partial Y_0$ and a  matrix  $K$ with  $K, K^{-1} \in \text{Lip}( \mathbb R^d;  \mathbb R^{d\times d})$,  where  $0< K_1 \leq  |\det K(x)|\leq K_2<\infty $,   $K_x Y_0 \subset Y$,   and $Y^\ast= Y \setminus \overline Y_0$ and  $\widetilde Y_{K_x}^\ast = Y \setminus K_x\overline Y_0$ are connected,  for all $x\in \overline \Omega$.  
 Define  $Y^\ast_{x,K}= D_x \widetilde Y_{K_x}^\ast$   with the boundary $\Gamma_{x} = D_{x} K_{x} \Gamma$, where   $K_x=K(x)$ and $D_x=D(x)$.   Then, a domain with locally periodic perforations   is defined as 
 $$ 
 \Omega^\ast_{\ve, K} =\text{Int}\big( \bigcup_{n=1}^{N_\ve}  \Omega_{n,K}^{\ast, \ve}\big) \cap \Omega,  \quad \text{ where } 
 \quad \Omega_{n,K}^{\ast, \ve}=  \bigcup_{\xi \in \Xi_{n}^{\ast,\ve}}\ve D_{x_n^\ve}( \overline{\widetilde Y^\ast_{K_{x_n^\ve}}} + \xi)\cup \overline{\Lambda_{n}^{\ast, \ve}}.
$$ 
 Here $ \Lambda_{n}^{\ast, \ve}= \Omega_n^\ve \setminus \bigcup_{\xi \in \Xi_{n}^{\ast, \ve}} \, \ve D_{x_n^\ve}(\overline{Y}+ \xi)$, with $\Xi^{\ast, \ve}_{n} =\{\xi \in \Xi_n^\ve \; : \,  \, \ve D_{x_n^\ve}(Y+ \xi) \subset \Omega_n^\ve\}$, $\widetilde Y_{K_{x_n^\ve}}^\ast= Y\setminus K_{x_n^\ve}\overline Y_0$ and $K_{x_n^\ve} = K(x_n^\ve)$,  for  $n=1,\ldots, N_\ve$ and $x_n^\ve \in \hat \Omega_n^\ve$.
The boundaries of the locally periodic microstructure of $\Omega^\ast_{\ve, K}$ are denoted by   
$$\Gamma^\ve =\bigcup_{n=1}^{N_\ve} \Gamma_n^\ve, \; \; \text{ where } \; \;  \Gamma_n^\ve= \bigcup_{\xi\in   \Xi_{n}^{\ast, \ve}} \ve  D_{x_n^\ve} (\widetilde \Gamma_{K_{x_n^\ve}} + \xi) \cap \Omega \; \; \text{ with } \; \widetilde \Gamma_{K_{x_n^\ve}} = K_{x_n^\ve} \Gamma.$$
Notice that changes in the microstructure of  $\Omega^\ast_{\ve, K}$ are defined by changes in the periodicity given by $D(x)$ and  additional changes in the shape of perforations described  by $K(x)$ for $x \in \Omega$.

\begin{figure}
\centering
\includegraphics[width=4.4 cm]{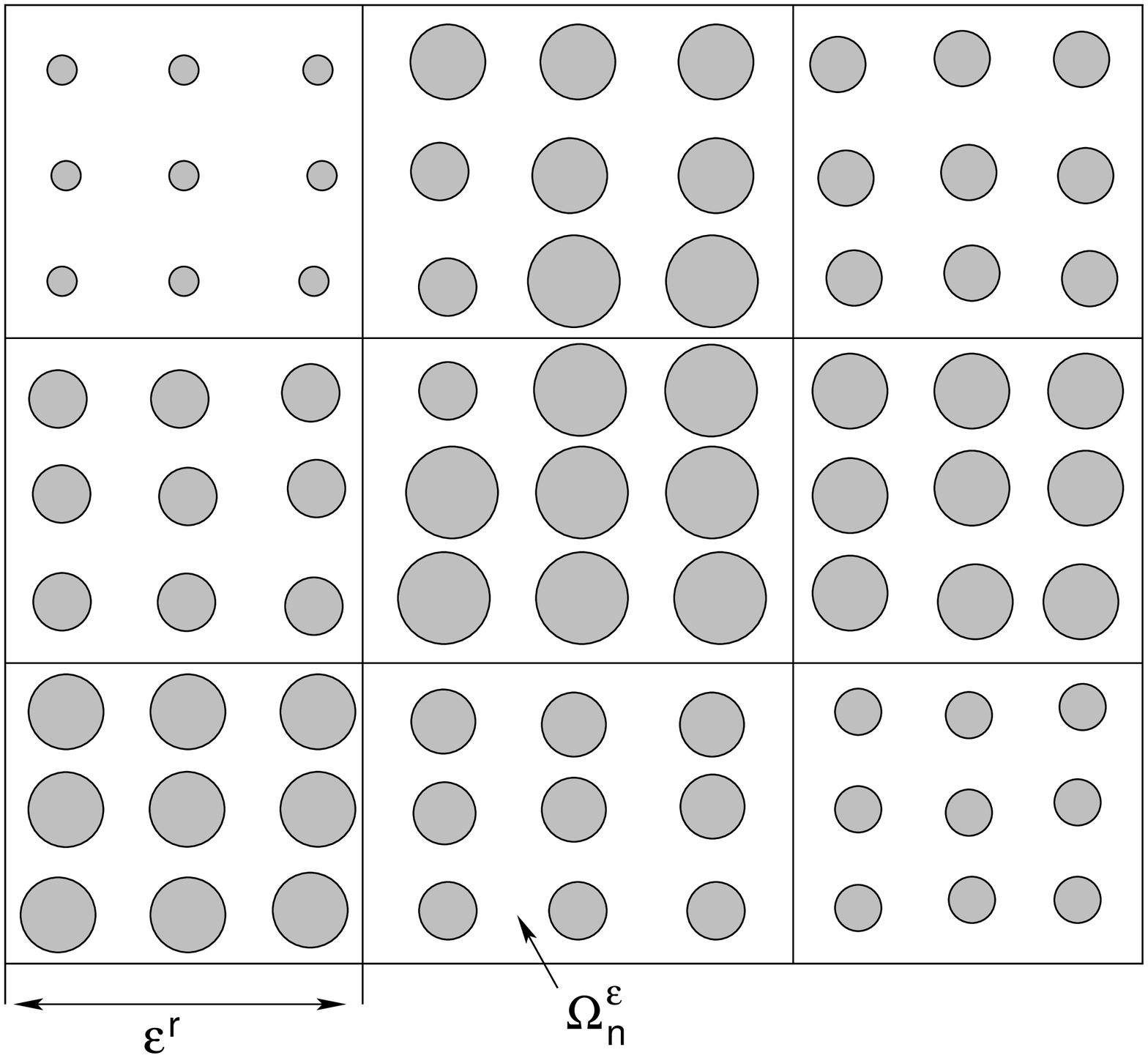} 
\includegraphics[width=5.5 cm]{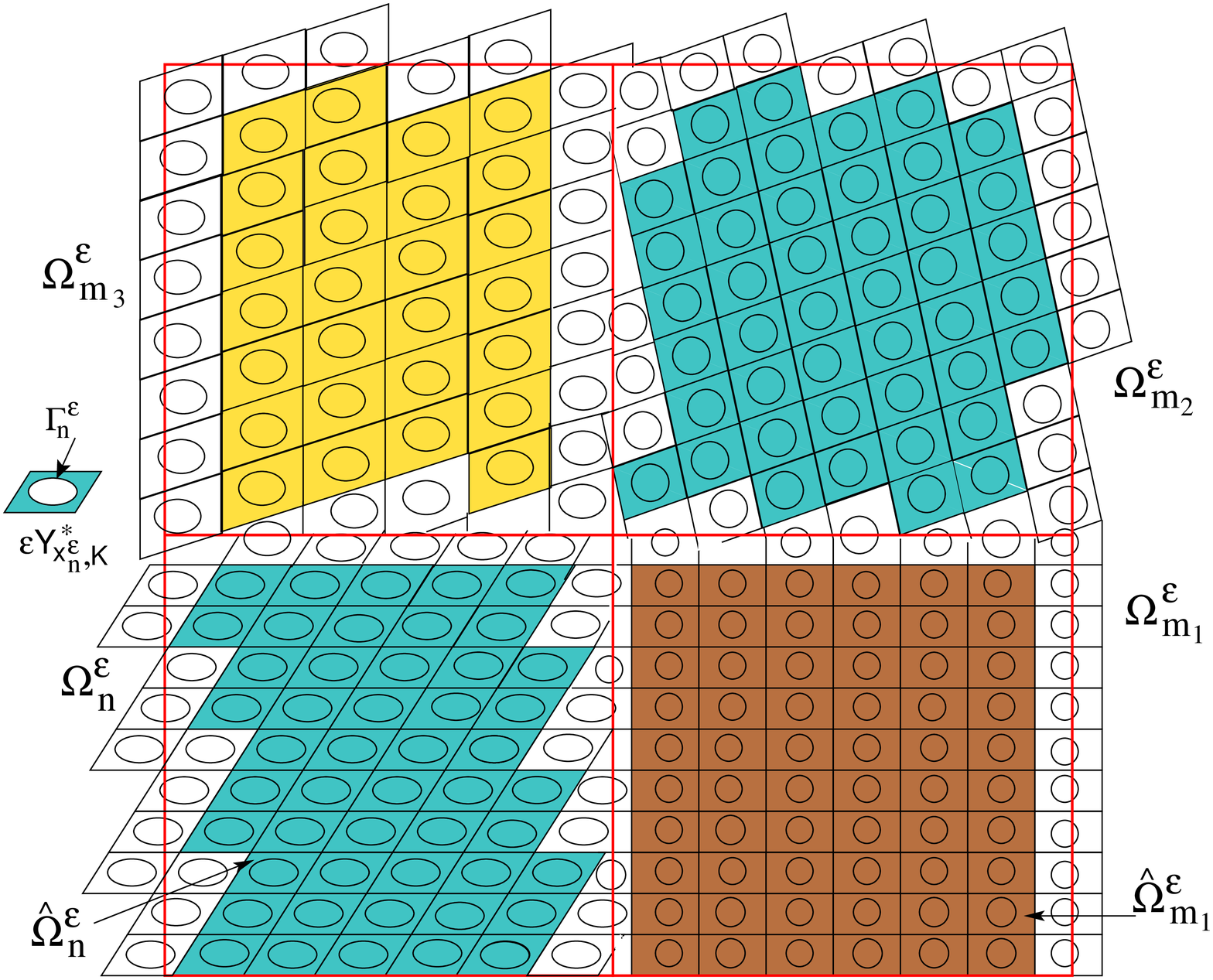}
\caption{Schematic representation of subdomains $\Omega_n^\ve$ and $\hat \Omega^\ve_n$.} \label{Fig2.2}
\end{figure}

Along with plywood-like structures (see Fig.~\ref{Fig1}),  examples of locally periodic microstructures are e.g.\ concrete materials with space-dependent perforations,  plant  and epithelial tissues, see Fig.~\ref{Fig3}.  
 In the definition of microstructure of concrete materials with  space-dependent perforations  we have e.g.\ $D(x)={\bf I}$ and $K(x)= \rho(x) {\bf I}$ for  such $0<\rho_1 \leq \rho(x) \leq \rho_2 <\infty$ that $K(x)\overline Y_0 \subset Y$, where ${\bf I}$ denotes the identity matrix,  see e.g.\ \cite{Chechkin,AdrianTycho2} and Fig.~\ref{Fig2.2}. For  plant  or  epithelial tissues   additionally we have space-dependent deformations of cells given by  $D(x) \neq {\bf I}$, see Fig.~\ref{Fig3}. 
\begin{figure}
\centering
\includegraphics[width=5 cm]{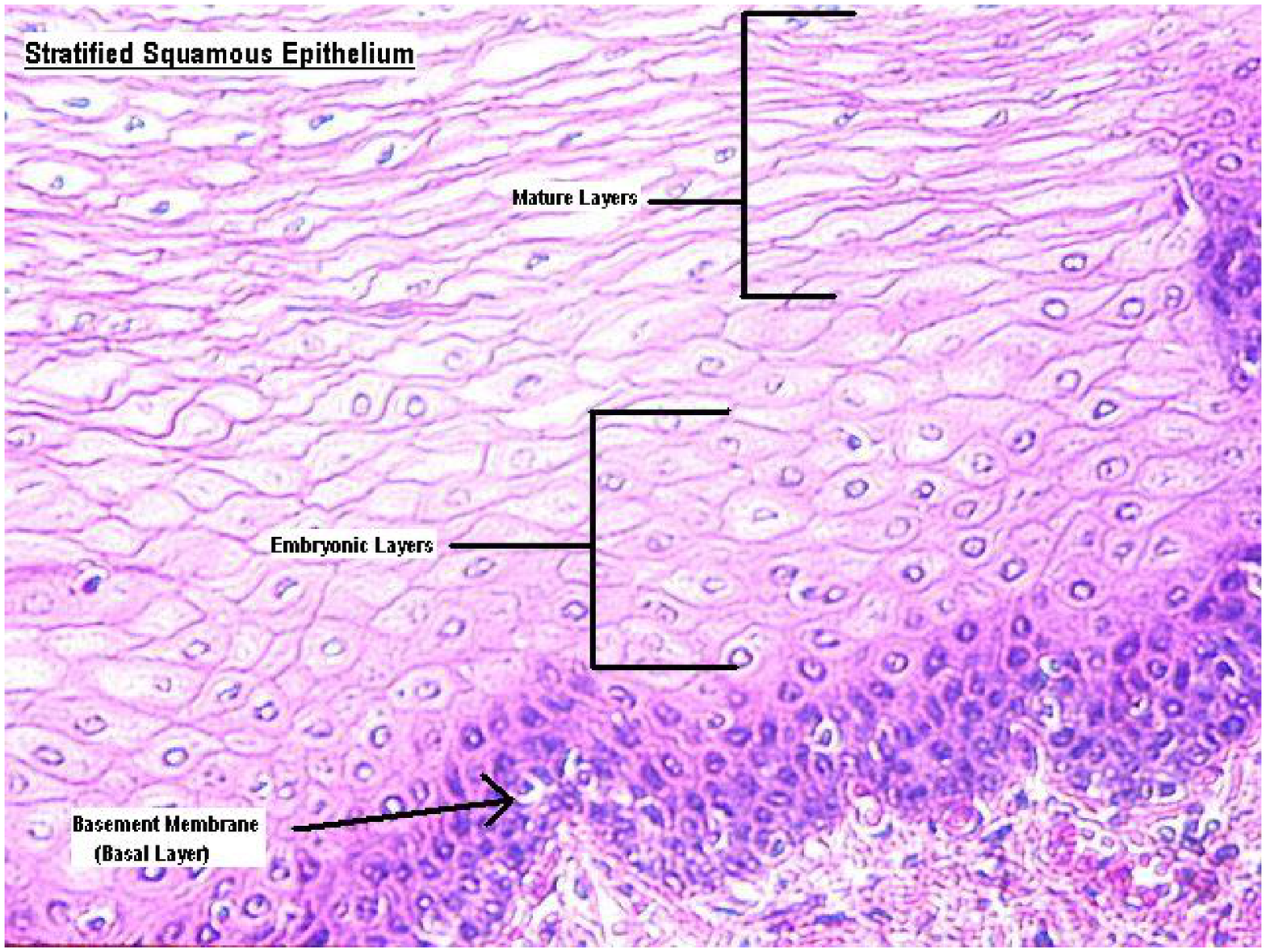} \qquad 
\includegraphics[width=5.8 cm]{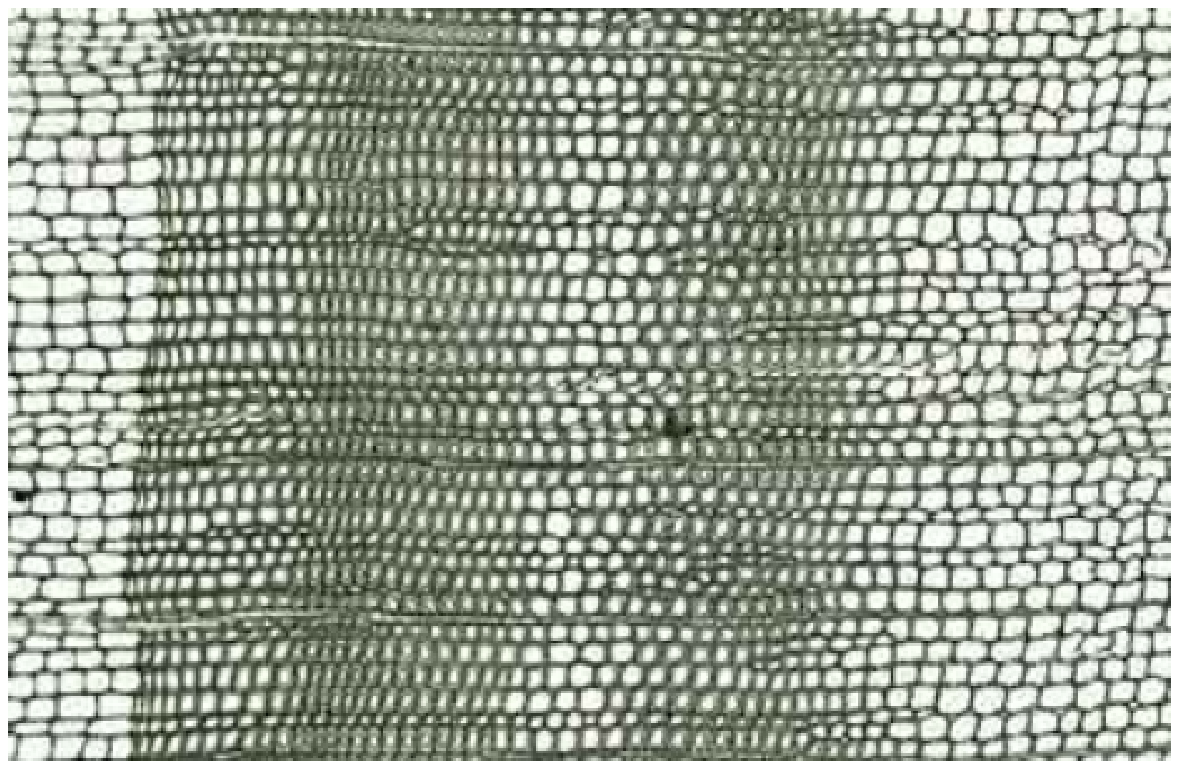}
\caption{Examples of locally periodic microstructures with local changes in the shape  and the periodicity of a microstructure.  We observe changes in shape and  size of cells in an   epithelial  tissue  due to maturation (left) and changes in the size of plant cells in a wood tissue (right). Reproduced with permission  from Anatomy\& Physiology, \text{http://anatomyandphysiologyi.com} (left) and   from Schoch, Heller, Schweingruber, Kienast, 2004, \cite{Schoch_2004}  (right).} \label{Fig3}
\end{figure}

Using the  mathematical definition of general  locally periodic microstructures,  next  we introduce the definition of the  locally periodic (l-p) unfolding operator, mapping  functions defined on $\ve$-dependent domains  to functions depending on two variables (i.e.\ a microscopic variable and a macroscopic variable), but defined on fixed domains.

\section{Definitions  of l-p  unfolding operator and  l-p two-scale convergence on oscillating surfaces}\label{Definitions}

The main idea of the two-scale convergence is to consider test functions which comprise the information about the microstructure and the microscopic properties of  a composite material    and of  model equations. 
The same idea is used in the definition of l-t-s by considering a    l-p approximation of   $\psi \in L^q(\Omega; C_{\rm per}(Y_x))$ (reflecting the locally periodic  properties of microscopic problems) as  a test function. 
\begin{definition}\label{def_two-scale}\cite{Ptashnyk13}
Let  $u^\ve\in  L^p(\Omega)$ for all $\ve >0$ and $p \in (1, +\infty)$. We say   the sequence $\{u^\ve\}$ converges l-t-s  to $u \in L^p(\Omega; L^p(Y_x)) $ as $\ve \to 0$ if  
$\| u^\ve\|_{L^p(\Omega)} \leq C$ and for any $\psi \in L^q(\Omega; C_{\rm per}(Y_x))$  
\begin{eqnarray*}
\lim\limits_{\ve \to 0}\int_{\Omega} u^\ve(x) \mathcal L^\ve\psi(x) dx = \int_\Omega   \ddashinttt_{Y_x}  u(x,y) \psi(x, y)  dy dx,
\end{eqnarray*}
where $\mathcal L^\ve $ is the l-p approximation of $\psi$,  defined in \eqref{loc-period-def}, and $1/p+1/q=1$.
\end{definition}

\textit{Remark.} Notice that the definition of l-t-s  and  convergence results  presented in \cite{Ptashnyk13} for $p=2$ are directly generalized to $p \in (1,\infty)$.

Motivated by the notion of the periodic unfolding operator and  l-t-s convergence we  define the l-p unfolding operator in the following way. 
\begin{definition} \label{l-p-unf-oper} For any  Lebesgue-measurable on $\Omega$  function $\psi$
the locally periodic (l-p)  unfolding operator 
$\mathcal T_{\mathcal L}^\ve$ is defined as 
\begin{eqnarray*}
\mathcal T^\ve_{\mathcal L} (\psi) (x,y) = & \sum\limits_{n=1}^{N_\ve}
\psi\big( \ve D_{x_n^\ve} \big[{D^{-1}_{x_n^\ve} x} /\ve \big]_{Y} +  \ve D_{x_n^\ve}  y \big) \chi_{\hat \Omega_{n}^\ve}(x)  \quad \text{ for } \, \, x \in \Omega \text{ and }   \, \,  y \in Y.
\end{eqnarray*}
\end{definition}
\noindent The definition implies that  $\mathcal T_{\mathcal L}^\ve (\psi)$ is  Lebesgue-measurable on $\Omega\times Y$ and is zero for $x \in \Lambda^\ve$.

For perforated domains  with  local changes  in the distribution of perforations, but without additional changes in the shape of  perforations, i.e.\  $K={\bf I}$ and  
$$\Omega^\ast_{\ve} = \text{Int} \big(\bigcup_{n=1}^{N_\ve}\Omega_n^{\ast, \ve}\big)  \cap \Omega, \qquad \text{ where } \qquad 
\Omega_n^{\ast, \ve}  =  \bigcup_{\xi \in \Xi_{n}^{\ast,\ve}} \ve D_{x_n^\ve}(\overline {Y^\ast} + \xi)\cup \overline{\Lambda_{n}^{\ast,\ve}}, $$
and   $Y^\ast = Y \setminus \overline Y_0$, we define  the  l-p unfolding operator in the following way: 
\begin{definition}\label{l-p-unf-oper_perfor} For any  Lebesgue-measurable on $\Omega_\ve^\ast$  function $\psi$
the l-p  unfolding operator 
$\mathcal T_{\mathcal L}^{\ast, \ve} $ is defined as 
\begin{eqnarray*}
\mathcal T^{\ast, \ve}_{\mathcal L} (\psi) (x,y) = &
\sum\limits_{n=1}^{N_\ve} \psi\big( \ve D_{x_n^\ve} \big[ {D^{-1}_{x_n^\ve} x}/ \ve\big]_{Y} +  \ve D_{x_n^\ve}  y \big) \chi_{\hat \Omega_{n}^\ve}(x)  \quad \text{ for } \, \, x \in \Omega \text{ and }   \, \,  y \in Y^\ast.
\end{eqnarray*}
\end{definition}
\noindent The definition implies that $\mathcal T^{\ast,\ve}_{\mathcal L}(\psi)$ is Lebesgue-measurable on $\Omega\times  Y^\ast$ and is zero 
for $x \in \Lambda^\ve$. 

In mathematical models posed in perforated domains  we often  have some  processes defined on the  surfaces of the microstructure  (e.g.\ non-homogeneous Neumann conditions or  equations defined on  the boundaries of the microstructure). Therefore   it is important to have a notion of a convergence for  sequences defined on  oscillating surfaces of locally periodic microstructures. 
Applying the same idea  as in the definition of  l-t-s convergence for sequences in $L^p(\Omega)$ (i.e.\  considering  l-p approximations of functions with space-dependent periodicity as test functions) we  define the l-t-s  on surfaces of locally periodic microstructures.  
\begin{definition}\label{l-t-s_boundary}
A sequence $\{u^\ve \} \subset L^p(\Gamma^\ve)$, with $p\in(1,+\infty)$, is said to converge locally periodic two-scale (l-t-s)  to $u \in L^p(\Omega; L^p(\Gamma_x))$ if
$
\ve \|u^\ve \|^p_{L^p(\Gamma^\ve)}  \leq C
$
and  for any $\psi \in C(\overline \Omega; C_{\rm per}(Y_x))$
\begin{eqnarray*}
\lim\limits_{\ve \to 0} \ve \int_{\Gamma^\ve} u^\ve(x) \, \mL^\ve \psi(x) \, d\sigma_x =\int_{\Omega}  \frac 1{|Y_x|}  \int_{\Gamma_x} u(x,y) \, \psi(x, y) \, d\sigma_y dx, 
\end{eqnarray*}
where $ \mL^\ve  $ is the l-p approximation of $\psi$ defined in \eqref{loc-period-def}. 
\end{definition}

Often, to show the strong convergence of a sequence defined on  oscillating boun\-da\-ries of a microstructure, we need to map it to a sequence defined  on a fixed domain. This can be achieved  by using the boundary unfolding operator.
\begin{definition}\label{b-l-p-unf-oper} For any  Lebesgue-measurable on $\Gamma^\ve$  function $\psi$
the l-p boundary unfolding operator 
$\mathcal T_{\mathcal L}^{b,\ve} $ is defined as 
\begin{eqnarray*}
\mathcal T^{b,\ve}_{\mathcal L} (\psi) (x,y) = & \sum\limits_{n=1}^{N_\ve}
 \, \psi\big( \ve D_{x_n^\ve} \big[ {D^{-1}_{x_n^\ve} x}/ \ve\big]_{Y} +  \ve D_{x_n^\ve} K_{x_n^\ve} y \big) \chi_{\hat \Omega_{n}^\ve}(x)  \;  \text{ for }  x \in \Omega \text{ and }     y \in \Gamma.
\end{eqnarray*}
\end{definition}
\noindent The definition implies that $\mathcal T^{b,\ve}_{\mathcal L}(\psi)$ is Lebesgue-measurable on $\Omega\times \Gamma$ and is zero 
for $x \in \Lambda^\ve$. 

The l-p boundary unfolding operator  is a generalization of the periodic boundary unfolding operator,  see e.g.\ \cite{Cioranescu_2006, Cioranescu_2012, Damlamian:2008, Onofrei:2006}.
Similar to the periodic unfolding operator, the l-p  unfolding operator  maps functions defined in  domains  depending on $\ve$  (on $\Omega^\ast_\ve$ or $\Gamma^\ve$)  to functions defined on fixed domains ($\Omega\times Y^\ast$ or $\Omega\times \Gamma$). The locally periodic microstructures of   domains are reflected in the definition of the l-p unfolding operator.

\section{Main  convergence results for the l-p unfolding operator  and  l-t-s convergence  on oscillating surfaces}\label{main_results}
In this section we summarize the main results of the paper.
Similar to the periodic case \cite{Cioranescu_2006, Cioranescu_2012}, we obtain compactness results for the l-t-s convergence on oscillating boundaries, for the  l-p unfolding operator and  for the l-p  boundary  unfolding operator. We prove  convergence results for sequences bounded in $L^p(\Gamma^\ve)$, $H^1(\Omega)$, and $H^1(\Omega_\ve^\ast)$, respectively. The properties of the transformation matrices $D$ and $K$, assumed in Section~\ref{Definitions}, are used to prove the convergence results stated in this section.
\begin{theorem}\label{prop_conver_1}
For a sequence $\{w^\ve \}\subset L^p(\Omega)$, with $p \in (1, +\infty)$,  satisfying  
$$
\| w^\ve \|_{L^p(\Omega)} + 
\ve \|\nabla w^\ve \|_{L^p(\Omega)} \leq C$$  
there exist a subsequence (denoted again by $\{w^\ve\}$)
and  $w \in L^p(\Omega; W^{1,p}_{\rm per}(Y_x))$ such that
\begin{eqnarray*}
\begin{aligned}
 \T_{\mL}^\ve(w^\ve) & \; \rightharpoonup && w(\cdot, D_x \cdot) \quad\hspace{2.1 cm } && \text{ weakly in } \, L^p(\Omega; W^{1,p}(Y)), \\ 
 \ve  \T_{\mL}^\ve(\nabla w^\ve) &\; \rightharpoonup  && D_x^{-T}\nabla_y w(\cdot, D_x \cdot)  \quad && \text{ weakly in } \, L^p(\Omega\times Y). 
\end{aligned}
\end{eqnarray*} 
\end{theorem}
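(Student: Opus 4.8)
The plan is to follow the architecture of the periodic unfolding method (cf.\ \cite{Cioranescu_2008, Cioranescu_2012}), the only genuinely new feature being that the reference lattice $\ve D_{x_n^\ve}\mathbb Z^d$ changes from block $\Omega_n^\ve$ to block $\Omega_{n'}^\ve$. First I would record the two elementary facts about $\T_\mL^\ve$ on which everything rests. On each cell contained in $\hat\Omega_n^\ve$ the operator $\T_\mL^\ve$ is the composition of $w^\ve$ with the affine map $y\mapsto\ve D_{x_n^\ve}\xi+\ve D_{x_n^\ve}y$, so a cell-by-cell change of variables — in which the Jacobians $|\det D_{x_n^\ve}|$ cancel — gives the isometry-type identities
\[
\|\T_\mL^\ve(w^\ve)\|_{L^p(\Omega\times Y)}=\|w^\ve\|_{L^p(\hat\Omega^\ve)},\qquad
\|\ve\,\T_\mL^\ve(\nabla w^\ve)\|_{L^p(\Omega\times Y)}=\ve\,\|\nabla w^\ve\|_{L^p(\hat\Omega^\ve)},
\]
while differentiating the same cell-wise representation in $y$ and applying the chain rule yields the commutation formula
\[
\nabla_y\big(\T_\mL^\ve(w^\ve)\big)(x,y)=\ve\,(\widehat D^\ve_x)^{T}\,\T_\mL^\ve(\nabla w^\ve)(x,y),\qquad
\widehat D^\ve_x:=\sum_{n=1}^{N_\ve}D_{x_n^\ve}\,\chi_{\hat\Omega_n^\ve}(x).
\]
Since $0<D_1\le|\det D|\le D_2$ and $D,D^{-1}\in\mathrm{Lip}$, these show that $\T_\mL^\ve(w^\ve)$ is bounded in $L^p(\Omega;W^{1,p}(Y))$ and $\ve\,\T_\mL^\ve(\nabla w^\ve)$ in $L^p(\Omega\times Y)^d$.

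As $1<p<\infty$, both spaces are reflexive, so I would extract a subsequence along which $\T_\mL^\ve(w^\ve)\rightharpoonup W$ in $L^p(\Omega;W^{1,p}(Y))$ and $\ve\,\T_\mL^\ve(\nabla w^\ve)\rightharpoonup\Xi$ in $L^p(\Omega\times Y)^d$; in particular $\nabla_y\T_\mL^\ve(w^\ve)\rightharpoonup\nabla_y W$. To identify $\Xi$ I would pass to the limit in the commutation formula: the map $x\mapsto\widehat D^\ve_x$ is piecewise constant and, since $D$ is Lipschitz and $\mathrm{diam}\,\Omega_n^\ve\le\sqrt d\,\ve^r\to0$, converges uniformly on $\hat\Omega^\ve$ to $D$ (this, together with $|\Omega\setminus\hat\Omega^\ve|=|\Lambda^\ve|\to0$, is part of the description of the microstructure in Section~\ref{section:LpM}; see also \cite{Ptashnyk13}). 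Because $(\widehat D^\ve_x)^{T}$ is $y$-independent and converges strongly in $L^\infty$, its product with the weakly $L^p$-convergent factor $\ve\,\T_\mL^\ve(\nabla w^\ve)$ converges weakly to $D_x^{T}\Xi$, whence $\nabla_y W=D_x^{T}\Xi$, i.e.\ $\Xi=D_x^{-T}\nabla_y W$. Writing $w(x,z):=W(x,D_x^{-1}z)$ for $z\in Y_x$ turns these into $W(x,y)=w(x,D_x y)$ and $\Xi=D_x^{-T}\nabla_y w(\cdot,D_x\cdot)$, the expressions in the statement.

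What remains, and what I expect to be the main obstacle, is the $Y$-periodicity of $W(x,\cdot)$ (equivalently $w(x,\cdot)\in W^{1,p}_{\rm per}(Y_x)$). In the periodic case this is obtained by extending the defining formula of the unfolding operator to $y$ ranging over a fixed neighbourhood of $Y$: the extended function $U^\ve$ has no jumps across the translated cell faces — it is just $w^\ve$ rescaled and translated, hence in $W^{1,p}_{\rm loc}$ — and on each translate $k_0+Y$ it coincides with $\T_\mL^\ve(w^\ve)$ evaluated at a point shifted in $x$ by $\ve D_{x_n^\ve}k_0$; letting $\ve\to0$, absence of jumps forces the traces of $W(x,\cdot)$ on opposite faces of $Y$ to agree. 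The point requiring care in the present setting is that this argument is intrinsically \emph{block-wise}: it uses the constancy of the lattice $\ve D_{x_n^\ve}\mathbb Z^d$ inside a single $\Omega_n^\ve$. I would therefore run it separately on each block, for $x$ away from $\partial\Omega_n^\ve$, and control the contributions of the inter-block region and of a neighbourhood of $\partial\Omega$ — both of measure tending to $0$, namely $|\Lambda^\ve|\to0$ and $|\{x:\mathrm{dist}(x,\partial\Omega)<\sqrt d\,\ve^r\}|\to0$ — so that they do not affect the weak limits; the uniform convergence $\widehat D^\ve\to D$ guarantees that near any interior point the cell shape is asymptotically the single constant matrix $D_x$.

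Combining these steps gives, along the chosen subsequence, $\T_\mL^\ve(w^\ve)\rightharpoonup w(\cdot,D_x\cdot)$ weakly in $L^p(\Omega;W^{1,p}(Y))$ with $w\in L^p(\Omega;W^{1,p}_{\rm per}(Y_x))$, and $\ve\,\T_\mL^\ve(\nabla w^\ve)\rightharpoonup D_x^{-T}\nabla_y w(\cdot,D_x\cdot)$ weakly in $L^p(\Omega\times Y)$, as claimed. All the heavy lifting that is new relative to \cite{Cioranescu_2008, Cioranescu_2012} is concentrated in the limit passages of the last two paragraphs, where the block decomposition and the variation of $D$ across the partition $\{\Omega_n^\ve\}$ must be shown to be asymptotically negligible.
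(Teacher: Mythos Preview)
Your proposal is correct and follows essentially the same architecture as the paper's proof: boundedness via the cell-wise change of variables and the commutation formula \eqref{micro_grad}, extraction of weak limits, identification of the gradient limit through the piecewise-constant matrix $\widehat D^\ve\to D$, and the block-wise shift argument for periodicity with boundary layers of total measure $O(\ve^{1-r})$. The only cosmetic difference is that the paper identifies the limit of $\ve\,\T_\mL^\ve(\nabla w^\ve)$ by integrating by parts against $\operatorname{div}_y(D_{x_n^\ve}^{-1}\phi)$ rather than invoking strong$\times$weak convergence in the commutation formula directly, and it carries out the periodicity step via traces on $Y'\times\{0,1\}$ (writing out the sets $\widetilde\Omega_n^{\ve,d}$ and $\widetilde\Lambda_{n,l}^{\ve,d}$ explicitly) rather than via an extension of the unfolded function beyond $Y$; both routes are equivalent.
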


For a  uniformly in $\ve$ bounded sequence in $W^{1,p}(\Omega)$, in addition   we obtain   the weak convergence of the unfolded sequence of derivatives, important for the homogenization of equations comprising elliptic operators of second order. 
\begin{theorem}\label{theorem_cover_grad}
For a sequence  $\{w^\ve\}\subset W^{1,p}(\Omega)$, with $p\in (1, +\infty)$,  that converges weakly to $w$ in $W^{1,p}(\Omega)$, there exist a subsequence (denoted again by $\{w^\ve\}$) and a function $w_1\in L^p(\Omega; W^{1,p}_{\rm per}(Y_x))$ such that 
\begin{eqnarray*}
\begin{aligned}
 \mathcal T_{\mathcal L}^{\ve}(w^\ve) &\;  \rightharpoonup  &&  w   \quad && \text{  weakly in } \, \, L^p(\Omega; W^{1,p}(Y)), \\
 \mathcal T_{\mathcal L}^{\ve}(\nabla w^\ve)(\cdot, \cdot) &\;  \rightharpoonup  && \nabla_x w(\cdot) +  D_x^{-T} \nabla_y w_1(\cdot, D_x \cdot) \quad && \text{ weakly in } L^p(\Omega\times Y).
\end{aligned}
\end{eqnarray*}
\end{theorem}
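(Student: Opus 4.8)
The plan is to adapt the periodic unfolding strategy \cite{Cioranescu_2008, Cioranescu_2012} to the locally periodic geometry; the only genuinely new ingredient is the control of the slowly varying matrices $D_{x_n^\ve}$. The basic tool is the chain rule applied to Definition~\ref{l-p-unf-oper}: since the ``corner'' term in $\mathcal T^\ve_{\mathcal L}$ is independent of $y$, on each $\hat\Omega_n^\ve$ one has
\[
\nabla_y\big(\mathcal T^\ve_{\mathcal L}(v)\big)(x,y)\;=\;\ve\, D_{x_n^\ve}^{T}\,\mathcal T^\ve_{\mathcal L}(\nabla v)(x,y)\qquad\text{for } v\in W^{1,p}(\Omega).
\]
In addition I would use the $L^p$-contraction $\|\mathcal T^\ve_{\mathcal L}(v)\|_{L^p(\Omega\times Y)}\le C\|v\|_{L^p(\Omega)}$ and the consistency $\mathcal T^\ve_{\mathcal L}(v)\to v$ in $L^p(\Omega\times Y)$ for fixed $v$ (both established for the l-p unfolding operator in Section~\ref{unfolding_operator_1}), and the elementary facts that, $D$ being Lipschitz and the cubes $\Omega_n^\ve$ having side $\ve^r\to0$, one has $\sum_n D_{x_n^\ve}^{\pm T}\chi_{\hat\Omega_n^\ve}\to D_x^{\pm T}$ in $L^\infty(\Omega)$, while $|\Lambda^\ve|\le C\ve^{1-r}\to0$.

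For the first convergence: by the contraction property $\{\mathcal T^\ve_{\mathcal L}(w^\ve)\}$ is bounded in $L^p(\Omega\times Y)$, and the identity above gives $\|\nabla_y\mathcal T^\ve_{\mathcal L}(w^\ve)\|_{L^p(\Omega\times Y)}\le C\ve\|\nabla w^\ve\|_{L^p(\Omega)}\to0$; hence every weak limit $W$ in $L^p(\Omega;W^{1,p}(Y))$ is $y$-independent. Since the cell mean $x\mapsto\int_Y\mathcal T^\ve_{\mathcal L}(w^\ve)(x,y)\,dy$ differs from $w^\ve$ by $O(\ve\|\nabla w^\ve\|_{L^p(\Omega)})$ in $L^p(\Omega)$ (Poincaré on each cell $\ve D_{x_n^\ve}(Y+\xi)$), and $w^\ve\to w$ in $L^p(\Omega)$ by Rellich--Kondrachov, we obtain $W=w$; in fact $\mathcal T^\ve_{\mathcal L}(w^\ve)\to w$ strongly, hence weakly, in $L^p(\Omega;W^{1,p}(Y))$.

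The core of the argument is the second convergence. The identity above shows $\{\mathcal T^\ve_{\mathcal L}(\nabla w^\ve)\}$ is bounded in $L^p(\Omega\times Y)^d$, so $\mathcal T^\ve_{\mathcal L}(\nabla w^\ve)\rightharpoonup\Xi$ along a subsequence; the task is to identify $\Xi$. Here I would invoke the macro--micro decomposition of Section~\ref{macro_micro_1}: $w^\ve=\mathcal Q^\ve(w^\ve)+\mathcal R^\ve(w^\ve)$, with $\mathcal Q^\ve(w^\ve)$ the $Q_1$-type interpolant on the vertices of the cells $\ve D_{x_n^\ve}(Y+\xi)$, satisfying $\nabla\mathcal Q^\ve(w^\ve)\to\nabla w$ strongly in $L^p(\Omega)$, $\|\mathcal R^\ve(w^\ve)\|_{L^p(\Omega)}\le C\ve\|\nabla w^\ve\|_{L^p(\Omega)}$ and $\|\nabla\mathcal R^\ve(w^\ve)\|_{L^p(\Omega)}\le C$. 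Then $\mathcal T^\ve_{\mathcal L}(\nabla\mathcal Q^\ve(w^\ve))\to\nabla_xw$ strongly in $L^p(\Omega\times Y)$ by consistency and contraction, while, applying the chain-rule identity to $\mathcal R^\ve(w^\ve)$ together with the Poincaré--Wirtinger inequality on $Y$, the rescaled fluctuation $\ve^{-1}\mathcal T^\ve_{\mathcal L}(\mathcal R^\ve(w^\ve))$ is bounded in $L^p(\Omega;W^{1,p}(Y))$; let $\widehat w_1$ denote a weak limit. Passing to the limit in
\[
\mathcal T^\ve_{\mathcal L}(\nabla w^\ve)\;=\;\mathcal T^\ve_{\mathcal L}(\nabla\mathcal Q^\ve(w^\ve))\;+\;\ve^{-1}\Big(\sum_n D_{x_n^\ve}^{-T}\chi_{\hat\Omega_n^\ve}\Big)\,\nabla_y\mathcal T^\ve_{\mathcal L}(\mathcal R^\ve(w^\ve)),
\]
and using $\sum_n D_{x_n^\ve}^{-T}\chi_{\hat\Omega_n^\ve}\to D_x^{-T}$ in $L^\infty(\Omega)$, yields $\Xi=\nabla_xw+D_x^{-T}\nabla_y\widehat w_1$. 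Setting $w_1(x,y'):=\widehat w_1(x,D_x^{-1}y')$ for $y'\in Y_x$, so that $\nabla_y\widehat w_1(x,y)=\nabla_y\big(w_1(x,D_xy)\big)$ is the quantity denoted $\nabla_y w_1(\cdot,D_x\cdot)$ in the statement, gives the asserted formula — provided $\widehat w_1(x,\cdot)$ is $Y$-periodic for a.e.\ $x$, equivalently $w_1\in L^p(\Omega;W^{1,p}_{\rm per}(Y_x))$.

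The main obstacle is precisely this periodicity of the corrector. In the purely periodic case it follows from the fact that $\mathcal R^\ve(w^\ve)\in W^{1,p}(\Omega)$ has matching traces on the common face of any two adjacent reference cells, a property that survives the weak passage to the limit. In the locally periodic setting the reference cells carry the $n$-dependent shapes $D_{x_n^\ve}$, so this matching argument must be carried out inside each $\Omega_n^\ve$ separately, and the contributions of the faces that lie in the interfaces between different $\Omega_n^\ve$ and in the boundary layer $\Lambda^\ve$ must be shown to be asymptotically negligible; this is exactly what the estimates of Section~\ref{macro_micro_1} deliver, since the smooth cutoffs $\phi_{\Omega_n^\ve}$ with the bounds \eqref{ApproxCharactF} control the $L^p$-error committed in replacing $\chi_{\Omega_n^\ve}$ and hence the size of all boundary layers. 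Once this is secured, the remaining steps — the contraction and consistency of $\mathcal T^\ve_{\mathcal L}$ and the replacement of the $n$-dependent matrices by $D_x$ in every weak limit — are routine given $D_{x_n^\ve}\to D_x$ uniformly and $|\Lambda^\ve|\to0$.
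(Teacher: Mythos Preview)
Your treatment of the first convergence is essentially that of the paper and is fine. The second convergence, however, has two genuine gaps.

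First, the claim that $\nabla\mathcal Q_{\mathcal L}^\ve(w^\ve)\to\nabla w$ \emph{strongly} in $L^p(\Omega)$ is not justified. Lemma~\ref{Lemma_Q_1} gives only the uniform bound $\|\nabla\mathcal Q_{\mathcal L}^\ve(w^\ve)\|_{L^p}\le C\|\nabla w^\ve\|_{L^p}$, so from weak convergence of $w^\ve$ you get at best $\nabla\mathcal Q_{\mathcal L}^\ve(w^\ve)\rightharpoonup\nabla w$ weakly. With only weak convergence, $\mathcal T_{\mathcal L}^\ve(\nabla\mathcal Q_{\mathcal L}^\ve(w^\ve))$ will itself produce a corrector, and you must show that this corrector is trivial. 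In the paper this is done (for the perforated analogue, Lemma~\ref{lem:weak_cover}) by \emph{applying} Theorem~\ref{theorem_cover_grad} to $\mathcal Q_{\mathcal L}^{\ast,\ve}(w^\ve)$ and then observing that a $Y$-periodic function that is multilinear in $y$ is constant. Used here, that route would be circular: you would be invoking Theorem~\ref{theorem_cover_grad} in its own proof.

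Second, the periodicity of $\widehat w_1$ is the heart of the matter and your paragraph does not supply it. The estimates of Section~\ref{macro_micro_1} give $W^{1,p}$ bounds on $\mathcal Q_{\mathcal L}^\ve$ and $\mathcal R_{\mathcal L}^\ve$; they do not by themselves force periodicity of the limit. The smooth cutoffs $\phi_{\Omega_n^\ve}$ in \eqref{ApproxCharactF} are used to build admissible test functions $\mathcal L^\ve_\rho\psi$, not to control interface contributions in the unfolded trace identities. The paper's proof works instead with $V^\ve=\ve^{-1}\big(\mathcal T_{\mathcal L}^\ve(w^\ve)-\mathcal M_{\mathcal L}^\ve(w^\ve)\big)$ and subtracts the centred linear field $y_c^\ve\cdot\nabla w$; the periodicity of the limit $\widetilde w_1$ of $V^\ve-y_c^\ve\cdot\nabla w$ is then obtained by a direct boundary computation on opposite faces of $Y$, in which the interface terms between neighbouring $\Omega_n^\ve$ do \emph{not} simply vanish: they combine to produce $-\int_{\Omega\times Y'}w\,\psi\,\mathrm{div}\,D_d\,dy'\,dx$, and this is cancelled exactly by the corresponding contribution of $y_c^\ve\cdot\nabla w$. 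That cancellation, driven by the $x$-dependence of $D$, is the ``new idea'' the paper flags, and your sketch does not contain it.
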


Two of  the main advantages of the unfolding operator are that it helps to  overcome one of the difficulties of perforated domains which is the use of  extension operators and it allows us to prove   strong convergence for sequences defined on  boundaries of microstructures. Thus  next we formulate convergence results for  the l-p unfolding operator  in perforated domains and the l-p boundary unfolding operator. 

\begin{theorem}\label{converge_11_perfor}
For a sequence $\{w^\ve\} \subset W^{1,p}(\Omega^\ast_\ve)$, where $p\in (1, +\infty)$, satisfying 
\begin{equation}\label{extim_w_ve}
\| w^\ve \|_{L^p(\Omega^\ast_\ve)}  + \ve \|\nabla w^\ve \|_{L^p(\Omega^\ast_\ve)} \leq C
\end{equation}
there   exist a subsequence (denoted again by $\{w^\ve\}$) and $ w \in L^p(\Omega; W^{1,p}_{\rm per}(Y^\ast_x))$ such that 
\begin{equation}\label{weak_conver_1}
\begin{aligned}
\mathcal T_{\mathcal L}^{\ast, \ve} (w^\ve) &\;  \rightharpoonup &&  w(\cdot, D_x\cdot)  && \quad  \text{ weakly in }   \quad  L^p(\Omega; W^{1,p}(Y^\ast)), \\
  \ve \mathcal T_{\mathcal L}^{\ast, \ve} (\nabla w^\ve) & \;  \rightharpoonup && D_x^{-T} \nabla_y  w(\cdot, D_x\cdot) && \quad  \text{ weakly in }  \quad   L^p(\Omega \times Y^\ast). 
\end{aligned}
\end{equation}
\end{theorem}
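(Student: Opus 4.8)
\noindent\emph{Sketch of proof.} The plan is to follow the pattern of the proof of Theorem~\ref{prop_conver_1}, carrying the perforation inside the reference cell and handling the non-periodic transformation $D$ through its piecewise-constant approximation $D^\ve:=\sum_{n}D_{x_n^\ve}\chi_{\hat\Omega_n^\ve}$. First I would record the elementary properties of $\mathcal T^{\ast,\ve}_{\mathcal L}$: it is linear, and a cell-by-cell substitution $z=\ve D_{x_n^\ve}(\xi+y)$ on each $\ve D_{x_n^\ve}(Y+\xi)\subset\hat\Omega_n^\ve$ gives the exact identity
\[
\int_{\Omega}\int_{Y^\ast}\mathcal T^{\ast,\ve}_{\mathcal L}(\psi)(x,y)\,dy\,dx=\int_{\hat\Omega^\ve\cap\Omega^\ast_\ve}\psi(z)\,dz,\qquad \psi\in L^1(\Omega^\ast_\ve),
\]
whence $\|\mathcal T^{\ast,\ve}_{\mathcal L}(\psi)\|_{L^p(\Omega\times Y^\ast)}\le\|\psi\|_{L^p(\Omega^\ast_\ve)}$. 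Since $x\mapsto\ve D_{x_n^\ve}[D_{x_n^\ve}^{-1}x/\ve]_Y$ is locally constant on $\hat\Omega_n^\ve$, the chain rule gives $\nabla_y\mathcal T^{\ast,\ve}_{\mathcal L}(w^\ve)=\ve D_{x_n^\ve}^{T}\mathcal T^{\ast,\ve}_{\mathcal L}(\nabla w^\ve)$ on $\hat\Omega_n^\ve\times Y^\ast$, i.e. $\ve\,\mathcal T^{\ast,\ve}_{\mathcal L}(\nabla w^\ve)=(D^\ve)^{-T}\nabla_y\mathcal T^{\ast,\ve}_{\mathcal L}(w^\ve)$. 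I would also invoke from \cite{Ptashnyk13} that $|\Lambda^\ve|\to0$ and that, since $D,D^{-1}\in\mathrm{Lip}(\mathbb R^d;\mathbb R^{d\times d})$ and $\mathrm{diam}\,\Omega_n^\ve=\sqrt d\,\ve^{r}\to0$, one has $D^\ve\to D_x$ and $(D^\ve)^{-T}\to D_x^{-T}$ in $L^\infty(\Omega)$.

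Next, from the bound \eqref{extim_w_ve} together with these identities, $\{\mathcal T^{\ast,\ve}_{\mathcal L}(w^\ve)\}$ is bounded in $L^p(\Omega;W^{1,p}(Y^\ast))$ and $\{\ve\,\mathcal T^{\ast,\ve}_{\mathcal L}(\nabla w^\ve)\}$ in $L^p(\Omega\times Y^\ast)$. Since $p\in(1,\infty)$ both spaces are reflexive, so along a subsequence $\mathcal T^{\ast,\ve}_{\mathcal L}(w^\ve)\rightharpoonup\hat w$ in $L^p(\Omega;W^{1,p}(Y^\ast))$ and $\ve\,\mathcal T^{\ast,\ve}_{\mathcal L}(\nabla w^\ve)\rightharpoonup\chi$ in $L^p(\Omega\times Y^\ast)$, and in particular $\nabla_y\mathcal T^{\ast,\ve}_{\mathcal L}(w^\ve)\rightharpoonup\nabla_y\hat w$. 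Passing to the limit in $\ve\,\mathcal T^{\ast,\ve}_{\mathcal L}(\nabla w^\ve)=(D^\ve)^{-T}\nabla_y\mathcal T^{\ast,\ve}_{\mathcal L}(w^\ve)$, a product of a strongly $L^\infty$-convergent factor and a weakly $L^p$-convergent one, identifies $\chi=D_x^{-T}\nabla_y\hat w$.

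Finally I would prove that $\hat w(x,\cdot)$ is, for a.e. $x$, the restriction to $Y^\ast$ of a $Y$-periodic $W^{1,p}$-function. The mechanism is the identity $[D_{x_n^\ve}^{-1}(x+\ve D_{x_n^\ve}e_i)/\ve]_Y=[D_{x_n^\ve}^{-1}x/\ve]_Y+e_i$, valid when $x$ and $x+\ve D_{x_n^\ve}e_i$ lie in the same $\hat\Omega_n^\ve$, which shows that $\mathcal T^{\ast,\ve}_{\mathcal L}(w^\ve)$ has, up to an $\ve$-shift in $x$, equal traces on the opposite outer faces $\{y_i=0\}$ and $\{y_i=1\}$ of $Y^\ast$; the hypothesis $\overline{Y_0}\subset Y$ guarantees that these are full faces of the cube and that the perforated structure of $\Omega^\ast_\ve$ is $\ve D_{x_n^\ve}\mathbb Z^d$-periodic inside $\hat\Omega_n^\ve$. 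Integrating by parts in $y$ over $Y^\ast$ against $\phi(x)\rho(y)$ with $\phi\in C_0^\infty(\Omega)$ and $Y$-periodic $\rho\in C^\infty(\mathbb R^d)$, the traces on the inner faces $\partial Y_0$ cancel by $Y$-periodicity of $\rho$, while the outer-face contributions cancel in the limit because the shift $\ve D_{x_n^\ve}e_i\to0$ uniformly and $\phi$ is uniformly continuous; this yields $\hat w(x,\cdot)\in W^{1,p}_{\rm per}(Y^\ast)$. Setting $w(x,y):=\hat w(x,D_x^{-1}y)$ for $y\in Y^\ast_x=D_xY^\ast$ then gives $w\in L^p(\Omega;W^{1,p}_{\rm per}(Y^\ast_x))$ with $\hat w=w(\cdot,D_x\cdot)$ and $\chi=D_x^{-T}\nabla_y\big(w(\cdot,D_x\cdot)\big)$, which are exactly \eqref{weak_conver_1}.

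The main obstacle is this last periodicity step: one has to make the translation and trace identity on the \emph{perforated} reference cell rigorous and then show that the boundary integrals over opposite outer faces cancel in the limit uniformly over all $O(\ve^{-rd})$ cubes $\Omega_n^\ve$, whose periodicity matrices $D_{x_n^\ve}$ are mutually distinct; this is precisely why the condition $r<1$, forcing $\mathrm{diam}\,\Omega_n^\ve\to0$ and hence $D^\ve\to D_x$, is indispensable. The remaining steps are a routine transcription of the unfolding calculus already developed for $\mathcal T^{\ve}_{\mathcal L}$ in the proof of Theorem~\ref{prop_conver_1}.
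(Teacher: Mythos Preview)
Your overall structure---boundedness via the unfolding $L^p$ isometry, the chain rule $\nabla_y\mathcal T^{\ast,\ve}_{\mathcal L}(w^\ve)=\ve (D^\ve)^T\mathcal T^{\ast,\ve}_{\mathcal L}(\nabla w^\ve)$, weak compactness, and the identification of $\chi=D_x^{-T}\nabla_y\hat w$ by passing to the limit in a product of an $L^\infty$-strongly convergent factor and an $L^p$-weakly convergent one---is correct and matches the paper's approach (the paper in fact says even less here, simply citing boundedness and regularity of $D$).

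The periodicity step, however, is where your sketch breaks down. Your claim that integrating by parts against $Y$-periodic $\rho$ makes ``the traces on the inner faces $\partial Y_0$ cancel by $Y$-periodicity of $\rho$'' is wrong: $\partial Y_0$ is an interior boundary of $Y^\ast$ with nothing to pair against, and periodicity of $\rho$ controls only the \emph{outer} faces of $\partial Y$. The integration-by-parts route leaves an uncontrolled $\int_{\Omega}\int_{\partial Y_0}\hat w\,\rho\, n_i\,d\sigma\,\phi(x)\,dx$ that does not vanish. You also invoke the extra hypothesis $\overline{Y_0}\subset Y$, which the paper does not assume; without it the outer faces of $Y^\ast$ need not be full cube faces, so a trace argument in the style of Theorem~\ref{prop_conver_1} becomes awkward.

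The paper bypasses traces entirely with a bulk argument: for $\phi\in C_0^\infty(\Omega\times Y^\ast)$ it writes
\[
\int_{\Omega\times Y^\ast}\mathcal T^{\ast,\ve}_{\mathcal L}(w^\ve)(x,\tilde y+e_j)\,\phi(x,\tilde y)\,d\tilde y\,dx,
\]
uses exactly your shift identity on the interior cells to convert this to an integral of $\mathcal T^{\ast,\ve}_{\mathcal L}(w^\ve)(x,\tilde y)\,\phi(x-\ve D_{x_n^\ve}e_j,\tilde y)$ over $\widetilde\Omega_n^{\ve,j}\times Y^\ast$, controls the remainder by $\sum_n|\widetilde\Lambda_{n,1}^{\ve,j}|\le C\ve^{1-r}\to 0$, and passes to the limit directly to obtain $\hat w(x,\tilde y+e_j)=\hat w(x,\tilde y)$ in the distributional sense. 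This is shorter than the trace argument and insensitive to whether $Y_0$ touches $\partial Y$; replace your integration-by-parts paragraph with it.
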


In the case    $ w^\ve$ is bounded  in $W^p(\Omega^\ast_\ve)$ uniformly with respect to $\ve$, we obtain weak convergence  of $\mathcal T^{\ast,\ve}_{\mL}(\nabla w^\ve)$ in $L^p(\Omega\times Y^\ast)$ and local strong convergence of $\mathcal T^{\ast,\ve}_{\mL}(w^\ve)$. 
\begin{theorem}\label{converg_unfolding_perforate}
For a sequence $\{ w^\ve \} \subset W^{1, p}(\Omega^\ast_\ve)$, where $p\in (1, +\infty)$, satisfying 
$$
\|w^\ve \|_{W^{1, p}(\Omega^\ast_\ve)} \leq C 
$$
there exist a subsequence (denoted again by $\{w^\ve\}$) and  functions  $w\in W^{1,p}(\Omega)$ and $w_1\in L^p(\Omega; W^{1,p}_{\rm per}(Y^\ast_x))$ such that 
\begin{eqnarray*}
\begin{aligned}
\mathcal T^{\ast,\ve}_{\mL}(w^\ve) & \; \rightharpoonup && w \quad\qquad  &&   \text{ weakly in }   L^p(\Omega; W^{1,p}(Y^\ast)), \\
  \mathcal T^{\ast,\ve}_{\mL}(\nabla w^\ve)& \;  \rightharpoonup && \nabla w + D_x^{-T} \nabla_y w_1(\cdot, D_x \cdot)\quad &&  \text{ weakly in }   L^p(\Omega\times Y^\ast), \\
\mathcal T^{\ast,\ve}_{\mL}(w^\ve) & \; \to &&  w \quad\qquad &&  \text{ strongly in }   L^p_{\rm loc}(\Omega; W^{1,p}(Y^\ast)).
\end{aligned}
\end{eqnarray*}
\end{theorem}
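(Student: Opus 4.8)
\noindent\emph{Proof proposal.} The plan is to transfer the problem to the whole domain $\Omega$ by means of a uniformly bounded extension operator adapted to the locally periodic perforations, then apply Theorem~\ref{theorem_cover_grad}, and finally upgrade the weak convergence of $\mathcal T^{\ast,\ve}_{\mathcal L}(w^\ve)$ to a strong one by controlling separately its $y$-gradient.

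First I would construct, for each $\ve$, an extension operator $\mathcal E_\ve\colon W^{1,p}(\Omega^\ast_\ve)\to W^{1,p}(\Omega)$ with $\|\mathcal E_\ve v\|_{W^{1,p}(\Omega)}\le C\,\|v\|_{W^{1,p}(\Omega^\ast_\ve)}$, $C$ independent of $\ve$, and $\mathcal E_\ve v=v$ on $\Omega^\ast_\ve$. Since $\partial Y_0$ is Lipschitz there is a standard Sobolev extension $W^{1,p}(Y^\ast)\to W^{1,p}(Y)$; transplanting it to each cell $\ve D_{x_n^\ve}(\xi+Y^\ast)$ via the affine map $y\mapsto\ve D_{x_n^\ve}(\xi+y)$ produces extension operators whose norms are uniform in $\ve,n,\xi$ because $D,D^{-1}\in\mathrm{Lip}(\mathbb R^d;\mathbb R^{d\times d})$ and $0<D_1\le|\det D_x|\le D_2$ (the only extra factor on gradients is the bounded matrix $D_{x_n^\ve}^{-1}$, and the $\ve^{-1}$ from differentiation is compensated by the $\ve^d$-Jacobian after a change of variables); the thin boundary layer $\Lambda^\ve$, with $|\Lambda^\ve|\to0$, is absorbed by a global Sobolev extension from a neighbourhood of $\hat\Omega^\ve$. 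Writing $\tilde w^\ve:=\mathcal E_\ve w^\ve$, so $\|\tilde w^\ve\|_{W^{1,p}(\Omega)}\le C$, a subsequence satisfies $\tilde w^\ve\rightharpoonup w$ in $W^{1,p}(\Omega)$, hence $w\in W^{1,p}(\Omega)$, and by Rellich--Kondrachov $\tilde w^\ve\to w$ strongly in $L^p_{\rm loc}(\Omega)$.

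Next I would apply Theorem~\ref{theorem_cover_grad} to $\{\tilde w^\ve\}$: there is $w_1\in L^p(\Omega;W^{1,p}_{\rm per}(Y_x))$ with $\mathcal T_{\mathcal L}^{\ve}(\tilde w^\ve)\rightharpoonup w$ in $L^p(\Omega;W^{1,p}(Y))$ and $\mathcal T_{\mathcal L}^{\ve}(\nabla\tilde w^\ve)\rightharpoonup\nabla w+D_x^{-T}\nabla_y w_1(\cdot,D_x\cdot)$ in $L^p(\Omega\times Y)$. Since $\hat\Xi_n^\ve\subseteq\Xi_n^{\ast,\ve}$ and $\tilde w^\ve=w^\ve$ on $\Omega^\ast_\ve$, for $x\in\hat\Omega_n^\ve$ and $y\in Y^\ast$ the point $\ve D_{x_n^\ve}[D^{-1}_{x_n^\ve}x/\ve]_{Y}+\ve D_{x_n^\ve}y$ lies in $\Omega^\ast_\ve$, whence $\mathcal T^{\ast,\ve}_{\mathcal L}(w^\ve)=\mathcal T_{\mathcal L}^{\ve}(\tilde w^\ve)$ and $\mathcal T^{\ast,\ve}_{\mathcal L}(\nabla w^\ve)=\mathcal T_{\mathcal L}^{\ve}(\nabla\tilde w^\ve)$ a.e.\ on $\Omega\times Y^\ast$ (both sides vanish off $\hat\Omega^\ve$). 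Restricting the two convergences above to $\Omega\times Y^\ast$, and restricting $w_1(x,\cdot)$ to $Y^\ast_x$, yields the first two assertions. For the third, the chain rule gives $\nabla_y\mathcal T^{\ast,\ve}_{\mathcal L}(w^\ve)=\ve\,D_{x_n^\ve}^{T}\,\mathcal T^{\ast,\ve}_{\mathcal L}(\nabla w^\ve)$ on $\hat\Omega_n^\ve\times Y^\ast$; since $\mathcal T^{\ast,\ve}_{\mathcal L}$ satisfies a norm identity of the type $\|\mathcal T^{\ast,\ve}_{\mathcal L}(\psi)\|_{L^p(\hat\Omega^\ve\times Y^\ast)}=\|\psi\|_{L^p(\hat\Omega^\ve\cap\Omega^\ast_\ve)}$ and $\nabla w^\ve$ is bounded in $L^p(\Omega^\ast_\ve)$ while $D$ is bounded, $\nabla_y\mathcal T^{\ast,\ve}_{\mathcal L}(w^\ve)\to0$ strongly in $L^p(\Omega\times Y^\ast)$. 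It remains to show $\mathcal T^{\ast,\ve}_{\mathcal L}(w^\ve)\to w$ strongly in $L^p_{\rm loc}(\Omega;L^p(Y^\ast))$: writing it as $\mathcal T_{\mathcal L}^{\ve}(\tilde w^\ve)|_{Y^\ast}$ and splitting $\mathcal T_{\mathcal L}^{\ve}(\tilde w^\ve)-w=\mathcal T_{\mathcal L}^{\ve}(\tilde w^\ve-w)+(\mathcal T_{\mathcal L}^{\ve}w-w)$, the first term is bounded on $\omega\times Y$, $\omega\Subset\Omega$, by $C\|\tilde w^\ve-w\|_{L^p(\omega')}$ for a slightly enlarged $\omega'\Subset\Omega$ (to absorb the microscopic cells, of size $O(\ve)$, and using $|\Lambda^\ve|\to0$), which tends to $0$ by the previous step, while the second term tends to $0$ in $L^p(\Omega\times Y)$ because $\|\mathcal T_{\mathcal L}^{\ve}\phi-\phi\|_{L^p(\Omega\times Y)}\to0$ for $\phi\in C(\overline\Omega)$ (uniform continuity plus the $O(\ve)$-smallness of the unfolding shift) and hence, by density and uniform boundedness, for $\phi=w\in L^p(\Omega)$. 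Combining the strong $L^p_{\rm loc}$ convergence with $\nabla_y\mathcal T^{\ast,\ve}_{\mathcal L}(w^\ve)\to0$ yields convergence in $L^p_{\rm loc}(\Omega;W^{1,p}(Y^\ast))$.

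I expect the main obstacle to be the construction of the extension operator for \emph{locally periodic} perforations with operator norm uniform in $\ve$: here the cells are governed by the $x$-dependent matrices $D_{x_n^\ve}$ rather than one fixed period, so one must verify that the reference-cell extension constant is not amplified under transplantation — which is exactly where the Lipschitz regularity and uniform invertibility of $D$ enter — and that the resulting function is a genuine element of $W^{1,p}(\Omega)$ across $\Lambda^\ve$ without spoiling the uniform bound. The remaining steps are a routine combination of Theorem~\ref{theorem_cover_grad}, the elementary $L^p$-properties of $\mathcal T^\ve_{\mathcal L}$, and compactness.
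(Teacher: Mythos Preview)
Your argument is essentially correct but follows a genuinely different route from the paper. The paper does \emph{not} use an extension operator here; instead it invokes the micro--macro decomposition $w^\ve=\mathcal Q^{\ast,\ve}_{\mL}(w^\ve)+\mathcal R^{\ast,\ve}_{\mL}(w^\ve)$ built in Section~\ref{macro_micro_1}, and the result follows immediately from Lemmas~\ref{lem:weak_cover} and~\ref{lemma_R_estim}. The interpolation operator $\mathcal Q^{\ast,\ve}_{\mL}$ is defined intrinsically on $\Omega^\ast_\ve$ (via $Q_1$-interpolation of cell averages over $Y^\ast$, with a careful triangular interpolation across the layers $\Lambda^\ve_{\mathcal Y}$ between neighbouring $\Omega_n^\ve$), and Lemma~\ref{estim_QL_ast} gives uniform $W^{1,p}$-bounds for $\mathcal Q^{\ast,\ve}_{\mL}(w^\ve)$ on the interior set $\widetilde\Omega_\ve$. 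This is precisely designed to \emph{avoid} extension operators, which the paper lists as one of the main motivations for the unfolding method.

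Your extension approach is more direct and reuses Theorem~\ref{theorem_cover_grad} wholesale, at the cost of having to build $\mathcal E_\ve$ uniformly in $\ve$ for locally periodic perforations. That construction is indeed feasible here (and the paper in fact carries out a local version of it later, in Lemma~\ref{extension_local}, for the application with $K\neq\mathbf I$), provided $\overline{Y_0}\subset Y$ so that cell-wise extensions glue across $\partial Y$; the uniform bound on $D,D^{-1}$ handles the transplantation constants as you say. One point to tighten: a genuine global extension into $W^{1,p}(\Omega)$ is awkward near $\partial\Omega$ because cells in $\Xi^{\ast,\ve}_n\setminus\hat\Xi^\ve_n$ \emph{do} carry perforations there; it is cleaner to extend only into $\widetilde\Omega_\ve$ (or any fixed $\omega\Subset\Omega$ for $\ve$ small) and then recover $w\in W^{1,p}(\Omega)$ by the standard exhaustion argument (zero-extensions of $\tilde w^\ve$ and $\nabla\tilde w^\ve$ bounded in $L^p(\Omega)$, weak limits related by integration by parts against $C^\infty_c(\Omega)$). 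This is exactly how the paper obtains $w\in W^{1,p}(\Omega)$ in the proof of Lemma~\ref{lem:weak_cover}. The paper's intrinsic route buys independence from any extension hypothesis and keeps the argument entirely within the unfolding framework; your route buys brevity once the extension is granted.
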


Notice that the weak limit of $\ve\mathcal T^{\ast,\ve}_{\mL}(\nabla w^\ve)$ and  $\mathcal T^{\ast,\ve}_{\mL}(\nabla w^\ve)$ reflects the locally periodic microstructure of $\Omega^\ast_\ve$ and depends on the transformation matrix $D$. 

For l-t-s convergence on oscillating surfaces of microstructures we have  following compactness result. 
 \begin{theorem}\label{conv_locally_period_b}
For a sequence $\{ w^\ve \} \subset L^p(\Gamma^\ve)$, with $p \in (1, +\infty)$,  satisfying  $$\ve \| w^\ve\|^p_{L^p(\Gamma^\ve)}\leq C$$ there exist a subsequence  (denoted  again by $\{ w^\ve \}$)  and  $w \in L^p(\Omega; L^p(\Gamma_x))$ such that 
$$
w^\ve \to w \quad \text{locally periodic two-scale (l-t-s). } 
$$
\end{theorem}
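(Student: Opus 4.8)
The plan is to adapt to the locally periodic setting the classical functional-analytic proof of two-scale compactness on oscillating surfaces (cf.\ the periodic case in \cite{Allaire_1996, Neuss_Radu_1996}). For each $\ve>0$ introduce the linear functional
\[
\mu_\ve(\psi)=\ve\int_{\Gamma^\ve} w^\ve(x)\,\mathcal L^\ve\psi(x)\,d\sigma_x,\qquad \psi\in C(\overline\Omega;C_{\rm per}(Y_x)),
\]
and view $C(\overline\Omega;C_{\rm per}(Y_x))$ as a subspace of $\mathcal H:=L^q(\Omega;L^q(\Gamma_x))$ equipped with the measure $|Y_x|^{-1}\,d\sigma_y(y)\,dx$, where $1/p+1/q=1$. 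Since $D_1\le|Y_x|=|\det D_x|\le D_2$, this weight is comparable to surface measure and $\mathcal H^\ast=L^p(\Omega;L^p(\Gamma_x))$. The goal is to extract a subsequential limit of $\{\mu_\ve\}$ as a bounded functional on $\mathcal H$ and to represent it by a function $w\in L^p(\Omega;L^p(\Gamma_x))$, which will be the desired l-t-s limit.

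First I would prove a bound on $\mu_\ve$ uniform in $\ve$. By H\"older's inequality (splitting $\ve=\ve^{1/p}\ve^{1/q}$),
\[
|\mu_\ve(\psi)|\le\bigl(\ve\,\|w^\ve\|^p_{L^p(\Gamma^\ve)}\bigr)^{1/p}\bigl(\ve\,\|\mathcal L^\ve\psi\|^q_{L^q(\Gamma^\ve)}\bigr)^{1/q}\le C^{1/p}\bigl(\ve\,\|\mathcal L^\ve\psi\|^q_{L^q(\Gamma^\ve)}\bigr)^{1/q},
\]
so everything reduces to the ``oscillating surface integral'' estimate
\[
\ve\int_{\Gamma^\ve}|\mathcal L^\ve\psi(x)|^q\,d\sigma_x\ \longrightarrow\ \int_\Omega\frac1{|Y_x|}\int_{\Gamma_x}|\psi(x,y)|^q\,d\sigma_y\,dx\qquad(\ve\to0),
\]
whose left-hand side is in particular bounded by $C\|\psi\|^q_{\mathcal H}$ uniformly in $\ve$. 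This is the estimate underlying Definition~\ref{l-t-s_boundary}: one unfolds each facet $\ve D_{x_n^\ve}(\widetilde\Gamma_{K_{x_n^\ve}}+\xi)$ onto $\Gamma$ through the affine map $y\mapsto\ve D_{x_n^\ve}(\xi+K_{x_n^\ve}y)$ with the corresponding $(d-1)$-dimensional area distortion of $D_{x_n^\ve}K_{x_n^\ve}$, replaces $\psi(x,\cdot)$ by $\psi(x_n^\ve,\cdot)$ up to an error governed by the modulus of continuity of $\psi$, the Lipschitz bounds on $D$ and $K$, and $\operatorname{diam}\Omega_n^\ve=\sqrt d\,\ve^r\to0$, and discards the part of $\Gamma^\ve$ over the boundary layer $\Lambda^\ve$, whose $\ve$-weighted surface measure is $O(\ve^{1-r})\to0$.

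Next I would pass to the limit. As $C(\overline\Omega;C_{\rm per}(Y_x))$ is separable and dense in $\mathcal H$, fix a countable dense family $\{\psi_k\}$ and, by a diagonal argument, extract a subsequence (not relabelled) along which $\mu_\ve(\psi_k)$ converges for every $k$; the uniform bound of the previous step together with the density of $\{\psi_k\}$ then shows that the limit extends to a bounded linear functional $\mu$ on $\mathcal H$ and that $\mu_\ve(\psi)\to\mu(\psi)$ for every $\psi\in C(\overline\Omega;C_{\rm per}(Y_x))$. The Riesz representation theorem, applied in $\mathcal H^\ast=L^p(\Omega;L^p(\Gamma_x))$, produces $w\in L^p(\Omega;L^p(\Gamma_x))$ with $\mu(\psi)=\int_\Omega|Y_x|^{-1}\int_{\Gamma_x}w(x,y)\psi(x,y)\,d\sigma_y\,dx$ for all such $\psi$; this is exactly the assertion $w^\ve\to w$ l-t-s.

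I expect the main obstacle to be the oscillating surface integral estimate of the second step, since one must change variables on the $(d-1)$-dimensional surfaces $\Gamma^\ve$ rather than on volumes, so the relevant Jacobian is the area distortion of $D_{x_n^\ve}K_{x_n^\ve}$ on the tangent planes of $\Gamma$, and the contribution of the facets touching the boundary layer must be controlled separately. An alternative that avoids the explicit surface change of variables is to argue through the l-p boundary unfolding operator of Definition~\ref{b-l-p-unf-oper}: one shows that $\ve\,\|w^\ve\|^p_{L^p(\Gamma^\ve)}$ controls $\|\mathcal T^{b,\ve}_{\mathcal L}(w^\ve)\|^p_{L^p(\Omega\times\Gamma)}$ up to bounded Jacobian weights, extracts along a subsequence a weak limit $\widehat w\in L^p(\Omega\times\Gamma)$, sets $w(x,y):=\widehat w\bigl(x,(D_xK_x)^{-1}y\bigr)$ for $y\in\Gamma_x=D_xK_x\Gamma$, and identifies the l-t-s limit using the strong convergence $\mathcal T^{b,\ve}_{\mathcal L}(\mathcal L^\ve\psi)\to\psi(\cdot,D_xK_x\,\cdot)$ in $L^q(\Omega\times\Gamma)$; the area-distortion factors then cancel automatically between the two sides, and the nontrivial content migrates to this last strong-convergence statement, which again rests on the same continuity and boundary-layer estimates.
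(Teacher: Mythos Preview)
Your proposal is correct and follows essentially the same approach as the paper: the paper proves the oscillating surface integral estimate you single out as the key step as a separate lemma (Lemma~\ref{Convergence_1_boundary}), then invokes separability and density of $C(\overline\Omega;C_{\rm per}(Y_x))$ in $L^p(\Omega;L^p(\Gamma_x))$ together with the Riesz representation theorem, citing \cite[Theorem~3.2]{Ptashnyk13} for the by-now standard diagonal extraction and extension argument that you spell out in detail. Your alternative via the l-p boundary unfolding operator is likewise present in the paper, as Theorem~\ref{weak_two_scale_b} and Lemma~\ref{unfold_bound_lemma}.
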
 

Similar to the periodic case \cite{Cioranescu_2006, Cioranescu_2012}, we show the relation between the l-t-s convergence on  oscillating surfaces and the weak convergence of a sequence  obtained by applying the l-p  boundary unfolding operator.
\begin{theorem}\label{weak_two_scale_b}
Let $\{ w^\ve\} \subset L^p(\Gamma^\ve)$ with $\ve \| w^\ve \|^p_{L^p(\Gamma^\ve)} \leq C$, where $p \in (1, +\infty)$.
The following assertions are equivalent 
\begin{equation*}
\begin{aligned}
(i) &\qquad \qquad 
w^\ve \to w  \qquad  \qquad  &&  \text{ l-t-s}, \qquad  w \in L^p(\Omega; L^p(\Gamma_x)). 
\\
(ii) & \quad \T_{\mL}^{b, \ve} (w^\ve) \rightharpoonup  w(\cdot, D_xK_x \cdot) \quad &&  \text{weakly  in } L^p(\Omega\times \Gamma).
\end{aligned}
\end{equation*}
\end{theorem}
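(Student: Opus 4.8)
The plan is to establish the equivalence $(i)\Leftrightarrow(ii)$ by relating the two pairings via a standard change of variables on each cell $\hat\Omega_n^\ve$, mimicking the periodic argument of \cite{Cioranescu_2006, Cioranescu_2012} but carefully tracking the transformation matrices $D_{x_n^\ve}$ and $K_{x_n^\ve}$. First I would record the exact correspondence between surface integrals over $\Gamma^\ve$ and integrals over $\Omega\times\Gamma$ obtained by unfolding: for a test function $\varphi$, parametrising $\Gamma_n^\ve=\bigcup_\xi \ve D_{x_n^\ve}(\widetilde\Gamma_{K_{x_n^\ve}}+\xi)\cap\Omega$ by $y\in\Gamma$ via $x=\ve D_{x_n^\ve}(\xi+K_{x_n^\ve}y)$, the surface measure transforms with a Jacobian factor $\ve^{d-1}j_n(y)$, where $j_n(y)$ is the $(d-1)$-dimensional Jacobian of the map $y\mapsto D_{x_n^\ve}K_{x_n^\ve}y$ restricted to $\Gamma$. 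Summing over $\xi\in\Xi_n^{\ast,\ve}$ and $n$, and using $\#\Xi_n^{\ast,\ve}\cdot\ve^d|\det(D_{x_n^\ve}K_{x_n^\ve})|\approx|\Omega_n^\ve|$, one gets the key identity
\begin{equation*}
\ve\int_{\Gamma^\ve} w^\ve(x)\,\mL^\ve\psi(x)\,d\sigma_x = \int_{\hat\Omega^\ve}\frac{1}{|Y_{x_n^\ve}|}\int_{\Gamma}\T_{\mL}^{b,\ve}(w^\ve)(x,y)\,\widetilde\psi\big(x_n^\ve\text{-dependent arg}\big)\,j_n(y)\,d\sigma_y\,dx + o(1),
\end{equation*}
where the $o(1)$ collects the contribution of $\Lambda^\ve$ (which has measure $O(\ve^{1-r})$ after the appropriate scaling, hence negligible given the uniform bound $\ve\|w^\ve\|^p_{L^p(\Gamma^\ve)}\le C$ and Hölder's inequality) and the error in replacing the Riemann-type sum $\sum_n(\cdot)\chi_{\Omega_n^\ve}$ by the integral with $x$ frozen to $x_n^\ve$.

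Next I would argue both implications from this identity. For $(ii)\Rightarrow(i)$: assuming $\T_{\mL}^{b,\ve}(w^\ve)\rightharpoonup w(\cdot,D_xK_x\cdot)$ weakly in $L^p(\Omega\times\Gamma)$, I pass to the limit on the right-hand side using that the test function, together with the Jacobian $j_n(y)$ and the weight $1/|Y_{x_n^\ve}|$, converges strongly in $L^q(\Omega\times\Gamma)$ to $\frac{1}{|Y_x|}\psi(x,D_xK_xy)\,j(x,y)$ with $j(x,y)$ the Jacobian of $y\mapsto D_xK_xy$ on $\Gamma$ — this strong convergence is where the Lipschitz continuity of $D$ and $K$ and the continuity of $\psi$ are used, together with $r<1$ so $x_n^\ve\to x$ uniformly on $\Omega_n^\ve$. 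Then the change of variables $z=D_xK_xy$ on $\Gamma$ converts $\int_\Gamma w(x,D_xK_xy)\psi(x,D_xK_xy)j(x,y)\,d\sigma_y$ into $\frac{1}{|Y_x|}\int_{\Gamma_x}u(x,z)\psi(x,z)\,d\sigma_z$ with the identification $u(x,z)=w(x,(D_xK_x)^{-1}z)$, giving exactly the l-t-s limit in Definition~\ref{l-t-s_boundary}. For $(i)\Rightarrow(ii)$: the uniform bound $\ve\|w^\ve\|^p_{L^p(\Gamma^\ve)}\le C$ together with the norm identity $\|\T_{\mL}^{b,\ve}(w^\ve)\|^p_{L^p(\Omega\times\Gamma)}\le C\,\ve\|w^\ve\|^p_{L^p(\Gamma^\ve)}$ (another consequence of the change-of-variables identity, with the Jacobians bounded above and below by the hypotheses on $D,K$) gives a weakly convergent subsequence $\T_{\mL}^{b,\ve}(w^\ve)\rightharpoonup v$ in $L^p(\Omega\times\Gamma)$; running the same limit computation identifies the pairing of $v$ against arbitrary $\psi$ with the l-t-s pairing of $w$, and since the latter determines $v$ uniquely (test functions of the form $\psi(x,D_xK_xy)$ with $\psi\in C(\overline\Omega;C_{\rm per}(Y_x))$ being dense in $L^q(\Omega\times\Gamma)$) we conclude $v=w(\cdot,D_xK_x\cdot)$ and that the whole sequence converges. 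Note that existence of \emph{some} l-t-s limit under the bound is already Theorem~\ref{conv_locally_period_b}, so in $(i)\Rightarrow(ii)$ I may take $w$ as given.

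The main obstacle I anticipate is the bookkeeping in the change-of-variables identity: unlike the periodic case, the Jacobian of the cell map depends on $n$ through $D_{x_n^\ve}K_{x_n^\ve}$, so one must show that replacing $j_n(y)$ and $1/|Y_{x_n^\ve}|$ by their "continuous" counterparts $j(x,y)$ and $1/|Y_x|$ produces only an $o(1)$ error. This requires a uniform-continuity/equicontinuity estimate: $|D_{x_n^\ve}K_{x_n^\ve}-D_xK_x|\le C\,\mathrm{diam}(\Omega_n^\ve)= C\ve^r\to 0$ uniformly in $n$ by the Lipschitz hypotheses, hence the Jacobians (being continuous functions of the matrix entries, bounded away from $0$ and $\infty$) converge uniformly; combined with the $L^p$ bound and Hölder this controls the error. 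A secondary, more routine obstacle is handling the boundary layer $\Lambda^\ve$ and the cells near $\partial\Omega$ that are only partially contained in $\Omega$ — here the cut-off by $\chi_{\hat\Omega_n^\ve}$ in the definition of $\T_{\mL}^{b,\ve}$ and the estimate $|\Lambda^\ve|\to 0$ together with the scaled bound dispatch the contribution, exactly as in the treatment of the interior results Theorems~\ref{converge_11_perfor} and \ref{conv_locally_period_b}, so I would simply cite those techniques rather than redo them.
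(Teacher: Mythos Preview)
Your proposal is correct and follows essentially the same route as the paper's proof: both establish the key change-of-variables identity linking $\ve\int_{\Gamma^\ve} w^\ve\,\mL^\ve\psi\,d\sigma_x$ to $\int_{\Omega\times\Gamma}\T_{\mL}^{b,\ve}(w^\ve)$ against a test function weighted by the cell-dependent surface Jacobian and $1/|Y_{x_n^\ve}|$, then pass to the limit using Lipschitz continuity of $D,K$ to replace the piecewise-constant weights by their continuous counterparts, and handle the boundary layer $\Gamma^\ve\setminus\hat\Gamma^\ve$ via the uniform bound. Your $j_n(y)$ is exactly the paper's $\sqrt{g_{x_n^\ve}}/\sqrt{g}$ (the ratio of first fundamental forms under the linear map $y\mapsto D_{x_n^\ve}K_{x_n^\ve}y$), and the two directions of the equivalence are argued identically via compactness plus identification of the limit through the dense family of test functions.
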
 

 Theorems~\ref{conv_locally_period_b}~and~\ref{weak_two_scale_b} imply that  for   $\{ w^\ve\} \subset L^p(\Gamma^\ve)$ with $\ve \| w^\ve \|^p_{L^p(\Gamma^\ve)} \leq C$ we have the weak convergence of $\{\T_{\mL}^{b, \ve} (w^\ve)\}$  in $L^p(\Omega\times \Gamma)$, where $p \in (1, +\infty)$.
 
 The definition of the l-p boundary unfolding operator  and the  relation between the l-t-s convergence of sequences defined on locally periodic oscillating boundaries  and the l-p boundary unfolding operator allow us to obtain  homogenization results for  equations posed on the boundaries of locally periodic microstructures.

%%%%%%%%%%%%%%%%%%%%%%%%%%%%%%%%%%%
%%%%%%%%%%%%%%%%%%%%%%%%%%%%%%%%%%%%%%%%%%%%%%%%%%
%%%%%%%%%%%%%%%%%%%%%%%%%%%%%%%%%%%

\section{The l-p unfolding operator: Proofs of convergence results} \label{unfolding_operator_1}

  First we prove some properties of the l-p unfolding operator. Similar to the periodic case, we obtain that the l-p unfolding operator is  linear  and preserves strong convergence.   
\begin{lemma}\label{lemma:estim_1}
(i) For $\phi\in L^p(\Omega)$, with $p \in [1,  +\infty)$, holds 
 \begin{equation}\label{estim1}
 \begin{aligned}
& \frac 1{|Y|} \int_{\Omega\times Y}  \T_{\mathcal L}^{\ve} (\phi)(x,y)  \, dydx   = \int_\Omega \phi(x) \,  dx -  \int_{\Lambda^\ve} \phi(x) \,  dx, \\
& \int_{\Omega\times Y}| \T_{\mathcal L}^{\ve} (\phi)(x,y)|^p \, dydx   \leq |Y| \int_\Omega |\phi(x)|^p\,  dx. 
 \end{aligned}
\end{equation}
 (ii)  $\mathcal T_{\mathcal L}^\ve: L^p(\Omega) \to L^p(\Omega\times Y)$ is a linear continuous operator, where $p \in [1, +\infty)$.
\\
(iii) For $\phi\in L^p(\Omega)$, with $ p \in [1, +\infty)$,  we have  strong convergence 
\begin{eqnarray}\label{conver_1}
\mathcal T_{\mathcal L}^\ve ( \phi) \to  \phi \quad \text{ in } \quad  L^p(\Omega\times Y). 
\end{eqnarray}
(iv) If $\phi^\ve \to \phi$ in $L^p(\Omega)$,  with $ p \in [1, + \infty)$,  then $ \T_{\mathcal L}^{\ve}(\phi^\ve)\to  \phi$ in $L^p(\Omega\times Y)$. 
\end{lemma}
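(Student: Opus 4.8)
The plan is to establish the four assertions in order, since each relies on the previous ones. For (i), the key observation is that the l-p unfolding operator acts on each subdomain $\hat\Omega_n^\ve$ by a piecewise change of variables: on the cell $\ve D_{x_n^\ve}([D^{-1}_{x_n^\ve}x/\ve]_Y + Y)$ the map $x \mapsto (x, y)$ with $y = D^{-1}_{x_n^\ve}x/\ve - [D^{-1}_{x_n^\ve}x/\ve]_Y$ is, up to the Jacobian factor, measure-preserving on the $y$-cell. Concretely, for each $n$ and each $\xi \in \hat\Xi_n^\ve$, integrating $\T_{\mathcal L}^\ve(\phi)$ over $\ve D_{x_n^\ve}(Y+\xi) \times Y$ in the $x$ variable and using the substitution $x = \ve D_{x_n^\ve}\xi + \ve D_{x_n^\ve}z$ shows that $\int_{\ve D_{x_n^\ve}(Y+\xi)}\!\int_Y \T_{\mathcal L}^\ve(\phi)(x,y)\,dy\,dx = |Y|\int_{\ve D_{x_n^\ve}(Y+\xi)}\phi(x)\,dx$, because the value $\phi(\ve D_{x_n^\ve}\xi + \ve D_{x_n^\ve}y)$ of the unfolded function, integrated in $y$ over $Y$, reproduces (after rescaling) the integral of $\phi$ over the cell. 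Summing over $\xi \in \hat\Xi_n^\ve$ and over $n=1,\dots,N_\ve$ covers exactly $\hat\Omega^\ve = \Omega \setminus \Lambda^\ve$ (by \eqref{Omega_hat}), and since $\T_{\mathcal L}^\ve(\phi) = 0$ on $\Lambda^\ve$, this yields the first identity in \eqref{estim1}. The second (the $L^p$ bound) follows by the same cell-by-cell substitution applied to $|\phi|^p$, together with the fact that the cells $\ve D_{x_n^\ve}(Y+\xi)$, $\xi \in \hat\Xi_n^\ve$, are pairwise disjoint and contained in $\Omega$, so summing gives $\le |Y|\int_\Omega|\phi|^p\,dx$ (the inequality, rather than equality, accounting for the part of $\Omega$ not covered by complete cells).

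Part (ii) is then immediate: linearity of $\T_{\mathcal L}^\ve$ is clear from its definition as a finite sum of compositions with affine maps multiplied by characteristic functions, and continuity $L^p(\Omega) \to L^p(\Omega\times Y)$ is exactly the second estimate in \eqref{estim1}, which gives $\|\T_{\mathcal L}^\ve(\phi)\|_{L^p(\Omega\times Y)} \le |Y|^{1/p}\|\phi\|_{L^p(\Omega)}$.

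For (iii), the standard density argument applies. First one checks \eqref{conver_1} for $\phi \in C(\overline\Omega)$ (or $C_0^\infty(\Omega)$, or some other convenient dense subclass): here $\T_{\mathcal L}^\ve(\phi)(x,y) - \phi(x) = \sum_n [\phi(\ve D_{x_n^\ve}[D^{-1}_{x_n^\ve}x/\ve]_Y + \ve D_{x_n^\ve}y) - \phi(x)]\chi_{\hat\Omega_n^\ve}(x)$, and on $\hat\Omega_n^\ve$ the argument of the first $\phi$ differs from $x$ by a vector of length $O(\ve)$ (since $y \in Y$ is bounded and $D_{x_n^\ve}$ has bounded norm uniformly in $n$), so uniform continuity of $\phi$ forces the difference to $0$ uniformly, hence in $L^p(\Omega\times Y)$; the $\Lambda^\ve$ contribution is controlled because $|\Lambda^\ve| \to 0$. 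Then for general $\phi \in L^p(\Omega)$, pick $\phi_\delta \in C(\overline\Omega)$ with $\|\phi - \phi_\delta\|_{L^p(\Omega)} < \delta$ and split $\T_{\mathcal L}^\ve(\phi) - \phi = \T_{\mathcal L}^\ve(\phi - \phi_\delta) + (\T_{\mathcal L}^\ve(\phi_\delta) - \phi_\delta) + (\phi_\delta - \phi)$; the first and third terms are $\le C\delta$ uniformly in $\ve$ by (ii), and the middle term $\to 0$ as $\ve \to 0$ by the continuous case. Finally (iv) follows by combining (ii) and (iii): $\|\T_{\mathcal L}^\ve(\phi^\ve) - \phi\|_{L^p(\Omega\times Y)} \le \|\T_{\mathcal L}^\ve(\phi^\ve - \phi)\|_{L^p(\Omega\times Y)} + \|\T_{\mathcal L}^\ve(\phi) - \phi\|_{L^p(\Omega\times Y)} \le |Y|^{1/p}\|\phi^\ve - \phi\|_{L^p(\Omega)} + o(1) \to 0$.

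The main obstacle is the bookkeeping in part (i): one must be careful that the partition cubes $\{\Omega_n^\ve\}$ overlap on their boundaries (they are a covering, not a partition of $\Omega$), so the characteristic functions $\chi_{\hat\Omega_n^\ve}$ could in principle double-count; this is handled by noting that the overlaps have measure zero, or by passing to the associated partition, and that the $\hat\Omega_n^\ve$ are built from disjoint complete cells lying inside $\Omega_n^\ve \cap \Omega$. Everything else is a routine change of variables plus the density argument; the uniform (in $n$ and $\ve$) control of $\|D_{x_n^\ve}\|$ and $|\det D_{x_n^\ve}|$ coming from $D, D^{-1} \in \mathrm{Lip}$ and $0 < D_1 \le |\det D(x)| \le D_2$ is what makes the estimates uniform.
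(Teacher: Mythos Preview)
Your proof is correct and follows essentially the same route as the paper: the cell-by-cell change of variables $y\mapsto \ve D_{x_n^\ve}(\xi+y)$ to obtain \eqref{estim1}, linearity and continuity from the definition and the $L^p$ bound, the density argument via continuous functions for (iii), and the triangle-inequality splitting for (iv). The paper's write-up is more compressed (it packages (iii) into ``approximation by continuous functions together with \eqref{conver_1_equality} and \eqref{estim1}'') but the content is identical; your remark on the overlap of the $\hat\Omega_n^\ve$ is fine but slightly overcautious, since by construction the $\Omega_n^\ve$ are open and non-intersecting, so the $\hat\Omega_n^\ve\subset\Omega_n^\ve$ are automatically disjoint up to a null set.
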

\begin{proof}
Using the definition of the l-p unfolding operator we obtain 
\begin{eqnarray}\label{conver_1_equality}
\begin{aligned}
&& \int_{\Omega\times Y}| \T_{\mL}^{\ve} (\phi)(x,y)|^p dydx = 
\sum\limits_{n=1}^{N_\ve}\sum_{\xi\in \hat\Xi^\ve_n} \ve^d |D_{x_n^\ve} Y| \int_{Y} |\phi(D_{x_n^\ve}(\ve \xi + \ve y))|^p \, dy 
  \\
&&=\sum\limits_{n=1}^{N_\ve} | Y|\sum_{\xi\in  \hat \Xi^\ve_n} \int_{\ve D_{x_n^\ve}(\xi + Y)} |\phi(x)|^p\,  dx =\sum\limits_{n=1}^{N_\ve}  | Y| \int_{\hat \Omega_n^\ve} |\phi(x)|^p \, dx. 
\end{aligned}
\end{eqnarray}
Then the equality and  estimate in \eqref{estim1} follow  from the definition of $\Lambda^\ve$ and the properties of the  covering of $\Omega$ by $\{ \Omega_n^\ve\}_{n=1}^{N_\ve}$. 

The result in (ii) is ensured by the definition of the l-p unfolding operator and inequality  in \eqref{estim1}.

(iii) Using the fact    that  $\phi\in L^p(\Omega)$  and  $|\Lambda^\ve| \to 0$ as $\ve \to 0$ (ensured by the properties of the covering  of $\Omega$ by $\{\Omega_n^\ve\}_{n=1}^{N_\ve}$ and definition of $\Lambda^\ve$)  and applying Lebesgue's Dominated Convergence Theorem, see e.g.\ \cite{Evans}, we obtain
$\int_{\Lambda^\ve} |\phi(x)|^p \, dx \to 0$  as $ \ve \to 0.$

Then considering the approximation of $L^p$-functions by continuous functions and   using the definition of $\T_{\mL}^\ve$,  equality   \eqref{conver_1_equality}  and estimate in  \eqref{estim1} imply the convergence stated in (iii).

(iv) The  linearity of the l-p  unfolding operator along with    \eqref{estim1} and  \eqref{conver_1} yield
\begin{equation*}
\|\T_{\mathcal L}^{\ve}(\phi^\ve) - \phi \|_{L^p(\Omega\times Y)} \leq   |Y|^{\frac 1p} \|\phi^\ve - \phi\|_{L^p(\Omega)}  + \|\T_{\mathcal L}^{\ve}(\phi) - \phi\|_{L^p(\Omega\times Y)}
\to 0  \text{ as } \ve \to 0. 
\end{equation*} 
\end{proof}

Similar to l-t-s convergence, the average of the weak limit of the unfolded sequence with respect to microscopic variables is equal to the weak limit of the original sequence.  
\begin{lemma}\label{weak-weak_converge}
For  $\{ w^\ve \} $ bounded in $L^p(\Omega)$,  with $p\in (1, +\infty)$,
we have that  $\{\T_{\mL}^{\ve}(w^\ve)\}$ is bounded in $L^p(\Omega \times Y) $
and  if
\[
 \T_{\mL}^{\ve}(w^\ve) \rightharpoonup  \tilde w  \quad \text{weakly  in } \, L^p(\Omega\times Y),  \] 
then 
\[
w^\ve \rightharpoonup \ddashinttt_Y  \tilde w \, dy  \quad \text{weakly  in } \, L^p(\Omega) . \] 
\end{lemma}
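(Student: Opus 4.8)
The plan is to prove both assertions by testing against smooth functions and using the already-established properties of $\T_{\mL}^\ve$ from Lemma~\ref{lemma:estim_1}. For the boundedness of $\{\T_{\mL}^{\ve}(w^\ve)\}$ in $L^p(\Omega\times Y)$, I would simply invoke the second inequality in \eqref{estim1}, which gives $\|\T_{\mL}^{\ve}(w^\ve)\|_{L^p(\Omega\times Y)}^p \le |Y|\,\|w^\ve\|_{L^p(\Omega)}^p \le C$. This part is immediate.

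For the identification of the weak limit, the key computational fact is an ``adjoint'' or integration-type identity: for $\phi \in L^p(\Omega)$ and $\varphi \in L^q(\Omega)$ (with $1/p+1/q=1$), one has
\[
\frac{1}{|Y|}\int_{\Omega\times Y} \T_{\mL}^\ve(\phi)(x,y)\,\varphi(x)\,dy\,dx = \int_{\Omega} \phi(x)\,\varphi(x)\,dx - \int_{\Lambda^\ve}\phi(x)\varphi(x)\,dx + o(1),
\]
or more cleanly, since $\T_{\mL}^\ve$ applied to $x$-independent test functions is essentially trivial, I would use $\frac{1}{|Y|}\int_{\Omega\times Y}\T_{\mL}^\ve(w^\ve)(x,y)\,\varphi(x)\,dy\,dx$ and compare it with $\int_\Omega w^\ve\varphi\,dx$. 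The cleanest route: write the difference as $\frac 1{|Y|}\int_{\Omega\times Y}\T_{\mL}^\ve(w^\ve)(\varphi - \T_{\mL}^\ve(\varphi))\,dy\,dx + \big[\frac 1{|Y|}\int_{\Omega\times Y}\T_{\mL}^\ve(w^\ve)\T_{\mL}^\ve(\varphi)\,dy\,dx - \int_\Omega w^\ve\varphi\,dx\big]$; the first bracket tends to zero by boundedness of $\T_{\mL}^\ve(w^\ve)$ in $L^p(\Omega\times Y)$ together with $\T_{\mL}^\ve(\varphi)\to\varphi$ strongly in $L^q(\Omega\times Y)$ (Lemma~\ref{lemma:estim_1}(iii)), and the second bracket equals, by the explicit cell-by-cell formula for $\T_{\mL}^\ve$, $-\frac 1{|Y|}\int_{\Lambda^\ve\times Y}(\cdots) = O(|\Lambda^\ve|^{1/q}\|w^\ve\|_{L^p})\to 0$ since $\T_{\mL}^\ve(w^\ve)\T_{\mL}^\ve(\varphi)$ is a pointwise-in-$y$ constant rearrangement of $w^\ve\varphi$ on $\hat\Omega^\ve$, up to a measure-zero-in-the-limit error on $\Lambda^\ve$. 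Hence $\int_\Omega w^\ve\varphi\,dx \to \frac 1{|Y|}\int_{\Omega\times Y}\tilde w\,\varphi\,dy\,dx = \int_\Omega\big(\ddashinttt_Y \tilde w\,dy\big)\varphi\,dx$ for all $\varphi\in L^q(\Omega)$, which is exactly $w^\ve\rightharpoonup \ddashinttt_Y\tilde w\,dy$ weakly in $L^p(\Omega)$.

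The main obstacle, such as it is, is the bookkeeping needed to justify that $\frac 1{|Y|}\int_{\Omega\times Y}\T_{\mL}^\ve(\phi)(x,y)\,\T_{\mL}^\ve(\varphi)(x,y)\,dy\,dx$ agrees with $\int_{\hat\Omega^\ve}\phi(x)\varphi(x)\,dx$ up to a negligible term: this requires unwinding the definition of $\T_{\mL}^\ve$ cell by cell exactly as in the proof of \eqref{conver_1_equality}, noting that on each $\ve D_{x_n^\ve}(\xi+Y)$ both unfolded functions are parametrized by the same change of variables $x = \ve D_{x_n^\ve}(\xi + y)$, so their product integrates back to $\int_{\ve D_{x_n^\ve}(\xi+Y)}\phi\varphi\,dx$ after multiplication by $|Y|^{-1}$ and the Jacobian $\ve^d|\det D_{x_n^\ve}|$. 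Summing over $n$ and $\xi\in\hat\Xi_n^\ve$ gives the integral over $\hat\Omega^\ve$, and $|\Omega\setminus\hat\Omega^\ve| = |\Lambda^\ve|\to 0$ by the properties of the covering, so Hölder's inequality closes the estimate. Everything else is a routine density/continuity argument already packaged in Lemma~\ref{lemma:estim_1}.
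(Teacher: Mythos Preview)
Your proposal is correct and follows essentially the same approach as the paper. The paper writes the identity
\[
\int_\Omega w^\ve\,\psi\,dx = \frac{1}{|Y|}\int_{\Omega\times Y}\T_{\mL}^\ve(w^\ve)\,\T_{\mL}^\ve(\psi)\,dy\,dx + \int_{\Lambda^\ve} w^\ve\,\psi\,dx
\]
directly and passes to the limit using weak convergence of $\T_{\mL}^\ve(w^\ve)$ against the strongly convergent $\T_{\mL}^\ve(\psi)$, together with $\int_{\Lambda^\ve}w^\ve\psi\,dx\to 0$; your decomposition into two brackets is the same computation with one extra intermediate step. One small imprecision: the bound $O(|\Lambda^\ve|^{1/q}\|w^\ve\|_{L^p})$ is only valid for $\varphi\in L^\infty$; for general $\varphi\in L^q(\Omega)$ the correct estimate is $\|w^\ve\|_{L^p(\Omega)}\|\varphi\|_{L^q(\Lambda^\ve)}\to 0$ by absolute continuity of the integral, which is what the paper uses.
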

\begin{proof}
The boundedness of  $\{\T_{\mL}^{\ve}(w^\ve)\}$ in $L^p(\Omega\times Y)$ follows directly from the boundedness of $\{ w^\ve \} $ in $L^p(\Omega)$
and the estimate~\eqref{estim1}. For  $\psi \in L^{q}(\Omega)$,  $1/p+1/q=1$,  using the definition of  $ \T_{\mL}^{\ve}(w^\ve) $ we have 
\begin{eqnarray*}
\int_\Omega w^\ve\,  \psi \,  dx =  
\frac 1{|Y|} \int_{\Omega\times Y}  \T_{\mathcal L}^{\ve}(w^\ve) \,  \T_{\mathcal L}^{\ve}(\psi) \,  dy\,  dx  + \mathcal A_\ve, \quad  
\text{ where  } \; 
\mathcal A_\ve=\int_{\Lambda^\ve} w^\ve \psi  \, dx.
 \end{eqnarray*}
For $\{w^\ve\}$ bounded in $L^p(\Omega)$  and  $\psi \in L^{q}(\Omega)$, using the properties of the covering of $\Omega$ and the definition of $\hat \Omega^\ve$ and $\Lambda^\ve$ we obtain  
$
\mathcal A_\ve \to 0  \;  \text{ as } \, \, \ve \to 0.
$
 Then, the weak convergence of  $ \T_{\mL}^{\ve}(w^\ve)$  and the  strong convergence  of $\T_{\mathcal L}^{\ve}(\psi)$, shown  in  Lemma~\ref{lemma:estim_1},
 imply 
 \begin{eqnarray*}
\lim\limits_{\ve \to 0} \int_\Omega w^\ve(x) \,  \psi(x) \,  dx =  \frac 1 {|Y|} \int_{\Omega} \int_{Y} \tilde w(x,y)  \, \psi (x) \,  dy \, dx  \; 
  \end{eqnarray*}
  for any  $\psi \in L^{q}(\Omega)$.
\end{proof} 

For the periodic unfolding operator  we have that  $\mathcal T^\ve(\psi(\cdot, \cdot/\ve)) \to \psi $ in $L^q(\Omega\times Y)$  for $\psi \in L^q(\Omega, C_{\rm per}(Y))$.  A similar result holds for the l-p unfolding operator and $\psi \in L^q(\Omega, C_{\rm per}(Y_x))$, but with 
$\psi(\cdot, \cdot/ \ve)$ replaced by the l-p approximation $\mathcal L^\ve \psi(\cdot)$. 

\begin{lemma}\label{conver_local_t-s}
(i) For $\psi \in L^q(\Omega; C_{\rm per}(Y_x))$, with $q \in [1, +\infty)$, we have 
$$\T_{\mathcal L}^{\ve} (\mathcal L^\ve \psi) \to \psi(\cdot, D_x \cdot)  \quad \text{strongly in } \; L^q(\Omega \times Y).$$
(ii) For $\psi \in C(\overline \Omega; L^q_{\rm per}(Y_x))$, with $q \in [1, +\infty)$,  we have 
$$\T_{\mathcal L}^{\ve} (\mathcal L_0^\ve \psi) \to \psi(\cdot, D_x \cdot)  \quad \text{strongly in }\;  L^q(\Omega \times Y). $$
\end{lemma}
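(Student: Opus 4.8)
The plan is to reduce both statements to the density of continuous (in the appropriate sense) functions together with the strong-convergence machinery already established in Lemma~\ref{lemma:estim_1}. For part (i), first I would treat the smooth case $\psi \in C(\overline\Omega; C_{\rm per}(Y_x))$, which by the identification in Section~\ref{section:LpM} corresponds to a function $\widetilde\psi \in C(\overline\Omega; C_{\rm per}(Y))$ via $\psi(x,y) = \widetilde\psi(x, D_x^{-1}y)$. Unfolding the l-p approximation $\mathcal L^\ve\psi$ and using $\tilde x_n^\ve = \ve D_{x_n^\ve}\xi$, one computes directly from Definitions~\ref{l-p-unf-oper} and the formula \eqref{loc-period-def} that on $\hat\Omega_n^\ve$
\begin{equation*}
\T_{\mathcal L}^\ve(\mathcal L^\ve\psi)(x,y) = \widetilde\psi\Big(\ve D_{x_n^\ve}\big[D_{x_n^\ve}^{-1}x/\ve\big]_Y + \ve D_{x_n^\ve}y,\ y\Big),
\end{equation*}
since the fast argument of $\widetilde\psi$ reduces modulo the unit cell exactly to $y$. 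The point $\ve D_{x_n^\ve}[D_{x_n^\ve}^{-1}x/\ve]_Y + \ve D_{x_n^\ve}y$ differs from $x$ by at most $\ve\,|D_{x_n^\ve}|\,\mathrm{diam}(Y) \le C\ve$, so by uniform continuity of $\widetilde\psi$ in its first argument, $\T_{\mathcal L}^\ve(\mathcal L^\ve\psi)(x,y) \to \widetilde\psi(x,y) = \psi(x, D_x y)$ uniformly on the part of $\Omega$ covered by $\hat\Omega^\ve$; together with $|\Lambda^\ve|\to 0$ this gives convergence in $L^q(\Omega\times Y)$ (after passing to the $Y$-variable via $y = D_x^{-1}\cdot$, which only contributes the Jacobian factors bounded by the hypotheses on $D$).

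For general $\psi \in L^q(\Omega; C_{\rm per}(Y_x))$, I would pick $\psi_\delta \in C(\overline\Omega; C_{\rm per}(Y_x))$ with $\|\psi - \psi_\delta\|_{L^q(\Omega; C_{\rm per}(Y_x))} < \delta$ and split
\begin{equation*}
\T_{\mathcal L}^\ve(\mathcal L^\ve\psi) - \psi(\cdot, D_x\cdot) = \T_{\mathcal L}^\ve(\mathcal L^\ve(\psi-\psi_\delta)) + \big[\T_{\mathcal L}^\ve(\mathcal L^\ve\psi_\delta) - \psi_\delta(\cdot, D_x\cdot)\big] + \big[\psi_\delta(\cdot, D_x\cdot) - \psi(\cdot, D_x\cdot)\big].
\end{equation*}
The middle term goes to zero by the smooth case. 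The last term is bounded by $C\delta$. For the first term I would use the $L^q$-continuity estimate in \eqref{estim1}: $\|\T_{\mathcal L}^\ve(\mathcal L^\ve(\psi-\psi_\delta))\|_{L^q(\Omega\times Y)}^q \le |Y|\,\|\mathcal L^\ve(\psi-\psi_\delta)\|_{L^q(\Omega)}^q$, and then a change of variables on each $\Omega_n^\ve$ (summing the cell contributions, exactly as in \eqref{conver_1_equality}) bounds $\|\mathcal L^\ve(\psi-\psi_\delta)\|_{L^q(\Omega)}$ by $C\|\psi-\psi_\delta\|_{L^q(\Omega; C_{\rm per}(Y_x))} \le C\delta$, uniformly in $\ve$. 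Letting $\ve\to 0$ then $\delta\to 0$ finishes part (i).

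Part (ii) is handled the same way, now approximating $\psi \in C(\overline\Omega; L^q_{\rm per}(Y_x))$ in the $C(\overline\Omega; L^q_{\rm per}(Y_x))$-norm by functions smooth in $y$ (the smooth case being identical, since $\mathcal L_0^\ve\psi$ freezes the slow variable at $x_n^\ve$, which differs from $x$ by at most $C\ve^r \to 0$, so the continuity of $\psi$ in $x$ into $L^q_{\rm per}(Y)$ again yields uniform smallness), and for the remainder term invoking the same $L^q$-boundedness $\|\T_{\mathcal L}^\ve(\mathcal L_0^\ve\varphi)\|_{L^q(\Omega\times Y)} \le C\|\varphi\|_{C(\overline\Omega; L^q_{\rm per}(Y_x))}$ obtained from \eqref{estim1} and a cellwise change of variables. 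The main technical obstacle is the first splitting term: because $\psi$ is merely $L^q$ in $x$ (resp. merely $L^q$ in $y$ in part (ii)), one cannot appeal to uniform continuity there and must instead control $\mathcal L^\ve(\psi-\psi_\delta)$ through the cell decomposition and the uniform bounds $0 < D_1 \le |\det D(x)| \le D_2$; verifying that this bound is genuinely uniform in $\ve$ — i.e. that the sum over $n$ of the $L^q$-integrals over the $\Omega_n^\ve$ does not accumulate extra $\ve$-dependence — is the step I would be most careful about, though it follows the pattern already used in \eqref{conver_1_equality}.
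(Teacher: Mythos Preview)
Your approach is correct and, for part (i), essentially identical to the paper's: you both treat the continuous case by computing $\T_{\mathcal L}^\ve(\mathcal L^\ve\psi)$ explicitly, exploit the $O(\ve)$ distance in the slow argument, and then pass to $L^q(\Omega;C_{\rm per}(Y_x))$ by density. The paper phrases the smooth case as convergence of $L^q$-norms plus pointwise a.e.\ convergence, whereas you go straight for uniform convergence on $\hat\Omega^\ve$; and for the remainder the paper cites \cite[Lemma~3.4]{Ptashnyk13} for $\lim_m\lim_\ve\|\mathcal L^\ve\psi_m-\mathcal L^\ve\psi\|_{L^q}=0$, while you establish the uniform-in-$\ve$ bound $\|\mathcal L^\ve\varphi\|_{L^q(\Omega)}\le C\|\varphi\|_{L^q(\Omega;C_{\rm per}(Y_x))}$ directly. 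These are cosmetic differences.

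For part (ii) the routes genuinely diverge. The paper does \emph{not} approximate in $y$: it writes $\|\T_{\mathcal L}^\ve(\mathcal L_0^\ve\psi)\|_{L^q(\Omega\times Y)}^q = |Y|\big[\int_\Omega|\mathcal L_0^\ve\psi|^q - \int_{\Lambda^\ve}|\mathcal L_0^\ve\psi|^q\big]$ via Lemma~\ref{lemma:estim_1}, invokes \cite[Lemma~3.4]{Ptashnyk13} for the convergence of the first integral, shows the second vanishes, and combines this norm convergence with pointwise a.e.\ convergence (from continuity of $\psi$ in $x$) to conclude strong $L^q$ convergence. Your density argument is a legitimate alternative and is more self-contained (no external citation needed), but it requires the approximation of $C(\overline\Omega;L^q_{\rm per}(Y_x))$-functions by $C(\overline\Omega;C_{\rm per}(Y_x))$-functions, which is standard but should be stated. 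The paper's route is shorter given the tools already in hand; yours is more elementary.
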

\begin{proof}  (i) For  $\psi \in C(\overline\Omega; C_{\rm per}(Y_x))$ using the definition of $\mathcal L^\ve$  and  $\T_{\mathcal L}^{\ve} $   we obtain  
\begin{eqnarray*}
&& \int_{\Omega\times Y}| \T_{\mathcal L}^{\ve} (\mathcal L^\ve \psi)|^q dy\, dx =
 \sum\limits_{n=1}^{N_\ve} \int_{\hat \Omega_n^\ve\times Y} \Big|\widetilde \psi\Big(\ve D_{x_n^\ve} \Big[\frac {D^{-1}_{x_n^\ve} x} \ve\Big]_{Y} +  \ve D_{x_n^\ve}  y, y\Big) \Big|^q dy \, dx, 
 \end{eqnarray*}
 where $q\in [1, +\infty)$ and  $\widetilde \psi  \in C(\overline\Omega; C_{\rm per}(Y)) $ such that $\psi(x,y) = \widetilde \psi(x, D_x^{-1} y)$ for $x \in \Omega$ and $y \in Y_x$. Then, using  the properties of the covering of $\Omega_n^\ve$ by $\ve Y^\xi_{x_n^\ve}=\ve D_{x_n^\ve}(Y+\xi)$, with $\xi \in \Xi_n^\ve$, and considering  fixed points $y_\xi \in Y+\xi$  for  $\xi \in \hat\Xi_n^\ve$ we obtain
$$
  \int_{\Omega\times Y}| \T_{\mathcal L}^{\ve} (\mathcal L^\ve \psi )|^q dy\, dx  = \sum\limits_{n=1}^{N_\ve}\sum_{\xi\in \hat \Xi^\ve_n} \ve^d |Y_{x_n^\ve}|   \int_{ Y} |\widetilde \psi(\ve D_{x_n^\ve}(\xi +  y_\xi), y)|^q \, dy + \delta(\ve),
  $$
where, due to the  continuity of $\psi$  and the properties of the covering of $\Omega$ by $\{ \Omega_n^\ve\}_{n=1}^{N_\ve}$,
\begin{eqnarray*}
 \delta(\ve)  =  \sum\limits_{n=1}^{N_\ve}\sum_{\xi\in \hat \Xi^\ve_n} \ve^d |Y_{x_n^\ve}|  \int_{ Y}\Big( |\widetilde \psi(\ve D_{x_n^\ve}(\xi +  y_\xi),  y)|^q -  |\widetilde \psi(\ve D_{x_n^\ve}(\xi +  y),  y)|^q\Big) \, d y \to 0
\end{eqnarray*}
 as $\ve \to 0$. Then, using the continuity of $\psi$ and $D$ together with the relation between $\psi$ and $\widetilde \psi$ we obtain 
   \begin{eqnarray*}\label{converg_l-p}
 \lim\limits_{\ve \to 0}  \int_{\Omega\times Y}| \T_{\mathcal L}^{\ve} (\mathcal L^\ve \psi )|^q dy\, dx   = \int_{\Omega \times Y} |\widetilde \psi(x, y)|^q \, dy \,    dx = \int_{\Omega \times Y} |\psi(x, D_x y) |^q  dy \,  dx.
\end{eqnarray*}
The continuity of $\psi$ with respect to $x$ yields the pointwise convergence of $\T_{\mathcal L}^{\ve} (\mathcal L^\ve \psi)(x,y)$ to $\psi(x, D_x  y)$ a.e.\  in $\Omega\times Y$.

Considering an approximation of $\psi \in L^q(\Omega; C_{\rm per}(Y_x))$ by $\psi_m \in C(\overline\Omega; C_{\rm per}(Y_x))$
 and the  convergences    
$$ 
\begin{aligned}
&\lim\limits_{m \to \infty}\lim\limits_{\ve \to 0} \int_\Omega
|\mathcal L^\ve \psi_m(x) -  \mathcal L^\ve \psi(x)|^q  dx = 0,  \\
&\lim\limits_{m \to \infty}\lim\limits_{\ve \to 0} \int_\Omega
\big(|\mathcal L^\ve \psi_m(x)|^q -  |\mathcal L^\ve \psi(x)|^q\big) dx = 0,  
\end{aligned}
$$ 
 see \cite[Lemma 3.4]{Ptashnyk13} for the proof,  implies
 $\T_{\mathcal L}^{\ve} (\mathcal L^\ve \psi)(\cdot, \cdot) \to \psi(\cdot, D_x \cdot)$ in $L^q(\Omega\times Y)$ for $\psi \in L^q(\Omega; C_{\rm per}(Y_x))$.  
 
 (ii) For $\psi \in C(\overline \Omega; L^q_{\rm per}(Y_x))$ we can prove  the strong convergence only of $\T_{\mathcal L}^{\ve}(\mathcal L_0^\ve \psi)$.   Consider 
 \begin{eqnarray*}
 \lim\limits_{\ve \to 0 } \int_{\Omega \times Y} |\T_{\mathcal L}^{\ve} (\mathcal L_0^\ve \psi)(x,y)|^q dy dx  = 
|Y| \lim\limits_{\ve \to 0 }\Big[ \int_{\Omega} | \mathcal L_0^\ve \psi(x)|^q dx - \int_{\Lambda^\ve} | \mathcal L_0^\ve \psi(x)|^q dx\Big].
 \end{eqnarray*}
 Then,  using Lemma 3.4 in  \cite{Ptashnyk13} along with  the regularity of $\psi$ and the properties of $\Lambda^\ve$ we obtain 
 \begin{eqnarray*}
|Y|  \lim\limits_{\ve \to 0 } \int_{\Omega} | \mathcal L_0^\ve \psi(x)|^q dx = \int_{\Omega \times Y} |\psi(x, D_x y)|^q dy dx, \qquad 
  \lim\limits_{\ve \to 0 } \int_{\Lambda^\ve} | \mathcal L_0^\ve \psi(x)|^q dx = 0. 
  \end{eqnarray*}
  The continuity of $\psi$ with respect to $x\in \Omega$ implies   $\T_{\mathcal L}^{\ve} (\mathcal L^\ve_0 \psi)(x,y) \to \psi(x, D_x  y)$ pointwise a.e.\ in $\Omega\times Y$.
\end{proof}

\textit{Remark. } Notice that for  $\psi \in C(\overline \Omega; L^q_{\rm per}(Y_x))$ we have  the strong convergence  only of $\T_{\mathcal L}^{\ve} (\mathcal L_0^\ve \psi)$. 
However,  this convergence result is sufficient for the derivation of  homogenization results,  since  the microscopic properties of  the considered  processes or    domains can be represented by  coefficients in the form  $ B \mathcal L_0^\ve A$, with some  given functions $B \in L^\infty(\Omega)$ and  $A \in C(\overline \Omega; L^q_{\rm per}(Y_x))$. 

The strong convergence of  $\T_{\mathcal L}^{\ve} (\mathcal L^\ve \psi) $ for $\psi \in L^q(\Omega; C_{\rm per}(Y_x))$ is now used to show the equivalence between the weak convergence of the l-p unfolded sequence and l-t-s convergence of the original sequence. Notice that  $L^q(\Omega; C_{\rm per}(Y_x))$  represents the set of test functions admissible in the definition of the  l-t-s convergence. 

\begin{lemma}\label{l-t-s-l-p-eq}
 Let $\{w^\ve\}$ be a bounded sequence in $L^p(\Omega)$, where  $p \in (1, +\infty)$. 
 Then the following assertions  are equivalent
\begin{itemize} 
\item[ (i)] $\quad \qquad \qquad w^\ve \to  w  \qquad \qquad \; \; \text{ l-t-s}, \quad \qquad  \qquad w \in L^p(\Omega; L^p(Y_x)),$
\item[(ii)] $ \quad\;  \T_{\mL}^{\ve}(w^\ve)(\cdot, \cdot)  \rightharpoonup  w(\cdot, D_x\cdot)  \qquad \text{weakly  in } L^p(\Omega\times Y).$ 
\end{itemize}
\end{lemma}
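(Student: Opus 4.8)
The plan is to prove the two implications separately, using Lemma~\ref{conver_local_t-s}(i) as the bridge that allows us to move between the l-p approximation $\mathcal L^\ve\psi$ appearing in the l-t-s definition and the unfolded test function $\T_{\mL}^\ve(\mathcal L^\ve\psi)$. The key algebraic identity, already used in the proof of Lemma~\ref{weak-weak_converge}, is that for $w^\ve\in L^p(\Omega)$ and any Lebesgue-measurable $\varphi$ on $\Omega$,
\begin{equation*}
\int_\Omega w^\ve(x)\,\varphi(x)\,dx=\frac{1}{|Y|}\int_{\Omega\times Y}\T_{\mL}^\ve(w^\ve)(x,y)\,\T_{\mL}^\ve(\varphi)(x,y)\,dy\,dx+\int_{\Lambda^\ve}w^\ve(x)\,\varphi(x)\,dx,
\end{equation*}
which follows directly from Definition~\ref{l-p-unf-oper} and the partition structure. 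Taking $\varphi=\mathcal L^\ve\psi$ for $\psi\in L^q(\Omega;C_{\rm per}(Y_x))$ turns the left-hand side into exactly the expression whose limit defines l-t-s convergence, while on the right-hand side $\T_{\mL}^\ve(\mathcal L^\ve\psi)\to\psi(\cdot,D_x\cdot)$ strongly in $L^q(\Omega\times Y)$ by Lemma~\ref{conver_local_t-s}(i), and $\|\mathcal L^\ve\psi\|_{L^\infty(\Omega)}$ is bounded so the $\Lambda^\ve$-term vanishes since $|\Lambda^\ve|\to0$ and $\{w^\ve\}$ is bounded in $L^p$.

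For the implication $(ii)\Rightarrow(i)$: assume $\T_{\mL}^\ve(w^\ve)\rightharpoonup w(\cdot,D_x\cdot)$ weakly in $L^p(\Omega\times Y)$. Pairing the weak convergence of $\T_{\mL}^\ve(w^\ve)$ against the strong convergence of $\T_{\mL}^\ve(\mathcal L^\ve\psi)$ in the identity above, and using that the $\Lambda^\ve$-remainder tends to zero, we get
\begin{equation*}
\lim_{\ve\to0}\int_\Omega w^\ve(x)\,\mathcal L^\ve\psi(x)\,dx=\frac{1}{|Y|}\int_{\Omega\times Y}w(x,D_xy)\,\psi(x,D_xy)\,dy\,dx.
\end{equation*}
Then the change of variables $y\mapsto D_x y$ (for each fixed $x$, this is the diffeomorphism $Y\to Y_x$ with Jacobian $|\det D_x|=|Y_x|$, noting $|Y|=1$) rewrites the right-hand side as $\int_\Omega\ddashinttt_{Y_x}w(x,y)\psi(x,y)\,dy\,dx$, which is precisely the defining limit for $w^\ve\to w$ l-t-s; the uniform bound $\|w^\ve\|_{L^p(\Omega)}\le C$ is assumed, so $(i)$ holds.

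For the converse $(i)\Rightarrow(ii)$: suppose $w^\ve\to w$ l-t-s. By Lemma~\ref{weak-weak_converge} (or directly from the estimate~\eqref{estim1}) the sequence $\{\T_{\mL}^\ve(w^\ve)\}$ is bounded in $L^p(\Omega\times Y)$, so along a subsequence it converges weakly to some $\tilde w\in L^p(\Omega\times Y)$. Applying the just-proved implication $(ii)\Rightarrow(i)$ to this subsequence shows that $w^\ve\to\tilde w(\cdot,D_x^{-1}\cdot)$ l-t-s along the subsequence; by uniqueness of the l-t-s limit (which follows because $L^q(\Omega;C_{\rm per}(Y_x))$ is dense in $L^q(\Omega;L^q(Y_x))$, the dual space being $L^p(\Omega;L^p(Y_x))$), we must have $\tilde w(x,D_x^{-1}y)=w(x,y)$ a.e., i.e.\ $\tilde w(x,y)=w(x,D_xy)$. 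Since every weakly convergent subsequence has the same limit $w(\cdot,D_x\cdot)$, the whole sequence converges weakly, giving $(ii)$. The main obstacle is the bookkeeping around the change of variables $y\leftrightarrow D_xy$ and making sure the l-t-s test-function space $L^q(\Omega;C_{\rm per}(Y_x))$ is rich enough to separate points of $L^p(\Omega;L^p(Y_x))$, so that limits are uniquely determined; both are handled by the density statement already invoked in \cite{Ptashnyk13}.
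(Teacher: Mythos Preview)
Your proof is correct and follows essentially the same route as the paper: both directions rest on the identity
\[
\int_\Omega w^\ve\,\mathcal L^\ve\psi\,dx=\frac{1}{|Y|}\int_{\Omega\times Y}\T_{\mL}^\ve(w^\ve)\,\T_{\mL}^\ve(\mathcal L^\ve\psi)\,dy\,dx+\int_{\Lambda^\ve}w^\ve\,\mathcal L^\ve\psi\,dx,
\]
together with the strong convergence $\T_{\mL}^\ve(\mathcal L^\ve\psi)\to\psi(\cdot,D_x\cdot)$ from Lemma~\ref{conver_local_t-s}(i). For $(i)\Rightarrow(ii)$ you go via a subsequence and invoke uniqueness of the l-t-s limit, whereas the paper computes the weak limit directly from the same identity; these are equivalent formulations of the same argument.

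One small imprecision: you write that ``$\|\mathcal L^\ve\psi\|_{L^\infty(\Omega)}$ is bounded'' to kill the $\Lambda^\ve$ term, but for $\psi\in L^q(\Omega;C_{\rm per}(Y_x))$ (the full test class in Definition~\ref{def_two-scale}) this is generally false. The correct estimate, which the paper spells out, is
\[
\Big|\int_{\Lambda^\ve}w^\ve\,\mathcal L^\ve\psi\,dx\Big|\le \|w^\ve\|_{L^p(\Omega)}\Big(\int_{\Lambda^\ve}\sup_{y\in Y}|\psi(x,D_xy)|^q\,dx\Big)^{1/q}\to 0,
\]
using that $x\mapsto\sup_y|\psi(x,D_xy)|^q\in L^1(\Omega)$ and $|\Lambda^\ve|\to 0$. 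This is a trivial fix and does not affect the structure of your argument.
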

\begin{proof}
$[(ii) \Rightarrow (i)]$  Since $\{w^\ve\}$ is  bounded  in $L^p(\Omega)$, there exists (up to a subsequence) a l-t-s limit of $w^\ve$ as $\ve \to 0$. For an arbitrary $\psi \in L^q(\Omega; C_{\rm per}(Y_x))$ the weak convergence of $\T_{\mL}^{\ve}(w^\ve)$,   and the strong convergence of $\T_{\mathcal L}^{\ve} (\mathcal L^\ve(\psi))$ ensure
\begin{eqnarray*}
&& \lim\limits_{\ve \to 0} \int_\Omega w^\ve \mathcal L^\ve(\psi) dx 
= \lim\limits_{\ve \to 0} \Big[\int_{\Omega} \ddashinttt_{Y} \T_{\mathcal L}^{\ve} (w^\ve) \, \T_{\mathcal L}^{\ve} (\mathcal L^\ve(\psi)) dy dx  + \int_{\Lambda^\ve}  w^\ve \mathcal L^\ve(\psi) dx \Big]\\ &&
=  \int_{\Omega}\ddashinttt_{Y}  w(x, D(x) y) \,  \psi(x, D_x y) \,  dy   dx  = \int_{\Omega}  \ddashinttt_{Y_x}  w \, \psi \,  dy  dx.
 \end{eqnarray*}
 Thus the whole sequence $w^\ve$ converges l-t-s to $w$. \\
$[(i) \Rightarrow (ii)]$   On the other hand, the boundedness of $\{w^\ve\}$ in $L^p(\Omega)$ implies the boundedness of $\{\T_{\mL}^{\ve}(w^\ve)\}$ and (up to a subsequence)  the weak convergence of $\T_{\mL}^{\ve}(w^\ve)$ in   $L^p(\Omega\times Y)$. If $w^\ve \to w$ l-t-s, then  
 \begin{eqnarray*} 
  \lim\limits_{\ve \to 0}  \int_{\Omega} \ddashinttt_{Y} \T_{\mathcal L}^{\ve} (w^\ve)\,  \T_{\mathcal L}^{\ve} (\mathcal L^\ve(\psi)) \, dy dx 
  =   \lim\limits_{\ve \to 0} \Big[\int_\Omega w^\ve \mathcal L^\ve(\psi) \, dx  - \int_{\Lambda^\ve}  w^\ve \mathcal L^\ve(\psi) \, dx \Big]\\ =
  \int_\Omega \ddashinttt_{Y_x} w \, \psi \, dy  dx
  \end{eqnarray*} 
for $\psi  \in L^q(\Omega; C_{\rm per}(Y_x))$.  Since $ \T_{\mathcal L}^{\ve} (\mathcal L^\ve(\psi))(\cdot, \cdot) \to \psi (\cdot, D_x \cdot)$ in $L^q(\Omega\times Y)$, we obtain the weak convergence of the whole sequence  $ \T_{\mL}^{\ve}(w^\ve)$ to $w(\cdot, D_x \cdot)$ in $L^p(\Omega\times Y)$.
 Notice that   the boundedness of $\{ w^\ve\}$ in $L^p(\Omega)$ and   the fact that   $|\Lambda^\ve| \to 0$ as $\ve\to 0$ imply  
 $$
 \int_{\Lambda^\ve}  |w^\ve \, \mathcal L^\ve(\psi)| \, dx \leq C \Big(\int_{\Lambda^\ve} \sup_{y \in Y} |\psi (x, D_x y)|^q dx\Big)^{1/q} \to 0  \quad \text{ as } \; \; \ve \to 0  $$
for $ \psi \in L^q(\Omega; C_{\rm per}(Y_x))$ and  $1/p + 1/q =1$.
\end{proof}

%%%%%%%%%%%%%%%%%%%%%%%%%%%%%%%%%%%%%%%%%%%%%%%%%%%%
%%%%%%%%%%%%%%%%%%%%%%%%%%%%%%%%%%%
%%%%%%%%%%%%%%%%%%%%%%%%%%%%%%%%%%%%%%%%%%

Next, we prove the main convergence results for  the l-p unfolding operator, i.e.\  convergence results for  $\{\T_{\mL}^\ve(w^\ve)\}$,  $\{ \ve \T_{\mL}^\ve(\nabla w^\ve)\}$ and  $\{\T_{\mL}^\ve(\nabla w^\ve) \}$. 

The definition of the l-p unfolding operator yields that for $w\in W^{1, p} (\Omega)$   
\begin{equation}\label{micro_grad}
\nabla_y \T_{\mL}^\ve(w) = \ve  \sum_{n=1}^{N_\ve} D^T_{x_n^\ve}\,  \T_{\mL}^\ve(\nabla w) \,  \chi_{\Omega_n^\ve}  \, .
\end{equation}
Due to the regularity of $D$,  the  boundedness of $\ve \nabla w^\ve$  implies the  boundedness of $\nabla_y \T_{\mL}^\ve(w^\ve)$.  Thus,  assuming the boundedness of $\{\ve \nabla w^\ve\}$   we obtain  convergence  of the derivatives with respect to the microscopic variables, but have no information about the macroscopic derivatives.    

\begin{proof}[{\bf Proof of Theorem~\ref{prop_conver_1}}]
The assumptions on $\{w^\ve\}$ together with inequality~\eqref{estim1},  equality  \eqref{micro_grad},  and regularity of $D$   ensure  that $\{\T_{\mL}^\ve(w^\ve)\}$ is bounded in $L^p(\Omega; W^{1,p}(Y))$. Thus, there exists a subsequence, denoted again by $\{w^\ve\}$, and a function $\widetilde w \in L^p(\Omega; W^{1,p}(Y))$, such that 
$\T_{\mL}^\ve(w^\ve)  \rightharpoonup \widetilde w$ in $L^p(\Omega; W^{1,p}(Y))$.   We define $w(x,y) = \widetilde w(x, D^{-1}_x y)$
for a.a.\ $x\in \Omega$,  $y \in Y_x$.  Due to the regularity of $D$, we have  $w \in L^p(\Omega; W^{1,p}(Y_x))$. 
For $\phi \in C^\infty_0(\Omega\times Y)$, using the convergence of  $\T_{\mL}^\ve(w^\ve)$, we have  
\begin{eqnarray*}
\lim\limits_{\ve \to 0 } \int_{\Omega \times Y} \ve \mathcal T_{\mathcal L}^{\ve} (\nabla w^\ve) \, \phi \, dy dx= 
-\lim\limits_{\ve \to 0 } \int_{\Omega \times Y} \mathcal T_{\mathcal L}^{\ve} (w^\ve)  \sum_{n=1}^{N_\ve} \text{div}_y(D_{x_n^\ve}^{-1} \phi(x,y) )   \chi_{\Omega_n^\ve} dy dx  \\ 
=-\int_{\Omega \times Y}  w(x, D_x y) \, \text{div}_y( D_x^{-1} \phi(x,y))  dy dx = 
\int_{\Omega \times Y} D_x^{-T} \nabla_y  w(x, D_x y)  \,  \phi(x,y) \, dy dx.
\end{eqnarray*}
Hence,  $\ve \mathcal T_{\mathcal L}^{\ve} (\nabla w^\ve)(\cdot, \cdot)  	\rightharpoonup D_x^{-T} \nabla_y  w(\cdot, D_x \cdot)$ in $L^p(\Omega\times Y)$ as $\ve \to 0$.  To show the $Y_x$--periodicity of $w$, i.e.\ $Y$--periodicity of $\widetilde w$, we  show first the periodicity in $e_d$--direction. Then considering similar calculations in each $e_j$--direction, with $j=1, \ldots, d-1$  and  $\{e_j\}_{j=1, \ldots, d}$ being the canonical basis of $\mathbb R^d$,  we obtain the $Y_x$--periodicity  of $w$.   For $\psi \in C^\infty_0(\Omega \times Y^\prime)$  we consider 
\begin{equation*}
I=\int_{\Omega\times Y^\prime} \left[\T_{\mL}^\ve(w^\ve)(x,(y^\prime, 1)) - \T_{\mL}^\ve(w^\ve)(x,(y^\prime, 0))\right]
\psi(x, y^\prime) dy^\prime dx, 
\end{equation*}
where $Y^\prime = (0,1)^{d-1}$.  For $j=1,\ldots, d$ we define 
$$\widetilde \Omega_n^{\ve, j} =\text{Int} \Big( \bigcup_{\xi \in \overline \Xi_{n,1}^{\ve,j} }  \ve D_{x_n^\ve}(\overline Y + \xi) \Big),  \quad 
\widetilde \Lambda_{n,l}^{\ve,j} = \text{Int } \Big(\bigcup_{\xi \in \widetilde \Xi^{\ve,j}_{n,l} } \ve D_{x_n^\ve}(\overline Y + \xi)\Big) \; \; \;  \text{ for }  l=1,2, 
$$
where $\overline \Xi_{n,1}^{\ve,j} = \big\{ \xi \in \hat \Xi_n^\ve : \, \ve D_{x_n^\ve}(Y+\xi - e_j)  \subset \hat \Omega_n^\ve  \big\}$, 
$\overline \Xi_n^{\ve,j} = \big\{ \xi \in \hat \Xi_n^\ve : \, 
\ve D_{x_n^\ve}(Y +\xi+ e_j )  \subset \hat \Omega_n^\ve \; \text{ and }\;   \ve D_{x_n^\ve}(Y+\xi - e_j)  \subset \hat \Omega_n^\ve  \big\}$ and 
$\widetilde \Xi_n^{\ve,j} = \hat \Xi_n^\ve \setminus \overline \Xi_n^{\ve,j}$.  We write  
$\widetilde \Xi_n^{\ve ,j}= \widetilde \Xi_{n,1}^{\ve,j}\cup \widetilde \Xi_{n,2}^{\ve,j}$, where    $\widetilde \Xi_{n,1}^{\ve,j}$ corresponds  to upper    and  
$\widetilde \Xi_{n,2}^{\ve,j} $ corresponds to  lower cells  in the $D_{x_n^\ve} e_j$-direction.
Using the definition of $ \T_{\mL}^\ve$ we can write 
 \begin{eqnarray*}
&& I =  \sum_{n=1}^{N_\ve} \int_{\widetilde \Omega_n^{\ve,d} \times Y^\prime}  \mathcal T_\mL^\ve(w^\ve)(x, y^0) \left[\psi(x-\ve D_{x_n^\ve} e_d, y^\prime)-  \psi(x, y^\prime) \right]  dy^\prime dx  \\
&& + 
 \sum_{n=1}^{N_\ve}\Big[ \int_{\widetilde \Lambda_{n,1}^{\ve,d} \times Y^\prime}  \mathcal T_\mL^\ve(w^\ve)(x, y^1)  \psi(x, y^\prime)  dy^\prime dx
- \int_{\widetilde \Lambda_{n,2}^{\ve,d} \times Y^\prime}  \mathcal T_\mL^\ve(w^\ve)(x, y^0)  \psi(x, y^\prime)  dy^\prime dx \Big], 
\end{eqnarray*}
where $y^1= (y^\prime, 1)$ and $y^0=(y^\prime, 0)$. Using the continuity of $\psi$, the boundedness of the trace of $\T_{\mL}^\ve(w^\ve)$ in $L^p(\Omega\times Y^\prime)$, ensured by the assumptions on $w^\ve$, and the fact that $\sum_{n=1}^{N_\ve} |\widetilde \Lambda_{n,l}^{\ve,d}| \leq C \ve^{1-r} \to 0$ as $\ve \to 0 $, with  $r\in [0,1)$ and $l=1,2$, we obtain  that $I \to 0$ as $\ve \to 0$. 
Similar calculations for $e_j$, with   $j=1,\ldots, d-1$, and 
the  convergence of the trace of $\T_{\mL}^\ve(w^\ve)$ in $L^p(\Omega\times Y^\prime)$, ensured by the weak convergence of   $\T_{\mL}^\ve(w^\ve)$  in $L^p(\Omega; W^{1,p}(Y))$,  imply the $Y_x$-periodicity of $w$. 
\end{proof} 

If $\| w^\ve\|_{W^{1,p}(\Omega)}$ is bounded uniformly in $\ve$, we  have the weak convergence  of $w^\ve$ in $W^{1,p}(\Omega)$ and  of $\mathcal T_{\mathcal L}^{\ve}(\nabla w^\ve)$ in $L^p(\Omega\times Y)$. Hence we have information about the macroscopic and microscopic gradients of limit functions. The proof of  the convergence results for $\mathcal T_{\mathcal L}^{\ve}(\nabla w^\ve)$  makes use of the Poincar\'e inequality for an auxiliary sequence. 
For this purpose we define a  local average operator $\mathcal M^\ve_{\mL}$, i.e.\ an average of the unfolded function with respect to the microscopic variables. 
\begin{definition}
The local average operator $\mathcal M^\ve_{\mL}: L^p(\Omega) \to L^p(\Omega)$, $p\in [1,  +\infty]$, is defined as 
\begin{equation}\label{def:over_op}
\mathcal M_{\mL}^\ve (\psi)(x) = \ddashinttt_Y \T_{\mL}^\ve(\psi)(x,y)  dy = 
\sum_{n=1}^{N_\ve} \ddashinttt_{Y} \psi \big(\ve D_{x_n^\ve} \big([D^{-1}_{x_n^\ve} x /\ve] + y\big)\big)dy \, \chi_{\hat\Omega_n^\ve}(x).
\end{equation}
\end{definition}

\begin{proof}[{\bf Proof of Theorem~\ref{theorem_cover_grad}}]  The proof of the convergence of $\mathcal T_{\mathcal L}^{\ve}(\nabla w^\ve)$ follows similar ideas as in the case of the periodic unfolding operator. However, the proof of the periodicity of the corrector $w_1$ involves new ideas and technical details. 

The  convergence of $\mathcal T_{\mathcal L}^{\ve}(w^\ve)$  follows from  Lemma~\ref{weak-weak_converge} and the fact that  due to the  assumption on $\{w^\ve\}$ and regularity of $D$  we have $$\|\nabla_y  \mathcal T_{\mathcal L}^{\ve}(w^\ve) \|_{L^p(\Omega\times Y)} \leq C \ve \to 0 \quad \text{ as } \ve \to 0. $$ To show the convergence of $\mathcal T_{\mathcal L}^{\ve}(\nabla w^\ve)$ we consider a function  $V^\ve: \Omega\times Y \to \mathbb R$  defined as
\begin{equation}
V^\ve = \ve^{-1} \left(\T_{\mL}^\ve(w^\ve) - \mathcal M _{\mL}^\ve(w^\ve)\right). 
\end{equation}
Then, the definition of $\T_{\mL}^\ve$ and $\mathcal M _{\mL}^\ve$ implies
 $$\nabla_y V^\ve = \frac 1 \ve \nabla_y \T_{\mL}^\ve (w^\ve) = \sum_{n=1}^{N_\ve} D^T_{x_n^\ve} \T_{\mL}^\ve (\nabla w^\ve) \, \chi_{\Omega_n^\ve}. $$
The boundedness of $\{w^\ve\}$ in $W^{1,p}(\Omega)$ together with \eqref{estim1}  and regularity assumptions  on $D$ imply that  the sequence 
$ \{ \nabla_y V^\ve \}$  is bounded in $L^p(\Omega\times Y)$.
Considering  that
$$\ddashinttt_Y V^\ve \, dy =0 \quad  \text{ and } \quad  \ddashinttt_Y y_c^\ve \cdot \nabla w \, dy =0 \; \; \text{ with } \; \;  y_c^\ve=\sum_{n=1}^{N_\ve} D_{x_n^\ve} y_c  \, \chi_{\Omega_n^\ve},$$ 
where  $y_c=(y_1-\frac12, \ldots, y_d-\frac 12)$ for $y \in Y$,  and 
 applying the Poincar\'e inequality to $V^\ve - y_c^\ve  \cdot \nabla w$  yields 
\begin{equation*}
\|V^\ve - y_c^\ve  \cdot\nabla w  \|_{L^p(\Omega\times Y)} \leq C_1 \|\nabla_y V^\ve - \sum_{n=1}^{N_\ve}D^T_{x_n^\ve}\nabla w \, \chi_{\Omega_n^\ve} \|_{L^p(\Omega\times Y)} \leq C_2.
\end{equation*} 
Thus, there exists a subsequence (denoted again by $\{V^\ve - y_c^\ve \cdot \nabla w \}$) and  a function $\widetilde w_1 \in L^p(\Omega; W^{1,p}(Y))$ such that 
\begin{equation}\label{Z_conver}
V^\ve - y_c^\ve\cdot \nabla w \rightharpoonup \widetilde w_1 \qquad   \text{ weakly in } \quad  L^p(\Omega; W^{1,p}(Y)).
\end{equation}
For $\phi \in W^{1,p}(\Omega)$ we have the following relation
\begin{eqnarray*}
\mathcal T^\ve_{\mL} (\nabla \phi)(x,y) = 
 \ve^{-1} \sum\limits_{n=1}^{N_\ve}  D_{x_n^\ve}^{-T} \nabla_y \T_{\mL}^{\ve} (\phi)(x,y) \, \chi_{\Omega_n^\ve} (x).
\end{eqnarray*}
Then  the convergence in \eqref{Z_conver} and  the continuity of $D$ yield
\begin{equation}\label{Z_conver_2}
 \T_{\mL}^\ve (\nabla w^\ve) =\sum_{n=1}^{N_\ve} D_{x_n^\ve}^{-T} \nabla_{y} V^\ve    \chi_{\Omega_n^\ve}   \rightharpoonup  \nabla w +  D^{-T}_x \nabla_{y} \widetilde w_1 \;   \text{ weakly in }   L^p(\Omega\times Y) .
\end{equation}
We  show now that $ \widetilde w_1(x, y)$ is $Y$--periodic. Then the function $w_1(x,y) = \widetilde w_1(x, D^{-1}_x y)$ for a.a.\ $x\in \Omega$, $y \in Y_x$ will be $Y_x$--periodic.  
For  $\psi \in C^\infty_0( \Omega\times Y^\prime)$ we  consider 
\begin{eqnarray*}
&& \int_{\Omega}\int_{Y^\prime}\ \left[ V^\ve(x, y^1) - V^\ve(x, y^0)\right] \psi(x, y^\prime)   dy^\prime dx =  \sum_{n=1}^{N_\ve} \left(\mathcal I_{1,n} +\mathcal  I_{2,n}\right)
\end{eqnarray*}
with 
\begin{eqnarray*}
&&\mathcal I_{1,n}=  \int_{\widetilde \Omega_n^{\ve,d}} \int_{Y^\prime} \T_{\mL}^\ve(w^\ve)(x, y^0)
\frac 1\ve \left[ \psi(x-\ve D_{x_n^\ve} e_d, y^\prime) -  \psi(x, y^\prime)\right]  dy^\prime dx, \\
&&\mathcal I_{2,n}= \frac 1\ve \Big[\int_{\widetilde\Lambda_{n,1}^{\ve,d}\times Y^\prime} \hspace{-0.2 cm} 
  \T_{\mL}^\ve(w^\ve)(x,y^1)  \psi(x, y^\prime)   dy^\prime d x  
 - \int_{\widetilde \Lambda_{n,2}^{\ve,d}\times Y^\prime} \hspace{-0.2 cm} 
 \T_{\mL}^\ve(w^\ve)(x,y^0)\psi(x, y^\prime)    dy^\prime  d x \Big] \\
 &&\qquad  =\mathcal I^{U}_{2,n}- \mathcal I^L_{2,n},
\end{eqnarray*}
where  $y^1$,  $y^0$,  $\widetilde \Omega_n^{\ve,d}$, and $\widetilde \Lambda_{n,l}^{\ve,d}$, with $l=1,2$,  are  defined  in the proof of Theorem~\ref{prop_conver_1}.  Then
 Lemma~\ref{lemma:estim_1}  and the  strong convergence of  $\{w^\ve\}$  in $L^p(\Omega)$, ensured by the boundedness of $\{w^\ve\}$ in $W^{1,p}(\Omega)$,    imply  the strong convergence of $\{\T_{\mL}^\ve(w^\ve)\}$  to $w$ in $L^p(\Omega\times Y)$.  The boundedness of $\{\nabla_y \T_{\mL}^\ve(w^\ve) \}$ (ensured by the boundedness of $\{\nabla w^\ve\}$) yields the weak convergence of  $\{\T_{\mL}^\ve(w^\ve)\}$    in $L^p(\Omega; W^{1,p}(Y))$ to the same $w$. Applying the trace theorem in $W^{1,p}(Y)$ we obtain  that the trace 
 of $\T_{\mL}^\ve(w^\ve)$ on $\Omega\times Y^\prime$  converges weakly to $w$ in $L^p(\Omega\times Y^\prime)$ as $\ve \to 0$.
This together with the regularity of $\psi$ and $D$ gives
$$
\lim\limits_{\ve \to 0 } \sum_{n=1}^{N_\ve}  \mathcal I_{1,n} = - \int_{\Omega} \int_{Y^\prime} w(x) \, D_d (x) \cdot \nabla_x \psi(x,y^\prime) \, dy^\prime dx, 
$$ 
where $D_j(x) = (D_{1j}(x),  \ldots, D_{dj}(x))^T$, with $j=1, \ldots, d$. 
Next we  consider the integrals over the upper cells $\mathcal I^U_{2,n_1}$ and  over the lower cells $\mathcal I^L_{2,n_2}$  in neighboring $\Omega_{n_1}^\ve$ and $\Omega_{n_2}^\ve$ (in  $e_j$ direction, with $e_j\cdot D_{x_{n_1}^\ve} e_d\neq 0$, $j=1, \ldots, d$), i.e.\   for such  $1 \leq n_{1}, n_2  \leq N_\ve $  that $\Theta_{n_{1, 2}} = (\partial \Omega_{n_1}^\ve  \cap \partial \Omega_{n_2}^\ve)\cap\{ x_j = \text{const}\}  \neq \emptyset$,  
$\text{dim}(\Theta_{n_{1,2}}) = d-1$, and $x^\ve_{n_1, j}< x^\ve_{n_2,j}$, and write
\begin{eqnarray*}
 &&\mathcal I^U_{2,n_1}- \mathcal I^L_{2,n_2}= \frac 1 \ve \Big[ \int_{\widetilde \Lambda_{n_1,1}^{\ve,d}\times Y^\prime} 
    \T_\mL^\ve (w^\ve) (x, y^0) \psi  dy^\prime dx    - 
 \int_{\widetilde \Lambda_{n_2,2}^{\ve,d}\times Y^\prime} 
  \T_\mL^\ve (w^\ve) (x, y^0) \psi   d y^\prime  dx  \Big]
  \\  && \qquad +  \int_{\widetilde \Lambda_{n_1,1}^{\ve,d}} \frac 1\ve \Big[ \int_{ Y^\prime} 
    \T_\mL^\ve (w^\ve) (x, y^1) \psi \,  d y^\prime -  \int_{ Y^\prime} \T_\mL^\ve (w^\ve) (x, y^0) \psi \,  d y^\prime \Big] dx
   = \mathcal I_{2, n}^{1,2} + \mathcal I_{2,n}^{1}.
  \end{eqnarray*}
   The second integral $\mathcal I_{2,n}^1$ can be rewritten as
  \begin{equation*}
\mathcal I_{2,n}^1=  \frac 1 \ve \int_{\widetilde \Lambda_{n_1,1}^{\ve,d}\times Y} \partial_{y_d} 
   \T_\mL^\ve (w^\ve) (x, y)\, \psi(x, y^\prime)  d y dx =   \int_{\widetilde \Lambda_{n_1,1}^{\ve,d}\times Y} 
   D_d(x_{n_1}^\ve) \cdot \T_\mL^\ve (\nabla w^\ve)  \psi\,  d y dx.
   \end{equation*}
Using the boundedness of $\{\nabla w^\ve\}$ in $L^p(\Omega)$  and $\sum_{n_1=1}^{N_\ve} |\widetilde \Lambda_{n_1,1}^{\ve,d}| \leq C \ve^{1-r}$,   we conclude that $\sum_{n=1}^{N_\ve} \mathcal I_{2,n}^1 \to 0$ as $\ve \to 0$ and $r <1$.

 In $ \mathcal I_{2, n}^{1,2}$ we  distinguish between variations in $D_{x_n^\ve}e_j$ directions, for $1\leq j \leq  d-1$,  and in $D_{x_n^\ve}e_d$ direction. 
For an arbitrary fixed $x_{n_{1,2}}^\ve \in\Theta_{n_{1, 2}}$  we  define $\hat D^l_{x_{n_{1,2}}^\ve}= (D_1(x_{n_{1,2}}^\ve), \ldots, D_{d-1}(x_{n_{1,2}}^\ve), D_d(x_{n_l}^\ve))$,  with   $l=1,2$,  and   introduce     
$$
\hat \Lambda^{\ve}_{n_l} ={\rm Int}\Big(\bigcup_{ \xi \in\widetilde \Xi_{n_{1,2}}^{\ve, l}} \ve  \hat D^l_{x_{n_{1,2}}^\ve}(\overline Y+\xi) \Big) \qquad \text{ for } l =1,2,
$$
where 
\begin{eqnarray*}
 \widetilde \Xi_{n_{1,2}}^{\ve, 1} = \left\{ \xi \in \mathbb Z^d :  \ve  \hat D^1_{x_{n_{1,2}}^\ve}(\overline  Y+ \xi +e_d) \cap \Theta_{n_{1, 2}}\neq   \emptyset \text{ and }  \ve \hat D^1_{x_{n_{1,2}}^\ve}(Y+\xi )\subset  \Omega_{n_1}^\ve\right\}, \\
 \widetilde \Xi_{n_{1,2}}^{\ve, 2} = \left\{ \xi \in \mathbb Z^{d} : \ve  \hat D^2_{x_{n_{1,2}}^\ve}(\overline Y + \xi -e_d) \cap \Theta_{n_{1, 2}}\neq   \emptyset \text{ and }   \ve \hat D^2_{x_{n_{1,2}}^\ve}(Y+ \xi) \subset  \Omega_{n_2}^\ve\right\}.
\end{eqnarray*}
Then  each of the   integrals    in $\mathcal I_{2,n}^{1,2}$  we  rewrite as
 \begin{equation*}
 \begin{aligned}
&\frac 1 \ve  \int_{\widetilde \Lambda_{n_l,l}^{\ve,d}}  \int_{ Y^\prime}  \T_\mL^\ve (w^\ve) (x, y^0) \psi   d y^\prime  dx  = \frac 1 \ve \int_{\hat \Lambda^{\ve}_{n_l}} \int_{ Y^\prime}
w^\ve (\ve \hat D^{l}_{x_{n_{1,2}}^\ve}([{x_{D,n}^l} /\ve] + y^0))\psi
d y^\prime  dx   \\
&+\frac 1 \ve \Big[ \int_{\widetilde \Lambda_{n_l,l}^{\ve,d}}  \int_{ Y^\prime} \T_\mL^\ve (w^\ve) (x, y^0)\psi   d y^\prime  dx  -
 \int_{\hat \Lambda^{\ve}_{n_l}} \int_{ Y^\prime} w^\ve (\ve \hat D^{l}_{x_{n_{1,2}}^\ve}([x_{D,n}^l/\ve] + y^0))\psi    d y^\prime  dx \Big] \\
& = J_{l,n}^1+ J_{l,n}^2,
 \end{aligned}
 \end{equation*} 
where $x_{D, n}^l= (\hat D^{l}_{x_{n_{1,2}}^\ve})^{-1}  x$ and $l=1,2$.    Using the  definition of  $\hat \Lambda^{\ve}_{n_l}$, for $l=1,2$,
  and  the fact that $|\widetilde \Xi_{n_{1,2}}^{\ve, l}||\hat D^l_{x_{n_{1,2}}^\ve}|= I_j|D_{d}(x^\ve_{n_l})\cdot e_j|$, with   $D_{d}(x^\ve_{n_l})\cdot e_j\neq 0$ and some $I_j>0$,  $j=1,\ldots, d$,  and denoting
$|\widetilde \Xi_{n_{1,2}}^{\ve, 1}|=I^\ve_{n_{1,2}}$  yields 
 \begin{eqnarray*}
 && J^1_{1,n} - J^1_{2,n}  =  \ve^{d}   \sum_{i=1}^{I^\ve_{n_{1,2}}}  \int_{Y} \int_{ Y^\prime} 
    \frac 1 \ve\Big[w^\ve  \big(\ve \hat D^1_{x_{n_{1,2}}^\ve}(\xi_i^1 + y^0)\big)
    \psi (\ve\tilde y^i_{n_1,\xi}, y^\prime) \\ 
    && \hspace{ 4 cm } - w^\ve  \big(\ve \hat D^2_{x_{n_{1,2}}^\ve}(\xi_i^2  + y^0)\big) \psi(\ve \tilde y^i_{n_2,  \xi}, y^\prime)   
  \Big] \big|\hat D^1_{x_{n_{1,2}}^\ve}\big| \, dy^\prime   d \tilde y  \\
&& -  \ve^{r-1}  \sum_{\xi\in  \widetilde \Xi_{n_{1,2}}^{\ve, 2} }  \int_{\ve (Y+ \xi)}  \int_{ Y^\prime} 
    w^\ve \big(\ve \hat D^2_{x_{n_{1,2}}^\ve}\big([\tilde x/\ve] + y^0\big)\big)\psi\,  dy^\prime  
\,   \frac 1{\ve^r}   \left[d (\hat D^2_{x^\ve_{n_{1,2}}} \tilde x)- d(\hat D^1_{x_{n_{1,2}}^\ve}  \tilde x) \right],
  \end{eqnarray*}
 where  $\tilde y^i_{n_l, \xi}= \hat D^l_{x_{n_{1,2}}^\ve}  (\tilde y + \xi^l_i)$ for $l=1,2$. 
 The first integral in the last equality can be estimated by 
 \begin{eqnarray*}
  C \ve^{rd + (1-r)} \| w^\ve \|_{W^{1,p}(\Omega)} \| \psi\|_{C^1_0 (\Omega\times Y^\prime)}.
 \end{eqnarray*}
  In the second integral  we have a discrete derivative, in  $e_j$ direction, $e_j\cdot D_d(x_{n_1}^\ve)\neq 0$ and $j=1, \ldots, d$, of an integral over an evolving domain with the velocity vector $D_d$. 
Then, using the fact that $|N_\ve|\leq C \ve^{-dr}$ and $x_{n_1, j}^\ve< x_{n_2, j}^\ve$  together with the regularity of $D$  and the definition of  $\hat D^l_{x^\ve_{n_{1,2}}}$,  where $l=1,2$,   yields
 \begin{eqnarray*}
\sum_{n=1}^{N_\ve}  \left(J^1_{1,n} - J^1_{2,n}\right) \to 
 - \int_\Omega \int_{Y^\prime} w(x)\, \psi(x,y^\prime)\,  \text{div} D_d(x) \,  d y^\prime  dx\; \quad \text{ as } \ve \to 0.
 \end{eqnarray*}
For $J_{1,n}^2 - J_{2,n}^2 $  using  the definition of $\widetilde \Lambda_{n_l,l}^{\ve,d}$ and $\hat \Lambda^{\ve}_{n_l}$, with $l=1,2$,  the regularity of $D$ and $\psi$, the boundedness of $\{w^\ve\}$ in $W^{1,p}(\Omega)$, along with the  the properties of the covering of $\Omega$ by $\{\Omega_n^\ve\}_{n=1}^{N_\ve}$
we obtain  
   \begin{eqnarray*}
 \sum_{n=1}^{N_\ve} |J_{1, n}^2 -J_{2, n}^2| \leq C \ve^{1-r}  \sum_{j=1}^{d-1} \|\text{div} D_j\|_{L^\infty(\Omega)} \|w^\ve\|_{W^{1,p}(\Omega)} \|\psi\|_{C_0^1(\Omega\times Y^\prime)} \to 0
\end{eqnarray*}  
 as $\ve \to 0$  for $r\in [0,1)$. Combining the obtained results  we conclude that 
$$
\sum_{n=1}^{N_\ve}(\mathcal I_{1,n}+  \mathcal I_{2,n}) \to - \int\limits_{\Omega\times Y^\prime} \big[w(x) D_d(x)\cdot  \nabla_x \psi(x, y^\prime) +  w(x)  \psi(x, y^\prime)  \text{div}  D_d(x) \big] d y^\prime dx
$$
as $\ve \to 0$.  The definition of  $y^c_\ve \cdot\nabla w $ implies  
$$
 (y_c^\ve\cdot \nabla w(x))(y^\prime, 1) - (y_c^\ve\cdot \nabla w(x))(y^\prime, 0) = \sum_{n=1}^{N_\ve} D_d(x_n^\ve)\cdot  \nabla  w(x) \chi_{\Omega_n^\ve} (x)
$$
for  $y^\prime \in Y^\prime$ and  $x\in \Omega$.  Taking  the limit as $\ve \to 0$ yields
\begin{eqnarray*}
\lim \limits_{\ve \to 0} \int_{\Omega\times Y^\prime} 
 \left[ (y_c^\ve\cdot \nabla w)(y^1) - (y_c^\ve\cdot \nabla w)(y^0)\right]  \psi\,   dy^\prime dx =  \int_{\Omega\times Y^\prime} D_d(x) \cdot  \nabla w  \, \psi  \, dy^\prime dx
 \\
=   -  \int_{\Omega\times Y^\prime} w(x) \big[ D_d(x) \cdot  \nabla_x \psi(x,y^\prime) +  \text{div}\, D_d(x)  \psi(x, y^\prime)\big]  d y^\prime dx .
\end{eqnarray*}
Then using the convergence of $V^\ve - y^c_\ve\cdot \nabla w $ to $\widetilde w_1$ in $L^p(\Omega; W^{1,p}(Y))$ we obtain 
\begin{eqnarray*}
\int_\Omega\int_{Y^\prime} [\widetilde w_1(x, ( y^\prime, 1)) - \widetilde w_1(x, (y^\prime, 0)) ] \psi(x, y^\prime) \, dy^\prime dx 
= \lim\limits_{\ve \to 0} \int_\Omega\int_{Y^\prime} \big[V^\ve(x, (y^\prime, 1)) \\ - (y_c^\ve\cdot \nabla w)(x, (y^\prime, 1)) -
V^\ve(x, (y^\prime, 0)) + (y_c^\ve\cdot \nabla w)(x, (y^\prime, 0))\big] \,  \psi(x, y^\prime) \, d y^\prime dx  =0.
\end{eqnarray*}
Carrying out similar calculations  for $y_j$ with $j=1, \ldots, d-1$ yields the $Y$--periodicity of $\widetilde w_1$ and, hence,   $Y_x$--periodicity of  $w_1$, defined by $w_1(x,y) = \widetilde w_1(x, D_x^{-1} y)$ for $x\in \Omega$ and $y\in D_xY$.
\end{proof}

\section{Micro-macro decomposition: The interpolation operator $\Q_{\mL}^\ve$}\label{macro_micro_1}

Similar to the periodic case  \cite{Cioranescu_2008, Cioranescu_2012}, in the context of 
convergence results for the unfolding method  in perforated domains as well as
for the derivation of error estimates,  \cite{Ptashnyk2012, Griso:2004, Griso:2006, Griso:2014, Onofrei:2007},    it is important to  consider micro-macro decomposition of a function in $W^{1,p}$ and to introduce an interpolation operator $\Q_{\mL}^\ve$.
 For any $\varphi \in W^{1,p}(\Omega)$ we  consider the splitting  $\varphi= \Q_{\mL}^\ve(\varphi) + \mathcal{R}_{\mL}^\ve(\varphi)$ and show that $\Q_{\mL}^\ve(\varphi)$ has a similar behavior as $\varphi$, whereas $\R_{\mL}^\ve(\varphi)$ is of order $\ve$. 

We consider a continuous extension operator $\mathcal P: W^{1,p}(\Omega) \to W^{1,p}(\IR^d) $ satisfying
$$
\|\mathcal P(\varphi)\|_{W^{1,p}(\IR^d)} \leq C \|\varphi\|_{W^{1,p}(\Omega)} \qquad  \text{ for all } \; \;  \phi \in W^{1,p}(\Omega), 
$$
where the constant $C$ depends only on $p$ and $\Omega$, see e.g.\ \cite{Evans}.  In the following we  use the same notation for a function  in $W^{1,p}(\Omega)$ and its  continuous extension into $\IR^d$. 

We consider a bounded Lipschitz domain $\Omega_1\subset \mathbb R^d$,  such that $\Omega \subset \Omega_1$, $\text{dist}(\partial \Omega, \partial \Omega_1) \geq 2\ve^r$, and $\Omega_1 \subset \bigcup_{n=1}^{N_{\ve,1}}\overline{\Omega_n^\ve}$, where $\Omega_n^\ve$ as in section~\ref{section:LpM}, and identify $N_{\ve,1}$ with $N_\ve$.

We consider  $\mathcal Y= \text{Int} \big(\bigcup_{k\in\{0,1\}^d} ( \overline Y + k)\big)$ and define 
 \begin{eqnarray*}
  \Omega^\ve_{\mathcal Y} = \text{Int}\big(\bigcup_{n=1}^{N_\ve}\overline \Omega_{n, \mathcal Y}^{\ve}\big),  && \quad  \text{ with } \; \;  \Omega_{n, \mathcal Y}^{\ve}= \text{Int}\big(\bigcup_{\xi \in \Xi_{n,\mathcal Y}^\ve} \ve D_{x_n^\ve}(\overline Y + \xi)\big), \quad \Lambda^\ve_{\mathcal Y} = \Omega \setminus \Omega^\ve_{\mathcal Y},  
  \end{eqnarray*}
   where 
  ${\Xi}_{n,\mathcal Y}^\ve= \{ \xi \in \Xi_n^\ve: \ve D_{x_n^\ve} (\mathcal Y + \xi) \subset (\Omega_n^\ve\cap \Omega_1)\}$. 

 In order to define an interpolation between two neighboring $\Omega_n^\ve$ and $\Omega_m^\ve$  we introduce   $\mathcal Y^{-}= \text{Int} \big(\bigcup_{k\in\{0,1\}^d} ( \overline Y - k)\big)$.   
  
 For $1\leq n \leq N_\ve$ and  $m \in  Z_n=\{1 \leq m \leq N_\ve: \partial \Omega_n^\ve \cap \partial \Omega_m^\ve \neq \emptyset \}$ we shall consider unit cells near the corresponding neighboring parts of the boundaries $\partial \hat \Omega_n^\ve$ and $\partial \hat \Omega_m^\ve$, respectively.  For  $\xi_n \in \bar \Xi_n^\ve$, where  $ \bar \Xi_n^\ve = \{ \xi\in \hat \Xi_n^\ve : \ve D_{x_n^\ve}( \overline Y+\xi) \cap \partial\hat \Omega_n^\ve \neq \emptyset \}$, we consider  
\begin{eqnarray*}
\widetilde \Xi_{n,m}^\ve &=&\left \{ \xi_m \in \bar \Xi_m^\ve : \; \;  \ve D_{x_n^\ve}(\mathcal Y+ \xi_n)\cap \ve D_{x_m^\ve}(\mathcal Y^{-}+ \xi_m) \neq \emptyset  \right \}
\end{eqnarray*}
   and  
 $$\hat K_{n} = \{k\in \{0,1 \} ^{d}: \xi_n+ k \in \bar \Xi_n^\ve \}, \; \quad  \hat K^{-}_{m} = \{k\in \{0,1 \} ^{d}: \xi_m- k \in \bar \Xi_m^\ve \}. $$ 

\begin{figure}
\centering
\includegraphics[width=7.5 cm]{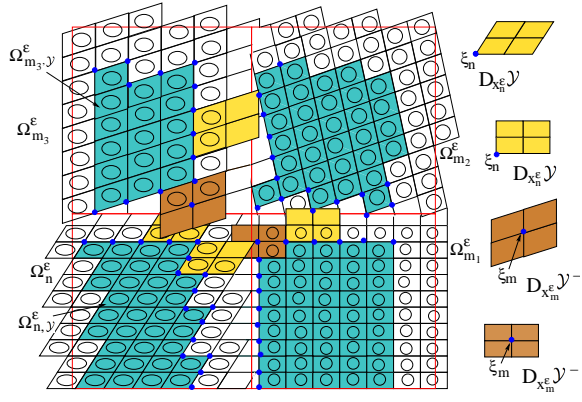}
 \caption{Schematic diagram of the covering of $\Omega$ by $\Omega_n^\ve$,  of $D_{x_n^\ve}\mathcal Y$ and $D_{x_m^\ve}\mathcal Y^{-}$,  and of the  interpolation points $\xi_n$ and $\xi_m$ for $\mathcal Q_{\mathcal L}^\ve$ and $\mathcal Q_{\mathcal L}^{\ast,\ve}$.}
 \label{fig:tissue}
\end{figure}

One of the important part in the definition of $\Q_{\mL}^\ve$ is to define  an interpolation between neighboring $\Omega_n^\ve$ and  $\Omega_m^\ve$.  For two neighboring $\Omega_n^\ve$ and  $\Omega_m^\ve$  we consider triangular interpolations between such vertices of  $\ve D_{x_n^\ve}(Y+ \xi_n)$ and $\ve D_{x_m^\ve}(Y+ \xi_m)$ that are lying on $\partial \Omega_{n, \mathcal Y}^\ve$ and $\partial \Omega_{m, \mathcal Y}^\ve$, respectively.

\begin{definition} 
The operator $\Q^\ve_{\mathcal L}: L^p(\Omega) \to W^{1, \infty} (\Omega)$, for $p \in [1, + \infty]$,  is defined by 
\begin{equation}\label{def:interp}
\Q_{\mL}^\ve(\varphi)(\ve\xi) =   \ddashinttt_{Y} \varphi(D_{x_n^\ve}(\ve \xi+ \ve y)) dy  \qquad  \text{for }  \xi\in \Xi_n^\ve \;  \text{ and }  1\leq n \leq N_\ve,
\end{equation}
 and for  $x\in  \Omega^\ve_{n,\mathcal Y}\cap \Omega$    we define $\Q_{\mL}^\ve(\varphi)(x)$ as the $Q_1$-interpolant of  $\Q_{\mL}^\ve(\varphi)(\ve\xi)$  at the vertices of  $\ve [D^{-1}_{x_n^\ve} x/\ve]_Y + \ve Y$, where 
 $1\leq n \leq N_\ve$.  
 
 For $x\in \Lambda^\ve_{\mathcal Y}$  
 we define $\Q_{\mL}^\ve(\varphi)(x)$   as a triangular $Q_1$-interpolant of the  values of $\Q_{\mL}^\ve(\varphi)(\ve\xi)$ at   $\xi_n + k_n$ and $\xi_m $       such  that   $\xi_n \in  \bar \Xi_n^\ve$, $\xi_m \in \widetilde \Xi_{n,m}^\ve$ for  $m \in Z_n$  and  $k_n \in \hat K_n$, where $1\leq n \leq N_\ve$  and $\Omega_n^\ve\cap \Omega \neq \emptyset$ or $\Omega_m^\ve\cap \Omega \neq \emptyset$. 
\end{definition}

The  vertices of   $\ve D_{x^\ve_n}( Y+\xi_n+ k_n)$ and   $\ve D_{x^\ve_m}(Y+\xi_m)$ for  $\xi_n \in \bar \Xi_n^\ve$, $ \xi_m \in \widetilde \Xi_{n,m}^\ve$ and $k_n \in \hat K_n$,   in the definition of $\Q^\ve_{\mathcal L}$, belong to $\partial \Omega^\ve_{n, \mathcal Y}$ and  $\partial \Omega^\ve_{m, \mathcal Y}$,  see  Figure~\ref{fig:tissue}. 
 
For $\Q_{\mL}^\ve(\varphi)$ and $\mathcal{R}_{\mL}^\ve(\varphi) = \varphi - \Q_{\mL}^\ve(\varphi)$   we have the following   estimates.
\begin{lemma}\label{Lemma_Q_1} 
For every $\varphi \in W^{1,p}(\Omega)$, where $p \in [1, + \infty)$, we have 
\begin{equation}\label{estim_Q_1}
 \begin{aligned}
& \|\Q_{\mL}^\ve(\varphi) \|_{L^p(\Omega)}  \leq C  \| \varphi \|_{L^p(\Omega)},  \qquad \qquad
\|\mathcal{R}_{\mL}^\ve(\varphi) \|_{L^p(\Omega)}  \leq C \ve \|\nabla \varphi\|_{L^p(\Omega)}, \\
&
 \|\nabla \Q_{\mL}^\ve(\varphi) \|_{L^p(\Omega)} + \|\nabla \mathcal{R}_{\mL}^\ve(\varphi) \|_{L^p(\Omega)}  \leq C \|\nabla \varphi \|_{L^p(\Omega)}, 
 \end{aligned}
\end{equation}
where the constant $C$ is independent of $\ve$ and depends only on $Y$, $D$, and $d=\text{dim} (\Omega)$.
\end{lemma}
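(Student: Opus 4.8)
The plan is to reduce all three estimates to the corresponding local estimates on each subdomain $\Omega_n^\ve$, exploiting the fact that on the interior part $\hat\Omega_n^\ve$ the operator $\Q_{\mL}^\ve$ is (after the linear change of variables $y\mapsto D_{x_n^\ve}y$) exactly the classical periodic $Q_1$-interpolation operator, for which the estimates are known, see \cite{Cioranescu_2008, Cioranescu_2012}. First I would fix $n$ and pull back to the reference cell: for $\xi\in\Xi_n^\ve$ the value $\Q_{\mL}^\ve(\varphi)(\ve\xi)$ is the mean of $\varphi$ over $\ve D_{x_n^\ve}(Y+\xi)$, so writing $\varphi_n(z)=\varphi(D_{x_n^\ve}z)$ the restriction of $\Q_{\mL}^\ve(\varphi)$ to $\Omega^\ve_{n,\mathcal Y}\cap\Omega$ coincides, up to the change of variables, with the standard $Q_1$-interpolant of the cell-averages of $\varphi_n$. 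Since $D,D^{-1}\in\text{Lip}(\IR^d;\IR^{d\times d})$ with $0<D_1\le|\det D|\le D_2$, the change of variables distorts $L^p$- and $W^{1,p}$-norms only by constants depending on $D$ and $d$; hence it suffices to prove the three inequalities for the periodic $Q_1$-interpolant on $\bigcup_\xi (Y+\xi)$, which is classical: the $L^p$-stability $\|Q_1 v\|_{L^p}\le C\|v\|_{L^p}$, the gradient bound $\|\nabla Q_1 v\|_{L^p}\le C\|\nabla v\|_{L^p}$, and the Poincar\'e-type estimate $\|v-Q_1 v\|_{L^p}\le C\ve\|\nabla v\|_{L^p}$ on each cell, summed over $\xi\in\hat\Xi_n^\ve$. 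Summing over $n=1,\dots,N_\ve$ and using that the $\{\Omega_n^\ve\}$ form a covering of $\Omega$ with bounded overlap then yields the estimates on $\hat\Omega^\ve=\bigcup_n\hat\Omega_n^\ve$, and on the slightly enlarged region $\Omega^\ve_{\mathcal Y}$ covered by the $Q_1$-cells.

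The remaining work is to control $\Q_{\mL}^\ve(\varphi)$ and $\R_{\mL}^\ve(\varphi)$ on the transition layer $\Lambda^\ve_{\mathcal Y}$, where the interpolation is the triangular $Q_1$-interpolant between vertices belonging to two neighbouring subdomains $\Omega_n^\ve$ and $\Omega_m^\ve$. Here I would argue as follows. On each simplex of this interpolation the value of $\Q_{\mL}^\ve(\varphi)$ is a convex combination of finitely many cell-averages $\Q_{\mL}^\ve(\varphi)(\ve\xi)$ taken over cells $\ve D_{x_n^\ve}(Y+\xi_n+k_n)$ and $\ve D_{x_m^\ve}(Y+\xi_m)$ lying within an $O(\ve)$-neighbourhood of the simplex; each such average is bounded by $C\ve^{-d/p}\|\varphi\|_{L^p(\omega)}$ over a union $\omega$ of cells of measure $O(\ve^d)$, and the gradient of the interpolant on the simplex is bounded by $\ve^{-1}$ times the maximal difference of these averages, which by a discrete Poincar\'e (telescoping along a chain of adjacent cells of bounded length, using the regularity of $D$ to control the mismatch $D_{x_n^\ve}$ versus $D_{x_m^\ve}$, which is $O(\ve^r)$ since $|x_n^\ve-x_m^\ve|\le C\ve^r$ and $D\in\text{Lip}$) is bounded by $C\|\nabla\varphi\|_{L^p(\omega)}$ after raising to the $p$-th power and integrating. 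Summing the $p$-th powers over all simplices in $\Lambda^\ve_{\mathcal Y}$ and again invoking bounded overlap of the associated cell-unions $\omega$ gives the contribution of $\Lambda^\ve_{\mathcal Y}$ to all three bounds; for $\R_{\mL}^\ve=\varphi-\Q_{\mL}^\ve(\varphi)$ one combines $\|\varphi\|_{L^p(\Lambda^\ve_{\mathcal Y})}$-type terms with the interpolation-error estimate $\|\varphi-\Q_{\mL}^\ve\varphi\|_{L^p}\le C\ve\|\nabla\varphi\|_{L^p}$ obtained from the difference of $\varphi$ and its piecewise-$Q_1$ reconstruction of cell-averages.

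I would carry out the steps in this order: (1) record the classical periodic $Q_1$-interpolation estimates on a single cell and their summed form; (2) transfer them to $\hat\Omega_n^\ve$ and to $\Omega^\ve_{n,\mathcal Y}\cap\Omega$ via the change of variables $z\mapsto D_{x_n^\ve}z$, tracking the dependence of constants on $D_1,D_2$ and the Lipschitz norms of $D,D^{-1}$; (3) sum over $n$ using the bounded-overlap covering property of $\{\Omega_n^\ve\}$; (4) handle the transition layer $\Lambda^\ve_{\mathcal Y}$ via the chain/telescoping argument, using $|x_n^\ve-x_m^\ve|\le C\ve^r$ and the Lipschitz continuity of $D$ to show that the geometry of the triangular interpolant is uniformly non-degenerate and that the finitely many cells feeding each simplex are comparable; (5) assemble the global estimates for $\Q_{\mL}^\ve(\varphi)$ and deduce those for $\R_{\mL}^\ve(\varphi)=\varphi-\Q_{\mL}^\ve(\varphi)$ by the triangle inequality together with the cell-wise Poincar\'e bound. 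The main obstacle is step (4): unlike the periodic case, the $Q_1$-cells on the two sides of an interface $\partial\Omega_n^\ve\cap\partial\Omega_m^\ve$ are generated by different matrices $D_{x_n^\ve}\neq D_{x_m^\ve}$, so the triangular interpolation is genuinely needed and one must verify both that the resulting simplices have uniformly bounded aspect ratio (so that $Q_1$-interpolation on them is $W^{1,p}$-stable with $\ve$-independent constants) and that the discrete Poincar\'e inequality along chains crossing the interface picks up only an $O(\ve^r)$ error from the mismatch of the two lattices, which is absorbed since $r<1$; controlling this cleanly, uniformly in $n$, is the technical heart of the argument.
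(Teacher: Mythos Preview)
Your plan is essentially the paper's own strategy: classical $Q_1$-interpolation estimates on each $\ve D_{x_n^\ve}(Y+\xi)$ (after the linear change of variables), summed over cells and over $n$, with separate treatment of the transition layer $\Lambda^\ve_{\mathcal Y}$. The paper carries this out exactly along your steps (1)--(3) and (5).

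The one point where your description diverges from the paper is step (4). You frame the interface estimate as a telescoping/chain argument in which the lattice mismatch contributes an $O(\ve^r)$ error ``absorbed since $r<1$''. The paper does something cleaner and does \emph{not} rely on $r<1$ here: it assembles a single connected reference domain
\[
\widetilde{\mathcal Y}_{\xi_n}=\bigcup_{m\in Z_n,\ \xi_m\in\widetilde\Xi_{n,m}^\ve}\ \bigcup_{k\in\{0,1\}^d} D_{x_m^\ve}(\xi_m+\overline{\mathcal Y^{-}}+k)\cup D_{x_n^\ve}(\xi_n+\overline{\mathcal Y}-k),
\]
applies the continuous Poincar\'e inequality once on $\widetilde{\mathcal Y}_{\xi_n}$, and reads off
\[
\big|\Q_{\mL}^\ve(\varphi)(\ve\xi_n+\ve k_n)-\Q_{\mL}^\ve(\varphi)(\ve\xi_m-\ve k_m)\big|^p\le C\,\ve^{p-d}\|\nabla\varphi\|_{L^p(\ve\widetilde{\mathcal Y}_{\xi_n})}^p
\]
with $C$ depending only on $D$ (via the Lipschitz bounds on $D,D^{-1}$ and the uniform bounds on $|\det D|$), not on $\ve$ or $r$. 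The Lipschitz regularity of $D$ enters only to guarantee that the Poincar\'e constant on $\widetilde{\mathcal Y}_{\xi_n}$ is uniform in $n$ and $\ve$ --- there is no residual $\ve^r$ term to absorb. Your telescoping picture would also work, but you would then need to check that the overlap of a $D_{x_n^\ve}$-cell with a neighbouring $D_{x_m^\ve}$-cell is uniformly nondegenerate so that the pairwise Poincar\'e constants do not blow up; the paper's single-domain formulation avoids having to isolate that step. Your concern about uniform aspect ratio of the triangular simplices is legitimate and is handled in the paper only implicitly, through the same dependence of constants on $D$.
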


\begin{proof} Similar to  the periodic case \cite{Cioranescu_2008}, we use the fact that 
the space of $Q_1$-interpolants  is  a finite-dimensional space of dimension $2^d$ and all norms  are equivalent.  
Then  for $\xi \in \Xi_{n,\mathcal Y}^\ve$, where $n=1, \ldots, N_\ve$, we obtain 
\begin{eqnarray}\label{Q1-norms}
\begin{aligned} 
\|\Q_{\mL}^\ve (\varphi)\|^p_{L^p(\ve D_{x_n^\ve}(\xi+ Y))} \leq C_1 \ve^{d}  
 \sum_{k\in \{0,1 \} ^d}  \big| \Q_{\mL}^\ve(\varphi)(\ve \xi + \ve  k)\big|^p. 
 \end{aligned}
\end{eqnarray}
For $\xi_n \in \bar \Xi_n^\ve$ and triangular  elements $\omega^\ve_{\xi_{n,m}}$    between  $ \Omega_{n, \mathcal Y}^\ve$ and $ \Omega_{m, \mathcal Y}^\ve$,    $m \in Z_n$,  holds
\begin{equation*}
\|\Q_{\mL}^\ve (\varphi)\|^p_{L^p(\omega_{\xi_{n,m}}^\ve)} \leq
 C_2 \ve^{d} \hspace{-0.5 cm } \sum_{k\in\hat K_{n}, m\in Z_{n}}  \sum_{\xi_m \in \widetilde \Xi_{n,m}^\ve}     \Big[  \big| \Q_{\mL}^\ve(\varphi)(\ve\xi_n + \ve k )\big|^p +   \big| \Q_{\mL}^\ve(\varphi)(\ve \xi_m)\big|^p \Big],
\end{equation*}
where  $|Z_n| \leq 2^d$ and $|\widetilde \Xi^\ve_{n,m}| \leq  2^{2d}$ for every $n=1, \ldots, N_\ve$. Thus for $\Lambda^\ve_{\mathcal Y}$ it 
holds that
\begin{eqnarray}\label{estim_Lambda_1}
\begin{aligned}
& \|\Q_{\mL}^\ve (\varphi)\|^p_{L^p(\Lambda^\ve_{\mathcal Y})} \\
& \qquad  \leq C_3 \ve^{d} \sum_{n=1}^{N_\ve} \sum_{\xi_n\in \bar \Xi^\ve_n, k \in \hat K_{n}}\sum_{m \in Z_n, \xi_m \in \widetilde \Xi_{n,m}^\ve}\hspace{-0.3 cm} \Big[ \big| \Q_{\mL}^\ve(\varphi)(\ve \xi_n+ \ve k )\big|^p 
+  \big| \Q_{\mL}^\ve(\varphi)(\ve \xi_m)\big|^p  \Big]. 
\end{aligned}
\end{eqnarray}
From the definition of $\Q_{\mL}^\ve$ it follows that
\begin{eqnarray*}
|\Q_{\mL}^\ve(\varphi) (\ve \xi)|^p \leq \ddashinttt_Y |\varphi(\ve D_{x_n^\ve}(\xi+ y))|^p dy  =
\frac 1{\ve^d |D_{x_n^\ve} Y|}  \int_{\ve D_{x_n^\ve}(\xi+ Y)} |\varphi(x)|^p dx
\end{eqnarray*}
 for $\xi \in \Xi_{n}^\ve$ and $n=1, \ldots, N_\ve$. Then using \eqref{Q1-norms} and \eqref{estim_Lambda_1} implies 
\begin{eqnarray}\label{estim_inter_1}
\begin{aligned}
\|\Q_{\mL}^\ve (\varphi)\|^p_{L^p(\ve D_{x_n^\ve}(\xi+ Y))} \leq C_4 \sum_{k\in \{0,1 \} ^d} \int_{\ve D_{x_n^\ve}(\xi + k + Y)}  |\varphi(x)|^p  dx
 \end{aligned}
\end{eqnarray}
 for $\xi \in \Xi_{n, \mathcal Y}^\ve$ and $n=1, \ldots, N_\ve$,   and in $\Lambda^\ve_{\mathcal Y}$ we have 
\begin{eqnarray}\label{estim_inter_2}
\begin{aligned}
\|\Q_{\mL}^\ve (\varphi)\|^p_{L^p(\Lambda^\ve_{\mathcal Y})} \leq C_5  \sum_{n=1}^{N_\ve}\sum_{m \in Z_n} \sum_{j=n,m}  \sum_{\xi \in \bar \Xi_j^\ve}   \int_{\ve D_{x_j^\ve}(\xi  + Y)}  |\varphi(x)|^p dx. 
\end{aligned} 
\end{eqnarray}
Summing up in \eqref{estim_inter_1}  over $\xi \in \Xi_{n, \mathcal Y}^\ve$ and  $n=1, \ldots, N_\ve$, and adding  \eqref{estim_inter_2} we obtain the   estimate for the $L^p$-norm of $\Q_{\mL}^\ve (\varphi)$,  stated in the Lemma. 

From the definition of   $Q_1$-interpolants we obtain that  for  $\xi \in\Xi_{n, \mathcal Y}^\ve$
\begin{equation}\label{estim_22}
 \| \nabla  \Q_{\mL}^\ve(\phi) \|^p_{L^p(\ve D_{x_n^\ve} (\xi+Y))}  \leq C \ve^{d-p} \sum_{k\in\{0,1\}^d} | \Q_{\mL}^\ve(\phi)(\ve\xi+ \ve k)  -  \Q_{\mL}^\ve(\phi)(\ve \xi) |^p.
\end{equation} 
For  the triangular regions $\omega_{\xi_{n,m}}^\ve$  between neighboring  $\Omega_{n, \mathcal Y}^\ve$ and  $  \Omega_{m, \mathcal Y}^\ve$ we have 
\begin{eqnarray*}
&& \| \nabla  \Q_{\mL}^\ve(\phi) \|^p_{L^p(\omega^\ve_{\xi_{n,m}})}  \leq C \ve^{d-p} \sum_{\substack{m \in Z_n \\ \xi_m \in \widetilde \Xi_{n,m}^\ve}} \hspace{-0.2 cm } \sum_{k_n\in \hat K_{n}, k_m \in \hat K^{-}_{m} } \hspace{-0.2 cm }
 \Big[ | \Q_{\mL}^\ve(\phi)(\ve(\xi_n+  k_n))  -  \Q_{\mL}^\ve(\phi)(\ve \xi_n) |^p   \\
&&  +
  | \Q_{\mL}^\ve(\phi)(\ve(\xi_n+k_n))  -  \Q_{\mL}^\ve(\phi)(\ve (\xi_m-k_m)) |^p + | \Q_{\mL}^\ve(\phi)(\ve(\xi_m-k_m))  -  \Q_{\mL}^\ve(\phi)(\ve \xi_m) |^p \Big].
  \end{eqnarray*}
For       $\phi \in W^{1,p}(D_{x_n^\ve}Y)$ (and $W^{1,p}(D_{x_n^\ve}\mathcal Y)$, $W^{1,p}(D_{x_n^\ve}\mathcal Y^-)$), using   the regularity  of $D$ and  the Poincar\'e inequality, we obtain  
\begin{eqnarray}\label{Poincare_11}
   \Big\|\phi -  \ddashinttt_{D_{x_n^\ve}Y} \phi \, dy\Big \|_{L^p(D_{x_n^\ve}Y)} &\leq & C \|\nabla_y \phi \|_{L^p(D_{x_n^\ve}Y)},
 \nonumber\\
  \Big\|\phi -  \ddashinttt_{D_{x_n^\ve}Y} \phi \, dy\Big \|_{L^p(D_{x_n^\ve}\mathcal Y)}& \leq&  C \|\nabla_y \phi \|_{L^p(D_{x_n^\ve}\mathcal Y)}, \\
 \Big |\ddashinttt_{D_{x_n^\ve}Y} \phi \, dy -  \ddashinttt_{D_{x_n^\ve}\mathcal Y} \phi \, dy\Big|^p
+
 \Big |\ddashinttt_{D_{x_n^\ve}(Y+k)} \phi \, dy-  \ddashinttt_{D_{x_n^\ve}\mathcal Y} \phi \, dy\Big |^p
 &\leq& C \|\nabla_y \phi \|^p_{L^p(D_{x_n^\ve}\mathcal Y)},  \nonumber\\
  \Big |\ddashinttt_{D_{x_n^\ve}Y} \phi \, dy -  \ddashinttt_{D_{x_n^\ve}\mathcal Y^{-}} \phi \, dy\Big|^p
+
 \Big |\ddashinttt_{D_{x_n^\ve}(Y-k)} \phi \, dy-  \ddashinttt_{D_{x_n^\ve}\mathcal Y^{-}} \phi \, dy\Big |^p
 &\leq& C \|\nabla_y \phi \|^p_{L^p(D_{x_n^\ve}\mathcal Y^{-})},  \nonumber
\end{eqnarray}
 where  $1\leq n \leq N_\ve$,  $k \in \{0,1\}^d$ and the constant  $C$ depends on $D$ and  is independent of $\ve$ and $n$.
Using a scaling argument we obtain for every $\xi \in \Xi_n^\ve$
\begin{eqnarray}\label{scalled_Poincare}
\Big\| \phi - \ddashinttt_{\ve D_{x_n^\ve} (\xi+ Y)} \phi\,  dx\Big \|_{L^p(\ve D_{x_n^\ve} (\xi+ Y))}
   \leq  C \ve \| \nabla \phi \|_{L^p(\ve D_{x_n^\ve} (\xi+ Y))} \; .
\end{eqnarray}
Hence,  for $\xi \in  \Xi_{n, \mathcal Y}^\ve$ and $k \in \{0,1\}^d$ as well as for $\xi_j \in \bar \Xi_j^\ve$, with $j=n,m$ and $k_n \in \hat K_{n}$,  $k_m \in \hat K^{-}_{m}$, using a scaling argument in \eqref{Poincare_11}, we have 
\begin{eqnarray} \label{estim_33}
\begin{aligned}
| \Q_{\mL}^\ve(\varphi)(\ve\xi+ \ve k)  -  \Q_{\mL}^\ve(\varphi)(\ve \xi) |^p = \Big|\ddashinttt_{Y+k} \varphi (\ve D_{x_n^\ve}(\xi +y))  dy-  \ddashinttt_{Y} \varphi (\ve D_{x_n^\ve}(\xi +y))  dy\Big|^p  \\
\leq C \ve^{p-d}   \|\nabla \varphi \|^p_{L^p(\ve D_{x_n^\ve}(\xi + \mathcal Y))} , \\
| \Q_{\mL}^\ve(\varphi)(\ve\xi_n+ \ve k_n)  -  \Q_{\mL}^\ve(\varphi)(\ve \xi_n) |^p \leq C \ve^{p-d}   \|\nabla \varphi \|^p_{L^p(\ve D_{x_n^\ve}(\xi_n + \mathcal Y))}, \\
| \Q_{\mL}^\ve(\varphi)(\ve\xi_m- \ve k_m)  -  \Q_{\mL}^\ve(\varphi)(\ve \xi_m) |^p \leq  C\ve^{p-d}   \|\nabla \varphi \|^p_{L^p(\ve D_{x_m^\ve}(\xi_m + \mathcal Y^{-}))} ,
\end{aligned}
\end{eqnarray}
where $C$ depends on $D$  and is independent of $\ve$, $n$, and $m$. 

For $\xi_n \in \bar\Xi_n^\ve$,  $\xi_m \in  \widetilde \Xi_{n,m}^{\ve}$   and $k_n \in \hat K_{n}$,  $k_m \in \hat K^{-}_{m}$,
 using that
 $\ve D_{x_m^\ve}(\xi_m + \mathcal Y^{-})\cap \ve D_{x_n^\ve}(\xi_n + \mathcal Y)  \neq \emptyset$, and applying the inequalities \eqref{Poincare_11} with  a connected domain $$\widetilde {\mathcal Y}_{\xi_n} =  \bigcup_{m\in Z_n,  \xi_m \in \widetilde\Xi_{n,m}^\ve }\bigcup_{k\in \{0,1\}^d}  D_{x_m^\ve}(\xi_m + \mathcal Y^{-}+ k)\cup D_{x_n^\ve}(\xi_n + \mathcal Y- k), $$
 instead of $\mathcal Y$ and $\mathcal Y^{-}$,   together with   a scaling argument,  yield
\begin{eqnarray}\label{estim_44}
\begin{aligned}
 | \Q_{\mL}^\ve(\varphi)(\ve\xi_n+\ve k_n)  -  \Q_{\mL}^\ve(\varphi)(\ve \xi_m- \ve k_m) |^p 
\leq 
 \Big| \ddashinttt_{D_{x_n^\ve}(\xi_n+Y + k_n)} \hspace{-0.1 cm}  \varphi (\ve y)  dy- \ddashinttt_{\widetilde {\mathcal Y}_{\xi_n}} \hspace{-0.1 cm}  \varphi (\ve y)  dy \Big|^p
\\  + 
 \Big| \ddashinttt_{D_{x_m^\ve}(\xi_m+ Y- k_m)} \varphi (\ve y)  dy- \ddashinttt_{\widetilde {\mathcal Y}_{\xi_n}}  \varphi (\ve y)  dy \Big|^p
 \leq C \ve^{p-d} \|\nabla \varphi \|^p_{L^p(\ve \widetilde{\mathcal Y}_{\xi_n})}, 
 \end{aligned}
\end{eqnarray}
where   $C$ depends on $D$ and is independent of $\ve$, $n$, and $m$. 
Thus, using \eqref{estim_44} and the last two estimates in \eqref{estim_33}  we obtain  
\begin{eqnarray}\label{estim_Q_Lambda_2}
\begin{aligned}
\qquad\quad   \| \nabla  \Q_{\mL}^\ve(\varphi) \|^p_{L^p(\Lambda^\ve_{\mathcal Y})}  \leq  C_1 \sum_{n=1}^{N_\ve}\sum_{\substack{\xi_n \in \bar \Xi_n^\ve\\ m \in Z_n}} \sum_{\xi_m \in \widetilde \Xi^{\ve}_{n,m}} \hspace{ -0.2 cm }   \|\nabla \varphi \|^p_{L^p(\ve\widetilde{\mathcal Y}_{\xi_n})} 
 \leq  C_2 \| \nabla \varphi \|^p_{L^p(\Omega)}. 
 \end{aligned}
\end{eqnarray}
Applying   \eqref{estim_33} in \eqref{estim_22},  summing up over $\xi \in  \Xi_{n, \mathcal Y}^\ve$ and $n=1, \ldots, N_\ve$ and combining  with the estimate for $ \| \nabla  \Q_{\mL}^\ve(\varphi) \|_{L^p(\Lambda^\ve_{\mathcal Y})}$ in \eqref{estim_Q_Lambda_2}  we obtain the  estimate  for $\|\nabla  \Q_{\mL}^\ve(\varphi)\|_{L^p(\Omega)}$ in terms of $\|\nabla \varphi\|_{L^p(\Omega)}$, as stated in the Lemma.

To show the estimates for $\mathcal R_{\mL}^\ve (\varphi)$ we consider first
\begin{eqnarray*}
\| \varphi -  \Q_{\mL}^\ve(\varphi) \|_{L^p(\ve D_{x_n^\ve} (\xi+ Y))}  & \leq &
\| \varphi -  \Q_{\mL}^\ve(\varphi)(\ve \xi) \|_{L^p(\ve D_{x_n^\ve} (\xi+ Y))}\\ & + & \| \Q_{\mL}^\ve(\varphi)(\ve \xi) -  \Q_{\mL}^\ve(\varphi) \|_{L^p(\ve D_{x_n^\ve} (\xi+ Y))} \; 
\end{eqnarray*}
for $\xi \in \Xi_{n, \mathcal Y}^\ve$. Using the definition of $\Q_{\mL}^\ve$ and   \eqref{scalled_Poincare}  we obtain  
\begin{eqnarray*}
\| \varphi -  \Q_{\mL}^\ve(\varphi)(\ve \xi) \|_{L^p(\ve D_{x_n^\ve} (\xi+ Y))}   \leq C\ve \| \nabla \varphi \|_{L^p(\ve D_{x_n^\ve} (\xi+ Y))} \; \; \; \text{ for } \;  \xi \in \Xi_{n,\mathcal Y}^\ve.
\end{eqnarray*}
The definition of $\Q_{\mL}^\ve(\varphi)$ and the properties of   $Q_1$-interpolants along with   \eqref{estim_33}  imply 
\begin{eqnarray*}
\| \Q_{\mL}^\ve(\varphi) -  \Q_{\mL}^\ve(\varphi)(\ve \xi) \|_{L^p(\ve D_{x_n^\ve} (\xi+ Y))}   \leq C\ve \| \nabla \varphi \|_{L^p(\ve D_{x_n^\ve}(\xi+  \mathcal Y))} \; \; \; \text{ for } \xi \in  \Xi_{n, \mathcal Y}^\ve.
\end{eqnarray*}
For  triangular elements $\omega^\ve_{\xi_{n,m}} \subset \Lambda^\ve_{\mathcal Y}$ with  $\xi_{n} \in \bar \Xi_n^\ve$ and  $\xi_m \in \widetilde\Xi_{n,m}^\ve$   we have  $\omega^\ve_{\xi_{n,m}} \subset \ve \widetilde{\mathcal Y}_{\xi_n}$. 
Then, the second inequality in \eqref{Poincare_11} with $\widetilde {\mathcal Y}_{\xi_n}$ and a scaling argument yield 
\begin{eqnarray*}
\| \varphi - \Q_{\mL}^\ve(\varphi)(\ve \xi_n)\|_{L^p({\omega^\ve_{\xi_{n,m}}})}
\leq \| \varphi - \Q_{\mL}^\ve(\varphi)(\ve \xi_n)\|_{L^p({\ve\widetilde{\mathcal Y}_{\xi_n}})}  \leq C \ve \|\nabla \varphi\|_{L^p({\ve \widetilde{\mathcal Y}_{\xi_n}})},
\end{eqnarray*}
 whereas  
   \eqref{estim_33} and \eqref{estim_44} together with the properties of $Q_1$-interpolants ensure 
\begin{eqnarray*}
\| \Q_{\mL}^\ve(\varphi) -  \Q_{\mL}^\ve(\varphi)(\ve \xi_n) \|_{L^p({\omega^\ve_{\xi_{n,m}}})}   \leq C\ve  \| \nabla \varphi \|_{L^p({\ve \widetilde{\mathcal Y}_{\xi_n}})}. 
\end{eqnarray*}
Thus, combining the estimates from above  we obtain 
\begin{eqnarray*}
\|\mathcal R_{\mL}^\ve (\varphi)\|_{L^p(\Omega)} \leq  \sum_{n=1}^{N_\ve} \|\varphi- \Q_{\mL}^\ve(\varphi) \|_{L^p(\Omega^\ve_n)}   \leq 
\sum_{n=1}^{N_\ve} \sum_{\xi \in  \Xi_{n,\mathcal Y}^\ve}  \|\varphi- \Q_{\mL}^\ve(\varphi) \|_{L^p(\ve D_{x_n^\ve} (\xi+ Y))} 
 \\+  \sum_{n=1}^{N_\ve}\sum_{\xi_n \in \bar \Xi_n^\ve, m \in Z_n} \sum_{ \xi_m \in \widetilde\Xi_{n,m}^\ve}  \hspace{-0.2 cm } \|\varphi- \Q_{\mL}^\ve(\varphi) \|_{L^p(\omega_{\xi_{n,m}}^\ve)}  \leq 
C \ve \|\nabla \varphi\|_{L^p(\Omega)}. 
\end{eqnarray*}
The estimate for $\nabla \Q_{\mL}^\ve(\varphi)$ and the definition of $\mathcal R_{\mL}^\ve (\varphi)$  yield the estimate for 
$\nabla \mathcal R_{\mL}^\ve (\varphi)$.
\end{proof}
 
To show convergence results for sequences obtained by applying the l-p unfolding operator  to sequences of functions defined on locally periodic perforated domains, we have to introduce  the interpolation operator $\Q^{\ast,\ve}_{\mathcal L}$
for functions in $L^p(\Omega^\ast_{\ve})$.
We define  
$$ \hat  \Omega^\ast_{\ve} = \text{Int}\big( \bigcup_{n=1}^{N_\ve} \hat \Omega_{n}^{\ast,\ve}\big),  \qquad  \Lambda^\ast_\ve = \Omega^\ast_{\ve} \setminus \hat  \Omega^\ast_{\ve}, \quad  \text{ where } \; 
\hat \Omega_{n}^{\ast,\ve}= \bigcup_{\xi \in \hat\Xi_n^\ve} \ve D_{x_n^\ve}(\overline Y^\ast + \xi),$$ 
and
 $$ \Omega^\ast_{\ve, \mathcal Y} = \text{Int}\big(\bigcup_{n=1}^{N_\ve} \overline \Omega_{n, \mathcal Y}^{\ast,\ve}\big), \;   \Lambda^\ast_{\ve, \mathcal Y}= \Omega^\ast_{\ve} \setminus  \Omega^\ast_{\ve, \mathcal Y}, \text{  where } 
 \Omega_{n, \mathcal Y}^{\ast,\ve}=\text{Int}\big( \bigcup_{\xi \in \Xi_{n,\mathcal Y}^\ve} \ve D_{x_n^\ve}(\overline Y^\ast + \xi)\big),  $$
 with $\Omega$ instead of $\Omega_1$ in the definition of $\Xi_{n,\mathcal Y}^\ve$,  as well as     $\widetilde \Omega^\ast_\ve =\Omega_\ve^\ast \cap \widetilde \Omega_\ve$, where $\widetilde \Omega_\ve$ is defined as
\begin{equation}\label{domain_interp}
\widetilde \Omega_\ve= \{ x\in \Omega : \; \text{dist} ( x, \partial \Omega) > 4 \ve
 \max\limits_{x\in\partial\Omega} \text{diam}(D(x)Y)\}. 
 \end{equation}
 We also  consider   $\mathcal Y^\ast= \text{Int} \big(\bigcup_{k\in\{0,1\}^d} ( \overline Y^\ast + k)\big)$ and $
 \mathcal Y^{\ast,-}= \text{Int} \big(\bigcup_{k\in\{0,1\}^d} ( \overline Y^\ast - k)\big). $

Similar to $\Q^{\ve}_{\mathcal L}$, in the definition of the interpolation operator  $\Q^{\ast,\ve}_{\mathcal L}$    we shall distinguish between  $\Omega^{\ve}_{\mathcal Y}$ and  $\Lambda^\ve_{\mathcal Y}\cap \widetilde \Omega_\ve$. 
  For $x\in \Omega^{\ve}_{\mathcal Y}$ we can consider $Q_1$-interpolation between  vertices of  the corresponding unit cells, whereas for $x\in \Lambda^\ve_{\mathcal Y}\cap \widetilde \Omega_\ve$  we  consider triangular $Q_1$-interpolation between vertices of  unit cells in  two neighboring $\Omega^\ve_n$ and $\Omega_m^\ve$. This approach ensures that $\Q^{\ast,\ve}_{\mathcal L}(\phi)$ is continuous in $\widetilde \Omega_\ve$. 
\begin{definition} 
The operator $\Q^{\ast,\ve}_{\mathcal L}: L^p(\Omega^\ast_{\ve}) \to W^{1, \infty} (\widetilde\Omega_\ve)$, for $p\in[1, + \infty]$,  is defined by 
\begin{equation}\label{def:interp_2}
\Q_{\mL}^{\ast, \ve}(\phi)(\ve\xi) =   \ddashinttt_{Y^\ast} \phi(D_{x_n^\ve}(\ve \xi+ \ve y)) dy  \qquad \text{ for }  \xi\in  \hat \Xi_n^\ve \text{ and } n=1, \ldots, N_\ve, 
\end{equation}
 and for $x\in \Omega^{\ve}_{n,\mathcal Y}\cap \widetilde \Omega_\ve$   we define $\Q_{\mL}^{\ast, \ve}(\phi)(x)$ as the $Q_1$-interpolant of the  values of $\Q_{\mL}^{\ast,\ve}(\phi)(\ve\xi)$  at  vertices of  $\ve [D^{-1}_{x_n^\ve} x/\ve]_Y + \ve Y$, where    $1\leq n \leq N_\ve$.
 \\
  For $x\in \Lambda^\ve_{\mathcal Y}\cap \widetilde \Omega_\ve$  
 we define $\Q_{\mL}^{\ast, \ve}(\phi)(x)$  as a  triangular $Q_1$-interpolant of the  values of $\Q_{\mL}^{\ast, \ve}(\phi)(\ve\xi)$ at   $ \xi_n  +k_n$ and  $\xi_m $    such that $\xi_n \in  \bar \Xi_n^{\ast, \ve} = \{ \xi \in \bar \Xi_n^{\ve}: \ve D_{x_n^\ve}(Y+ \xi) \cap \widetilde \Omega_{\ve/2} \neq \emptyset\}$, 
  $\xi_m \in \widetilde \Xi^\ve_{n,m}$  for  $ m \in Z_n$,  and  $k_n \in \hat K_n$,  where $1\leq n \leq N_\ve$, see  Figure~\ref{fig:tissue}.   
 \end{definition}
 
In a similar way as   for $\Q^{\ve}_\mL(\phi)$  and $\R^{ \ve}_\mL(\phi)$ 
we  obtain   estimates  for $\Q^{\ast,\ve}_\mL(\phi)$  and $\R^{\ast, \ve}_\mL(\phi)= \phi - \Q^{\ast,\ve}_\mL(\phi)$.
\begin{lemma}\label{estim_QL_ast}
For every $\phi \in W^{1,p}(\Omega^\ast_\ve)$, where $p \in [1, + \infty)$, we have 
\begin{equation*}
\begin{aligned}
&\|\Q^{\ast,\ve}_\mL(\phi) \|_{L^p(\widetilde \Omega_\ve)} \leq C \|  \phi \|_{L^p(\Omega^\ast_\ve)}, \qquad \qquad &&
\|\nabla \Q^{\ast,\ve}_\mL(\phi) \|_{L^p(\widetilde \Omega_\ve)} \leq C \| \nabla \phi \|_{L^p(\Omega^\ast_\ve)}, \\
&\|\R^{\ast,\ve}_\mL(\phi) \|_{L^p(\widetilde \Omega_\ve^\ast)} \leq C \ve \| \nabla \phi \|_{L^p(\Omega^\ast_\ve)}, \quad &&
\|\nabla \R^{\ast,\ve}_\mL(\phi) \|_{L^p(\widetilde\Omega^\ast_\ve)} \leq C \| \nabla \phi \|_{L^p(\Omega^\ast_\ve)}, 
\end{aligned}
\end{equation*}
where the constant $C$  is independent of $\ve$. 
\end{lemma}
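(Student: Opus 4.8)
The plan is to run the proof of Lemma~\ref{Lemma_Q_1} essentially verbatim, with three systematic replacements: the reference cell $Y$ becomes the perforated cell $Y^\ast$, the macro-cells $\mathcal Y$, $\mathcal Y^{-}$ become $\mathcal Y^\ast$, $\mathcal Y^{\ast,-}$, and every estimate is carried out inside the interior region $\widetilde\Omega_\ve$ (resp.\ $\widetilde\Omega^\ast_\ve$) defined in \eqref{domain_interp}, where by construction every unit cell $\ve D_{x_n^\ve}(\xi+Y^\ast)$ and every triangular patch used in \eqref{def:interp_2} lies inside $\Omega^\ast_\ve$, so that $\phi\in W^{1,p}(\Omega^\ast_\ve)$ is defined on all sets appearing below. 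The only new structural input is that the Poincar\'e inequality must now be applied on the \emph{connected} perforated sets $D_xY^\ast$, $D_x\mathcal Y^\ast$, $D_x\mathcal Y^{\ast,-}$ and on the joined neighbour domains; connectedness and uniform-in-$(n,\ve)$ Poincar\'e constants follow from the assumption that $Y^\ast=Y\setminus\overline Y_0$ and $\widetilde Y^\ast_{K_x}$ are connected for all $x\in\overline\Omega$ together with the uniform bi-Lipschitz bounds on $D$.

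First I would prove the $L^p$-bound. Since the space of $Q_1$-interpolants on a cell is $2^d$-dimensional and all norms on it are equivalent, for $\xi\in\Xi^\ve_{n,\mathcal Y}$ one has $\|\Q^{\ast,\ve}_\mL(\phi)\|^p_{L^p(\ve D_{x_n^\ve}(\xi+Y^\ast))}\le C\ve^d\sum_{k\in\{0,1\}^d}|\Q^{\ast,\ve}_\mL(\phi)(\ve\xi+\ve k)|^p$, and Jensen's inequality in \eqref{def:interp_2} gives $|\Q^{\ast,\ve}_\mL(\phi)(\ve\xi)|^p\le\ddashinttt_{Y^\ast}|\phi(D_{x_n^\ve}(\ve\xi+\ve y))|^p\,dy=(\ve^d|D_{x_n^\ve}Y^\ast|)^{-1}\int_{\ve D_{x_n^\ve}(\xi+Y^\ast)}|\phi|^p\,dx$. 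On the transition set $\Lambda^\ve_{\mathcal Y}\cap\widetilde\Omega_\ve$ the triangular interpolant is controlled by the analogous vertex values at $\xi_n+k_n$ and $\xi_m$, with $|Z_n|\le 2^d$ and $|\widetilde\Xi^\ve_{n,m}|\le 2^{2d}$ exactly as in Lemma~\ref{Lemma_Q_1}. Summing over $n$ and the relevant $\xi$, each cell of $\Omega^\ast_\ve$ being counted a bounded number of times, yields $\|\Q^{\ast,\ve}_\mL(\phi)\|_{L^p(\widetilde\Omega_\ve)}\le C\|\phi\|_{L^p(\Omega^\ast_\ve)}$.

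For the gradient I would use, as in \eqref{estim_22}, that on a cell $\|\nabla\Q^{\ast,\ve}_\mL(\phi)\|^p_{L^p(\ve D_{x_n^\ve}(\xi+Y^\ast))}\le C\ve^{d-p}\sum_{k}|\Q^{\ast,\ve}_\mL(\phi)(\ve\xi+\ve k)-\Q^{\ast,\ve}_\mL(\phi)(\ve\xi)|^p$, and estimate each difference of averages over $Y^\ast+k$ and $Y^\ast$ by the analogue of \eqref{Poincare_11}--\eqref{estim_33} on the connected set $D_{x_n^\ve}\mathcal Y^\ast$, obtaining $C\ve^{p-d}\|\nabla\phi\|^p_{L^p(\ve D_{x_n^\ve}(\xi+\mathcal Y^\ast))}$. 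In the transition region one replaces $\mathcal Y^\ast$, $\mathcal Y^{\ast,-}$ by the connected union $\widetilde{\mathcal Y}^\ast_{\xi_n}=\bigcup_{m\in Z_n,\,\xi_m\in\widetilde\Xi^\ve_{n,m}}\bigcup_{k\in\{0,1\}^d}D_{x_m^\ve}(\xi_m+\mathcal Y^{\ast,-}+k)\cup D_{x_n^\ve}(\xi_n+\mathcal Y^\ast-k)$ and bounds the cross-differences $|\Q^{\ast,\ve}_\mL(\phi)(\ve\xi_n+\ve k_n)-\Q^{\ast,\ve}_\mL(\phi)(\ve\xi_m-\ve k_m)|^p$ by $C\ve^{p-d}\|\nabla\phi\|^p_{L^p(\ve\widetilde{\mathcal Y}^\ast_{\xi_n})}$, exactly as \eqref{estim_44}. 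A given cell of $\Omega^\ast_\ve$ belongs to boundedly many of the sets $\ve D_{x_n^\ve}(\xi+\mathcal Y^\ast)$ and $\ve\widetilde{\mathcal Y}^\ast_{\xi_n}$, so summation gives $\|\nabla\Q^{\ast,\ve}_\mL(\phi)\|_{L^p(\widetilde\Omega_\ve)}\le C\|\nabla\phi\|_{L^p(\Omega^\ast_\ve)}$. For $\R^{\ast,\ve}_\mL(\phi)=\phi-\Q^{\ast,\ve}_\mL(\phi)$ on $\widetilde\Omega^\ast_\ve$ I would split, on each cell, $\|\phi-\Q^{\ast,\ve}_\mL(\phi)\|_{L^p}\le\|\phi-\Q^{\ast,\ve}_\mL(\phi)(\ve\xi)\|_{L^p}+\|\Q^{\ast,\ve}_\mL(\phi)(\ve\xi)-\Q^{\ast,\ve}_\mL(\phi)\|_{L^p}$; the first term is $\|\phi-\ddashinttt_{\ve D_{x_n^\ve}(\xi+Y^\ast)}\phi\|_{L^p(\ve D_{x_n^\ve}(\xi+Y^\ast))}\le C\ve\|\nabla\phi\|_{L^p(\ve D_{x_n^\ve}(\xi+Y^\ast))}$ by the scaled Poincar\'e inequality on $D_{x_n^\ve}Y^\ast$ (analogue of \eqref{scalled_Poincare}), and the second is $\le C\ve\|\nabla\phi\|_{L^p(\ve D_{x_n^\ve}(\xi+\mathcal Y^\ast))}$ by the $Q_1$-interpolant properties and the vertex-difference bounds above (with $\widetilde{\mathcal Y}^\ast_{\xi_n}$ in the transition region). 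Summing over the cells covering $\widetilde\Omega^\ast_\ve$ gives $\|\R^{\ast,\ve}_\mL(\phi)\|_{L^p(\widetilde\Omega^\ast_\ve)}\le C\ve\|\nabla\phi\|_{L^p(\Omega^\ast_\ve)}$, and then $\|\nabla\R^{\ast,\ve}_\mL(\phi)\|_{L^p(\widetilde\Omega^\ast_\ve)}\le\|\nabla\phi\|_{L^p(\Omega^\ast_\ve)}+\|\nabla\Q^{\ast,\ve}_\mL(\phi)\|_{L^p(\widetilde\Omega^\ast_\ve)}\le C\|\nabla\phi\|_{L^p(\Omega^\ast_\ve)}$.

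The main obstacle is geometric rather than computational: one must be sure the perforated macro-cells $D_x\mathcal Y^\ast$, $D_x\mathcal Y^{\ast,-}$ and, above all, the joined neighbour sets $\widetilde{\mathcal Y}^\ast_{\xi_n}$ stay connected, so that the Poincar\'e inequality applies there with a constant uniform in $n$ and $\ve$; this is precisely where the standing hypothesis that $Y^\ast$ and $\widetilde Y^\ast_{K_x}$ are connected, together with the uniform bi-Lipschitz control of $D$, is used. A second point one must verify is that, because no extension operator from $W^{1,p}(\Omega^\ast_\ve)$ is available here (in contrast to the full-domain Lemma~\ref{Lemma_Q_1}), every unit cell and every triangular patch entering the definition of $\Q^{\ast,\ve}_\mL$ is contained in $\Omega^\ast_\ve$ and away from $\partial\Omega$; this is exactly the reason for introducing the interior region $\widetilde\Omega_\ve$ in \eqref{domain_interp} and the index set $\bar\Xi_n^{\ast,\ve}$ through $\widetilde\Omega_{\ve/2}$, and it is where one checks that $\widetilde\Omega^\ast_\ve$ is covered by the cells on which the above local estimates were proved.
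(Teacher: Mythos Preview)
Your overall strategy coincides with the paper's: rerun Lemma~\ref{Lemma_Q_1} with $Y^\ast$, $\mathcal Y^\ast$, $\mathcal Y^{\ast,-}$ in place of $Y$, $\mathcal Y$, $\mathcal Y^-$, use Poincar\'e on the connected perforated reference sets, and exploit $\widetilde\Omega_\ve$, $\bar\Xi_n^{\ast,\ve}$ to stay away from $\partial\Omega$. The $L^p$-bound, the interior gradient bound on $\Omega^\ve_{n,\mathcal Y}$, and the treatment of $\R^{\ast,\ve}_\mL$ are all handled exactly as the paper does.

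There is, however, a genuine gap in your construction of the joined neighbour set. Your set
\[
\widetilde{\mathcal Y}^\ast_{\xi_n}=\bigcup_{m,\xi_m}\bigcup_{k\in\{0,1\}^d} D_{x_m^\ve}(\xi_m+\mathcal Y^{\ast,-}+k)\cup D_{x_n^\ve}(\xi_n+\mathcal Y^\ast-k)
\]
removes only the $D_{x_n^\ve}$-holes from the $n$-pieces and only the $D_{x_m^\ve}$-holes from the $m$-pieces. But near the interface the $n$-pieces $D_{x_n^\ve}(\xi_n+\mathcal Y-k)$ can protrude into $\hat\Omega_m^{\ast,\ve}$, where the actual perforations of $\Omega^\ast_\ve$ sit at $D_{x_m^\ve}$-positions, not $D_{x_n^\ve}$-positions; those holes are \emph{not} excised by your union, so in general $\ve\widetilde{\mathcal Y}^\ast_{\xi_n}\not\subset\Omega^\ast_\ve$ and $\phi$ is not defined on it. Since no extension of $\phi$ from $\Omega^\ast_\ve$ is available (that is the whole point of the lemma), the Poincar\'e step \eqref{estim_44} breaks down. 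Connectedness, which you flag as the main obstacle, is actually the easier of the two requirements.

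The paper's remedy is to build $\widetilde{\mathcal Y}^\ast_{\xi_n}$ differently: take the \emph{full} (unperforated) neighbourhood $\hat{\mathcal Y}_{\xi_n}$, the same $\widetilde{\mathcal Y}_{\xi_n}$ as in Lemma~\ref{Lemma_Q_1}, and then subtract the union of \emph{all} translates of $Y_0$ (from the $n$-pattern, from every neighbouring $m$-pattern, and even from second neighbours $s\in Z_m$) that can meet this region. This guarantees simultaneously $\ve\widetilde{\mathcal Y}^\ast_{\xi_n}\subset\Omega^\ast_\ve$ and connectedness with a Poincar\'e constant depending only on $D$ and $Y^\ast$, after which \eqref{Poincare_perfor_2}--\eqref{estim_perfor_Q_1} go through exactly as you outline.
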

\begin{proof}  
The proof for the first estimate follows the same lines as  the proof of the corresponding estimate in Lemma~\ref{Lemma_Q_1}. 
To show the estimates for $\nabla \Q_{\mathcal L}^{\ast ,\ve} (\phi)$ and $\R^{\ast,\ve}_\mL(\phi)$ we have to estimate the differences 
$\Q_{\mathcal L}^{\ast ,\ve} (\phi)(\ve \xi)- \Q_{\mathcal L}^{\ast ,\ve} (\phi)(\ve \xi+k)$, for $\xi \in \Xi_{n, \mathcal Y}^\ve$ and $k \in \{ 0, 1\}^d$,
and  $\Q_{\mathcal L}^{\ast ,\ve} (\phi)(\ve\xi_n+ \ve k_n) - \Q_{\mathcal L}^{\ast ,\ve} (\phi)(\ve\xi_m- \ve k_m)$ for  $\xi_n \in \bar \Xi_n^{\ast, \ve}$ and $\xi_m \in \widetilde \Xi_{n,m}^\ve$, with $m \in Z_n$, and $k_n \in \hat K_n$,  $k_m \in \hat K_m^{ -}$, 
where $1\leq n\leq N_\ve$. 

As  in the proof of Lemma~\ref{Lemma_Q_1}, by considering the estimate  \eqref{estim_22},  applying the Poincar\'e inequality  and   using the estimates similar to \eqref{Poincare_11} and  \eqref{estim_33},  with $Y^\ast$ and $\mathcal Y^\ast$ instead of $Y$ and $\mathcal Y$, we obtain 
\begin{equation}\label{Q_perfor_estim}
\begin{aligned}
&\; \; \big |\Q_{\mathcal L}^{\ast ,\ve} (\phi)(\ve \xi)- \Q_{\mathcal L}^{\ast ,\ve} (\phi)(\ve \xi+k)\big|^p  \leq 
C \ve^{p-d} \|\nabla \phi \|^p_{L^p(\ve D_{x_n^\ve}(\mathcal Y^\ast+ \xi))}, \\
&\qquad  \| \nabla  \Q_{\mathcal L}^{\ast ,\ve} (\phi) \|_{L^p(\ve D_{x_n^\ve}(Y+ \xi))} \leq  C\|\nabla \phi \|_{L^p(\ve D_{x_n^\ve}(\mathcal Y^\ast+ \xi))}, \\
&\qquad  \| \phi  - \Q_{\mathcal L}^{\ast ,\ve} (\phi) \|_{L^p(\ve D_{x_n^\ve}(Y^\ast+ \xi))}
 \leq  \| \phi - \Q_{\mathcal L}^{\ast ,\ve} (\phi)(\ve \xi) \|_{L^p(\ve D_{x_n^\ve}(Y^\ast+ \xi))} \\  
& \qquad  + \| \Q_{\mathcal L}^{\ast ,\ve} (\phi)  - \Q_{\mathcal L}^{\ast ,\ve} (\phi)(\ve \xi) \|_{L^p(\ve D_{x_n^\ve}(Y+ \xi))}  \leq C \ve \|\nabla \phi \|_{L^p(\ve D_{x_n^\ve}(\mathcal Y^\ast+ \xi))},
\end{aligned}
\end{equation}
for $\xi \in \Xi_{n, \mathcal Y}^\ve$ and $n=1,\ldots, N_\ve$. 
For $\xi_n\in \bar \Xi_n^{\ast, \ve}$ and $ \xi_m \in \widetilde \Xi_{n,m}^\ve$,  with  $m \in Z_n$, we consider   $\ve D_{x_j^\ve}(Y_{0}+ \xi)$ for such  $\ve D_{x_j^\ve}(Y+ \xi)$, with $\xi \in \hat \Xi_j^\ve$,  that have   possible nonempty intersections with a triangular element  $\omega^\ve_{\xi_{n,m}}$ between neighboring   $\Omega_{n,\mathcal Y}^{\ast, \ve}$ and $\Omega_{m,\mathcal Y}^{\ast, \ve}$, i.e.,
\begin{eqnarray*}
&&\hat{\mathcal Y}^{0}_{\xi_n} =\bigcup_{ \substack{k_n^+\in \widetilde K_{n}\\ k \in \widetilde K_n^{-} }} \bigcup_{\substack{l \in Z_n, \xi_l \in \widetilde \Xi_{l,n}^{\ve,+}\\  k_l\in \widetilde K_{l}}} D_{x_n^\ve}( \overline Y_{0} + \xi_n  +k_n^+-k) \cup D_{x_l^\ve}(\overline Y_{0}  + \xi_l+k_l) , \quad  \\
&&\hat {\mathcal Y}^{0,-}_{\xi_n} =\bigcup_{\substack{m \in Z_n, \xi_m \in \widetilde \Xi_{n,m}^\ve\\ k_m^-\in \widetilde K_{m}^{-},  k \in \widetilde K_m }}
\bigcup_{\substack{l \in Z_m,  
\xi_l \in \widetilde \Xi_{l,m}^{\ve,+}\\ k_l \in \widetilde K_l}}
D_{x_m^\ve}(\overline Y_{0}  + \xi_m  - k_m^-+ k) \cup D_{x_l^\ve}(\overline Y_{0}  + \xi_l+k_l),\\
&&\hat {\mathcal Y}^{0,+}_{\xi_n} =\bigcup_{m \in Z_n, \xi_m \in \widetilde \Xi_{n,m}^\ve}
\bigcup_{s \in Z_m, 
 \xi_s \in \widetilde \Xi_{m,s}^\ve} \bigcup_{k \in  \widetilde K_{s}^- }  D_{x_s^\ve}(\overline Y_{0}  + \xi_s- k),
\end{eqnarray*}
where $\widetilde K_n^{-}=\{ k\in \{0,1\}^d : \; \xi_n - k \in \hat \Xi_n^\ve\}$,  $\widetilde K_m =\{ k\in \{0,1\}^d : \; \xi_m + k \in \hat \Xi_m^\ve\}$, and $\widetilde \Xi_{l,n}^{\ve, +} =\left \{ \xi_l \in \bar \Xi_l^\ve : \; \;  \ve D_{x_l^\ve}(\mathcal Y+ \xi_l)\cap \ve D_{x_n^\ve}(\mathcal Y^{-}+ \xi_n) \neq \emptyset  \right \}$,  and  assemble a set of such cells $\ve D_{x_n^\ve} (Y+\xi)$ and $\ve D_{x_m^\ve}(Y+\xi)$ that have possible nonempty  intersections with $\omega^\ve_{\xi_{n,m}}$, i.e.,
\begin{eqnarray*}
&&\hat{\mathcal Y}_{\xi_n} =\bigcup_{m \in Z_n, \xi_m \in \widetilde \Xi_{n,m}^\ve} 
 \bigcup_{k\in \{0,1\}^d } D_{x_m^\ve}(\overline{\mathcal Y^{-}} + \xi_m+ k)\cup D_{x_n^\ve} (\overline{\mathcal Y} + \xi_n-k)
\end{eqnarray*}
and define 
$ \widetilde {\mathcal Y}_{\xi_n}^{\ast} =\text{Int} \big(\hat{\mathcal Y}_{\xi_n}  
\setminus (\hat{\mathcal Y}^{0}_{\xi_n}  \cup \hat {\mathcal Y}^{0,-}_{\xi_n} \cup \hat {\mathcal Y}^{0,+}_{\xi_n} ) \big)$. 
 We have that  $\widetilde {\mathcal Y}_{\xi_n}^{\ast}$ is connected and $\ve \widetilde {\mathcal Y}_{\xi_n}^{\ast} \subset \Omega_\ve^\ast$ for all $\xi_n \in \bar \Xi^{\ast, \ve}_n$ and  $n=1,\ldots,N_\ve$. 
 Applying  the Poincar\'e inequality  in $\widetilde {\mathcal Y}_{\xi_n}^{\ast}$  and using the regularity of $D$ yields
\begin{equation}\label{Poincare_perfor_2}
\begin{aligned}
\Big| \ddashinttt_{D_{x_n^\ve}(Y^\ast+\xi_n+k_n)} \phi(y) dy -  \ddashinttt_{\widetilde {\mathcal Y}_{\xi_n}^{\ast}} \phi(y) dy   \Big|^p\leq C  \int_{\widetilde {\mathcal Y}_{\xi_n}^{\ast}  }  |\nabla_y \phi (y)|^p dy , \\
\Big| \ddashinttt_{D_{x_m^\ve}(Y^\ast+\xi_m-k_m)} \phi(y) dy -  \ddashinttt_{\widetilde {\mathcal Y}_{\xi_n}^{\ast} } \phi(y) dy   \Big|^p\leq C  \int_{\widetilde {\mathcal Y}_{\xi_n}^{\ast} }  |\nabla_y \phi (y)|^p dy, \\
\Big \|\phi - \ddashinttt_{D_{x_n^\ve}(Y^\ast+\xi_n)} \phi(y) dy\Big \|_{L^p(\widetilde {\mathcal Y}_{\xi_n}^{\ast})} \leq C \|\nabla_y \phi \|_{L^p(\widetilde {\mathcal Y}_{\xi_n}^{\ast})},
\end{aligned}
\end{equation}
for  $\xi_n \in \bar \Xi_n^{\ast, \ve}$, $\xi_m \in \widetilde \Xi_{n,m}^\ve$, with $m \in Z_n$, and  $k_n \in \hat K_n$,  $k_m \in \hat K_m^{ -}$, where  the constant $C$ depends on $D$ and  is independent of $\ve$, $n$ and $m$.
Then, using a scaling argument  in \eqref{Poincare_perfor_2}  implies
\begin{eqnarray}\label{estim_Q_end}
\big|\Q_{\mathcal L}^{\ast ,\ve} (\phi)(\ve\xi_n+ \ve k_n) - \Q_{\mathcal L}^{\ast ,\ve} (\phi)(\ve\xi_n)\big|^p +
\big|\Q_{\mathcal L}^{\ast ,\ve} (\phi)(\ve\xi_m- \ve k_m) - \Q_{\mathcal L}^{\ast ,\ve} (\phi)(\ve\xi_m)\big|^p  \nonumber \\
+ \big|\Q_{\mathcal L}^{\ast ,\ve} (\phi)(\ve\xi_n+ \ve k_n) - \Q_{\mathcal L}^{\ast ,\ve} (\phi)(\ve\xi_m- \ve k_m)\big|^p \leq C \ve^{p-d}\|\nabla \phi \|^p_{L^p(\ve \widetilde{\mathcal Y}^{\ast}_{\xi_n} )}
\end{eqnarray}
for $\xi_n \in \bar \Xi_n^{\ast,\ve}$, $\xi_m \in \widetilde \Xi_{n,m}^\ve$, with $m \in Z_n$,  and $k_n \in \hat K_n$,  $k_m \in \hat K_m^{ -}$.  Hence, taking into account  that $|Z_n|\leq 2^d$ and $| \widetilde \Xi_{n,m}^\ve|\leq 2^{2 d}$, we obtain 
\begin{equation}\label{estim_perfor_Q}
\begin{aligned}
 \| \nabla \Q_{\mathcal L}^{\ast ,\ve} (\phi) \|^p_{L^p(\Lambda^\ve_{\mathcal Y}\cap \widetilde \Omega_\ve)}  &\leq C_1
\sum_{n=1}^{N_\ve} \sum_{\xi_n \in \bar \Xi_n^{\ast, \ve}}  \|\nabla \phi \|^p_{L^p(\ve \widetilde{\mathcal Y}^{\ast}_{\xi_n} )} \leq C_2 \|\nabla \phi \|^p_{L^p(\Omega^\ast_\ve)}.
\end{aligned}
\end{equation}
Applying a scaling argument in  \eqref{Poincare_perfor_2} and  using   the properties of $Q_1$-interpolants and the estimate \eqref{estim_Q_end}  yields  
\begin{equation}\label{estim_perfor_Q_1}
\begin{aligned}
&\| \phi - \Q_{\mathcal L}^{\ast ,\ve} (\phi) \|_{L^p(\Lambda^{\ast}_{\ve, \mathcal Y}\cap \widetilde \Omega_\ve)}  \leq  \sum_{n=1}^{N_\ve} \sum_{\xi_n \in \bar \Xi_n^{\ast,\ve}} \Big[\big\| \phi - \Q_{\mathcal L}^{\ast ,\ve} (\phi)(\ve\xi_n) \big\|_{L^p(\ve \widetilde{ \mathcal Y}^{\ast}_{\xi_n})} \\
&  + \sum_{m \in Z_n, \xi_m \in \widetilde\Xi_{n,m}^\ve} \big \| \Q_{\mathcal L}^{\ast ,\ve} (\phi)(\ve\xi_n)- \Q_{\mathcal L}^{\ast ,\ve}(\phi) \big \|_{L^p(\omega^\ve_{\xi_{n,m}})} \Big]
\leq C \ve  \|\nabla \phi \|_{L^p(\Omega^\ast_\ve)}.
\end{aligned}
\end{equation}
Summing in   \eqref{Q_perfor_estim} over $\Xi_{n, \mathcal Y}^\ve$ and $1\leq n \leq N_\ve$,  adding \eqref{estim_perfor_Q} or \eqref{estim_perfor_Q_1},  respectively, and using the definition of $\R^{\ast,\ve}_\mL(\phi)$ we obtain the estimates stated in the Lemma.
\end{proof}

\section{The l-p unfolding operator in perforated domains: Proofs of convergence results}\label{unfolding_operator_2}
In this section we prove convergence results  for the l-p unfolding operator   in  domains with locally periodic perforations.  
First, we show some properties of the l-p unfolding operator in perforated domains. 
\begin{lemma}\label{converg_perfor_1}
\begin{itemize}
\item[(i)]  $\mathcal T_{\mathcal L}^{\ast, \ve}$ is linear and continuous from 
$L^p(\Omega^{\ast}_{\ve})$ to $L^p(\Omega \times Y^\ast)$, where $p \in [1, +\infty)$,   
\begin{equation*}
 \| \mathcal T_{\mathcal L}^{\ast, \ve}(w)\|_{L^p(\Omega\times Y^\ast)}  \leq |Y|^{1/p}\|w\|_{L^p(\Omega_\ve^\ast)}\; .
\end{equation*}
\item[(ii)] For $w \in L^p(\Omega)$, with $p\in[1, +\infty)$,
$
\mathcal T_{\mathcal L}^{\ast, \ve} (w) \to w $  strongly in $ L^p(\Omega \times Y^\ast). 
$
\item[(iii)] Let  $w^\ve \in L^p(\Omega^{\ast}_{\ve})$, with $p\in(1, +\infty)$, such that $\|w^\ve \|_{L^p(\Omega^{\ast}_{\ve})} \leq C.$ If 
$$
\mathcal T_{\mathcal L}^{\ast, \ve} (w^\ve) \rightharpoonup \hat w \quad \text{weakly in } L^p(\Omega \times Y^\ast)\; , 
$$
then 
$$
\widetilde w^\ve  \rightharpoonup \frac{1}{|Y|} \int_{Y^\ast} \hat  w \, dy \quad \text{weakly in } L^p(\Omega).
$$
\item[(iv)] For $w \in L^p(\Omega; C_{\rm per}(Y^\ast_x))$  we have 
$\mathcal T_{\mathcal L}^{\ast, \ve}(\mathcal L^\ve w) \to w(\cdot, D_x \cdot)$ in $L^p(\Omega \times Y^\ast)$, where $ p \in [1,  +\infty)$. 
\item[(v)] For $w \in C(\overline\Omega; L^p_{\rm per}(Y^\ast_x))$  we have 
$\mathcal T_{\mathcal L}^{\ast, \ve}(\mathcal L^\ve_0 w) \to w(\cdot, D_x \cdot)$ in $L^p(\Omega \times Y^\ast)$, where $p \in[1, +\infty)$. 
\end{itemize}
\end{lemma}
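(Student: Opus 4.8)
The five assertions are the perforated-domain counterparts of Lemma~\ref{lemma:estim_1} (for (i)--(ii)), Lemma~\ref{weak-weak_converge} (for (iii)) and Lemma~\ref{conver_local_t-s} (for (iv)--(v)), and the plan is to transcribe those proofs, the only new ingredients being that the cells $\ve D_{x_n^\ve}(Y+\xi)$ get replaced by their perforated parts $\ve D_{x_n^\ve}(Y^\ast+\xi)$ --- so the elementary change of variables $x=\ve D_{x_n^\ve}(\xi+y)$ now sweeps $Y^\ast$ only --- and that the layer $\Lambda^\ast_\ve=\Omega^\ast_\ve\setminus\hat\Omega^\ast_\ve$ satisfies $|\Lambda^\ast_\ve|\le|\Lambda^\ve|\to0$ by the covering properties of $\{\Omega^\ve_n\}$. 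For \emph{(i)}, unfolding cell by cell, exactly as in~\eqref{conver_1_equality}, gives
\[
\int_{\Omega\times Y^\ast}\big|\T_{\mL}^{\ast,\ve}(w)(x,y)\big|^p\,dy\,dx=|Y|\sum_{n=1}^{N_\ve}\int_{\hat\Omega_n^{\ast,\ve}}|w|^p\,dx=|Y|\int_{\hat\Omega^\ast_\ve}|w|^p\,dx\le|Y|\int_{\Omega^\ast_\ve}|w|^p\,dx,
\]
since $\hat\Xi^\ve_n\subset\Xi^{\ast,\ve}_n$ forces $\hat\Omega_n^{\ast,\ve}\subset\Omega^{\ast,\ve}_n\cap\Omega$ and the $\Omega^\ve_n$ are pairwise disjoint; linearity is immediate. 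For \emph{(ii)}, for $w\in C(\overline\Omega)$ the sampling point $\ve D_{x_n^\ve}[D^{-1}_{x_n^\ve}x/\ve]_Y+\ve D_{x_n^\ve}y$ lies within $C\ve$ of $x$ for $x\in\hat\Omega^\ve_n$, $y\in Y^\ast$, so $\T^{\ast,\ve}_{\mL}(w)(x,y)\to w(x)$ pointwise and, by the bound of (i) and dominated convergence, $\T^{\ast,\ve}_{\mL}(w)\to w$ in $L^p(\Omega\times Y^\ast)$; the general $w\in L^p(\Omega)$ follows by density and the continuity estimate of (i).

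For \emph{(iii)}, (i) gives boundedness of $\{\T^{\ast,\ve}_{\mL}(w^\ve)\}$, hence a weak limit exists; for $\psi\in L^q(\Omega)$, $1/p+1/q=1$, the same change of variables yields
\[
\int_\Omega\widetilde w^\ve\psi\,dx=\int_{\Omega^\ast_\ve}w^\ve\psi\,dx=\frac1{|Y|}\int_{\Omega\times Y^\ast}\T^{\ast,\ve}_{\mL}(w^\ve)\,\T^{\ast,\ve}_{\mL}(\psi)\,dy\,dx+\mathcal A_\ve,\qquad\mathcal A_\ve=\int_{\Lambda^\ast_\ve}w^\ve\psi\,dx,
\]
where $\widetilde w^\ve$ denotes the extension of $w^\ve$ by zero to $\Omega$; $\mathcal A_\ve\to0$ by Hölder, $\|w^\ve\|_{L^p(\Omega^\ast_\ve)}\le C$, $|\Lambda^\ast_\ve|\to0$ and absolute continuity of the integral, and passing to the limit with the weak convergence of $\T^{\ast,\ve}_{\mL}(w^\ve)$ and the strong convergence $\T^{\ast,\ve}_{\mL}(\psi)\to\psi$ (from (ii)) gives $\int_\Omega\widetilde w^\ve\psi\,dx\to\int_\Omega\big(\frac1{|Y|}\int_{Y^\ast}\hat w\,dy\big)\psi\,dx$. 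For \emph{(iv)}--\emph{(v)}, writing $\psi(x,y)=\widetilde\psi(x,D_x^{-1}y)$ and using $[D^{-1}_{x_n^\ve}x/\ve]_Y\in\mathbb Z^d$ and $Y$-periodicity, one has for $x\in\hat\Omega^\ve_n$, $y\in Y^\ast$
\[
\T^{\ast,\ve}_{\mL}(\mathcal L^\ve\psi)(x,y)=\widetilde\psi\big(\ve D_{x_n^\ve}[D^{-1}_{x_n^\ve}x/\ve]_Y+\ve D_{x_n^\ve}y,\;y\big),\qquad\T^{\ast,\ve}_{\mL}(\mathcal L^\ve_0\psi)(x,y)=\widetilde\psi(x_n^\ve,y),
\]
and $\widetilde\psi(x,y)=\psi(x,D_xy)$. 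In \emph{(iv)}, for continuous $\psi$ the first fast point stays within $C\ve$ of $x$, so $\T^{\ast,\ve}_{\mL}(\mathcal L^\ve\psi)(x,y)\to\psi(x,D_xy)$ a.e.; a Riemann-sum computation as in the proof of Lemma~\ref{conver_local_t-s}(i) (with the same $\delta(\ve)\to0$ correction coming from the joint uniform continuity of $\widetilde\psi$) gives convergence of the $L^p$-norms to $\int_{\Omega\times Y^\ast}|\psi(x,D_xy)|^p$, and a.e.\ convergence together with convergence of norms gives strong $L^p$ convergence; the general $\psi\in L^p(\Omega;C_{\rm per}(Y^\ast_x))$ follows by approximation by $\psi_m\in C(\overline\Omega;C_{\rm per}(Y^\ast_x))$, \cite[Lemma~3.4]{Ptashnyk13} for $\mathcal L^\ve\psi_m\to\mathcal L^\ve\psi$ in $L^p$, and the bound of (i).

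Finally, in \emph{(v)}, since $\psi\in C(\overline\Omega;L^p_{\rm per}(Y^\ast_x))$ the map $x\mapsto\widetilde\psi(x,\cdot)\in L^p(Y^\ast)$ is uniformly continuous, so with $\omega_{\widetilde\psi}$ its modulus of continuity and $\text{diam}\,\Omega^\ve_n=\sqrt d\,\ve^r$,
\[
\int_{\Omega\times Y^\ast}\big|\T^{\ast,\ve}_{\mL}(\mathcal L^\ve_0\psi)(x,y)-\widetilde\psi(x,y)\big|^p\,dy\,dx\le|\Omega|\big(\omega_{\widetilde\psi}(\sqrt d\,\ve^r)\big)^p+|\Lambda^\ve|\sup_{x\in\overline\Omega}\|\widetilde\psi(x,\cdot)\|^p_{L^p(Y^\ast)}\longrightarrow0,
\]
which is the claim. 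I do not expect a genuine obstacle here: the computations mirror the non-perforated case almost verbatim, and the one thing to watch is the bookkeeping of the two vanishing layers $\Lambda^\ast_\ve\subset\Omega^\ast_\ve$ and $\Lambda^\ve\subset\Omega$ together with which part of each cell has been perforated away. The only conceptual point is the reason (v) is stated for $\mathcal L^\ve_0$ and not $\mathcal L^\ve$: for data that are merely $L^p$ in the fast variable the fast argument of $\widetilde\psi$ in $\T^{\ast,\ve}_{\mL}(\mathcal L^\ve\psi)$ ranges over a whole cell and cannot be frozen, whereas for $\mathcal L^\ve_0$ that argument is exactly $y$ and only continuity in the slow variable is invoked.
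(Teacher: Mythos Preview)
Your proposal is correct and follows essentially the same route as the paper, which explicitly presents only a sketch and refers back to Lemmas~\ref{lemma:estim_1}, \ref{weak-weak_converge} and \ref{conver_local_t-s} exactly as you do. Your argument for (v) is in fact slightly more direct than the paper's: the paper routes the convergence of $\|\T^{\ast,\ve}_{\mL}(\mathcal L^\ve_0\psi)\|^p_{L^p}$ through the identity $|Y|\big[\int_{\Omega^\ast_\ve}|\mathcal L^\ve_0\psi|^p-\int_{\Lambda^\ast_\ve}|\mathcal L^\ve_0\psi|^p\big]$ and invokes \cite[Lemma~3.4]{Ptashnyk13}, whereas your explicit modulus-of-continuity estimate gives strong convergence in one line; both are equivalent here.
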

 By $\widetilde w$  we   denote the extension of $w$ by zero from $\Omega^{\ast}_{\ve}$ into $\Omega$.
\begin{proof}[Sketch of the Proof] 
The proof of (i) follows  directly from the definition of $ \mathcal T_{\mathcal L}^{\ast, \ve}$ and  by using similar calculations as  in  the proof  of Lemma~\ref{lemma:estim_1}.

For $w_k \in C^\infty_0(\Omega)$  the convergence in $(ii)$ results from  the definition of  $\mathcal T_{\mathcal L}^{\ast, \ve}$,  the properties of the covering  of $\Omega_\ve^\ast$ by $\Omega_{n}^{\ast,\ve}$ and the following simple calculations 
\begin{eqnarray*}
&&\lim\limits_{\ve \to 0} \int_{\Omega \times Y^\ast} \hspace{-0.2 cm}  | \mathcal T_{\mathcal L}^{\ast, \ve}(w_k) |^p  dy dx = 
\lim\limits_{\ve \to 0} \left[ \sum\limits_{n=1}^{N_\ve} |\hat \Omega_n^\ve|  |Y^\ast|  |w_k(x_n^\ve) |^p   + \delta_\ve \right] = \int_{\Omega\times Y^\ast} \hspace{-0.2 cm } |w_k(x)|^p dy dx.
\end{eqnarray*}
We used the fact that  $|\Lambda^\ve| \to 0$ as $\ve \to 0$ and, due to the continuity of $w_k$, we have
$$
\delta^\ve =  \sum\limits_{n=1}^{N_\ve} \sum\limits_{\xi \in \hat \Xi^\ve_n}| Y|  \int_{\ve D_{x_n^\ve}(\xi+Y^\ast)}   |w_k(x) - w_k(x_n^\ve) |^p  \, dx\,   \to 0 \quad \text{ as } \quad \ve \to 0 \; . 
$$
The approximation of $w \in L^p(\Omega)$ by $\{w_k \} \subset  C^\infty_0(\Omega)$ and the estimate for the norm of $\mathcal T_{\mathcal L}^{\ast, \ve}(w-w_k)$ in (i),
yield  the convergence   for  $w \in L^p(\Omega)$. 

The proof of the  convergence in (iii)  is similar to the proof of Lemma~\ref{weak-weak_converge} and the corresponding result for the periodic unfolding operator. 

The proof of (iv) follows the same lines as the proof of the corresponding result for $\mathcal T_{\mathcal L}^{\ve}$ in Lemma~\ref{conver_local_t-s}. In a similar way as in \cite[Lemma 3.4]{Ptashnyk13}  we obtain that 
\begin{eqnarray*}
&&\lim\limits_{\ve \to 0}  \int_{\Omega_\ve^\ast} | \mathcal L^\ve_0 (w)(x)|^p dx = \int_\Omega \frac 1{|Y_x|} \int_{Y^\ast_x} |w(x,y)|^p dy dx =  \int_\Omega \frac 1{|Y|} \int_{Y^\ast} |w(x, D_x \tilde y)|^p d\tilde y dx, \\
&&\lim\limits_{\ve \to 0}  \int_{\Lambda_\ve^\ast} | \mathcal L^\ve_0 (w)(x)|^p dx = 0.
\end{eqnarray*}
Then, the last two convergence results  together with   the equality 
$$
\lim\limits_{\ve \to 0 } \int_{\Omega\times Y^\ast} |\mathcal T_{\mathcal L}^{\ast, \ve}(\mathcal L^\ve_0 w )|^p dy dx = 
|Y| \lim\limits_{\ve \to 0 }\Big[ \int_{\Omega_\ve^\ast} | \mathcal L^\ve_0  w|^p dx - \int_{\Lambda^\ast_\ve} | \mathcal L^\ve_0 w |^p dx\Big]
$$
and the  continuity of $w$ with respect to $x$ imply the convergence result stated in (v). 
 \end{proof}
 
Similar to  $\T_{\mL}^\ve$ we have 
$
\nabla_y \T_{\mL}^{\ast,\ve}(w) = \ve  \sum_{n=1}^{N_\ve} D^T_{x_n^\ve}\,  \T_{\mL}^{\ast,\ve}(\nabla w)  \chi_{\Omega_n^\ve}$ 
for  $ w\in W^{1, p} (\Omega^\ast_\ve)$. 
Using the definition and  properties of  $\mathcal T_{\mathcal L}^{\ast, \ve}$,  we prove   convergence results for  $\mathcal T_{\mathcal L}^{\ast, \ve} (w^\ve)$, $\ve\mathcal T_{\mathcal L}^{\ast, \ve} (\nabla w^\ve)$, and $\mathcal T_{\mathcal L}^{\ast, \ve} (\nabla w^\ve)$. 
We start with the proof of  Theorem~\ref{converge_11_perfor}. Here the proof of  the weak convergence follows the same steps as for $\mathcal T_{\mathcal L}^{\ve}$ in Theorem~\ref{prop_conver_1}, whereas   the periodicity of the limit-function we show in a  different way.

\begin{proof}[{\bf Proof of Theorem \ref{converge_11_perfor}}] The boundedness of $\{\mathcal T_{\mathcal L}^{\ast, \ve}(w^\ve) \}$ and $\{\nabla_y\mathcal T_{\mathcal L}^{\ast, \ve}(w^\ve) \}$, ensured by \eqref{extim_w_ve} and the regularity of $D$, imply the weak convergences  in \eqref{weak_conver_1}.  
To show the periodicity  of $w$ we consider  for $\phi \in C_0^\infty(\Omega\times Y^\ast)$ and $j=1,\ldots, d$ 
\begin{eqnarray*}
\int_{\Omega \times Y^\ast} \mathcal T_{\mathcal L}^{\ast, \ve} (w^\ve)(x, \tilde y + e_j)  \phi    d\tilde ydx = 
\int_{\Omega \times Y^\ast}\sum_{n=1}^{N_\ve}  \mathcal T_{\mathcal L}^{\ast, \ve} (w^\ve)  \phi (x- \ve D_{x_n^\ve} e_j, \tilde y)   \chi_{\widetilde \Omega_n^{\ve,j}}  d\tilde y dx\\
+\sum_{n=1}^{N_\ve}   \int_{\widetilde \Lambda^{\ve,j}_{n,1}\times Y^\ast}  \mathcal T_{\mathcal L}^{\ast, \ve}(w^\ve)(x, \tilde y+ e_j) \phi  \, d\tilde ydx, 
\end{eqnarray*}
where $\widetilde \Omega_n^{\ve,j}$ and  $\widetilde\Lambda^{\ve,j}_{n,l}$, with $l=1,2$, are defined in the proof of  Theorem~\ref{prop_conver_1}, Section~\ref{unfolding_operator_1}.  Considering  the weak  convergence of $\mathcal T_{\mathcal L}^{\ast, \ve}(w^\ve)$ along with $\sum_{n=1}^{N_\ve}|\widetilde \Lambda^{\ve,j}_{n,l}| \leq C \ve^{1-r}$, for $l=1,2$,  and  taking the limit as $\ve\to 0$ implies
$$
\int_{\Omega \times Y^\ast} w(x, D_x(\tilde y+ e_j) )   \phi (x, \tilde y)  d\tilde y dx =  \int_{\Omega \times Y^\ast}   w(x, D_x \tilde y)   \phi (x, \tilde y)  d\tilde y dx
$$
for all  $\phi \in C_0^\infty(\Omega\times Y^\ast)$ and $j=1, \ldots, d$. Thus, we obtain that  $w$ is $Y_x$-periodic. 
\end{proof}

Similar to the periodic case, we use the micro-macro decomposition of a function $\phi \in W^{1,p}(\Omega_\ve^\ast)$, i.e.\ $\phi= \Q^{\ast,\ve}_{\mathcal L}(\phi) + \R^{\ast,\ve}_{\mathcal L}(\phi)$,   to show the weak convergence of  $\mathcal T_{\mathcal L}^{\ast, \ve} (\nabla w^\ve)$.
 Here we use the fact that  for $\{w^\ve\}$ bounded in $W^{1,p}(\Omega_\ve^\ast)$  the sequence  $\{\Q^{\ast,\ve}_{\mathcal L}(w^\ve)\}$  is bounded in $W^{1,p}(G)$, for every     relatively compact open subset   $G \subset \Omega$.

Notice that  for  $w^\ve \in {W^{1,p}(\Omega^\ast_\ve)}$ the function ${\Q^{\ast,\ve}_\mL}(w^\ve)$  is defined  on  $\widetilde \Omega_\ve$. Thus,  we can apply $\T^\ve_\mL$  to ${\Q^{\ast,\ve}_\mL}(w^\ve)$   and  
use   the convergence results for the l-p unfolding operator $\T^\ve_\mL$ (shown in Theorems~\ref{prop_conver_1}~and~\ref{theorem_cover_grad}) to prove the weak convergence of  $\T^\ve_\mL({\Q^{\ast,\ve}_\mL}(w^\ve)^{\sim})$ and $\T^\ve_\mL([\nabla {\Q^{\ast,\ve}_\mL}(w^\ve)]^{\sim})$, where $^{\sim}$ denotes an extension by zero  from $\widetilde \Omega_\ve$  to $\Omega$.

\begin{lemma}\label{lem:weak_cover}
If $\|w^\ve \|_{W^{1,p}(\Omega^\ast_\ve)} \leq C$, where $p \in (1, +\infty)$. Then there exist a subsequence (denoted again by $\{w^\ve\}$) and a function $w \in W^{1,p}(\Omega)$ such that 
 \begin{equation*}
 \begin{aligned}
& \T^\ve_\mL({\Q^{\ast,\ve}_\mL}(w^\ve)^{\sim})  \to w  && \text{ strongly in } L^p_{\rm loc}(\Omega; W^{1,p}(Y)), \\
&   \T^\ve_\mL(\Q^{\ast, \ve}_\mL(w^\ve)^{\sim})   \rightharpoonup w && \text{ weakly in } L^p(\Omega; W^{1,p}(Y)), \\
&    \T^\ve_\mL([\nabla\Q^{\ast, \ve}_\mL(w^\ve)]^{\sim})  \rightharpoonup \nabla w && \text{ weakly  in } L^p(\Omega\times Y).
\end{aligned}
   \end{equation*}
\end{lemma}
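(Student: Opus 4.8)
The plan is to deduce all three statements from Lemma~\ref{estim_QL_ast} together with the properties of $\T_\mL^\ve$ already proved in Lemmas~\ref{lemma:estim_1}--\ref{weak-weak_converge} and Theorems~\ref{prop_conver_1}--\ref{theorem_cover_grad}, the one genuinely new ingredient being that a locally periodic $Q_1$-interpolant carries no microscopic corrector. First, by Lemma~\ref{estim_QL_ast} the sequence $\{\Q_\mL^{\ast,\ve}(w^\ve)\}$ is bounded in $W^{1,p}(\widetilde\Omega_\ve)$ and $\|\R_\mL^{\ast,\ve}(w^\ve)\|_{L^p(\widetilde\Omega_\ve^\ast)}\le C\ve$. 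Since $\widetilde\Omega_\ve\nearrow\Omega$ as $\ve\to0$, for every relatively compact open $G\subset\Omega$ one has $G\subset\widetilde\Omega_\ve$ for $\ve$ small and $\{\Q_\mL^{\ast,\ve}(w^\ve)|_G\}$ bounded in $W^{1,p}(G)$; exhausting $\Omega$ by $G_1\subset\subset G_2\subset\subset\cdots$ and combining a diagonal extraction with the Rellich--Kondrachov theorem yields a subsequence (not relabelled) and $w\in W^{1,p}_{\rm loc}(\Omega)$ with $\Q_\mL^{\ast,\ve}(w^\ve)\rightharpoonup w$ in $W^{1,p}(G_m)$ and $\Q_\mL^{\ast,\ve}(w^\ve)\to w$ in $L^p(G_m)$ for every $m$; the uniform bound and weak lower semicontinuity of the norm then force $w\in W^{1,p}(\Omega)$.

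Next I would treat $\T_\mL^\ve(\Q_\mL^{\ast,\ve}(w^\ve)^{\sim})$. By Lemma~\ref{estim_QL_ast} and \eqref{estim1} it is bounded in $L^p(\Omega\times Y)$, and by \eqref{micro_grad} one has $\|\nabla_y\T_\mL^\ve(\Q_\mL^{\ast,\ve}(w^\ve)^{\sim})\|_{L^p(\Omega\times Y)}=\ve\,\|\sum_n D^T_{x_n^\ve}\T_\mL^\ve([\nabla\Q_\mL^{\ast,\ve}(w^\ve)]^{\sim})\chi_{\Omega_n^\ve}\|_{L^p(\Omega\times Y)}\le C\ve$, again by Lemma~\ref{estim_QL_ast}, \eqref{estim1} and the regularity of $D$. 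Hence the sequence is bounded in $L^p(\Omega;W^{1,p}(Y))$ and any weak limit $\widehat w$ satisfies $\nabla_y\widehat w=0$, i.e.\ $\widehat w=\widehat w(x)$; applying Lemma~\ref{weak-weak_converge} to $\{\Q_\mL^{\ast,\ve}(w^\ve)^{\sim}\}$ gives $\Q_\mL^{\ast,\ve}(w^\ve)^{\sim}\rightharpoonup\ddashinttt_Y\widehat w\,dy=\widehat w$ in $L^p(\Omega)$, and comparison with Step~1 on each $G_m$ (using $|\Omega\setminus\widetilde\Omega_\ve|\to0$) identifies $\widehat w=w$, so the whole subsequence converges weakly to $w$ in $L^p(\Omega;W^{1,p}(Y))$. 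For the local strong convergence, the bound $\|\nabla_y\T_\mL^\ve(\Q_\mL^{\ast,\ve}(w^\ve)^{\sim})\|_{L^p(\Omega\times Y)}\le C\ve$ disposes of the $y$-gradient, while linearity and the local nature of $\T_\mL^\ve$ give, for $\ve$ small and each $m$,
\[
\|\T_\mL^\ve(\Q_\mL^{\ast,\ve}(w^\ve)^{\sim})-w\|_{L^p(G_m\times Y)}\le |Y|^{1/p}\|\Q_\mL^{\ast,\ve}(w^\ve)^{\sim}-w\|_{L^p(G_{m+1})}+\|\T_\mL^\ve(w)-w\|_{L^p(\Omega\times Y)},
\]
which tends to $0$ by Step~1 and Lemma~\ref{lemma:estim_1}(iii).

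The hard part is the gradient statement. The sequence $\{\T_\mL^\ve([\nabla\Q_\mL^{\ast,\ve}(w^\ve)]^{\sim})\}$ is bounded in $L^p(\Omega\times Y)$ by Lemma~\ref{estim_QL_ast} and \eqref{estim1}; let $\zeta$ be a weak limit along a further subsequence. Following the scheme of the proof of Theorem~\ref{theorem_cover_grad}, I set $V^\ve=\ve^{-1}(\T_\mL^\ve(\Q_\mL^{\ast,\ve}(w^\ve)^{\sim})-\mathcal M_\mL^\ve(\Q_\mL^{\ast,\ve}(w^\ve)^{\sim}))$, so that $\nabla_yV^\ve=\sum_n D^T_{x_n^\ve}\T_\mL^\ve([\nabla\Q_\mL^{\ast,\ve}(w^\ve)]^{\sim})\chi_{\Omega_n^\ve}$ up to a set of vanishing measure (the collar $\Omega\setminus\widetilde\Omega_\ve$ and the set $\Lambda^\ve$ on which $\T_\mL^\ve$ vanishes); then $\{\nabla_yV^\ve\}$ is bounded in $L^p(\Omega\times Y)$, $\ddashinttt_YV^\ve\,dy=0$, and the Poincar\'e inequality yields $V^\ve-y_c^\ve\cdot\nabla w\rightharpoonup\widetilde w_1$ in $L^p(\Omega;W^{1,p}(Y))$, whence $\zeta=\nabla w+D_x^{-T}\nabla_y\widetilde w_1(\cdot,D_x\cdot)$ exactly as in Theorem~\ref{theorem_cover_grad}. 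The decisive point is that $\widetilde w_1=0$. For this I would use that on each admissible cell $\ve D_{x_n^\ve}(\xi+Y)$ the function $\T_\mL^\ve(\Q_\mL^{\ast,\ve}(w^\ve)^{\sim})(x,\cdot)$ is precisely the multilinear function $y\mapsto\sum_{k\in\{0,1\}^d}\Q_\mL^{\ast,\ve}(w^\ve)(\ve(\xi+k))\prod_i y_i^{k_i}(1-y_i)^{1-k_i}$, so that $V^\ve(x,y)=\sum_k\delta^\ve_{n,\xi,k}\,(\prod_i y_i^{k_i}(1-y_i)^{1-k_i}-2^{-d})$ with divided differences $\delta^\ve_{n,\xi,k}=\ve^{-1}(\Q_\mL^{\ast,\ve}(w^\ve)(\ve(\xi+k))-\Q_\mL^{\ast,\ve}(w^\ve)(\ve\xi))$; expressing $\delta^\ve_{n,\xi,k}$ through integrals of $(D_{x_n^\ve}k)\cdot\nabla\Q_\mL^{\ast,\ve}(w^\ve)$ over the relevant cells and invoking $\nabla\Q_\mL^{\ast,\ve}(w^\ve)\rightharpoonup\nabla w$ in $L^p_{\rm loc}(\Omega)$ (Step~1) together with the Lipschitz regularity of $D$, the piecewise-constant representatives of $\delta^\ve_{n,\xi,k}$ converge weakly to $k\cdot(D_x^T\nabla w)$; the elementary multilinear identity $\sum_k(k\cdot\beta)(\prod_i y_i^{k_i}(1-y_i)^{1-k_i}-2^{-d})=y_c\cdot\beta$ then gives $V^\ve\rightharpoonup y_c\cdot(D_x^T\nabla w)$ in $L^p(\Omega\times Y)$, while $y_c^\ve\cdot\nabla w\to(D_xy_c)\cdot\nabla w=y_c\cdot(D_x^T\nabla w)$, so $V^\ve-y_c^\ve\cdot\nabla w\rightharpoonup0$, $\widetilde w_1=0$, and $\zeta=\nabla w$; uniqueness of the limit promotes this to the whole subsequence.

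I expect Step~3 to be the only delicate part: setting up the decomposition is routine, but verifying that the corrector $\widetilde w_1$ vanishes — equivalently, that the unfolded gradient of a locally periodic $Q_1$-interpolant has no microscopic oscillation and that the transformation factors $D_{x_n^\ve}^{-T}$ and $D_{x_n^\ve}^{T}$ cancel — is the crux, and is the locally periodic counterpart of the corresponding fact in the periodic unfolding theory; some care is needed with the divided-difference estimates at cell interfaces and near $\partial\Omega$, where one must check that the triangular interpolation regions $\Lambda^\ve_{\mathcal Y}$ and the extension by zero from $\widetilde\Omega_\ve$ to $\Omega$ contribute only sets of vanishing measure.
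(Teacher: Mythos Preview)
Your proof is correct. Steps~1 and~2 match the paper's argument essentially line for line.

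For Step~3 you take a genuinely different route from the paper. The paper does not reopen the construction of $V^\ve$: it simply applies Theorem~\ref{theorem_cover_grad} as a black box on each relatively compact $G\subset\Omega$ to obtain
\[
\T^\ve_{\mL}\big(\nabla\Q^{\ast,\ve}_\mL(w^\ve)|_G\big)\rightharpoonup \nabla w + D_x^{-T}\nabla_y \hat w_{1,G}(\cdot,D_x\cdot)\quad\text{in }L^p(G\times Y),
\]
with $\hat w_{1,G}\in L^p(G;W^{1,p}_{\rm per}(Y_x))$. It then observes that, since $\T^\ve_\mL(\Q^{\ast,\ve}_\mL(w^\ve))(x,\cdot)$ is a $Q_1$-interpolant in~$y$, the auxiliary function $V^\ve - y_c^\ve\cdot\nabla w$ appearing in the proof of Theorem~\ref{theorem_cover_grad} is multilinear in~$y$ for each $\ve$, and the space of such functions with $L^p(\Omega)$ coefficients is weakly closed in $L^p(\Omega;W^{1,p}(Y))$. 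Hence $\hat w_{1,G}$ is a polynomial of degree at most one in each $y_i$; combined with the $Y_x$-periodicity delivered by Theorem~\ref{theorem_cover_grad}, this forces $\hat w_{1,G}$ to be constant in~$y$, so the corrector term vanishes. Your approach instead computes the weak limit of $V^\ve$ directly via the divided differences $\delta^\ve_{n,\xi,k}$ and the multilinear identity $\sum_k(k\cdot\beta)\big(\prod_i y_i^{k_i}(1-y_i)^{1-k_i}-2^{-d}\big)=y_c\cdot\beta$, arriving at $\widetilde w_1=0$ explicitly. Both arguments are valid; the paper's is shorter and more structural (periodic $+$ multilinear $\Rightarrow$ constant), while yours makes the cancellation completely explicit and avoids appealing to the internal structure of the proof of Theorem~\ref{theorem_cover_grad} beyond the $V^\ve$ construction.
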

\begin{proof} Similar to the periodic case \cite{Cioranescu_2012},
the estimates  for $\Q^{\ast, \ve}_\mL$  in Lemma~\ref{estim_QL_ast} ensure that there exists   a  function $w \in W^{1,p}(\Omega)$ such that, up to a subsequence,
\begin{eqnarray*}
&\Q^{\ast,\ve}_\mL(w^\ve)^{\sim}  \to w &\quad \text{ strongly in }\;  L^p_{\rm loc}(\Omega) \; \text{ and  weakly in } \;  L^p(\Omega), \\
& [\nabla\Q^{\ast, \ve}_\mL(w^\ve)]^{\sim}  \rightharpoonup  \nabla w & \quad \text{ weakly  in } L^p(\Omega).
  \end{eqnarray*}
Then, the first two convergences stated in the Lemma follow directly from the estimate  
 $$\|\nabla_y  \T^\ve_\mL(\Q^{\ast, \ve}_\mL(w^\ve)^{\sim})\|_{L^p(\Omega\times Y)} \leq C_1 \ve \| [\nabla\Q^{\ast, \ve}_\mL(w^\ve)]^{\sim}\|_{L^p(\Omega)}\leq C \ve, $$ and convergence results for $\mathcal T^\ve_\mL$ in  Lemmas~\ref{lemma:estim_1},~\ref{weak-weak_converge} and Theorem~\ref{prop_conver_1}.  To prove the final convergence stated in the Lemma we observe   that $ \Q^{\ast,\ve}_\mL(w^\ve)|_{G}$ is uniformly bounded in  
$W^{1,p}(G)$, where $G\subset \Omega$ is a relatively compact open set, see Lemma~\ref{estim_QL_ast}.  Then, by  Theorem~\ref{theorem_cover_grad}  there exists $\hat w_{1, G} \in L^p(G; W^{1,p}_{\rm per}(Y_x))$
such that 
$$
\T^\ve_{\mL} (  \nabla\Q^{\ast,\ve}_\mL(w^\ve)|_G) \rightharpoonup \nabla w + D^{-T}_x\nabla_y  \hat w_{1, G}(\cdot, D_x \cdot) \quad \text{ weakly in }
L^p(G \times Y)\; .
$$
The definition of  $\Q^\ve_\mL$ implies that $\hat w_{1,G}$ is a polynomial in $y$ of degree less than or equal to one with respect to each variable $y_1, \ldots, y_d$. 
Thus, the $Y_x$-periodicity of  $\hat w_{1,G}$ yields that it is constant with respect to  $y$ and $$
\T^\ve_{\mL} (  [\nabla\Q^{\ast,\ve}_{\mL}(w^\ve)]^\sim) \rightharpoonup\nabla w  \quad \text{ weakly in }
L^p_{\rm loc}(\Omega; L^p(Y)) . 
$$
The boundedness of  $ [\nabla\Q^{\ast,\ve}_{\mL}(w^\ve)]^\sim$ in $L^p(\Omega)$ implies the boundedness of   $\T^\ve_\mL ( [\nabla\Q^{\ast,\ve}_{\mL}(w^\ve)]^\sim)$  in $L^p(\Omega\times Y)$ and  we obtain the  last  convergence stated  in Lemma. 
\end{proof}

For $\mathcal R^{\ast, \ve}_\mL (w^\ve) = w^\ve - \Q^{\ast,\ve}_{\mL}(w^\ve)$ we have the following convergence results. 
\begin{lemma}\label{lemma_R_estim}
Consider  a sequence $\{ w^\ve\} \subset W^{1,p}(\Omega^\ast_\ve)$,  with $p \in (1, +\infty)$, satisfying  
$
\|\nabla w^\ve \|_{L^p(\Omega^\ast_\ve)}  \leq C.
$
Then, there exist a subsequence (denoted again by $\{w^\ve\}$)  and a function   $ w_1 \in L^p(\Omega; W^{1,p}_{\rm per}(Y^\ast_x))$ such that 
\begin{eqnarray}\label{conver_R}
\begin{aligned}
\quad&\ve^{-1} \, \T^{\ast,\ve}_{\mL} (\mathcal R^{\ast, \ve}_\mL (w^\ve)^{\sim}) \rightharpoonup  w_1(\cdot, D_x \cdot) &&\text{ weakly in } L^p(\Omega; W^{1,p}(Y^\ast)) ,\\
& \T^{\ast, \ve}_{\mL} (\mathcal R^{\ast, \ve}_\mL (w^\ve)^{\sim}) \to 0  &&\text{ strongly in } L^p(\Omega; W^{1,p}(Y^\ast)) , \\
& \T^{\ast, \ve}_{\mL} ( [\nabla \mathcal R^{\ast, \ve}_\mL (w^\ve)]^\sim) \rightharpoonup  D_x^{-T}\nabla_y  w_1(\cdot, D_x \cdot) &&\text{ weakly in } L^p(\Omega\times Y^\ast),
 \end{aligned}
\end{eqnarray}
where $^\sim$ denotes the extension by zero from $\widetilde \Omega_\ve^\ast$ to $\Omega^\ast_\ve$.
\end{lemma}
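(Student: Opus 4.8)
The plan is to mimic the scheme used for Theorems~\ref{prop_conver_1}, \ref{theorem_cover_grad} and \ref{converge_11_perfor}, exploiting that the remainder $\R^{\ast,\ve}_\mL(w^\ve)$ is itself of order $\ve$ in $L^p$. First I would record the quantitative bounds: by Lemma~\ref{estim_QL_ast} one has $\|\R^{\ast,\ve}_\mL(w^\ve)\|_{L^p(\widetilde\Omega^\ast_\ve)} \leq C\ve\|\nabla w^\ve\|_{L^p(\Omega^\ast_\ve)} \leq C\ve$ and $\|\nabla\R^{\ast,\ve}_\mL(w^\ve)\|_{L^p(\widetilde\Omega^\ast_\ve)} \leq C$. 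Setting $Z^\ve := \ve^{-1}\,\T^{\ast,\ve}_\mL(\R^{\ast,\ve}_\mL(w^\ve)^\sim)$ and using Lemma~\ref{converg_perfor_1}(i) together with the identity $\nabla_y\T^{\ast,\ve}_\mL(\phi) = \ve\sum_{n=1}^{N_\ve} D^T_{x_n^\ve}\T^{\ast,\ve}_\mL(\nabla\phi)\chi_{\Omega_n^\ve}$ stated before Lemma~\ref{lem:weak_cover} (which gives $\T^{\ast,\ve}_\mL([\nabla\R^{\ast,\ve}_\mL(w^\ve)]^\sim) = \sum_{n=1}^{N_\ve} D^{-T}_{x_n^\ve}\nabla_y Z^\ve\,\chi_{\Omega_n^\ve}$), the regularity of $D$ shows that both $Z^\ve$ and $\nabla_y Z^\ve$ are bounded in $L^p(\Omega\times Y^\ast)$, so $\{Z^\ve\}$ is bounded in $L^p(\Omega; W^{1,p}(Y^\ast))$. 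Passing to a subsequence, $Z^\ve \rightharpoonup \widetilde w_1$ weakly in $L^p(\Omega; W^{1,p}(Y^\ast))$, and I put $w_1(x,y) := \widetilde w_1(x, D_x^{-1}y)$ for $x\in\Omega$, $y\in Y^\ast_x$; by the regularity of $D$, $w_1 \in L^p(\Omega; W^{1,p}(Y^\ast_x))$.

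The first convergence in \eqref{conver_R} is then the chosen weak limit. For the third, since $D^{-1}\in\text{Lip}(\IR^d)$ and the cubes $\Omega_n^\ve$ have diameter $C\ve^r$, one has $\sum_{n=1}^{N_\ve} D^{-T}_{x_n^\ve}\chi_{\Omega_n^\ve} \to D_x^{-T}$ uniformly on $\Omega$; combining this with $\nabla_y Z^\ve \rightharpoonup \nabla_y\widetilde w_1$ gives $\T^{\ast,\ve}_\mL([\nabla\R^{\ast,\ve}_\mL(w^\ve)]^\sim) \rightharpoonup D_x^{-T}\nabla_y\widetilde w_1 = D_x^{-T}\nabla_y w_1(\cdot, D_x\cdot)$ weakly in $L^p(\Omega\times Y^\ast)$. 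For the second, Lemma~\ref{converg_perfor_1}(i) and Lemma~\ref{estim_QL_ast} give $\|\T^{\ast,\ve}_\mL(\R^{\ast,\ve}_\mL(w^\ve)^\sim)\|_{L^p(\Omega\times Y^\ast)} \leq |Y|^{1/p}\|\R^{\ast,\ve}_\mL(w^\ve)\|_{L^p(\widetilde\Omega^\ast_\ve)} \leq C\ve$, while the same $\nabla_y$-identity and the bound on $\nabla\R^{\ast,\ve}_\mL(w^\ve)$ give $\|\nabla_y\T^{\ast,\ve}_\mL(\R^{\ast,\ve}_\mL(w^\ve)^\sim)\|_{L^p(\Omega\times Y^\ast)} \leq C\ve$; hence $\T^{\ast,\ve}_\mL(\R^{\ast,\ve}_\mL(w^\ve)^\sim) \to 0$ strongly in $L^p(\Omega; W^{1,p}(Y^\ast))$.

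It remains to prove the $Y$-periodicity of $\widetilde w_1$, which is the main point. I would follow the periodicity argument from the proofs of Theorems~\ref{prop_conver_1} and \ref{theorem_cover_grad}: fix $\psi\in C_0^\infty(\Omega\times Y^\prime)$, $Y^\prime = (0,1)^{d-1}$, and consider the difference of the traces of $Z^\ve$ on $Y^\prime\times\{1\}$ and $Y^\prime\times\{0\}$ tested against $\psi$. The decomposition $\R^{\ast,\ve}_\mL(w^\ve) = w^\ve - \Q^{\ast,\ve}_\mL(w^\ve)$ is essential here: by the continuity of $w^\ve$ and by the continuity of the $Q_1$-interpolant $\Q^{\ast,\ve}_\mL(w^\ve)$ across neighbouring cells, the trace of $\T^{\ast,\ve}_\mL(\R^{\ast,\ve}_\mL(w^\ve)^\sim)$ on the top face of the cell containing $x$ equals its trace on the bottom face of the cell obtained by shifting $x$ by $\ve D_{x_n^\ve}e_d$, up to the usual boundary-layer corrections supported on $\widetilde\Lambda_{n,1}^{\ve,d}$ (with $\widetilde\Omega_n^{\ve,d}$, $\widetilde\Lambda_{n,l}^{\ve,d}$ as in the proof of Theorem~\ref{prop_conver_1}). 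After dividing by $\ve$, the interior part becomes $-\sum_{n=1}^{N_\ve}\int_{\widetilde\Omega_n^{\ve,d}\times Y^\prime}\T^{\ast,\ve}_\mL(\R^{\ast,\ve}_\mL(w^\ve)^\sim)(x,(y^\prime,0))\, D_{x_n^\ve}e_d\cdot\nabla_x\psi\,dy^\prime dx + o(1)$, which tends to $0$ because the trace of $\T^{\ast,\ve}_\mL(\R^{\ast,\ve}_\mL(w^\ve)^\sim)$ converges to $0$ in $L^p(\Omega\times Y^\prime)$ by the step above; the boundary-layer and cross-subdomain terms, being $O(\ve^{-1})$ times integrals over sets of total measure $\leq C\ve^{1-r}$, are controlled by Hölder's inequality together with the $O(\ve)$ bound on $\T^{\ast,\ve}_\mL(\R^{\ast,\ve}_\mL(w^\ve)^\sim)$, hence are $O(\ve^{(1-r)/q})\to 0$ (with $1/p+1/q=1$). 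Passing to the limit and using the weak convergence of the traces of $Z^\ve$ yields $\int_{\Omega\times Y^\prime}[\widetilde w_1(x,(y^\prime,1)) - \widetilde w_1(x,(y^\prime,0))]\psi\,dy^\prime dx = 0$; repeating the computation in each direction $e_j$, $j=1,\ldots,d-1$, gives the $Y$-periodicity of $\widetilde w_1$ and hence the $Y^\ast_x$-periodicity of $w_1$, so $w_1 \in L^p(\Omega; W^{1,p}_{\rm per}(Y^\ast_x))$.

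I expect this periodicity step to be the genuine obstacle, since it requires the cell-by-cell bookkeeping of faces and boundary layers exactly as in the proofs of Theorems~\ref{prop_conver_1}--\ref{converge_11_perfor}. The simplification compared with Theorem~\ref{theorem_cover_grad}, however, is that here $\T^{\ast,\ve}_\mL(\R^{\ast,\ve}_\mL(w^\ve)^\sim)$ is already $O(\ve)$ in $L^p(\Omega; W^{1,p}(Y^\ast))$, so the delicate cancellations between the contributions of neighbouring subdomains $\Omega_n^\ve$ (the $J^1$--$J^2$ splitting there) are not needed: the $O(\ve)$ smallness absorbs the $\ve^{-1}$ and a crude Hölder estimate suffices.
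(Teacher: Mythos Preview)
Your treatment of the boundedness of $Z^\ve = \ve^{-1}\T^{\ast,\ve}_\mL(\R^{\ast,\ve}_\mL(w^\ve)^\sim)$ in $L^p(\Omega;W^{1,p}(Y^\ast))$, the extraction of the weak limit $\widetilde w_1$, and the identification of the three convergences in \eqref{conver_R} is correct and essentially identical to the paper's. The only substantive difference is in how you establish the $Y$-periodicity of $\widetilde w_1$.

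You propose to redo, from scratch, the trace/boundary-layer computation of Theorems~\ref{prop_conver_1} and~\ref{theorem_cover_grad}: examine the difference of traces of $Z^\ve$ on $Y'\times\{1\}$ and $Y'\times\{0\}$, shift cells, and argue that all interior, boundary-layer, and cross-subdomain contributions vanish thanks to the $O(\ve)$ smallness of $\T^{\ast,\ve}_\mL(\R^{\ast,\ve}_\mL(w^\ve)^\sim)$ in $L^p(\Omega;W^{1,p}(Y^\ast))$. This should work and your heuristic is sound---the $O(\ve)$ bound does absorb the $\ve^{-1}$ so that the delicate $J^1/J^2$ cancellations of Theorem~\ref{theorem_cover_grad} are not needed---but it is laborious and you yourself flag it as the ``genuine obstacle''.

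The paper avoids this entirely. It observes that for any relatively compact open $G\subset\Omega$ one eventually has $G\subset\widetilde\Omega_\ve$, and the estimates of Lemma~\ref{estim_QL_ast} give
\[
\big\|\ve^{-1}\R^{\ast,\ve}_\mL(w^\ve)\big\|_{L^p(G^\ast_\ve)} + \ve\big\|\nabla\bigl(\ve^{-1}\R^{\ast,\ve}_\mL(w^\ve)\bigr)\big\|_{L^p(G^\ast_\ve)} \le C,
\]
which is precisely the hypothesis \eqref{extim_w_ve} of Theorem~\ref{converge_11_perfor} with $G$ in place of $\Omega$. That theorem then delivers $w_1|_{G\times Y^\ast_x}\in L^p(G;W^{1,p}_{\rm per}(Y^\ast_x))$ directly; since $G$ is arbitrary, $w_1\in L^p(\Omega;W^{1,p}_{\rm per}(Y^\ast_x))$. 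So the periodicity argument collapses to two lines by invoking a result already proved, whereas your route repeats that result's proof in a slightly different setting.
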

\begin{proof}
The  estimates in Lemma~\ref{estim_QL_ast} imply  that $\ve^{-1}\,  \T^{\ast, \ve}_{\mL} ({\mathcal R^{\ast, \ve}_\mL} (w^\ve)^{\sim})$ is bounded in  $ L^p(\Omega; W^{1,p}(Y^\ast))$ and there exists  $\widetilde w_1 \in  L^p(\Omega; W^{1,p}(Y^\ast))$  and $ w_1(x,y) =\widetilde w_1(x, D_x^{-1} y)$ for $x\in \Omega$, $y \in Y^\ast_x$,  where $Y^\ast_x = D(x) Y^\ast$, such that the  convergences in \eqref{conver_R} are satisfied.  To show that $w_1$ is $Y_x$-periodic we consider the restriction of $\ve^{-1} \mathcal R^{\ast, \ve}_\mL(w^\ve)$ on $G^\ast_\ve$, which belongs to $W^{1,p}(G_\ve^\ast)$. Here  $G^\ast_\ve = G \cap \Omega_\ve^\ast$ 
and  $G \subset \Omega$ is a relatively compact open subset of $\Omega$.  Using Lemma~\ref{estim_QL_ast} we  obtain 
$$\|\ve^{-1} \mathcal R^{\ast, \ve}_\mL(w^\ve)\|_{L^p(G_\ve^\ast)} + \ve 
\|\ve^{-1}\nabla \mathcal R^{\ast, \ve}_\mL(w^\ve)\|_{L^p(G_\ve^\ast)}  \leq C.
$$
 Applying   Theorem~\ref{converge_11_perfor} to $\ve^{-1} \mathcal R^{\ast, \ve}_\mL(w^\ve)|_{G^\ast_\ve}$ yields $ w_1 |_{G\times Y^\ast_x} \in  L^p(G; W^{1,p}_{\rm per}(Y^\ast_x))$. Since $G$ can be chosen arbitrarily we obtain that $w_1 \in  L^p(\Omega; W^{1,p}_{\rm per}(Y^\ast_x))$. 
\end{proof} 

%%%%%%%%%%%%%%%%%%%%%%%%%%%%%%%%%%%%%%%

Combining the convergence results from above we obtain directly the main convergence theorem for the l-p unfolding operator in locally periodic perforated domains. 

\begin{proof} [{\bf Proof of Theorem~\ref{converg_unfolding_perforate}}]  Similar to  the periodic case the convergence results stated in  Theorem~\ref{converg_unfolding_perforate}  
follow directly from the fact that  $w^\ve =\mathcal  Q^{\ast,\ve}_\mL(w^\ve) + \mathcal R^{\ast,\ve}_\mL(w^\ve)$ and from  the convergence results  in Lemmas~\ref{lem:weak_cover}~and~\ref{lemma_R_estim}. 
\end{proof}

\textsc{Remark.} In the definition of $\Omega_{\ve}^\ast$ we assumed that there no perforations in layers 
 $(\Omega_n^{\ast, \ve} \setminus \overline{ \hat \Omega_n^{\ast, \ve}}) \cap \widetilde \Omega_{\ve/2}$, with 
 $\widetilde \Omega_{\ve/2}=  \{ x\in \Omega :  \text{dist} ( x, \partial \Omega)> 2  \ve
 \max\limits_{x\in\partial\Omega} \text{diam}(D(x)Y)\}$ and $1\leq n \leq N_\ve$.  In the proofs of convergence results  only local estimates for $\mathcal Q_\mL^{\ast,\ve}(w^\ve)$ and $\mathcal R_\mL^{\ast,\ve}(w^\ve)$ are used, thus no restrictions on the distribution of perforations near $\partial \Omega$ are needed.  
 For the macroscopic description of microscopic  processes this assumption is not  restrictive since  $\big|\bigcup_{n=1}^{N_\ve}(\Omega_n^{\ast, \ve} \setminus \overline{\hat \Omega_n^{\ast, \ve}}) \cap \Omega\big|\leq C \ve^{1-r}\to 0$ as $\ve \to 0$, $r <1$.
 If we allow  perforations in layers between two neighboring  $\hat \Omega_n^{\ast, \ve}$ and $\hat \Omega_m^{\ast, \ve}$ in $\widetilde \Omega_{\ve/2}$, then using that  $Y^\ast = Y\setminus \overline Y_0$  is connected, the transformation matrix $D$ is  Lipschitz continuous  and  $\text{dist} (\widetilde \Omega_{\ve/2}, \partial \Omega) >0$, 
 it is possible to construct an extension of $w^\ve \in W^{1,p}(\Omega_{\ve}^\ast)$ from $(\Omega_n^{\ast, \ve} \setminus \overline{\hat \Omega_n^{\ast, \ve}}) \cap \widetilde \Omega_{\ve/2}$ to $(\Omega_n^{\ve} \setminus \overline{\hat \Omega_n^{\ve}}) \cap \widetilde \Omega_{\ve/2}$, such that the $W^{1,p}$-norm of the extension is controlled by the $W^{1,p}$-norm of the original function, uniform in $\ve$,  and   apply Lemmas~\ref{lem:weak_cover},~\ref{lemma_R_estim} and  Theorem~\ref{converg_unfolding_perforate}  to the sequence of extended functions.

\section{Two-scale convergence on oscillating surfaces and the l-p boundary unfolding operator}\label{boundary_unfolding_operator}
 
 To derive macroscopic equations for the microscopic problems posed on boundaries of locally periodic microstructures or with non-homogeneous Neumann conditions on   boundaries of locally periodic microstructures we have to show convergence  properties  for sequences defined on oscillating surfaces. To show the  compactness result for l-p two-scale convergence on oscillating surfaces (see Theorem~\ref{conv_locally_period_b}) we first prove the convergence of the $L^p(\Gamma^\ve)$-norm of the l-p approximation of $\psi \in C(\overline\Omega; C_{\rm per}(Y_x))$.

\begin{lemma}\label{Convergence_1_boundary}
For $\psi \in C(\overline\Omega; C_{\rm per}(Y_x))$ and $p \in[1, +\infty)$,  we have that 
\begin{eqnarray*}
\lim\limits_{\ve \to 0}\ve  \int_{\Gamma^\ve} |\mL^\ve \psi (x) |^p \, d\sigma_x
 = \int_{\Omega} \frac 1 {|Y_x|} \int_{\Gamma_x}   |\psi(x,y) |^p d\sigma_y dx.
\end{eqnarray*}
\end{lemma}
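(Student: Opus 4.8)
The plan is to imitate the Riemann-sum argument used in the proof of Lemma~\ref{conver_local_t-s}, but now for the $(d-1)$-dimensional surface measure on $\Gamma^\ve$. Using the convention $\tilde x_n^\ve = \ve D_{x_n^\ve}\xi$, under which $\mL^\ve\psi(x) = \sum_{n}\widetilde\psi(x, D^{-1}_{x_n^\ve}x/\ve)\chi_{\Omega_n^\ve}(x)$, I would first decompose $\Gamma^\ve = \bigcup_{n}\bigcup_{\xi\in\Xi_n^{\ast,\ve}}\ve D_{x_n^\ve}(\widetilde\Gamma_{K_{x_n^\ve}}+\xi)\cap\Omega$. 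Since $K_{x_n^\ve}\Gamma = \partial(K_{x_n^\ve}Y_0)$ lies in the interior of $Y$, each surface piece $\ve D_{x_n^\ve}(\widetilde\Gamma_{K_{x_n^\ve}}+\xi)$ lies in the interior of the single cell $\ve D_{x_n^\ve}(Y+\xi)\subset\Omega_n^\ve$, so on it $\mL^\ve\psi$ reduces to the single term $\widetilde\psi(\cdot, D^{-1}_{x_n^\ve}\cdot/\ve)$. I would then split $\Xi_n^{\ast,\ve}$ into cells whose closure is contained in $\Omega$ and the remaining cells (those meeting $\partial\Omega$ contribute partially, those disjoint from $\Omega$ contribute nothing after $\cap\,\Omega$). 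As $\partial\Omega$ is Lipschitz and the covering has side $\ve^r$, the cells meeting $\partial\Omega$ number $O(\ve^{-(d-1)})$ in total, each carrying surface measure $O(\ve^{d-1})$, so their contribution to $\ve\int_{\Gamma^\ve}|\mL^\ve\psi|^p d\sigma_x$ is $O(\ve)\to 0$ and may be discarded.

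On each interior piece I would change variables $x = \ve D_{x_n^\ve}(K_{x_n^\ve}w+\xi)$ with $w\in\Gamma$: the surface measure picks up a factor $\ve^{d-1}$ (from scaling a $(d-1)$-surface by $\ve$) together with the Jacobian $|\det(D_{x_n^\ve}K_{x_n^\ve})|\,|(D_{x_n^\ve}K_{x_n^\ve})^{-T}\nu_\Gamma(w)|$ of the fixed linear map $D_{x_n^\ve}K_{x_n^\ve}$, and by $Y$-periodicity of $\widetilde\psi$ the fast argument $D^{-1}_{x_n^\ve}x/\ve = K_{x_n^\ve}w+\xi$ may be replaced by $K_{x_n^\ve}w$. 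Equivalently, each piece contributes $\ve^{d-1}\int_{\Gamma_{x_n^\ve}}|\widetilde\psi(x,D^{-1}_{x_n^\ve}z)|^p d\sigma_z = \ve^{d-1}\int_{\Gamma_{x_n^\ve}}|\psi(x,z)|^p d\sigma_z$ with $z\in\Gamma_{x_n^\ve} = D_{x_n^\ve}K_{x_n^\ve}\Gamma$. Since each cell has diameter $\le\sqrt d\,\ve^r$ and lies in $\Omega_n^\ve$, I can replace the slow argument $x$ by $x_n^\ve$ at a cost bounded by the modulus of continuity of $|\widetilde\psi|^p$ on the compact set $\overline\Omega\times\overline Y$ times $\ve^d\sum_{n,\xi}\mathcal H^{d-1}(\Gamma_{x_n^\ve}) = O(1)$, hence an error that tends to $0$. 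This leaves
\begin{equation*}
\ve\int_{\Gamma^\ve}|\mL^\ve\psi|^p\,d\sigma_x = \ve^d\sum_{n}\#\{\text{interior }\xi\in\Xi_n^{\ast,\ve}\}\int_{\Gamma_{x_n^\ve}}|\psi(x_n^\ve,z)|^p\,d\sigma_z + o(1).
\end{equation*}

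Finally I would convert cardinality into volume: $\ve^d\,\#\{\text{interior }\xi\}\,|Y_{x_n^\ve}|$ is the measure of the union of interior cells in $\Omega_n^\ve$, which differs from $|\Omega_n^\ve\cap\Omega|$ only by the measure of the frame $\Lambda_n^{\ast,\ve}$ and of the cells meeting $\partial\Omega$; summed over $n$ these leftovers are $O(\ve^{1-r})\to 0$ by the covering estimates already used in the paper. The right-hand side then becomes the Riemann sum $\sum_n |\Omega_n^\ve\cap\Omega|\,g(x_n^\ve) + o(1)$ with $g(x) = \tfrac{1}{|Y_x|}\int_{\Gamma_x}|\psi(x,z)|^p d\sigma_z$ and $x_n^\ve\in\Omega_n^\ve\cap\Omega$. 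To close the argument I would verify that $g\in C(\overline\Omega)$: transporting to the fixed surface $\Gamma$ via $z = D_xK_x w$ gives $g(x) = \int_\Gamma |\widetilde\psi(x,K_xw)|^p\,|\det K_x|\,|(D_xK_x)^{-T}\nu_\Gamma(w)|\,d\sigma_w$, whose integrand is continuous in $x$ (Lipschitz $D,K$; continuous $\widetilde\psi$; the normal factor bounded away from $0$ and $\infty$) and uniformly bounded, so dominated convergence applies; since $\{\Omega_n^\ve\cap\Omega\}_n$ partitions $\Omega$ up to a null set with diameters tending to $0$, the Riemann sum converges to $\int_\Omega g\,dx$, which is exactly the claimed limit. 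The main obstacle I anticipate is not the analysis (change of variables, uniform continuity, Riemann-sum convergence, continuity of $g$ are all routine) but the geometric bookkeeping near the boundary — the truncation $\cap\,\Omega$ in $\Gamma_n^\ve$, the frames $\Lambda_n^{\ast,\ve}$, and the cells straddling $\partial\Omega$ — where one must check uniformly that each stray term is $o(1)$, relying on the covering estimates of the type $\sum_n|\Lambda_n^{\ve,\cdot}|\le C\ve^{1-r}$ and $|\Lambda^\ve|\to 0$ together with \cite[Lemma~3.4]{Ptashnyk13}.
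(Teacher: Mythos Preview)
Your proposal is correct and follows essentially the same approach as the paper's own proof: both decompose $\Gamma^\ve$ cell by cell, discard the frame/boundary contributions via the covering estimate $O(\ve^{1-r})$, freeze the slow variable at $x_n^\ve$ by uniform continuity of $\widetilde\psi$, use $Y$-periodicity to reduce each piece to a single integral over $\Gamma_{x_n^\ve}$, and recognise the remaining sum $\sum_n \tfrac{|\hat\Omega_n^\ve|}{|Y_{x_n^\ve}|}\int_{\Gamma_{x_n^\ve}}|\psi(x_n^\ve,y)|^p\,d\sigma_y$ as a Riemann sum for $\int_\Omega g(x)\,dx$. The paper's version is organised as an explicit $I_1+I_2+I_3$ split and is terser about the continuity of $g$ and the surface Jacobian, while you spell these out; otherwise the arguments coincide.
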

\begin{proof}
The definition of the l-p approximation $\mathcal L^\ve$  implies 
\begin{equation*}
\begin{aligned}
&\ve \int_{\Gamma^\ve} |\mL^\ve \psi|^p   d\sigma_x =\ve  \sum\limits_{n=1}^{N_\ve} \sum_{\xi \in \hat\Xi_n^\ve} \int_{\ve \Gamma_{x_n}^\xi} 
\Big |\widetilde \psi\Big(x, \frac {D^{-1}_{x_n^\ve}x }\ve\Big)\Big|^p -\Big|\widetilde  \psi\Big(x_n^\ve, \frac {D^{-1}_{x_n^\ve}x }\ve\Big)\Big|^p  d\sigma_x\\
&
+ \ve  \sum\limits_{n=1}^{N_\ve} \Big[\sum_{\xi \in \hat \Xi_n^\ve} \int_{\ve \Gamma_{x_n}^\xi} \Big| \widetilde \psi\Big(x_n^\ve, \frac {D^{-1}_{x_n^\ve}x }\ve\Big)\Big|^p d\sigma_x 
+ \sum_{\xi \in \widetilde \Xi_n^\ve} \int_{\ve \Gamma_{x_n}^\xi}  \Big|\widetilde \psi\Big(x, \frac {D^{-1}_{x_n^\ve}x }\ve\Big)\Big|^p \chi_{\Omega_n^\ve} \chi_\Omega d\sigma_x \Big] \\
&= I_1+ I_2+I_3, 
\end{aligned}
\end{equation*}
where  $\widetilde \Xi_n^\ve= \Xi_n^\ve\setminus \hat \Xi_n^\ve$ and $\Gamma_{x_n}^\xi = D_{x_n^\ve}(\xi + \widetilde \Gamma_{K_{x_n^\ve}}) $.
Then,  the continuity of $\psi$,  the properties of $\Omega_n^\ve$,  and  the inequality $||a|^p - |b|^p| \leq p |a-b|(|a|^{p-1} + |b|^{p-1})$ imply $I_1 \to 0$ as $\ve \to 0$.
Using the properties of the covering of $\Omega$ by $\{\Omega_n^\ve\}_{n=1}^{N_\ve}$ we obtain 
\begin{eqnarray*}
|I_3| \leq  C \ve^{-rd}\sup_{1\leq n \leq N_\ve}  \ve^d  |\widetilde \Xi_n^\ve|| D_{x_n^\ve} \widetilde \Gamma_{K_{x_n^\ve}}| \leq C \ve^{1-r} \to 0 \quad \text{ as }\;  \ve \to 0\; \text{ for } 0\leq  r<1 \; . 
\end{eqnarray*} 
Considering the properties of the covering of  $\hat \Omega_n^\ve$ by $D_{x_n^\ve}(Y+ \xi)$, where $\xi \in \hat \Xi_n^\ve$ and  $1\leq n\leq N_\ve $,  and $Y$-periodicity of $\widetilde \psi$ the second integral can be rewritten as 
\begin{eqnarray*}
I_2 =
  \sum\limits_{n=1}^{N_\ve}\ve^d |\hat \Xi_n^\ve|   \int_{D_{x_n^\ve}\widetilde \Gamma_{K_{x_n^\ve}}}\hspace{-0.2 cm }| \widetilde  \psi(x_n^\ve, D_{x_n^\ve}^{-1}y)|^p  d\sigma_y =   \sum\limits_{n=1}^{N_\ve}\frac{|\hat \Omega_n^\ve|  }{ |Y_{x_n^\ve}|}  \int_{D_{x_n^\ve}\widetilde \Gamma_{K_{x_n^\ve}}} \hspace{-0.2 cm }  |\psi(x_n^\ve,  y)|^p d\sigma_y. 
\end{eqnarray*}
 Then, the regularity assumptions on   $\psi$, $D$ and $K$, the definition of $\hat \Omega_n^\ve$ and the properties of the covering of  $\Omega$ by $\{\Omega_n^\ve\}_{n=1}^{N_\ve}$ imply the convergence result stated in the Lemma.
\end{proof}

Similar to l-t-s convergence and two-scale convergence for sequences defined on surfaces of periodic microstructures, the convergence of l-p approximations (shown in Lemma~\ref{Convergence_1_boundary}) and the Riesz representation theorem ensure the compactness result for sequences  
$\{w^\ve\} \subset L^p(\Gamma^\ve)$ with $\ve \|w^\ve\|^p_{L^p(\Gamma^\ve)} \leq C$. 
\begin{proof}[{\bf Proof of Theorem~\ref{conv_locally_period_b}}] The Banach space $C(\overline \Omega; C_{\rm per}(Y_x))$ is separable and dense in $L^p(\Omega; L^p(\Gamma_x))$.
Thus, using  the convergence result in  Lemma~\ref{Convergence_1_boundary},   the  Riesz representation theorem, and  arguments similar to those  in  \cite[Theorem 3.2]{Ptashnyk13} we obtain l-t-s convergence  of $\{ w^\ve\} \subset L^p(\Gamma^\ve)$ to $w \in L^p(\Omega; L^p_{\rm per}(\Gamma_x))$, stated in the theorem. 
\end{proof}

Using the structure of $\Omega_{n,K}^{\ast, \ve}$ and the covering properties of $\Omega^\ast_{\ve, K}$ by $\{\Omega_{n,K}^{\ast, \ve}\}_{n=1}^{N_\ve}$ we can derive the trace inequalities for functions defined on $\Gamma^\ve$. 
 Applying first the trace inequality in $Y^{\ast, \xi}_{{x_n^\ve},K}=D_{x_n^\ve}(\widetilde Y^{\ast}_{K_{x_n^\ve}} + \xi)$, with  $\xi \in \hat \Xi_n^\ve$,  yields 
\begin{eqnarray*}
&&\| u\|^p_{L^p(D_{x_n^\ve}(\widetilde \Gamma_{K_{x_n^\ve}}  + \xi))} \leq \mu_\Gamma \left[ \| u \|^p_{L^p(Y^{\ast, \xi}_{{x_n^\ve}, K})} +  \| \nabla u \|^p_{L^p(Y^{\ast, \xi}_{{x_n^\ve}, K})} \right], \label{trace_unit_1} \\
&& \| u\|^p_{L^p(D_{x_n^\ve}(\widetilde \Gamma_{K_{x_n^\ve}} + \xi)) } \leq \mu_\Gamma \left[ \| u \|^p_{L^p(Y^{\ast, \xi}_{{x_n^\ve}, K})} +  
 \int_{Y^{\ast, \xi}_{{x_n^\ve}, K}} \int_{Y^{\ast, \xi}_{{x_n^\ve},K}}\frac{|u(y_1) - u(y_2)|^p}{|y_1-y_2|^{d+\beta p}} dy_1 dy_2 \right] \label{trace_unit_2} ,
\end{eqnarray*}
for $u \in W^{1,p}(Y^{\ast, \xi}_{{x_n^\ve}, K})$ or
$u \in W^{\beta, p}(Y^{\ast, \xi}_{{x_n^\ve}, K})$, for $1/2 < \beta <1$, respectively,  where the constant $\mu_\Gamma$ depends only on $D$, $K$, and $Y^\ast$, see e.g.\ \cite{Evans, Wloka}.
Then, scaling by $\ve$ and summing up over $\xi \in \hat\Xi_n^\ve$ and $1\leq n \leq N_\ve$ implies the estimates 
\begin{eqnarray}
&&\ve \| u\|^p_{L^p(\hat \Gamma^\ve)} \leq \mu_\Gamma \left[ \| u \|^p_{L^p(\Omega^{\ast}_ {\ve,K})} + \ve^p \| \nabla u \|^p_{L^p(\Omega^{\ast}_{\ve,K})}\right]  \label{estim_boundary}\\
 && \hspace{4.5 cm} \text{ for } u \in W^{1,p}(\Omega_{\ve,K}^\ast), \; p \in [1, +\infty),  \nonumber\\
&&\ve \| u\|^p_{L^p(\hat \Gamma^\ve)} \leq \mu_\Gamma \left[ \| u \|^p_{L^p(\Omega^{\ast}_{ \ve,K})} + \ve^{\beta p} 
\int_{\Omega^{\ast}_{\ve,K} } \int_{\Omega^{\ast}_{\ve,K} }\frac{|u(x_1) - u(x_2)|^p}{|x_1-x_2|^{d+\beta p}} dx_1 dx_2 \right] \label{estim_boundary_2} \\
&&\hspace{3. cm} \text{ for } u \in W^{\beta,p} (\Omega_{\ve,K}^\ast) \; \text{ with }\;  1/2<\beta<1, \;  p \in [1, +\infty), 
\nonumber
\end{eqnarray}
where the constant $\mu_\Gamma$ depends on $D$,  $K$, and $Y^\ast$ and is independent of $\ve$, 
and
$$\hat \Gamma^\ve =\bigcup_{n=1}^{N_\ve} \hat \Gamma_n^\ve\; \;  \text{ with  } \; \;  \hat \Gamma_n^\ve= \bigcup_{\xi\in   \hat \Xi_{n}^{\ve}} \ve  D_{x_n^\ve} (\widetilde \Gamma_{K_{x_n^\ve}} + \xi). $$
 Since $\Gamma_{x_n^\ve}$ is given by a linear transformation of $\Gamma$, for a parametrization $y= y(w)$ of $\Gamma$, where $w \in \mathbb R^{d-1}$,  we obtain by 
 $x(w)=\ve D_{x_n^\ve}K_{x_n^\ve} y(w)$ the parametrization of $\ve \Gamma_{x_n^\ve}$. We consider for $\Gamma$ that $d\sigma_y = \sqrt{ g}dw$ with $w \in \mathbb R^{d-1}$ and for $\Gamma^\ve_{x_n^\ve}$ we have $d\sigma_x^{n} = \ve^{d-1}\sqrt{g_{x_n^\ve}}dw$, where $g=\det(g_{ij})$,  $g_{x_n^\ve}=\det(g_{x_n^\ve, ij})$ and $(g_{ij})$, $(g_{x_n^\ve, ij})$ are  the corresponding first fundamental forms (metrics).
 We have also  $\int_{\Gamma^\ve}  d \sigma^\ve_x= \sum_{n=1}^{N_\ve} \int_{\Gamma_n^\ve} d \sigma_x^{n}$  and  $\Gamma_x= D(x)K(x)\Gamma$ with $d\sigma_x = \sqrt{ g(x)}dw$.
 
 Using the definition of  the l-p boundary unfolding operator,  the trace inequality \eqref{estim_boundary}, and the assumptions on $D$ and $K$ we show the following properties of  $\mathcal T_{\mathcal L}^{b,\ve}$. 
 
 \begin{lemma}\label{unfold_bound_lemma}
 For $\psi \in W^{1,p}(\Omega^\ast_{\ve,K})$, with $ p\in [1, +\infty)$, we have 
 \begin{eqnarray*}
 (i) \; &&  \int_{\Omega \times \Gamma}\sum_{n=1}^{N_\ve} \frac{\sqrt{g_{x_n^\ve}}}{\sqrt{g}|Y_{x_n^\ve}|}  |\mathcal T_{\mathcal L}^{b,\ve} (\psi) (x,y)|^p  \chi_{\Omega_n^\ve} d\sigma_y dx= \ve  \int_{\hat \Gamma^\ve} |\psi(x)|^p d\sigma_x^\ve,   \\
 (ii)\;  &&  \int_{\Omega \times \Gamma} |\mathcal T_{\mathcal L}^{b,\ve} (\psi) (x,y)|^p \,  d\sigma_y dx= 
 \ve \sum_{n=1}^{N_\ve} \int_{\hat \Gamma^\ve_n}  \frac{\sqrt{g}|Y_{x_n^\ve}|}{\sqrt{g_{x_n^\ve}}} |\psi(x)|^p d\sigma_{x}^n \leq C   \ve  \int_{\hat \Gamma^\ve}  |\psi(x)|^p d\sigma_x^\ve, \\
  (iii) \; && \|\mathcal T_{\mathcal L}^{b,\ve} (\psi) \|_{L^p(\Omega \times \Gamma)} \leq C \left(\|\psi\|_{L^p(\Omega^\ast_{\ve,K})} + 
 \ve \|\nabla \psi\|_{L^p(\Omega^\ast_{\ve,K})} \right),
 \end{eqnarray*}
 where the constant $C$ depends on $D$ and $K$ and is independent of $\ve$.  
  \end{lemma}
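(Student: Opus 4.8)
The plan is to treat (i) and (ii) as \emph{exact} identities obtained by writing out the definition of $\mathcal T_{\mathcal L}^{b,\ve}$ cell by cell, performing on each periodicity cell the change of variables induced by $x = \ve D_{x_n^\ve}(\xi + K_{x_n^\ve}y)$, and bookkeeping the surface measures; then to deduce (iii) by combining (ii) with the trace inequality \eqref{estim_boundary}. For $\psi\in W^{1,p}(\Omega^\ast_{\ve,K})$ the expression $\mathcal T_{\mathcal L}^{b,\ve}(\psi)$ is understood with $\psi$ replaced by its trace on $\Gamma^\ve$ (equivalently on $\hat\Gamma^\ve\subset\Gamma^\ve$), which is well defined by \eqref{estim_boundary}.

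For (i), fix $n$ and note that on each cell $\ve D_{x_n^\ve}(Y+\xi)$ with $\xi\in\hat\Xi_n^\ve$ one has $\big[ D^{-1}_{x_n^\ve}x/\ve\big]_Y = \xi$, so $\mathcal T_{\mathcal L}^{b,\ve}(\psi)(x,y) = \psi\big(\ve D_{x_n^\ve}(\xi + K_{x_n^\ve}y)\big)$ is independent of $x$ on that cell. Since $|\ve D_{x_n^\ve}(Y+\xi)| = \ve^d|Y_{x_n^\ve}|$, integrating in $x$ over $\hat\Omega_n^\ve$ turns $\int_{\hat\Omega_n^\ve}(\cdot)\,dx$ into $\sum_{\xi\in\hat\Xi_n^\ve}\ve^d|Y_{x_n^\ve}|(\cdot)$, which cancels the factor $|Y_{x_n^\ve}|$ in the weight. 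It then remains to convert $\int_\Gamma (\sqrt{g_{x_n^\ve}}/\sqrt g)\,|\psi(\ve D_{x_n^\ve}(\xi+K_{x_n^\ve}y))|^p\,d\sigma_y$ into a surface integral over $\ve D_{x_n^\ve}(\widetilde\Gamma_{K_{x_n^\ve}}+\xi)$: using $d\sigma_y = \sqrt g\,dw$ and the parametrization $x(w)=\ve D_{x_n^\ve}(\xi+K_{x_n^\ve}y(w))$, under which $d\sigma_x^n = \ve^{d-1}\sqrt{g_{x_n^\ve}}\,dw$, the metric factor $\sqrt{g_{x_n^\ve}}/\sqrt g$ together with the volume factor $\ve^d$ collapses to exactly $\ve\,d\sigma_x^n$. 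Summing over $\xi\in\hat\Xi_n^\ve$ and then over $n$ gives $\ve\int_{\hat\Gamma^\ve}|\psi|^p\,d\sigma_x^\ve$, i.e.\ (i).

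The computation for (ii) is identical up to the measure conversion, but now there is no weight $\sqrt{g_{x_n^\ve}}/(\sqrt g\,|Y_{x_n^\ve}|)$: starting from $\int_{\Omega\times\Gamma}|\mathcal T_{\mathcal L}^{b,\ve}(\psi)|^p\,d\sigma_y\,dx = \sum_n\sum_{\xi\in\hat\Xi_n^\ve}\ve^d|Y_{x_n^\ve}|\int_\Gamma|\psi(\ve D_{x_n^\ve}(\xi+K_{x_n^\ve}y))|^p\,d\sigma_y$ and writing $d\sigma_y = (\sqrt g/(\ve^{d-1}\sqrt{g_{x_n^\ve}}))\,d\sigma_x^n$ produces the weight $\sqrt g\,|Y_{x_n^\ve}|/\sqrt{g_{x_n^\ve}}$ and a single overall power of $\ve$, which is the middle expression in (ii). The inequality then follows from the uniform bound $\sqrt g\,|Y_{x_n^\ve}|/\sqrt{g_{x_n^\ve}}\le C$: here $|Y_{x_n^\ve}| = |\det D_{x_n^\ve}|\le D_2$, the fixed metric $g$ of $\Gamma$ is bounded on the compact $\Gamma$, and $g_{x_n^\ve}$, the metric determinant of the image $D_{x_n^\ve}K_{x_n^\ve}\Gamma$, is bounded below uniformly in $n$ and in the parameter $w$ because $D_{x_n^\ve}K_{x_n^\ve}$ is a linear isomorphism with $|\det(D_{x_n^\ve}K_{x_n^\ve})|\ge D_1K_1$, using $D,K,D^{-1},K^{-1}\in\text{Lip}$. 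Finally (iii): bound $\|\mathcal T_{\mathcal L}^{b,\ve}(\psi)\|_{L^p(\Omega\times\Gamma)}^p$ by the right-hand side of (ii), i.e.\ by $C\ve\|\psi\|_{L^p(\hat\Gamma^\ve)}^p$, then apply \eqref{estim_boundary} to get $\ve\|\psi\|_{L^p(\hat\Gamma^\ve)}^p\le\mu_\Gamma(\|\psi\|_{L^p(\Omega^\ast_{\ve,K})}^p+\ve^p\|\nabla\psi\|_{L^p(\Omega^\ast_{\ve,K})}^p)$, and take $p$-th roots using $(a+b)^{1/p}\le a^{1/p}+b^{1/p}$.

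I expect the only real subtlety to be the measure bookkeeping — matching the $\ve^d$ from the cell volume against the $\ve^{d-1}$ in $d\sigma_x^n$ so that precisely one factor of $\ve$ survives — together with the uniform two-sided control of $g_{x_n^\ve}$ extracted from the Lipschitz and non-degeneracy hypotheses on $D$ and $K$; the remaining steps are a direct transcription of the definition of $\mathcal T_{\mathcal L}^{b,\ve}$ and the change of variables, parallel to Lemma~\ref{Convergence_1_boundary} and to the periodic boundary unfolding operator.
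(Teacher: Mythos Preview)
Your proposal is correct and follows essentially the same approach as the paper: expand $\mathcal T_{\mathcal L}^{b,\ve}$ cell by cell, use that the unfolded function is constant in $x$ on each cell $\ve D_{x_n^\ve}(Y+\xi)$ so that the $x$-integration produces $\ve^d|Y_{x_n^\ve}|$, then convert the $\Gamma$-integral to an integral over $\ve D_{x_n^\ve}(\widetilde\Gamma_{K_{x_n^\ve}}+\xi)$ via the measure relations $d\sigma_y=\sqrt g\,dw$ and $d\sigma_x^n=\ve^{d-1}\sqrt{g_{x_n^\ve}}\,dw$, and finally derive (iii) from (ii) together with the trace inequality~\eqref{estim_boundary}. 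Your write-up is in fact more explicit than the paper's (which records only the key chain of equalities for (i) and then says ``similar calculations and the regularity assumptions on $D$ and $K$'' for (ii)); in particular your justification of the uniform bound $\sqrt g\,|Y_{x_n^\ve}|/\sqrt{g_{x_n^\ve}}\le C$ via the non-degeneracy of $D$ and $K$ spells out what the paper leaves implicit.
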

 \begin{proof} 
 Equality $(i)$ follows directly from 
the definition of $\mathcal T_{\mathcal L}^{b,\ve}$,  i.e.\
 \begin{eqnarray*}
&& \int_{\Omega \times \Gamma } \sum_{n=1}^{N_\ve}  \frac{\sqrt{g_{x_n^\ve}}}{\sqrt{g} |Y_{x_n^\ve}|}   |\mathcal T_{\mathcal L}^{b,\ve} (\psi) |^p \chi_{\Omega_n^\ve} d\sigma_y dx \\
&&  =
  \sum_{n=1}^{N_\ve} \sum_{\xi \in \hat \Xi_n^\ve} \ve^d  \int_\Gamma\frac{\sqrt{g_{x_n^\ve}}}{\sqrt{g}}
   |\psi( \ve D_{x_n^\ve} (\xi +  K_{x_n^\ve} y))|^p d\sigma_y 
   =     \ve  \int_{\hat \Gamma^\ve}  |\psi(x)|^p d\sigma_x^\ve.
   \end{eqnarray*}
   Similar calculations and the regularity assumptions on $D$ and $K$ imply the equality and the  estimate in $(ii)$.  The estimate in $(iii)$ is ensured by  $(ii)$ and  \eqref{estim_boundary}. 
 \end{proof}
 
\textit{Remark. } 
Due to  the second estimate in Lemma~\ref{unfold_bound_lemma} and the  assumptions on  $D$ and $K$, the boundedness of $\ve \|w^\ve\|^p_{L^p(\hat \Gamma^\ve)} $ implies the boundedness of $ \|\mathcal T_{\mathcal L}^{b,\ve} (w^\ve) \|^p_{L^p(\Omega \times \Gamma)}$ and, hence, the weak convergence of 
$\mathcal T_{\mathcal L}^{b,\ve} (w^\ve)$ in $L^p(\Omega \times \Gamma)$.

Applying the properties of the l-p boundary unfolding operator shown in Lemma~\ref{unfold_bound_lemma} we  prove the relation between the l-t-s convergence on oscillating boundaries and   the l-p boundary unfolding operator. 
\begin{proof}[{\bf Proof of Theorem~\ref{weak_two_scale_b}}]  Using the definition of $ \T_{\mL}^{b, \ve}$ and considering  
  $\psi \in C(\overline\Omega;  C_{\rm per}(Y_x))$ together with  the corresponding   $\widetilde \psi \in C(\overline\Omega;  C_{\rm per}(Y))$ yields
\begin{eqnarray*}
 && \int_\Omega\int_\Gamma  \sum_{n=1}^{N_\ve}  \frac{\sqrt{g_{x_n^\ve}}}{\sqrt{g} |Y_{x_n^\ve}|}    \T_{\mL}^{b, \ve} (w^\ve)\,  \widetilde\psi(x, K_{x_n^\ve} y)\,  \chi_{\Omega_n^\ve} d\sigma_y  dx  \\
 &&  = \sum_{n=1}^{N_\ve}  \sum_{\xi \in \hat \Xi_n^\ve}   \ve   \int_{\ve \Gamma_{x_n^\ve}^\xi }
w^\ve(z)  \widetilde  \psi\left(z,  D_{x_n^\ve}^{-1} \frac z \ve\right)    d\sigma_z^{n}   \\
&& + \sum_{n=1}^{N_\ve}  \sum_{\xi \in \hat \Xi_n^\ve} \ve^{1-d} \frac 1{  |Y_{x_n^\ve}|} \int_{\ve\Gamma_{x_n^\ve}^\xi }
w^\ve(z)  \int_{\ve Y_{x_n^\ve}^\xi}\left[\widetilde \psi\left(x,  D_{x_n^\ve}^{-1} \frac z\ve\right) -\widetilde \psi\left(z,   D_{x_n^\ve}^{-1} \frac z\ve\right)\right] dx\,  d\sigma_z^{n}  ,  
\end{eqnarray*}
where $\Gamma_{x_n^\ve}^\xi = D_{x_n^\ve}(\widetilde \Gamma_{K_{x_n^\ve}}+ \xi)$ and 
$Y_{x_n^\ve}^{\xi}= D_{x_n^\ve}(Y + \xi)$. The continuity of $\psi$  and the boundedness of $\ve\|w^\ve\|^p_{L^p(\Gamma^\ve)}$  ensure the convergence of the  last integral  to zero as $\ve\to 0$. 
Consider first that $w^\ve \to w$  l-t-s.
The assumption on $w^\ve$, i.e.\ $\ve \|w^\ve\|_{L^p(\Gamma^\ve)}^p \leq C$, with $p \in (1, +\infty)$ ensures that, up to a subsequence,  
$\T_{\mL}^{b, \ve} (w^\ve)	\rightharpoonup \hat w$  weakly in $L^p(\Omega\times \Gamma)$. 
Using  the continuity of $\psi$, $D$, and $K$,  along with  $|\Gamma^\ve \setminus \hat \Gamma^\ve| \to 0 $  as $\ve \to 0$, yields
\begin{equation}\label{boundary_unflod_ts}
\begin{aligned}
&\int_\Omega \int_\Gamma   \frac{ \sqrt{g_x}}{|Y_x|\sqrt{g}} \hat w(x,y) \, \widetilde \psi(x, K_x y) \, d\sigma_y  dx\\
& =\lim\limits_{\ve \to 0}
 \int_\Omega\int_\Gamma  \sum_{n=1}^{N_\ve}  \frac{\sqrt{g_{x_n^\ve}}}{ |Y_{x_n^\ve}|\sqrt{g}}  \T_{\mL}^{b, \ve} (w^\ve) \widetilde \psi(x, K_{x_n^\ve} y) \chi_{\Omega_n^\ve} d\sigma_y  dx \\ 
 &=
\lim\limits_{\ve\to 0} \ve  \int_{\Gamma^\ve} w^\ve(x)\mL^\ve (\psi)\, d\sigma_x^\ve= 
\int_\Omega\frac 1 {|Y_x|} \int_{\Gamma_x} w(x,y)  \psi(x, y) \, d\sigma_y  dx 
\end{aligned}
\end{equation}
for all $\psi \in C(\overline\Omega; C_{\rm per}(Y_x))$. Applying the coordinate transformation in the integral on the right-hand side yields    $\hat w(x,y) = w(x,D_x K_x y)$ for a.a.\ $x\in \Omega$, $y \in  \Gamma$ and  the whole sequence $\{\T_{\mL}^{b, \ve} (w^\ve)\}$ converges to $w(\cdot, D_x K_x \cdot)$.

Consider  $\T_{\mL}^{b, \ve} (w^\ve)	\rightharpoonup w(\cdot, D_x K_x \cdot)$ in $L^p(\Omega\times \Gamma)$. The boundedness of $\ve \|w^\ve\|_{L^p(\Gamma^\ve)}^p$ implies that, up to a subsequence,  $w^\ve \to \hat w$ l-t-s and $\hat w \in L^p(\Omega; L^p(\Gamma_x))$. 
Interchanging in \eqref{boundary_unflod_ts}  $\hat w$  and $w$, we obtain that the whole sequence $w^\ve$ l-t-s converges to $w$. 
\end{proof}

For  functions in  $W^{\beta,p}(\Omega)$, with $p \in (1, +\infty)$,  and $1/2<\beta\leq 1$ or for sequences  defined  on oscillating boundaries  and converging in the $L^p(\Gamma^\ve)$-norm, scaled by $\ve^{1/p}$,  we obtain the strong convergence of the corresponding unfolded sequences. 
\begin{lemma}\label{strong_conv_boundary}
For $u \in W^{\beta,p}(\Omega)$, with $p \in (1, +\infty)$,  and $1/2<\beta\leq 1$, we have 
\begin{equation} 
\T_{\mL}^{b, \ve} (u) \to u \quad \text{ strongly in } \quad L^p(\Omega\times \Gamma).
\end{equation}
If for $\{v^\ve \}\subset L^p(\Gamma^\ve)$ and some  $v \in C(\overline \Omega; C_{\rm per}(Y_x))$  we have   $\ve \|v^\ve - \mathcal L^\ve v\|^p_{L^p(\Gamma^\ve)} \to 0$ as $\ve \to 0$, then 
\begin{equation}\label{conver_strong_G}
\T_{\mL}^{b, \ve} (v^\ve) \to v(\cdot, D_xK_x \cdot)  \quad \text{ strongly in } \quad L^p(\Omega\times \Gamma).
\end{equation}
\end{lemma}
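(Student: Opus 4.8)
The plan is to reduce each of the two assertions to the uniform $L^p$-boundedness of $\T_{\mL}^{b,\ve}$ contained in Lemma~\ref{unfold_bound_lemma} together with an approximation argument, exactly in the spirit of Lemmas~\ref{lemma:estim_1} and \ref{conver_local_t-s} for $\T_{\mL}^{\ve}$. The elementary fact used throughout is that for $x\in\hat\Omega_n^\ve$ and $y\in\Gamma$ the sampling point $\ve D_{x_n^\ve}\big[D^{-1}_{x_n^\ve}x/\ve\big]_{Y}+\ve D_{x_n^\ve}K_{x_n^\ve}y$ differs from $x$ by at most $C\ve$, uniformly in $x$, $y$, $n$: the remainder $D^{-1}_{x_n^\ve}x/\ve-\big[D^{-1}_{x_n^\ve}x/\ve\big]_{Y}$ lies in $Y$, $K_{x_n^\ve}\Gamma\subset K_{x_n^\ve}\overline{Y_0}\subset Y$, and $D$, $K$ are bounded; moreover $|x_n^\ve-x|\le C\ve^{r}$ because both points lie in the cube $\Omega_n^\ve$ of side $\ve^{r}$.

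For the first assertion I would first treat $u\in C^\infty(\overline\Omega)$. By uniform continuity of $u$ on the compact set $\overline\Omega$ and the observation above, $|\T_{\mL}^{b,\ve}(u)(x,y)-u(x)|$ is bounded by the modulus of continuity of $u$ evaluated at $C\ve$ whenever $x\notin\Lambda^\ve$, while $\T_{\mL}^{b,\ve}(u)$ vanishes on $\Lambda^\ve$; since $|\Lambda^\ve|\to0$ this gives $\T_{\mL}^{b,\ve}(u)\to u$ in $L^p(\Omega\times\Gamma)$ for smooth $u$. Next, combining Lemma~\ref{unfold_bound_lemma}(ii) with the trace inequality \eqref{estim_boundary} when $\beta=1$ and \eqref{estim_boundary_2} when $1/2<\beta<1$, applied to the restriction of $u\in W^{\beta,p}(\Omega)$ to $\Omega^\ast_{\ve, K}$, yields $\|\T_{\mL}^{b,\ve}(u)\|_{L^p(\Omega\times\Gamma)}\le C\|u\|_{W^{\beta,p}(\Omega)}$ with $C$ independent of $\ve\le1$; here the restriction $\beta>1/2$ is precisely what keeps the trace bounded. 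Since $C^\infty(\overline\Omega)$ is dense in $W^{\beta,p}(\Omega)$, writing $\T_{\mL}^{b,\ve}(u)-u=\T_{\mL}^{b,\ve}(u-u_k)+(\T_{\mL}^{b,\ve}(u_k)-u_k)+(u_k-u)$ --- the first term small uniformly in $\ve$ by the uniform bound, the second small for fixed $k$ by the smooth case, the third controlled by $\|u_k-u\|_{L^p(\Omega)}$ --- closes this case.

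For the second assertion, split $\T_{\mL}^{b,\ve}(v^\ve)=\T_{\mL}^{b,\ve}(v^\ve-\mathcal L^\ve v)+\T_{\mL}^{b,\ve}(\mathcal L^\ve v)$. The first term tends to zero in $L^p(\Omega\times\Gamma)$ by the change-of-variables bound in Lemma~\ref{unfold_bound_lemma}(ii) (valid for any measurable function on $\hat\Gamma^\ve$), since $\|\T_{\mL}^{b,\ve}(v^\ve-\mathcal L^\ve v)\|_{L^p(\Omega\times\Gamma)}^p\le C\ve\|v^\ve-\mathcal L^\ve v\|_{L^p(\hat\Gamma^\ve)}^p\le C\ve\|v^\ve-\mathcal L^\ve v\|_{L^p(\Gamma^\ve)}^p\to0$ by hypothesis. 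For the second term I would argue as in Lemma~\ref{conver_local_t-s}(i): for $x\in\hat\Omega_n^\ve$ the sampling point lies in the cell $\ve D_{x_n^\ve}\big(\big[D^{-1}_{x_n^\ve}x/\ve\big]_{Y}+Y\big)\subset\Omega_n^\ve$, on which $\mathcal L^\ve v$ coincides with $\widetilde v(\cdot,D^{-1}_{x_n^\ve}\cdot/\ve)$, where $\widetilde v(x,z)=v(x,D_xz)$; the $Y$-periodicity of $\widetilde v$ in its second variable absorbs the integer vector $\big[D^{-1}_{x_n^\ve}x/\ve\big]_{Y}$ and leaves $\T_{\mL}^{b,\ve}(\mathcal L^\ve v)(x,y)=\sum_{n}\widetilde v\big(z_n^\ve(x,y),K_{x_n^\ve}y\big)\chi_{\hat\Omega_n^\ve}(x)$ with $|z_n^\ve(x,y)-x|\le C\ve$. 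By the joint continuity of $\widetilde v\in C(\overline\Omega;C_{\rm per}(Y))$ and of $K$ this integrand converges pointwise to $\widetilde v(x,K_xy)=v(x,D_xK_xy)$, is bounded by $\|\widetilde v\|_{C(\overline\Omega\times Y)}$, and vanishes on $\Lambda^\ve$; since $|\Lambda^\ve|\to0$, dominated convergence gives $\T_{\mL}^{b,\ve}(\mathcal L^\ve v)\to v(\cdot,D_xK_x\cdot)$ in $L^p(\Omega\times\Gamma)$, and \eqref{conver_strong_G} follows.

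I expect the main obstacle to be bookkeeping rather than analysis: identifying which translated cell $\ve D_{x_n^\ve}\big(\big[D^{-1}_{x_n^\ve}x/\ve\big]_{Y}+\xi\big)$ contains the sampling point (so that the correct summand of $\mathcal L^\ve v$ is the active one and the periodicity reduction is legitimate), and checking that the geometric factors $\sqrt{g_{x_n^\ve}}/\sqrt{g}$, $|Y_{x_n^\ve}|$ appearing in Lemma~\ref{unfold_bound_lemma}, as well as the constants in \eqref{estim_boundary}, \eqref{estim_boundary_2}, are bounded above and below uniformly in $n$ and $\ve$; this is guaranteed by the Lipschitz bounds on $D$, $K$ and by $0<D_1\le|\det D|\le D_2$, $0<K_1\le|\det K|\le K_2$ assumed in Section~\ref{Definitions}. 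No analytic ingredient beyond those already used for $\T_{\mL}^{\ve}$ and $\T_{\mL}^{\ast,\ve}$ appears to be needed.
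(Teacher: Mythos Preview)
Your proposal is correct and follows essentially the same route as the paper: density of smooth functions in $W^{\beta,p}(\Omega)$ combined with the uniform bound from Lemma~\ref{unfold_bound_lemma}(ii) and the trace estimates \eqref{estim_boundary}--\eqref{estim_boundary_2} for the first assertion, and the same splitting $\T_{\mL}^{b,\ve}(v^\ve)-v(\cdot,D_xK_x\cdot)=\T_{\mL}^{b,\ve}(v^\ve-\mathcal L^\ve v)+\bigl(\T_{\mL}^{b,\ve}(\mathcal L^\ve v)-v(\cdot,D_xK_x\cdot)\bigr)$ for the second. The only presentational difference is that for the smooth (respectively continuous) approximants the paper argues by showing convergence of the $L^p$-norm $\int_{\Omega\times\Gamma}|\T_{\mL}^{b,\ve}(u_k)|^p\,d\sigma_y\,dx\to\int_{\Omega\times\Gamma}|u_k|^p\,d\sigma_y\,dx$ (together with the implicit pointwise convergence from continuity), whereas you bound $|\T_{\mL}^{b,\ve}(u_k)(x,y)-u_k(x)|$ pointwise by the modulus of continuity and invoke dominated convergence directly; both lead to the same conclusion and neither requires any additional idea.
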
  
\begin{proof}
For an approximation  of $u$  by  $u_k \in C^1(\overline\Omega)$  we can write
\begin{eqnarray*}
\int_{\Omega\times \Gamma}\hspace{-0.1 cm }  |\T_{\mL}^{b, \ve} (u_k)|^p d\sigma_y dx = \sum_{n=1}^{N_\ve} \int_{\Omega\times \Gamma} \hspace{-0.05 cm }  \big|u_k\big( \ve D_{x_n^\ve} \big[{D^{-1}_{x_n^\ve} x}/ \ve\big]_{Y} +  \ve D_{x_n^\ve} K_{x_n^\ve} y \big)\big|^p \chi_{\hat \Omega_{n}^\ve}  d\sigma_y dx \\
=\sum_{n=1}^{N_\ve} \sum_{\xi\in \hat \Xi_n^\ve} \ve^d |Y_{x_n^\ve}| \hspace{-0.1 cm }  \int_\Gamma \hspace{-0.05 cm }  |u_k(\ve D_{x_n^\ve}(\xi + K_{x_n^\ve} y))|^p d\sigma_y 
=
\sum_{n=1}^{N_\ve} \sum_{\xi\in \hat \Xi_n^\ve} \hspace{-0.05 cm }  |\ve Y_{x_n^\ve}| |\Gamma| |u_k(\tilde x_{n, \xi}^\ve)|^p  + \delta_\ve,
\end{eqnarray*}
for some fixed $\tilde x_{n, \xi}^\ve \in \ve D_{x_n^\ve}(K_{x_n^\ve} \Gamma+\xi)$, where, due to the continuity of $u_k$, we have 
\begin{eqnarray*}
\delta_\ve = \sum_{n=1}^{N_\ve} \sum_{\xi\in\hat \Xi_n^\ve} \ve^d |D_{x_n^\ve} Y|   \int_\Gamma |u_k(\ve D_{x_n^\ve}(\xi + K_{x_n^\ve} y)) - u_k(\tilde x_{n, \xi}^\ve)|^p d\sigma_y \to 0 \; \; \text{ as } \;  \ve \to 0.
\end{eqnarray*}
The properties of the  covering of $\Omega$ by $\{\Omega_n^\ve\}_{n=1}^{N_\ve}$ and   $|\Omega \setminus \hat \Omega^\ve| \to 0$ as $\ve \to 0$ imply
$$
\lim\limits_{\ve \to 0} \sum_{n=1}^{N_\ve} \sum_{\xi\in \hat \Xi_n^\ve} \ve^d |D_{x_n^\ve} Y| |\Gamma| |u_k(\tilde x_{n, \xi}^\ve)|^p  = \int_\Omega \int_\Gamma  |u_k(x)|^p d \sigma_y dx.
$$
Then, the density of $C^1(\overline\Omega)$ in $W^{\beta,p}(\Omega)$, the relation (ii) in Lemma \ref{unfold_bound_lemma}, 
 and the trace estimates \eqref{estim_boundary} and  \eqref{estim_boundary_2}   ensure the convergence result for $u \in W^{\beta,p}(\Omega)$.

To show the convergence  in \eqref{conver_strong_G} we consider 
\begin{eqnarray*}
\|\T_{\mL}^{b, \ve} (v^\ve) - v(\cdot, D_x K_x \cdot) \|_{L^p(\Omega\times \Gamma)} &\leq & 
\|\T_{\mL}^{b, \ve} (v^\ve) - \T_{\mL}^{b, \ve}(\mathcal L^\ve v)\|_{L^p(\Omega\times \Gamma)}
\\ &+& \|\T_{\mL}^{b, \ve} (\mathcal L^\ve v) - v(\cdot, D_x K_x \cdot)\|_{L^p(\Omega\times \Gamma)}.
\end{eqnarray*}
Then, the estimate (ii) in Lemma~\ref{unfold_bound_lemma}, the regularity of $v$, $D$, and $K$,  and the convergence 
\begin{eqnarray*}
\lim\limits_{\ve\to 0}\int_{\Omega\times\Gamma}\hspace{-0.1 cm }  |\T_{\mL}^{b, \ve} (\mathcal L^\ve v)|^p d\sigma_y dx =
\lim\limits_{\ve\to 0} \sum_{n=1}^{N_\ve} |\ve Y_{x_n^\ve}|\hspace{-0.1 cm } \sum_{\xi \in \hat \Xi_n^\ve} \int\limits_\Gamma  |\widetilde v(\ve D_{x_n^\ve}(\xi+ K_{x_n^\ve} y), K_{x_n^\ve} y)|^p d\sigma_y\\
 = \int_\Omega \int_\Gamma 
 |v(x, D_{x} K_x y)|^p d\sigma_y dx,
\end{eqnarray*}
where $\widetilde v(x,y) = v(x, D_x y)$ for $x\in \Omega$ and $y \in Y$, yield   \eqref{conver_strong_G}.
\end{proof}

The  results in Lemma~\ref{strong_conv_boundary} will ensure the  strong convergence of coefficients in equations defined on oscillating boundaries and are used in the derivation of macroscopic problems for microscopic equations defined on  surfaces of locally periodic microstructures.

\section{ Homogenization of a   model for a signaling process in a tissue  with locally periodic distribution of cells}\label{Application}
In this section we apply the  methods of the l-p unfolding operator and l-t-s convergence on oscillating surfaces to derive macroscopic equations for microscopic models posed in  domains with locally periodic  perforations.  
We  consider a generalization of    the model for an intercellular signaling process presented in \cite{Ptashnyk08}  to  tissues with  locally periodic microstructures. As examples for tissues   with  space-dependent changes in the size and shape of cells we  consider  epithelial and  plant cell tissues, see Fig.~\ref{Fig3}. As an example of a tissue which has a  plywood-like structure we consider the cardiac muscle tissue of the left ventricular wall. %, see Fig.~\ref{Fig_fibres_1}. 
%\begin{figure}
%\centering
%\includegraphics[width=6 cm]{F2_large.eps}
%\includegraphics[width=6.5 cm]{heart_fibres.eps}
%\caption{Images of laminar cleavage planes in longitudinal-radial (A) and circumferential-radial (B) tissue sections from basal and apical measurement sites in anterior LV free wall. Republished with permission of the American Physiological Society,  from  Costa, Takayana, McCulloch, Covell, 1999 \cite{Costa_1999}. Cardiac muscle fiber orientations vary continuously through the left ventricular wall from a negative angle at the epicardium to positive values toward the endocardium. Republished with permission of Taylor and Francis Group, from  McCulloch 2000 \cite{McCulloch_2000}. Used with permission.}\label{Fig_fibres_1}
%\end{figure}

The microstructure of cardiac muscle is described in the same way as a plywood-like structure considered in the introduction, where   $D(x)=R^{-1}(\gamma(x_3))$  and the rotation matrix $R$  is as defined  in the introduction.  Additionally we   assume that  the radius of fibers may change locally, i.e.\ $K(x) Y_0 \subset Y$, with $K(x) = \begin{pmatrix} 1 & {\bf 0}^T  \\
{\bf  0} & \rho(x) {\bf I}_2   \end{pmatrix}$, $Y_0=\{(y_1, y_2, y_3)\in Y : |y_2|^2 + |y_3|^2 <a^2 \}$,  $0<a<1/2$, $Y=(-1/2, 1/2)^3$,  and  $\rho \in C^1(\overline \Omega)$ with $0< \rho_1\leq \rho(x) a < 1/2 $ for all $x\in \overline\Omega$. Then, 
for the plywood-like structure we have   $D_{x_n^\ve}= R^{-1}(\gamma(x^\ve_{n,3})) $,  $\widetilde Y^\ast_{K_x} =  Y\setminus K(x) \overline Y_0$,  $Y^\ast_{x, K} = R^{-1}(\gamma(x_3)) \widetilde Y^\ast_{K_{x}} $,   and the characteristic function of fibers is given by 
$$\chi_{\Omega_f^\ve}(x)= \chi_\Omega(x) \sum\limits_{n=1}^{N_\ve} \tilde \eta(x_n^\ve,  R(\gamma({x_{n,3}^\ve})) x/\ve) \chi_{\Omega_n^\ve}, $$ where 
$$
\tilde \eta(x, y) =\left \{ \begin{aligned} & 1  &&\quad  \text{ for } \; |\hat K(x)^{-1} \hat y| \leq a, \\
& 0 && \quad  \text{elsewhere},
\end{aligned}  
\right. 
$$
and extended $\hat Y$-periodically to the whole of $\mathbb R^3$.  Here $\hat y = (y_2, y_3)$, $\hat Y= (-1/2,1/2)^2$,  and $\hat K(x) = \rho(x) \,  {\bf I}_2$, where ${\bf I}_2$ denotes the identity matrix in $\mathbb R^{2\times 2}$ 
 \\
 In the case of an epithelial tissue  consider $Y_x= D(x) Y$,  with e.g.\  $D(x) =  \begin{pmatrix} {\bf I}_2 & {\bf 0} \\
 {\bf 0}^T& \kappa(x)
  \end{pmatrix}$, where   $\kappa \in C^1(\overline\Omega)$ and $0<\kappa_1\leq \kappa(x) <1$ for all $x \in \Omega$
  defines a  compression of cells in $x_3$-direction. 
  The changes in the size and shape  of cells can be  defined by  the boundaries of the microstructure  $\Gamma_x = S(x)\Gamma \subset Y_x= D_x Y$ for all $x \in \overline \Omega$ and $S\in \text{Lip}(\overline \Omega; \mathbb R^{3\times 3})$. Then, in the definition of the intercellular space $\Omega_{\ve, K}^\ast $  we have $ Y_{x,K}^\ast= D(x) \widetilde Y_{K_x}^\ast = D(x)( Y \setminus K(x) \overline Y_0 )$, where    $K(x) = D(x)^{-1} S(x)$.

 We define the intercellular space in a tissues as  
\begin{eqnarray*}
\Omega_{\ve, K}^\ast = \text{Int}\Big( \bigcup_{n=1}^{N_\ve}\overline {\Omega_{n,K}^{\ast, \ve}}\Big) \cap \Omega, \quad \text{ where }\; \;  \Omega_{n,K}^{\ast, \ve} = 
\Omega_n^\ve\setminus \bigcup_{\xi \in \Xi_n^{\ast, \ve}} D_{x_{n}^\ve} (K_{x_n^\ve} \overline Y_0+ \xi),
\\
\hat \Omega_{\ve, K}^\ast =\text{Int}\Big( \bigcup_{n=1}^{N_\ve}\overline {\hat \Omega_{n,K}^{\ast, \ve}}\Big), \;   \hat   \Omega_{n,K}^{\ast, \ve} =\text{Int}\Big(  \bigcup\limits_{\xi \in \hat \Xi_n^\ve} \ve D_{x_n^\ve}(\overline{\widetilde Y^\ast_{K_{x_n^\ve}}} + \xi)\Big), \; 
\Lambda^\ast_{\ve, K} =\Omega_{\ve, K}^\ast\setminus  \hat \Omega_{\ve, K}^\ast . 
\end{eqnarray*}

In the model for a  signaling  process in a cell tissue we  consider diffusion of  signaling molecules $l^\ve$ in the  inter-cellular space and their interactions with  free and bound receptors $r^\ve_f$ and $r^\ve_b$ located on  cell surfaces. The microscopic model reads
\begin{equation}\label{micro_model_1}
\begin{aligned}
\partial_t l^\ve - \text{div} (A^\ve(x) \nabla l^\ve) &= F^\ve(x, l^\ve) - d^\ve_l(x) l^\ve&  & \text{ in  } \, (0,T)\times \Omega^{\ast}_{\ve, K}, \\
 A^\ve(x)\nabla l^\ve \cdot \bf{n} &  =\ve \big[  \beta^\ve(x) r_b^\ve-  \alpha^\ve(x) l^\ve r_f^\ve \big]& &  \text{ on } \, (0,T)\times \Gamma^\ve,\\ 
  A^\ve(x)\nabla l^\ve \cdot \bf{n}  &= 0 & & \text{ on } \,  (0,T)\times (\partial \Omega\cap \partial\Omega^{\ast}_{\ve, K}),\\
 l^\ve(0,x) &= l_0(x)&  &  \text{ in } \, \Omega^{\ast}_{\ve, K},  
\end{aligned}
\end{equation}
where the dynamics  in  the concentrations of free and bound receptors on   cell surfaces are determined by two ordinary differential equations
\begin{equation}\label{micro_model_2}
\begin{aligned}
& \partial_t r^\ve_f  = p^\ve(x, r_b^\ve) -  \alpha^\ve(x) l^\ve r_f^\ve + \beta^\ve(x) r_b^\ve -d^\ve_f(x) r_f^\ve  \quad  && \text{ on } \, (0,T)\times \Gamma^\ve,\\
& \partial_t r^\ve_b  = \phantom{ R^\ve(x, r_f^\ve) - }  \; \alpha^\ve(x) l^\ve r_f^\ve - \beta^\ve(x) r_b^\ve - d^\ve_b(x) r_b^\ve  \quad  && \text{ on } \, (0,T)\times\Gamma^\ve, \\
& r_f^\ve(0, x) = r_{f0}^\ve(x), \quad \qquad r_b^\ve(0, x) = r_{b0}^\ve(x) \qquad && \text{ on } \, \Gamma^\ve. 
\end{aligned}
\end{equation}
The coefficients $A^\ve$, $\alpha^\ve$, $\beta^\ve$, $d^\ve_j$ and the  functions  $F^\ve(\cdot, \xi)$,  $p^\ve(\cdot, \xi)$, $r^\ve_{i0}$  are defined as 
\begin{equation*}
\begin{aligned}
& A^\ve(x) = \mathcal L^\ve_0 (A(x,y)),  \;  &&  F^\ve(x, \xi) =\mathcal L^\ve_0 (F(x, y, \xi)),\; 
&&   p^\ve(x, \xi) = \mathcal L^\ve_{0}(p(x,y, \xi)),   \\
& 
\alpha^\ve(x) = \mathcal L^\ve_{0}(\alpha(x,y)),  \;   &&
\beta^\ve(x) = \mathcal L^\ve_{0}(\beta(x,y)),  \; &&
  d^\ve_j(x) = \mathcal L^\ve_{0}(d_j(x,y)), \;  \\
  & r^\ve_{i0}(x)=\mL^\ve(r_{i0}(x,y)), && &&  j=l,f,b, \; \; i=f,b, 
  \end{aligned}
  \end{equation*} 
for $x\in \Omega$, $y \in Y_x$ and $\xi \in \mathbb R$,     where   $A(x, \cdot)$, $\alpha(x, \cdot)$, $\beta(x, \cdot)$,  $d_j(x, \cdot)$,  $p(x, \cdot, \xi)$,   $F(x, \cdot, \xi)$, and  $r_{i0}(x, \cdot)$ are  $Y_x$-periodic functions. 
We assume also that $\alpha^\ve(x) =0$ and  $\beta^\ve(x) =0$ for $x \in \Lambda^\ve$. The last assumption is not restrictive, since the domain $\Lambda^\ve$ is very small compared to the size of the whole domain $\Omega$ and $|\Lambda^\ve|\leq C \ve^{1-r}\to 0$ as $\ve \to 0$ for $0\leq r<1$. 

Here,  $A^\ve: \Omega \to \mathbb R$ denotes the diffusion coefficient for signaling molecules (li-gands), 
$F^\ve: \Omega \times \mathbb R  \to \mathbb R$ models the production of new ligands,  $p^\ve: \Omega \to \mathbb R$ describes  the production of new free receptors, 
$d_j^\ve: \Omega \to \mathbb R$, $j=l,f,b$, denote the rates of decay of ligands,  free and bound receptors, respectively, $\beta^\ve: \Omega \to \mathbb R$ is the rate of dissociation of bound receptors,    $\alpha^\ve: \Omega \to \mathbb R$ is the  rate of binding of ligands to free receptors. 
\begin{assumption} \label{asumption} 
\begin{itemize}
\item $A \in C(\overline\Omega; L^\infty_{\rm per}(Y_x))$ is symmetric with $(A(x,y) \xi, \xi) \geq d_0 |\xi|^2$ for  $d_0 >0$,  $\xi \in \mathbb R^d$,
$x\in \Omega$ and  a.a.\ $y \in Y_x$. 
\item $ F(\cdot, \cdot, \xi)\in C(\overline \Omega; L^\infty_{\rm per}(Y_x))$ is Lipschitz continuous in $\xi$ with $\xi \geq-\kappa$, for some $\kappa>0$, uniformly in $(x, y)$  and $ F(x, y,\xi)\geq 0$ for $\xi \geq 0$,  $x \in \Omega$ and  $y \in Y_x$.
\item $p(\cdot, \cdot, \xi)\in C(\overline\Omega; C_{\rm per}(Y_x))$ is Lipschitz continuous in $\xi$ with $\xi \geq-\kappa$, for some $\kappa>0$, uniformly in $(x, y)$ and nonnegative for nonnegative $\xi$.
\item Coefficients $ \alpha,  \beta,    d_j \in C(\overline\Omega; C_{\rm per}(Y_x))$ are nonnegative, $j=l,f,b$.
\item Initial conditions $l_0 \in H^1(\Omega)\cap L^\infty(\Omega)$,  $r_{j0} \in C(\overline\Omega; C_{\rm per}(Y_x))$  are nonnegative, $j=f,b$.
\end{itemize}
\end{assumption}
Notice that these assumptions are satisfied by the physical  processes  described by our model,  since for most signaling  processes in biological tissues we have that   $A=\text{const}$,   
$F(x,y, \xi) =  { \mu_1 \xi}/ (\mu_2 + \mu_3 \xi)$,  and $p(x,y,\xi) ={\kappa_1 \xi}/(\kappa_2 + \kappa_3 \xi)$ with some nonnegative constants $\mu_i$ and $\kappa_i$, for $i=1,2,3$, and the coefficients $\alpha, \beta$, and  $d_j$, with $j=l, f,b$, can be chosen as constant or as some smooth functions.

We shall use the following notations  $\hat \Gamma^\ve_T =  (0,T)\times \hat \Gamma^\ve$, $\Gamma^\ve_T =   (0,T)\times \Gamma^\ve$, $\Omega_T=(0,T)\times  \Omega$, $\Gamma_T=(0,T)\times \Gamma$, and $\Gamma_{x,T}= (0,T)\times\Gamma_x$. 
For $v \in L^p(0,\sigma; L^q(A))$, $w \in L^{p^\prime}(0,\sigma; L^{q^\prime}(A))$  we denote  $\langle v, w \rangle_{A, \sigma} = \int_0^\sigma \int_A v \, w\,  dx dt$.

\begin{definition} 
A weak solution of the problem \eqref{micro_model_1}--\eqref{micro_model_2} are   functions
$(l^\ve, r^\ve_f, r^\ve_b)$ such that $l^\ve \in L^2(0,T;H^1(\Omega^\ast_{\ve,K}))\cap H^1(0,T; L^2(\Omega^\ast_{\ve,K}))$, 
$r_j^\ve \in H^1(0,T; L^2(\Gamma^\ve))\cap L^\infty(\Gamma^\ve_T)$,  for $j = f,b$, satisfying the equation \eqref{micro_model_1} in the weak form 
\begin{equation}\label{sol_weak_l}
\begin{aligned}
 \langle \partial_t l^\ve, \phi \rangle_{\Omega^{\ast}_{\ve,K}, T}  +  \langle A^\ve(x)\nabla l^\ve, \nabla \phi \rangle_{\Omega^{\ast}_{\ve,K}, T} & = \langle F^\ve(x, l^\ve) - d_l^\ve(x)\,  l^\ve, \phi \rangle_{\Omega^{\ast}_{\ve,K}, T}   
\\ & + \ve  \langle \beta^\ve(x) r^\ve_b - \alpha^\ve(x) l^\ve r^\ve_f, \phi \rangle_{\Gamma^\ve,T},  
\end{aligned}
\end{equation}
for all $\phi\in L^2(0,T; H^1(\Omega_{\ve, K}^\ast))$,  the equations \eqref{micro_model_2} are satisfied  a.e.\  on $\Gamma^\ve_T$,  and  $l^\ve(t, \cdot) \to l_0(\cdot)$ in $L^2(\Omega^{\ast}_{\ve,K})$,  $r^\ve_j(t, \cdot) \to r^\ve_{j0}(\cdot)$ in $L^2(\Gamma^\ve)$ as $t \to 0$. 
\end{definition}

In a similar way as in  \cite{ Ptashnyk2013, Ptashnyk08} we obtain  the existence and uniqueness results and  \textit{a priori} estimates for  a weak solution of the microscopic problem  \eqref{micro_model_1}--\eqref{micro_model_2}.
\begin{lemma} \label{apriori}
Under Assumption \ref{asumption}  there exists a unique  non-negative weak solution  of the microscopic problem \eqref{micro_model_1}--\eqref{micro_model_2}  satisfying a priori estimates 
\begin{eqnarray}\label{receptor_a_priori}
\begin{aligned}
\|l^\ve\|_{L^\infty(0,T; L^2(\Omega_{\ve, K}^\ast))} + \|\nabla l^\ve\|_{L^\infty(0,T; L^2(\Omega_{\ve,K}^\ast))} + \|\partial_t l^\ve\|_{L^2((0,T)\times \Omega_{\ve, K}^\ast)}    \leq C, \\
\ve^{1/2} \|l^\ve\|_{L^2(\hat \Gamma^\ve_T)}+ \|r^\ve_j\|_{L^\infty(0,T; L^\infty(\Gamma^\ve))}  +
\ve^{1/2}\|\partial_t r^\ve_j\|_{L^2(\Gamma^\ve_T)}  \leq C, 
\end{aligned}
\end{eqnarray}
with $ j = f,b$,  where   the constant $C$ is independent of $\ve$.  
Additionally, we have that 
\begin{equation}\label{estim_suprem_ve}
\|(l^\ve- M e^{Bt})^{+}\|_{L^\infty(0,T; L^2(\Omega_{\ve,K}^\ast))} + \|\nabla (l^\ve- M e^{Bt})^{+}\|_{L^2((0,T)\times\Omega_{\ve,K}^\ast)}\leq C\ve^{1/2},
\end{equation}
where $v^+ = \max\{0, v\}$, $M\geq \sup\limits_{\Omega} l_0(x)$, $B=B(F, \beta, p)$, and  $C$ is independent~of~$\ve$.
\end{lemma}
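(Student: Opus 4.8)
The plan is to build a weak solution by a short-time Banach fixed-point argument and then to derive all estimates by testing the weak formulation \eqref{sol_weak_l} and the surface equations \eqref{micro_model_2} with suitable test functions, the key quantitative input being the $\ve$-scaled trace inequalities \eqref{estim_boundary}--\eqref{estim_boundary_2}. For the construction I would decouple the system: given $\bar l\in C([0,\sigma];L^2(\Gamma^\ve))$, the equations \eqref{micro_model_2} form, for a.e.\ point of $\Gamma^\ve$, a linear Carath\'eodory ODE system with coefficients bounded by Assumption~\ref{asumption}, hence uniquely solvable for $(r_f^\ve,r_b^\ve)$ with Lipschitz dependence on $\bar l$ in $L^2(\Gamma^\ve_\sigma)$; substituting $(r_f^\ve,r_b^\ve)$ into \eqref{sol_weak_l} and solving the parabolic problem for $l^\ve$ by a Galerkin method in $H^1(\Omega^\ast_{\ve,K})$ (the Lipschitz continuity of $F^\ve=\mL^\ve_0 F$ takes care of the nonlinearity) produces a unique $l^\ve\in L^2(0,\sigma;H^1(\Omega^\ast_{\ve,K}))\cap H^1(0,\sigma;L^2(\Omega^\ast_{\ve,K}))$. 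The composition $\bar l\mapsto l^\ve|_{\Gamma^\ve}$ is a contraction for $\sigma$ small enough (depending only on the Lipschitz constants, $d_0$, and $\mu_\Gamma$, hence on $\ve$ only through constants that the estimates below show to be uniform), so a unique weak solution exists on $[0,\sigma]$ and, by iteration, on $[0,T]$; uniqueness on $[0,T]$ follows from the same contraction estimate. This is exactly the scheme of \cite{Ptashnyk2013,Ptashnyk08}.

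Non-negativity of $r_f^\ve,r_b^\ve$ results from the quasi-positivity of the right-hand sides in \eqref{micro_model_2} by the invariant-region principle, and then $\partial_t(r_f^\ve+r_b^\ve)=p^\ve(x,r_b^\ve)-d_f^\ve r_f^\ve-d_b^\ve r_b^\ve\le C(1+r_f^\ve+r_b^\ve)$, by the linear growth of $p^\ve$ and the nonnegativity of $d_j^\ve$, so Gronwall yields the $L^\infty(\Gamma^\ve_T)$-bound on the receptors uniformly in $\ve$ (and independently of $l^\ve$), using that $r_{j0}^\ve=\mL^\ve r_{j0}$ is bounded in $L^\infty$ by $\|r_{j0}\|_{C(\overline\Omega;C_{\rm per}(Y_x))}$. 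Non-negativity of $l^\ve$ follows by testing \eqref{sol_weak_l} with $-(l^\ve)^-$ and using $F(x,y,\xi)\ge 0$ for $\xi\ge 0$ together with the fact that the only remaining bad-sign terms, $-d_l^\ve l^\ve$ and $-\ve\alpha^\ve l^\ve r_f^\ve$, are proportional to $l^\ve$.

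For \eqref{receptor_a_priori} I would first test \eqref{sol_weak_l} with $l^\ve$: ellipticity $(A^\ve\xi,\xi)\ge d_0|\xi|^2$, the linear growth of $F^\ve$, the $L^\infty$-bounds on $r_f^\ve,r_b^\ve$, and \eqref{estim_boundary} with $p=2$ (whose factor $\ve^2$ in front of $\|\nabla l^\ve\|^2$ makes the surface term absorbable into $d_0\|\nabla l^\ve\|^2$) give, via Gronwall, the $L^\infty(0,T;L^2)\cap L^2(0,T;H^1)$-bound together with $\ve^{1/2}\|l^\ve\|_{L^2(\hat\Gamma^\ve_T)}\le C$. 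Reading $\partial_t r_j^\ve$ off \eqref{micro_model_2} and integrating its square over $\Gamma^\ve_T$, all terms are controlled by the $L^\infty$-bounds except $\ve^{1/2}\alpha^\ve l^\ve r_f^\ve$, which is bounded via \eqref{estim_boundary}, so $\ve^{1/2}\|\partial_t r_j^\ve\|_{L^2(\Gamma^\ve_T)}\le C$. Finally, testing the time-differentiated equation with $\partial_t l^\ve$ — which does produce a good term $d_0\|\nabla\partial_t l^\ve\|^2$, so that \eqref{estim_boundary} absorbs the surface contribution — and using that $A^\ve,d_l^\ve,\alpha^\ve,\beta^\ve$ are time-independent, that $-\ve\langle\alpha^\ve r_f^\ve\partial_t l^\ve,\partial_t l^\ve\rangle_{\Gamma^\ve}\le 0$, and the bounds on $\partial_t r_j^\ve$, yields the bound on $\|\partial_t l^\ve\|_{L^2((0,T)\times\Omega^\ast_{\ve,K})}$ (the compatibility at $t=0$ is handled by a mollification of the data with a limit passage, since $A^\ve$ is merely $L^\infty$).

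For the barrier estimate \eqref{estim_suprem_ve} I would set $w^\ve:=(l^\ve-Me^{Bt})^+$ and test \eqref{sol_weak_l} with $\phi=w^\ve$, using $\nabla w^\ve=\nabla l^\ve\,\chi_{\{l^\ve>Me^{Bt}\}}$ and $\partial_t l^\ve\,w^\ve=\partial_t(\tfrac12|w^\ve|^2)+BMe^{Bt}w^\ve$. On $\{l^\ve>Me^{Bt}\}$ one has $F^\ve(x,l^\ve)-d_l^\ve l^\ve\le C_0+L_F l^\ve$, so writing $l^\ve=w^\ve+Me^{Bt}$ and choosing $B=L_F+1$, $M\ge\max\{\sup_\Omega l_0,\,C_0\}$ bounds the zeroth-order volume term by $L_F\|w^\ve\|^2_{L^2(\Omega^\ast_{\ve,K})}$; the term $-\ve\alpha^\ve l^\ve r_f^\ve w^\ve\le 0$ is discarded, whereas $\ve\int_{\Gamma^\ve}\beta^\ve r_b^\ve w^\ve\le C\ve|\Gamma^\ve|^{1/2}\big(\ve\|w^\ve\|^2_{L^2(\Gamma^\ve)}\big)^{1/2}\le\delta\big(\|w^\ve\|^2_{L^2(\Omega^\ast_{\ve,K})}+\|\nabla w^\ve\|^2_{L^2(\Omega^\ast_{\ve,K})}\big)+C_\delta\ve$ by \eqref{estim_boundary} and the sharp surface-measure bound $|\Gamma^\ve|\le C\ve^{-1}$. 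Since $w^\ve(0,\cdot)\equiv 0$, absorbing the gradient into $d_0$ and applying Gronwall gives \eqref{estim_suprem_ve}. The routine parts are the fixed point and the non-negativity; I expect the main obstacle to be keeping every constant in \eqref{receptor_a_priori}--\eqref{estim_suprem_ve} independent of $\ve$ — the surface integrals must carry exactly the powers of $\ve$ that \eqref{estim_boundary}--\eqref{estim_boundary_2} absorb, the Galerkin/fixed-point construction must not smuggle in $\ve$-dependent constants, and the $\partial_t l^\ve$-estimate requires the interplay of time-independence of the coefficients, the sign of the surface quadratic term, and the already-proven bounds on $\partial_t r_j^\ve$.
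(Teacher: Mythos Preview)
Your overall architecture (fixed-point construction, sign arguments for non-negativity, $L^\infty$-bound on receptors from the summed ODE, testing with $l^\ve$ for the basic energy bound, and the barrier test with $(l^\ve-Me^{Bt})^+$) matches the paper's line closely. Two points deserve comment.

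First, a minor one: the surface ODE system \eqref{micro_model_2} is \emph{not} linear in $(r_f^\ve,r_b^\ve)$ because of $p^\ve(x,r_b^\ve)$; it is merely Lipschitz. This is harmless for your argument, but the description is inaccurate. Also, the paper uses a Schauder fixed point in $L^2(0,T;H^\varsigma(\Omega^\ast_{\ve,K}))$, $\tfrac12<\varsigma<1$, rather than a Banach contraction on boundary traces; either route is acceptable here.

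The substantive gap is in your derivation of the $\partial_t l^\ve$-estimate. You differentiate \eqref{sol_weak_l} in time and test with $\partial_t l^\ve$. The differentiated surface term is
\[
\ve\big\langle \beta^\ve \partial_t r_b^\ve - \alpha^\ve r_f^\ve\,\partial_t l^\ve - \alpha^\ve\, l^\ve\,\partial_t r_f^\ve,\; \partial_t l^\ve\big\rangle_{\Gamma^\ve}.
\]
The piece $-\ve\alpha^\ve r_f^\ve|\partial_t l^\ve|^2$ indeed has a good sign, and $\ve\beta^\ve\partial_t r_b^\ve\,\partial_t l^\ve$ is controllable via \eqref{estim_boundary} and $\ve^{1/2}\|\partial_t r_b^\ve\|_{L^2(\Gamma^\ve_T)}\le C$. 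But the cross term $-\ve\,\alpha^\ve\, l^\ve\,\partial_t r_f^\ve\,\partial_t l^\ve$ contains the factor $l^\ve$, for which you have no uniform-in-$\ve$ $L^\infty$ bound at this stage of the argument; the bound $\ve^{1/2}\|\partial_t r_f^\ve\|_{L^2(\Gamma^\ve_T)}\le C$ alone is not enough to close, since you would need something like $\ve\| l^\ve\,\partial_t l^\ve\|_{L^2(\Gamma^\ve_T)}^2$ bounded. The paper circumvents this by \emph{not} differentiating: it tests the original equation with $\partial_t l^\ve$, uses $\langle A^\ve\nabla l^\ve,\nabla\partial_t l^\ve\rangle=\tfrac12\frac{d}{dt}\langle A^\ve\nabla l^\ve,\nabla l^\ve\rangle$, and then rewrites the boundary term via
\[
\ve\!\int_{\Gamma^\ve}(\beta^\ve r_b^\ve-\alpha^\ve l^\ve r_f^\ve)\,\partial_t l^\ve
=\ve\frac{d}{dt}\!\int_{\Gamma^\ve}\!\beta^\ve r_b^\ve l^\ve
-\ve\!\int_{\Gamma^\ve}\!\beta^\ve\partial_t r_b^\ve\, l^\ve
-\frac{\ve}{2}\frac{d}{dt}\!\int_{\Gamma^\ve}\!\alpha^\ve r_f^\ve|l^\ve|^2
+\frac{\ve}{2}\!\int_{\Gamma^\ve}\!\alpha^\ve\partial_t r_f^\ve|l^\ve|^2.
\]
Substituting the ODE for $\partial_t r_f^\ve$ in the last integral produces the non-positive contribution $-\tfrac{\ve}{2}\int(\alpha^\ve)^2 r_f^\ve\, l^\ve\,|l^\ve|^2\le 0$, and all remaining pieces involve only $\ve\|l^\ve\|_{L^2(\Gamma^\ve)}^2$ or $\ve\|\partial_t r_b^\ve\|_{L^2}\|l^\ve\|_{L^2}$, which \eqref{estim_boundary} handles. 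This is the key device that your proposal is missing; without it the $\partial_t l^\ve$-estimate does not close uniformly in $\ve$ as written.
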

\begin{proof}[Proof Sketch]
To prove the existence of a solution of the microscopic model  we show the existence of a fix point of an operator  $\mathcal B$ defined on $L^2(0,T; H^\varsigma(\Omega_{\ve, K}^\ast))$, with $1/2 < \varsigma <1$,  by $l^\ve_n = \mathcal B ( l^\ve_{n-1})$ given as a solution of \eqref{micro_model_1}--\eqref{micro_model_2}  with $l^\ve_{n-1}$ in the equations \eqref{micro_model_2} and in the nonlinear function $F^\ve(x, l^\ve)$ instead of $l^\ve_n$. 
For a given non-negative $l^\ve_{n-1}\in L^2(0,T; H^\varsigma(\Omega_{\ve, K}^\ast))$ there exists a  non-negative solution $(r^\ve_{f,n}, r^\ve_{b,n})$ of \eqref{micro_model_2}. Then, the  non-negativity of solutions,   the equality
\begin{equation*}
\partial_t ( r_{f,n}^\ve+ r_{b,n}^\ve) =  p^\ve(x, r_{b,n}^\ve) -  d^\ve_{b}(x)  r_{b,n}^\ve  -d^\ve_f(x) r_{f,n}^\ve, 
\end{equation*}
and the Lipschitz continuity  of $p$  ensure the boundedness of $r_{f,n}^\ve$ and $r_{b,n}^\ve$. 
Considering $l^{\ve,-}_{n}= \min\{ 0, l_n^\ve\}$ as a test function in \eqref{sol_weak_l}  and using the non-negativity of $r_{f,n}^\ve$, $r_{b,n}^\ve$ and the initial data we obtain the non-negativity of $l^\ve_n$.
Applying Galerkin's method and using a priori estimates similar to these in \eqref{receptor_a_priori} we obtain the existence  of a weak non-negative solution $l_n^\ve \in H^1(0,T; L^2(\Omega_{\ve, K}^\ast))\cap L^2(0,T; H^1(\Omega_{\ve, K}^\ast))$. The compactness of the embedding $H^1(0,T; L^2(\Omega_{\ve, K}^\ast))\cap L^2(0,T; H^1(\Omega_{\ve, K}^\ast)) \subset L^2(0,T; H^{\varsigma}(\Omega^\ast_{\ve, K}))$ and Schauder's theorem imply the existence of a fixed point $l^\ve$ of $\mathcal B$. Notice that the strong convergence of $l_n^\ve$ in $L^2(\Gamma^\ve_T)$, as $n \to \infty$, implies the strong convergence of $r_{j,n}^\ve$, $j=f,b$.
 Taking $l^\ve_n$ and $\partial_t l^\ve_n$ as test functions in \eqref{sol_weak_l}  and using the trace estimate \eqref{estim_boundary} we obtain \textit{a priori}  estimates for $l^\ve_n$ and $\partial_t l^\ve_n$.  
 Testing  \eqref{micro_model_2} by $\partial_t r_{f,n}^\ve$  and  $\partial_t r_{b,n}^\ve$, respectively, yields the  estimates for the time derivatives. 
 Then, using the lower semicontinuity of a norm we obtain the \textit{a priori}  estimates \eqref{receptor_a_priori} for $l^\ve, r^\ve_f$ and $r^\ve_b$.
 
 Especially for the derivation of  {\it a priori} estimates for  $\partial_t l^\ve$ we consider 
 \begin{equation*}
 \begin{aligned}
 \ve\int_{\Gamma^\ve}  ( \beta^\ve\,  r_b^\ve -  \alpha^\ve r_f^\ve \,   l^\ve)  \partial_t l^\ve d \sigma_x = 
  \ve \frac{d}{dt} \int_{\Gamma^\ve}   \beta^\ve\,  r_b^\ve\, l^\ve \, d \sigma_x  - \ve \int_{\Gamma^\ve}   \beta^\ve\,  \partial_t r_b^\ve\, l^\ve \, d \sigma_x \\
 - \frac \ve 2 \frac{d}{dt} \int_{\Gamma^\ve} \alpha^\ve r_f^\ve \,  | l^\ve |^2  d \sigma_x  
 + \frac \ve 2 \int_{\Gamma^\ve} \alpha^\ve \partial_t r_f^\ve \,  | l^\ve |^2  d \sigma_x .
 \end{aligned}
 \end{equation*}
 Using the equation for $\partial_t r_f^\ve$,  the last integral can be rewritten as 
 \begin{equation*}
 \begin{aligned}
  \frac \ve 2   \int_{\Gamma^\ve} \alpha^\ve \big( p^\ve(x, r_b^\ve) 
  - \alpha^\ve\,  l^\ve r^\ve_f + \beta^\ve\,  r_b^\ve - d^\ve_f\,  r_f^\ve \big)  | l^\ve |^2  d \sigma_x . 
 \end{aligned}
 \end{equation*} 
 Applying the trace estimate \eqref{estim_boundary} and using the assumptions on $\alpha^\ve$ and  $\beta^\ve$, along with  the non-negativity of $l^\ve$ 
 and $r_j^\ve$,  the boundedness of $r_j^\ve$, uniform in $\ve$,  and the estimate  $\ve \|\partial_t r_b^\ve\|^2_{L^2(\Gamma^\ve_T)} \leq C$,  we obtain 
 \begin{equation*}
 \begin{aligned}
 \ve\int_0^\tau\int_{\Gamma^\ve}  ( \beta^\ve\,  r_b^\ve - \alpha^\ve\,  r_f^\ve \,   l^\ve\big) \partial_t l^\ve d \sigma_x dt 
\leq C_1\big[ \| l^\ve(\tau)\|^2_{L^2(\Omega_{\ve, K}^\ast)} + \ve^2   \|\nabla l^\ve(\tau)\|^2_{L^2(\Omega_{\ve, K}^\ast)}\big] \\
+ C_2 \big[ \| l^\ve\|^2_{L^2((0,\tau)\times \Omega_{\ve, K}^\ast)} + \ve^2   \|\nabla l^\ve\|^2_{L^2((0,\tau)\times\Omega_{\ve, K}^\ast)}\big] + C_3
 \end{aligned}
 \end{equation*}
 for $\tau \in (0,T]$.   Standard arguments pertaining to the difference of two solutions $l^\ve_1 - l^\ve_2$, $ r_{j,1}^\ve- r_{j,2}^\ve$, with $j=f,b$, imply the uniqueness of a weak solution of the microscopic problem \eqref{micro_model_1}--\eqref{micro_model_2}. In particular, the non-negativity of $\alpha^\ve$, $r^\ve_j$,  and  $l^\ve$ along with the boundedness of $r^\ve_j$, where $j=f,b$,  ensures 
 \begin{eqnarray}\label{estim_unic1}
 \partial_t\|r^\ve_{f,1} - r^\ve_{f,2}\|^2_{L^2(\Gamma^\ve)} \leq  C \big(\sum_{j=f,b}\|r^\ve_{j,1} - r^\ve_{j,2}\|^2_{L^2(\Gamma^\ve)}  + \|l^\ve_{1} - l^\ve_{2}\|^2_{L^2(\hat \Gamma^\ve)} \big).
 \end{eqnarray}
 Testing  the difference of  the equations for $r^\ve_{f,1} + r^\ve_{b,1}$ and  $r^\ve_{f,2} + r^\ve_{b,2}$   by  
 $r^\ve_{f,1} + r^\ve_{b,1}- r^\ve_{f,2} - r^\ve_{b,2}$ yields 
  \begin{eqnarray}\label{estim_unic2}
\; \|r^\ve_{b,1}(\tau) - r^\ve_{b,2}(\tau)\|^2_{L^2(\Gamma^\ve)}  
\leq  C\int_0^\tau \sum_{j=f,b}\|r^\ve_{j,1} - r^\ve_{j,2}\|^2_{L^2(\Gamma^\ve)} + \|l^\ve_{1} - l^\ve_{2}\|^2_{L^2(\hat \Gamma^\ve)}  dt.
 \end{eqnarray}
 Applying the Gronwall Lemma yields the estimate for $\|r^\ve_{j,1}(\tau) - r^\ve_{j,2}(\tau)\|^2_{L^2(\Gamma^\ve)}$, with $\tau \in (0, T]$ and   $j=f, b$, in terms of $ \|l^\ve_{1} - l^\ve_{2}\|^2_{L^2(\hat \Gamma^\ve_\tau)}$.
 Taking  $(l^\ve-S)^{+}$ as a test function in \eqref{sol_weak_l} and using the boundedness of $r^\ve_j$  we obtain 
 \begin{eqnarray*}
  \|(l^\ve-S)^{+}\|_{L^\infty(0,T; L^2(\Omega^\ast_{\ve, K}))} + \|\nabla (l^\ve-S)^{+}\|_{L^2((0,T)\times\Omega^\ast_{\ve, K})}  \leq 2 S \Big(\int_0^T |\Omega^{\ast, S}_{\ve, K}(t)| dt\Big)^{\frac 12}, 
  \end{eqnarray*}
where  $S\geq \max \{\sup\limits_{\Omega} l_0(x), \sup\limits_{\Omega\times Y_x} |\beta(x,y)|, \sup\limits_{\Omega\times Y_x} |\alpha(x,y)|,  \|r^\ve_j\|_{L^\infty(\Gamma^\ve_T)}\}$
 and $\Omega^{\ast, S}_{\ve, K}(t)=\{ x \in \Omega^{\ast}_{\ve, K}: l^\ve(t, x) > S \}$  for a.a.\ $t \in (0,T)$.  
 Then, applying  Theorem II.6.1 in \cite{Ladyzhenskaja}  yields the  boundedness of $l^\ve$ for every fixed $\ve>0$. 
Considering equation \eqref{sol_weak_l} for $l^\ve_1$ and $l^\ve_2$ we obtain   the estimate for $\|l_1^\ve - l_2^\ve \|_{L^2(0, \tau; H^1(\Omega^\ast_{\ve, K}))}$ in terms of $\ve^{1/2}\|r^\ve_{j,1} - r^\ve_{j,2}\|_{L^2(\Gamma^\ve_\tau)}$, with $j=f,b$ and $\tau \in (0, T]$.  Then, using the estimates for  $\|r^\ve_{j,1}(\tau) - r^\ve_{j,2}(\tau)\|_{L^2(\Gamma^\ve)}$ in \eqref{estim_unic1} and \eqref{estim_unic2} yields  that $r^\ve_{j,1} = r^\ve_{j,2}$ a.e.\ in $\Gamma^\ve_T$, where $j = f,b$, and hence $l^\ve_1= l^\ve_2$ a.e.\ in $(0,T)\times \Omega^\ast_{\ve, K}$.

To show  \eqref{estim_suprem_ve}, we consider $(l^\ve-M e^{Bt})^{+}$ as a test function in \eqref{sol_weak_l}. Using the boundedness of $r^\ve_j$, uniform in $\ve$,  and the trace estimate  \eqref{estim_boundary} we obtain for  $\tau \in (0,T)$
 \begin{eqnarray*}
 \|(l^\ve(\tau)-M e^{B\tau})^{+}\|^2_{L^2(\Omega^\ast_{\ve, K})} +   \|\nabla (l^\ve-M e^{Bt})^{+}\|^2_{L^2((0,\tau)\times\Omega^\ast_{\ve, K})}\\ \leq C_1\|(l^\ve-M e^{Bt})^{+}\|^2_{L^2((0,\tau)\times\Omega^\ast_{\ve, K})} + C_2 \ve, 
  \end{eqnarray*}
where  $M\geq \sup\limits_{\Omega} l_0(x)$,  $ M B\geq  \big(\sup\limits_{\Omega\times Y_x} |F(x,y, 0)|+ \mu_\Gamma  \sup\limits_{\Omega\times Y_x} \beta (x,y) \|r^\ve_b\|_{L^\infty(\hat \Gamma^\ve_T)}\big)$, with $\mu_\Gamma$ as in \eqref{estim_boundary}.
Applying Gronwall's Lemma  in the last inequality yields \eqref{estim_suprem_ve}.
 \end{proof}

Notice, that in the case of a perforated domain where the  periodicity and  the shape of perforations vary  in space, i.e.\  $K \neq {\bf I}$, we can not apply the l-p unfolding operator to functions defined on $\Omega^\ast_{\ve, K}$ directly. To overcome this problem we  consider  a local extension of a function from $\hat \Omega_{n,K}^{\ast, \ve}$ to $\hat \Omega_{n}^\ve$ and then apply the l-p unfolding operator  $\mathcal T_{\mathcal L}^\ve$, determined  for  functions defined on $\hat \Omega^\ve$.   Applying   the assumptions on the microstructure of $\Omega_{\ve, K}^\ast$ considered here, i.e.\  $K_x\overline{Y_0} \subset Y$ or fibrous microstructure, we  obtain 
\begin{lemma}\label{extension_local}
For $x_n^\ve \in \hat\Omega_{n}^\ve$, where $1\leq n \leq N_\ve$, and $u \in W^{1,p}(Y^\ast_{x_n^\ve, K})$, with $p\in (1, +\infty)$,  there exists an extension  $\hat u \in  W^{1,p}(Y_{x_n^\ve})$ such that 
\begin{eqnarray}\label{extension_local_1}
\|\hat u\|_{L^p(Y_{x_n^\ve})} \leq \mu \|u\|_{L^p(Y_{x_n^\ve,K}^\ast)}, \qquad \|\nabla \hat u\|_{L^p(Y_{x_n^\ve})} \leq \mu \|\nabla u\|_{L^p(Y_{x_n^\ve,K}^\ast)}\; ,
\end{eqnarray}
where $\mu$ depends on $Y$, $Y_0$, $D$ and $K$ and is independent of $\ve$ and $n$. \\ For $u \in W^{1,p}(\Omega^\ast_{\ve, K})$  we have an extension $\hat u \in W^{1,p}(\hat \Omega^\ve)$ from $\hat \Omega_{\ve,K}^\ast$ to $\hat \Omega^\ve$ such that 
\begin{eqnarray}\label{extension_local_2}
\|\hat u\|_{L^p(\hat \Omega^\ve)} \leq \mu \|u\|_{L^p(\hat \Omega^\ast_{\ve, K})}, 
\qquad \|\nabla \hat u\|_{L^p(\hat\Omega^\ve)} \leq \mu \|\nabla u\|_{L^p(\hat \Omega^\ast_{\ve, K})}\; ,
\end{eqnarray}
where  $\mu$ depends on $Y$, $Y_0$, $D$ and $K$ and is independent of $\ve$.
\end{lemma}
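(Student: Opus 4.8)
The plan is to construct the extension cell by cell and, inside each cell, to reduce to a single reference extension by two successive changes of variables that absorb the matrices $D_{x_n^\ve}$ and $K_{x_n^\ve}$. Throughout, write $\mathcal D = \overline{D(\overline\Omega)}$ and $\mathcal K = \overline{K(\overline\Omega)}$; these are compact subsets of $GL_d(\mathbb R)$ since $D,K$ are continuous on the compact set $\overline\Omega$ and $|\det D|,|\det K|$ are bounded away from $0$, and $\sup_{x\in\overline\Omega}\big(\|D(x)\|+\|D(x)^{-1}\|+\|K(x)\|+\|K(x)^{-1}\|\big)<\infty$.

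First I would prove the local estimate \eqref{extension_local_1}. Fix $n$ and abbreviate $D=D_{x_n^\ve}$, $K=K_{x_n^\ve}$. The linear change of variables $y=Dz$ identifies $W^{1,p}(Y^\ast_{x_n^\ve,K})=W^{1,p}(D\widetilde Y^\ast_{K})$ with $W^{1,p}(\widetilde Y^\ast_{K})$, and because of the uniform bounds above the $L^p$- and gradient-norms on the two sides are equivalent with constants depending only on $D$. Hence it suffices to extend $v\in W^{1,p}(\widetilde Y^\ast_K)$, $\widetilde Y^\ast_K=Y\setminus K\overline Y_0$, to some $\hat v\in W^{1,p}(Y)$ with $\|\hat v\|_{L^p(Y)}\le C\|v\|_{L^p(\widetilde Y^\ast_K)}$ and $\|\nabla\hat v\|_{L^p(Y)}\le C\|\nabla v\|_{L^p(\widetilde Y^\ast_K)}$, where $C$ is uniform over $K\in\mathcal K$. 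For this I would fix a reference extension operator $E_0$ for the perforated Lipschitz cell $Y\setminus\overline Y_0$ into $W^{1,p}(Y)$ (Stein/reflection extension localized across $\partial Y_0\cap Y$, see e.g.\ \cite{Evans}; it can be chosen so that $E_0v$ coincides with $v$ away from a neighbourhood of $\overline Y_0$), together with a family of bi-Lipschitz homeomorphisms $\Psi_K:Y\to Y$, $K\in\mathcal K$, such that $\Psi_K(\widetilde Y^\ast_K)=Y\setminus\overline Y_0$, $\Psi_K=\mathrm{id}$ near the parts of $\partial Y$ that do not meet $\overline Y_0$, and $\|\nabla\Psi_K\|_{L^\infty}+\|\nabla\Psi_K^{-1}\|_{L^\infty}\le C(\mathcal K)$. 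Then $\hat v:=\big(E_0(v\circ\Psi_K^{-1})\big)\circ\Psi_K$ extends $v$, and the required bound follows from the chain rule and the uniform bi-Lipschitz estimates; undoing the change of variables $y=Dz$ yields \eqref{extension_local_1} with $\mu$ depending only on $Y$, $Y_0$, $D$, $K$, and not on $\ve$ or $n$. In the fibrous case, where $Y_0$ is a cylinder and $K$ acts only in the cross-section, one may bypass $\Psi_K$ and extend $v$ directly by reflecting across the lateral surface $\{\,|\hat K^{-1}\hat y|=a\,\}$ in the cross-sectional plane while leaving the axial variable untouched; this also makes transparent the compatibility needed in the next step.

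Next I would deduce the global estimate \eqref{extension_local_2}. The subdomains $\hat\Omega_n^\ve$ are pairwise disjoint and $\hat\Omega_n^\ve\setminus\hat\Omega_{n,K}^{\ast,\ve}$ is exactly the union of the holes $\ve D_{x_n^\ve}(K_{x_n^\ve}\overline Y_0+\xi)$, $\xi\in\hat\Xi_n^\ve$; thus it suffices to extend within each $\hat\Omega_n^\ve$ and sum. Inside a fixed $\hat\Omega_n^\ve$ I would apply the cell extension of the first part in every unit cell $\ve D_{x_n^\ve}(Y+\xi)$ — a translation and a dilation by $\ve$ of the reference configuration, so the constant $\mu$ is unchanged — obtaining $\hat u$ with $\|\hat u\|_{W^{1,p}(\ve D_{x_n^\ve}(Y+\xi))}\le \mu\|u\|_{W^{1,p}(\ve D_{x_n^\ve}(\widetilde Y^\ast_{K_{x_n^\ve}}+\xi))}$, and likewise for the $L^p$- and gradient-norms separately. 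Since $E_0v$ coincides with $v$ near the non-perforated parts of $\partial Y$ (and, in the fibrous case, the extension is independent of the axial variable), the cellwise pieces agree on the common faces of neighbouring unit cells of the same $\hat\Omega_n^\ve$, so $\hat u\in W^{1,p}(\hat\Omega_n^\ve)$. Summing over $\xi\in\hat\Xi_n^\ve$ and then over $n$ gives \eqref{extension_local_2}.

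The main obstacle is the construction of the uniformly bi-Lipschitz family $\{\Psi_K\}_{K\in\mathcal K}$, i.e.\ a version of the Stein extension theorem for the domains $\widetilde Y^\ast_K$ with a constant depending only on the Lipschitz character of $K\partial Y_0$. Here one uses the compactness of $\mathcal K$ in $GL_d(\mathbb R)$ together with the fact, guaranteed by $K\overline Y_0\subset Y$ for all $K\in\mathcal K$ and the connectedness of $\widetilde Y^\ast_K$, that the $\widetilde Y^\ast_K$ form a family of Lipschitz domains whose local graph representations are uniformly bounded and whose perforation boundaries stay at a uniform positive distance from the non-adjacent faces of $Y$: one interpolates the linear map $K$ between $Y_0$ and a fixed intermediate surface and the identity near $\partial Y$, obtaining $\Psi_K$ with constants controlled uniformly in $K$. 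Everything else reduces to routine change-of-variables computations and summation of local estimates.
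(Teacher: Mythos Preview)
Your proposal is correct and follows essentially the same route as the paper's own proof, which is only a sketch: extend cell by cell using a reference extension on $Y\setminus\overline Y_0$, transport it to $D_{x_n^\ve}\widetilde Y^\ast_{K_{x_n^\ve}}$ by linear changes of variables, with the uniformity of constants coming from the Lipschitz regularity of $D$, $K$ and the uniform bounds on $|\det D|$, $|\det K|$, then scale by $\ve$ and sum over $\xi\in\hat\Xi_n^\ve$ and $n$. Your explicit construction of the bi-Lipschitz family $\{\Psi_K\}$ and your treatment of cell-face compatibility (identity near $\partial Y$ in the inclusion case, axial-independence of the reflection in the fibrous case) fill in details that the paper leaves to the references \cite{Ptashnyk_2011,Cioranescu_1999}.
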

\begin{proof}[Sketch of the Proof]
The proof follows the same lines as in the periodic case, see e.g.\  \cite{Ptashnyk_2011, Cioranescu_1999}.
The  only difference is that  the extension  depends on the Lipschitz continuity of $K$ and $D$ and the uniform boundedness from above and below of $|\det K(x)|$ and $|\det D(x)|$.
To show \eqref{extension_local_2},  we first consider   an extension  from $D_{x_n^\ve}(\widetilde Y^\ast_{K_{x_n^\ve}}+ \xi)$  to $D_{x_n^\ve}(Y+\xi)$  satisfying  estimates  \eqref{extension_local_1}, where  $\xi \in \hat \Xi_n^\ve$. Then,   scaling by $\ve$ and summing up over  $\xi\in \hat \Xi_n^\ve$ and $n=1, \ldots, N_\ve$ imply  the estimates  \eqref{extension_local_2}. 
\end{proof}

\textsc{ Remark.}  Notice that  the definition of $\Omega_{\ve, K}^\ast$ implies that there no perforations in  $\big(\Omega^{\ast,\ve}_{n, K} \setminus \overline{\hat \Omega_{n,K}^{\ast, \ve}}\big) \cap \widetilde \Omega_{\ve/2}$,  with $\widetilde \Omega_{\ve/2}= \{ x\in \Omega : \; \text{dist} ( x, \partial \Omega) >  2\,  \ve
 \max\limits_{x\in\partial\Omega} \text{diam}(D(x)Y)\}$.   Also in the case of a plywood-like structure the fibres are orthogonal to the boundaries 
of $\Omega_n^\ve$ and near $\partial \Omega_n^\ve$ we need to extend $l^\ve$ only in the directions parallel to $\partial \Omega_n^\ve$. Thus, applying Lemma~\ref{extension_local} we can extend $l^\ve$ from $\Omega_{n,K}^{\ast, \ve}$ into  $\hat \Omega_n^\ve \cup\big(\Omega_n^\ve \cap \widetilde \Omega_{\ve/2}\big)$,  for 
 $n = 1, \ldots, N_\ve$. 

\begin{theorem}\label{th_macro_signaling}
A sequence  of solutions of the microscopic  problem \eqref{micro_model_1}--\eqref{micro_model_2} converges to a solution $(l, r_f, r_b)$ with
$l \in L^2(0,T; H^1(\Omega))\cap H^1(0,T; L^2(\Omega))$ and  $r_j \in H^1(0,T; L^2(\Omega; L^2(\Gamma_x)))\cap L^\infty(\Omega_T; L^\infty(\Gamma_x))$
 of the macroscopic equations 
\begin{equation}\label{macro_model}
\begin{aligned}
&\frac{|Y_{x,K}^\ast|}{|Y_x|}\partial_t l - \text{div}(\mathcal A(x) \nabla l) =\frac 1{|Y_x|} \int_{Y^\ast_{x,K}}  F(x,  y,l)  \, dy \\ & \hspace{ 3.5 cm }+ \frac 1 {|Y_x|} \int_{\Gamma_x} ( \beta(x, y) \,  r_b - \alpha(x, y) \,  r_f\,  l )  \,d\sigma_y   \; \text{ in  }  \Omega_T, \\
&\mathcal A(x) \nabla l \cdot \textbf{n}  = 0  \hspace{ 7 cm }\,  \text{ on } \, \partial\Omega, \\
&\partial_t r_f  = p(x, y, r_b)   -  \alpha(x, y)\,  l\,  r_f + \beta(x, y)  r_b -d_f(x, y) \, r_f  \; \;  \; \text{ for } \; y \in  \Gamma_x,\\
&\partial_t r_b  = \phantom{R(x, y, r_f)+\;}   \alpha(x, y)\,   l\,  r_f - \beta(x, y) \, r_b - d_b(x, y)\,  r_b  \; \; \; \text{ for }   \; y \in  \Gamma_x, 
\end{aligned}
\end{equation}
and for $ (t,x) \in  \Omega_T$,  where  $Y^\ast_{x, K}=D_x(Y\setminus K_xY_0)$  and the macroscopic diffusion matrix is defined as $$\mathcal A_{ij} (x)=\frac 1 {|Y_x|} \int_{Y^\ast_{x,K}} \big[ A_{ij}(x,  y)+ (A(x,  y) \nabla_{y} \omega^j(x,y))_i \big] \, dy \qquad \text{ for } \; x \in \Omega,$$
for $i,j=1,\ldots, d$, with 
\begin{equation}\label{unit_receptor_1}
\begin{aligned}
&\text{div}_y (A(x,  y) (\nabla_y \omega^j+ e_j))=0  && \text{ in } Y^\ast_{x,K}, \\
& A(x,   y) (\nabla_y \omega^j + e_j) \cdot \textbf{n} =0 &&  \text{ on } \Gamma_x, \quad  \omega^j \quad Y_x\text{-periodic}.
\end{aligned}
\end{equation}
We have that $\hat l^\ve \to l$ in $L^2(\Omega_T)$,  $\partial_t  l^\ve  \rightharpoonup \partial_t l$ and  $\partial_t  r^\ve_j  \rightharpoonup \partial_t r_j$  locally periodic two-scale,   $r^\ve_j \to r_j$ strongly locally periodic two-scale, $j=f,b$, and 
\begin{eqnarray*}
&&\nabla l^\ve  \rightharpoonup  \nabla l + \nabla_y l_1 \qquad  \qquad \text{l-t-s}, \qquad  \qquad l_1 \in L^2(\Omega_T; H^1_{\rm per}(Y^\ast_{x,K})),\\
&&\lim\limits_{\ve \to 0}  \langle A^\ve \nabla l^\ve,  \nabla l^\ve \rangle_{\Omega^\ast_{\ve, K}, T} =    \langle {|Y_x|}^{-1} A(x, y) (\nabla l +\nabla_y l_1 ), \nabla l + \nabla_y l_1 \rangle_{\Omega_T, Y_{x,K}^\ast}, 
\end{eqnarray*}
where $l_1(t,x,y) = \sum\limits_{j=1}^d \dfrac{\partial l }{\partial{x_j}}(t,x)\,   \omega^j(x,y)$. Here $\hat \phi$ denotes the  extension as in Lemma~\ref{extension_local}  from $(0,T)\times \Omega_{\ve,K}^\ast$ to $(0,T)\times (\widetilde \Omega_{\ve/2}\cup \Omega_{\ve,K}^\ast)$ and then by zero  to $\Omega_T$. 
\end{theorem}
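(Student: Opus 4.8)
The plan is to follow the standard homogenization scheme --- a priori estimates, extraction of two-scale limits, passage to the limit in the weak formulation, identification of the cell problem --- adapted to the locally periodic setting and relying on the results proved above. First I would invoke Lemma~\ref{apriori} and extend $l^\ve$, via Lemma~\ref{extension_local}, from $\Omega^\ast_{\ve,K}$ to $\hat\Omega^\ve$ and (in the plywood case, along the directions parallel to $\partial\Omega^\ve_n$) into $\widetilde\Omega_{\ve/2}$; I write $\hat l^\ve$ for the resulting extension and keep $l^\ve$ for the restriction to $\Omega^\ast_{\ve,K}$. The estimates \eqref{receptor_a_priori} give $\hat l^\ve$ bounded in $L^2(0,T;H^1(\widetilde\Omega_{\ve/2}))\cap H^1(0,T;L^2(\widetilde\Omega_{\ve/2}))$ uniformly in $\ve$, and since $\widetilde\Omega_{\ve/2}\nearrow\Omega$ an Aubin--Lions argument on relatively compact open $G\subset\Omega$ together with a diagonal extraction yields a subsequence with $\hat l^\ve\to l$ strongly in $L^2(\Omega_T)$ and in $L^2(0,T;H^\varsigma_{\mathrm{loc}}(\Omega))$ for $\tfrac12<\varsigma<1$, $\hat l^\ve\rightharpoonup l$ weakly in $L^2(0,T;H^1(\Omega))$ and $\partial_t\hat l^\ve\rightharpoonup\partial_t l$, for some $l\in L^2(0,T;H^1(\Omega))\cap H^1(0,T;L^2(\Omega))$.

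Applying $\T^\ve_\mL$ to $\hat l^\ve$ and $\nabla\hat l^\ve$ and using Theorem~\ref{theorem_cover_grad} I obtain, along a further subsequence, $\T^\ve_\mL(\hat l^\ve)\rightharpoonup l$ in $L^2(\Omega_T;H^1(Y))$ and $\T^\ve_\mL(\nabla\hat l^\ve)\rightharpoonup\nabla_x l+D_x^{-T}\nabla_y w_1(\cdot,D_x\cdot)$ in $L^2(\Omega_T\times Y)$ with $w_1\in L^2(\Omega_T;W^{1,2}_{\mathrm{per}}(Y_x))$; restricting to the perforated cell and performing the change of variables $y\mapsto D_xy$ this reads, in l-t-s form, $\nabla l^\ve\rightharpoonup\nabla l+\nabla_y l_1$ with $l_1\in L^2(\Omega_T;H^1_{\mathrm{per}}(Y^\ast_{x,K}))$, $Y^\ast_{x,K}=D_x(Y\setminus K_xY_0)$. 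For the receptors, the bounds $\|r^\ve_j\|_{L^\infty(\Gamma^\ve_T)}\le C$, $\ve\|\partial_t r^\ve_j\|^2_{L^2(\Gamma^\ve_T)}\le C$, the boundedness of $\{\T^{b,\ve}_\mL(r^\ve_j)\}$ in $L^\infty(\Omega_T\times\Gamma)$ from Lemma~\ref{unfold_bound_lemma}, the trace inequality \eqref{estim_boundary_2} and an Aubin--Lions argument on $\Omega\times\Gamma$ give $\T^{b,\ve}_\mL(r^\ve_j)\to r_j(\cdot,D_xK_x\cdot)$ strongly in $L^2(\Omega_T\times\Gamma)$, i.e.\ $r^\ve_j\to r_j$ strongly l-t-s, with $r_j$ in the spaces asserted ($j=f,b$); splitting $l^\ve$ off an approximation of $l$ in $L^2(0,T;W^{\beta,2}(\Omega))$, $\tfrac12<\beta<1$, and combining \eqref{estim_boundary_2} with Lemma~\ref{strong_conv_boundary} gives $\T^{b,\ve}_\mL(l^\ve)\to l$ strongly in $L^2(\Omega_T\times\Gamma)$. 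Finally Lemma~\ref{converg_perfor_1}(v) gives $\T^{\ast,\ve}_\mL(A^\ve)\to A(\cdot,D_x\cdot)$ strongly in $L^q(\Omega\times Y^\ast)$, and the boundary coefficients $\alpha^\ve,\beta^\ve,d^\ve_j$ together with the Nemytskii terms $F^\ve(\cdot,l^\ve)$, $p^\ve(\cdot,r^\ve_b)$ converge strongly after (boundary) unfolding, using the Lipschitz hypotheses of Assumption~\ref{asumption} and the strong convergences just obtained.

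I then pass to the limit in \eqref{sol_weak_l} with test functions of the form $\phi+\ve\mL^\ve_\rho\psi$, $\phi\in C^\infty([0,T]\times\overline\Omega)$ with $\phi(T,\cdot)=0$ and $\psi\in C^\infty_0(\Omega_T;C^\infty_{\mathrm{per}}(Y_x))$: the diffusion and reaction terms are handled by $\T^{\ast,\ve}_\mL$, the surface term by $\T^{b,\ve}_\mL$ via Theorem~\ref{weak_two_scale_b} and Lemma~\ref{unfold_bound_lemma}, and the time-derivative term via the l-t-s convergence of $\partial_t l^\ve$, whose $y$-average over $Y_x$ is $(|Y^\ast_{x,K}|/|Y_x|)\partial_t l$. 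Choosing $\phi\equiv0$ isolates the cell problem \eqref{unit_receptor_1}, so by linearity $l_1=\sum_j\partial_{x_j}l\,\omega^j$; choosing $\psi\equiv0$ yields the macroscopic diffusion equation in \eqref{macro_model} with homogenized matrix $\mathcal A$. Passing to the limit in \eqref{micro_model_2} after boundary unfolding, using the strong l-t-s convergence of $l^\ve|_{\Gamma^\ve}$ and of $r^\ve_j$ and the strong convergence of the coefficients, produces the limit ODEs for $y\in\Gamma_x$ and a.e.\ $(t,x)\in\Omega_T$. Uniqueness for this ODE system and for the macroscopic problem, shown by energy estimates as in Lemma~\ref{apriori}, upgrades the subsequential convergence to convergence of the whole sequence, and the energy identity $\langle A^\ve\nabla l^\ve,\nabla l^\ve\rangle_{\Omega^\ast_{\ve,K},T}\to\langle|Y_x|^{-1}A(\nabla l+\nabla_y l_1),\nabla l+\nabla_y l_1\rangle_{\Omega_T,Y^\ast_{x,K}}$ follows from weak lower semicontinuity for the $\liminf$ together with testing the limit equation by $l$ and the corrector for the $\limsup$.

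The hard part will be the strong l-t-s convergence of the receptor concentrations $r^\ve_f,r^\ve_b$: since they solve ODEs on the oscillating surface $\Gamma^\ve$ coupled to the trace of $l^\ve$ there is no direct compact embedding available, and one must instead combine the uniform $L^\infty$ and $\partial_t$ bounds, the fractional trace estimate \eqref{estim_boundary_2}, the boundary-unfolding identities of Lemma~\ref{unfold_bound_lemma}, and the continuous dependence of the ODE solution map on its data --- reproducing, in the locally periodic framework with the extra transformation matrices $D$ and $K$, the Neuss-Radu/Ptashnyk argument. Keeping track of the metric factors $\sqrt{g_{x_n^\ve}}/\sqrt{g}$ and of the distinction between $Y^\ast$ and $Y^\ast_{x,K}$ throughout accounts for most of the remaining technical bookkeeping.
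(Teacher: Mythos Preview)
Your overall scheme matches the paper's proof closely: extension via Lemma~\ref{extension_local}, unfolding, the same class of oscillating test functions $\psi_1+\ve\mL^\ve_\rho\psi_2$, identification of the cell problem, uniqueness for the limit, and the energy argument at the end. Two points, however, need correction.

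First, your claim that an Aubin--Lions argument on $\Omega\times\Gamma$ yields strong convergence of $\T^{b,\ve}_\mL(r^\ve_j)$ does not work, as you yourself flag in your last paragraph: the receptors carry no spatial regularity in either $x$ or $y$, only $H^1$ in time, so no compact embedding is available. The paper fills this gap by a direct Cauchy argument. One unfolds the ODEs \eqref{micro_model_2} to obtain equations for $\T^{b,\ve}_\mL(r^\ve_j)$ on $\Omega_T\times\Gamma$, then subtracts the equations at two parameter values $\ve_k,\ve_m$ and tests with the difference. Using the Lipschitz continuity of $p$, the non-negativity of $\alpha^\ve$ and $l^\ve$ (so that the cubic term $-\alpha^\ve l^\ve |r^\ve_{f,k}-r^\ve_{f,m}|^2$ has the right sign), the uniform $L^\infty$ bound on $r^\ve_j$, and the strong convergence of the unfolded coefficients, Gronwall gives
\[
\|\T^{b,\ve_k}_\mL(r^{\ve_k}_j)-\T^{b,\ve_m}_\mL(r^{\ve_m}_j)\|_{L^2(\Omega_T\times\Gamma)}
\le C\Big(\|\T^{b,\ve_k}_\mL(l^{\ve_k})-\T^{b,\ve_m}_\mL(l^{\ve_m})\|_{L^2((0,T)\times\Omega^\delta\times\Gamma)}+\delta^{1/4}+\sigma(\ve_k,\ve_m)\Big),
\]
with $\sigma\to0$; sending $\ve_k,\ve_m\to0$ and then $\delta\to0$ shows the sequence is Cauchy. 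This is precisely the ``continuous dependence of the ODE solution map'' idea you mention, but the key technical ingredient is the splitting $\Omega=\Omega^\delta\cup(\Omega\setminus\Omega^\delta)$ together with the $L^\infty$ bound on $r^\ve_j$ to control the boundary layer.

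Second, and related, the paper does \emph{not} obtain global strong convergence of $\T^{b,\ve}_\mL(l^\ve)$ in $L^2(\Omega_T\times\Gamma)$; it only gets $\T^{b,\ve}_\mL(l^\ve)\to l$ in $L^2(0,T;L^2_{\mathrm{loc}}(\Omega;L^2(\Gamma)))$, inherited from the local strong convergence of $\T^\ve_\mL(l^\ve)$ in $L^2_{\mathrm{loc}}(\Omega;H^1(Y))$ via the trace theorem. Your proposed route through a $W^{\beta,2}$ approximation of $l$ and Lemma~\ref{strong_conv_boundary} would give $\T^{b,\ve}_\mL(l)\to l$, but to control $\T^{b,\ve}_\mL(l^\ve-l)$ globally you would still need strong convergence of $l^\ve-l$ in $W^{\beta,2}$ up to the boundary of $\Omega$, which the a~priori estimates do not provide (recall $l^\ve$ is only controlled on $\widetilde\Omega_{\ve/2}$). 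The paper circumvents this by the same $\Omega^\delta$--splitting used above, combined with the bound \eqref{estim_suprem_ve} to handle the contribution from $\Omega\setminus\Omega^\delta$ in the nonlinear boundary term $\alpha^\ve l^\ve r^\ve_f$ and in the energy argument.
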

\begin{proof} 
Applying Lemma~\ref{extension_local} we can extend $l^\ve$ from $\Omega^\ast_{\ve, K}$ into $ \hat \Omega^\ve \cup \Lambda_{\ve, K}^\ast$. We shall use the same notations for original functions and their extensions.  The  \textit{a priori} estimates in Lemma~\ref{apriori} imply
\begin{equation}\label{estim_parab_2}
\| l^\ve\|_{L^2(0, T;H^1(\hat \Omega^\ve \cup \Lambda_{\ve, K}^\ast))}+ \|\partial_t l^\ve \|_{L^2((0, T)\times (\hat \Omega^\ve \cup \Lambda_{\ve, K}^\ast))} \leq C,
\end{equation}
where the constant $C$ depends on $D$ and $K$ and is independent of $\ve$.
Then the sequences $\{l^\ve\}$, $\{\nabla l^\ve\}$, and $\{\partial_t l^\ve\}$ are defined on $\hat\Omega^\ve$ and 
  we  can determine $\mathcal T^\ve_{\mathcal L} (l^\ve)$, $\mathcal T^\ve_{\mathcal L} (\nabla l^\ve)$ and  $\partial_t \mathcal T^\ve_{\mathcal L} (l^\ve)$. The properties of $\mathcal T^\ve_{\mathcal L}$ together with  \eqref{estim_parab_2}  ensure
\begin{eqnarray*}
\|\mathcal T^\ve_{\mathcal L} (l^\ve)\|_{L^2(\Omega_T\times Y)}+\|\mathcal T^\ve_{\mathcal L} (\nabla l^\ve)\|_{L^2(\Omega_T\times Y)}+ \|\partial_t \mathcal T^\ve_{\mathcal L}(l^\ve) \|_{L^2(\Omega_T\times Y)} \leq C.
\end{eqnarray*}
The  \textit{a priori} estimates in Lemma~\ref{apriori} yield  the  estimates for  the l-p boundary unfolding operator 
\begin{eqnarray*}
\|\mathcal T^{\ve,b}_{\mathcal L} (l^\ve)\|_{L^2(\Omega_T\times \Gamma)} +
\|\mathcal T^{\ve,b}_{\mathcal L} (r_f^\ve)\|_{H^1(0,T;L^2(\Omega\times \Gamma))} + \|\mathcal T^{\ve,b}_{\mathcal L} (r_b^\ve)\|_{H^1(0,T; L^2(\Omega\times \Gamma))} \leq C.
\end{eqnarray*}
Notice that due to the assumptions on $\Omega_{\ve, K}^\ast$ we have that  $\widetilde \Omega_{\ve/2} \subset \hat \Omega^\ve \cup \Lambda_{\ve, K}^\ast$.  

Then, the convergence results in Theorems~\ref{theorem_cover_grad},~\ref{converg_unfolding_perforate},~\ref{conv_locally_period_b},~and~\ref{weak_two_scale_b}  imply that there exist  subsequences (denoted again by $l^\ve$, $r^\ve_f$, $r^\ve_b$) and the functions $l\in 
 L^2(0,T; H^1(\Omega))\cap H^1(0, T; L^2(\Omega))$, \, $l_1 \in L^2(\Omega_T; H^1_{\rm per}(Y_x))$,   and $r_j \in H^1(0,T; L^2(\Omega; L^2(\Gamma_x)))$ such that 
\begin{eqnarray}\label{conver_parab_2}
  \begin{aligned}
\qquad &\mathcal T_\mL^\ve(l^\ve)  \rightharpoonup l \quad && \text{weakly in } L^2(\Omega_T; H^1(Y)), \\
& \mathcal T_\mL^\ve(l^\ve)  \to  l  && \text{strongly in }  L^2(0,T; L^2_{\rm loc}(\Omega; H^1(Y))),
\\
&\partial_t \mathcal T_\mL^\ve(l^\ve)  \rightharpoonup \partial_t l &&\text{weakly in } L^2(\Omega_T\times Y),
\\
 & \mathcal T_{\mathcal L}^{\ve}(\nabla l^\ve)  \rightharpoonup \nabla l +D_x^{-T} \nabla_{\tilde y} l_1(\cdot, D_x \cdot) && \text{weakly in }\; L^2(\Omega_T\times Y), \\
&  \mathcal T_{\mathcal L}^{b,\ve}(l^\ve)  \rightharpoonup l  \quad && \text{weakly in }  L^2(\Omega_T \times \Gamma), \\
 & \mathcal T_{\mathcal L}^{b,\ve}(l^\ve) \to  l   && \text{strongly in } \; L^2(0,T; L^2_{\rm loc}(\Omega; L^2(\Gamma))), \\
& r^\ve_j  \rightharpoonup r_j, \quad \partial_t r^\ve_j  \rightharpoonup \partial r_j \quad && \text{l-t-s},\quad r_j,\;  \partial_t r_j \in L^2(\Omega_T; L^2(\Gamma_x)), \\ 
&  \mathcal T_{\mathcal L}^{b,\ve}(r^\ve_j)  \rightharpoonup  r_j(\cdot, D_x K_x \cdot) && \text{weakly in }   L^2(\Omega_T \times \Gamma), \\
&  \partial_t \mathcal T_{\mathcal L}^{b,\ve}(r^\ve_j)  \rightharpoonup  \partial_t r_j(\cdot, D_x K_x \cdot) \; && \text{weakly in }  L^2(\Omega_T \times \Gamma), \quad j=f,b.  
\end{aligned}
\end{eqnarray}

Notice that for $l^\ve$ we have {\it a priori} estimates only in $L^2(0, T;H^1(\hat \Omega^\ve \cup \Lambda_{\ve, K}^\ast))$ and not in $L^2(0, T;H^1(\Omega))$ and  can not apply the convergence results in   Theorem~\ref{theorem_cover_grad}  directly.  
However using  $\|l^\ve\|_{L^2(0, T;H^1(\widetilde \Omega_{\ve/2}))}+\|\partial_t l^\ve\|_{L^2((0, T)\times \widetilde \Omega_{\ve/2})} \leq C$, ensured by \eqref{estim_parab_2},   applying Lemmas~\ref{lem:weak_cover}~and~\ref{lemma_R_estim}  to $\mathcal Q^\ve_\mL(l^\ve)$ and $\mathcal R^\ve_\mL (l^\ve)$, respectively,   and conside\-ring  the proof of    Theorem~\ref{converg_unfolding_perforate}  we obtain the convergences for $\mathcal T_\mL^\ve(l^\ve)$,  $\partial_t \mathcal T_\mL^\ve(l^\ve)$,  and  $\mathcal T_{\mathcal L}^{\ve}(\nabla l^\ve)$ in \eqref{conver_parab_2}.
Lemma~\ref{l-t-s-l-p-eq} implies  that $\nabla l^\ve\rightharpoonup \nabla l + \nabla_{y} l_1$ l-t-s and  $\partial_t l^\ve \rightharpoonup \partial_t l$ l-t-s.
The local strong convergence of $ \mathcal T_\mL^\ve(l^\ve) $ together with the estimate $\|(l^\ve - Me^{Bt})^{+} \|_{L^\infty(0,T; L^2(\Omega_{\ve,K}^\ast))} \leq C \ve^{1/ 2}$, shown  in Lemma~\ref{apriori},  yields  the strong convergence of $\hat l^\ve$ in $L^2(\Omega_T)$.

To  derive macroscopic equations for $l^\ve$ we consider 
 $\psi^\ve(x) =\psi_1(x) + \ve \mL_\rho^\ve(\psi_2)(x)$ with 
$\psi_1 \in C^1(\overline \Omega)$ and $\psi_2 \in C^1_0(\Omega; C^1_{\rm per}(Y_x))$ as a test function in \eqref{sol_weak_l}.  Applying the l-p unfolding operator and the l-p boundary unfolding operator  implies
\begin{eqnarray*}
\frac 1{|Y|}\Big[\langle \mathcal T_{\mathcal L}^{\ve}( \chi^\ve_{\Omega_{\ve,K}^\ast})  \partial_t  \mathcal T_{\mathcal L}^{\ve}(l^\ve), \mathcal T_{\mathcal L}^{\ve}(\psi^\ve) \rangle_{\Omega_T\times Y}  +  \langle  \mathcal T_{\mathcal L}^{\ve}( \chi^\ve_{\Omega_{\ve,K}^\ast}) \mathcal T_{\mathcal L}^{\ve}(A^\ve) \mathcal T_{\mathcal L}^{\ve}(\nabla l^\ve),  \mathcal T_{\mathcal L}^{\ve}(\nabla \psi^\ve) \rangle_{\Omega_T\times Y}\Big] \\
= {|Y|^{-1}} \langle\mathcal T_{\mathcal L}^{\ve}( \chi^\ve_{\Omega_{\ve,K}^\ast})\,  \hat F^\ve(x,\tilde y,  \mathcal T_{\mathcal L}^{\ve}(l^\ve)),  \mathcal T_{\mathcal L}^{\ve}(\psi^\ve) \rangle_{\Omega_T\times Y}   \\
+   \Big\langle \sum_{n=1}^{N_\ve} \frac{\sqrt{g_{x_n^\ve}}}{\sqrt{g}|Y_{x_n^\ve}|}  \big[ \mathcal T_{\mathcal L}^{b,\ve}(\beta^\ve) \mathcal T_{\mathcal L}^{b,\ve}(r^\ve_b) - \mathcal T_{\mathcal L}^{b,\ve}(\alpha^\ve) \mathcal T_{\mathcal L}^{b,\ve} (l^\ve) \mathcal T_{\mathcal L}^{b,\ve} (r^\ve_f)\big]\chi_{\Omega_n^\ve}, \mathcal T_{\mathcal L}^{b,\ve}(\psi^\ve) \Big\rangle_{\Omega_T\times \Gamma}\\
- \langle \partial_t l^\ve, \psi^\ve\rangle_{\Lambda^\ast_{\ve,K}, T} -\langle A^\ve(x)\nabla l^\ve, \nabla\psi^\ve\rangle_{\Lambda^\ast_{\ve,K},T} + \langle F^\ve(x, l^\ve), \psi^\ve\rangle_{\Lambda^\ast_{\ve,K},T},  
\end{eqnarray*}
where  
$\hat F^\ve(x, \tilde y, \T_{\mL}^{\ve}(l^\ve))= \sum_{n=1}^{N_\ve} F(x_n^\ve,  D_{x_n^\ve}\tilde y, \T_{\mL}^{\ve}(l^\ve)) \chi_{\hat \Omega_n^\ve}(x)$
for $\tilde y \in Y$, $x\in \Omega$ and  $\chi^\ve_{\Omega_{\ve,K}^\ast}=\mathcal L^\ve_0(\chi_{Y_{x,K}^\ast})$.  Here $\chi_{Y_{x,K}^\ast}$ is the characteristic function of $Y^\ast_{x,K}=D_x(Y\setminus K_x Y_0)$, extended $Y_x$-periodically to $\mathbb R^d$. We notice that 
$\hat F^\ve(x, \tilde y, \xi)=  \mathcal T_{\mathcal L}^{\ve}(\mathcal L^\ve_0(F(x, y, \xi)))$.

Applying Lemma~\ref{conver_local_t-s}   yields  
   $\mathcal T_{\mathcal L}^{\ve}( \chi^\ve_{\Omega_{\ve,K}^\ast})(x,\tilde y) \to \chi_{Y^\ast_{x,K}} (x,  D_x \tilde y)$, $\mathcal T_{\mathcal L}^{\ve}(A^\ve)(x,\tilde y) \to A(x, D_x \tilde y)$,  and  $\hat F^\ve(x, \tilde y, l) \to F(x, D_x \tilde y,l)$  in $L^p(\Omega_T\times Y)$ for  $p\in (1, +\infty)$ as $\ve \to 0$.  Lemma~\ref{strong_conv_boundary}  ensures    $\mathcal T_{\mathcal L}^{b,\ve}(\phi^\ve)(x,\hat y)\to \phi(x,   D_x K_x \hat y)$ in $L^p(\Omega\times \Gamma)$ as $\ve \to 0$, where $\phi^\ve(x)= \beta^\ve(x), \alpha^\ve(x)$, or $ d_j^\ve(x)$ and $\phi(x,y) = \alpha(x,y), \beta(x,y)$, or $d_j(x,y)$, with $j=f,b,l$, respectively.

For an arbitrary  $\delta>0$ we consider $\Omega^\delta=\{x \in \Omega: \text{dist}(x, \partial \Omega) > \delta\}$ and rewrite the boundary integral in the form 
\begin{equation*}
\begin{aligned}
&\Big\langle \sum_{n=1}^{N_\ve} \frac{\sqrt{g_{x_n^\ve}}}{\sqrt{g}|Y_{x_n^\ve}|} 
 \mathcal T_{\mathcal L}^{b,\ve}(\alpha^\ve) \mathcal T_{\mathcal L}^{b,\ve} (l^\ve) \mathcal T_{\mathcal L}^{b,\ve} (r^\ve_f)\chi_{\Omega_n^\ve}, \mathcal T_{\mathcal L}^{b,\ve}(\psi^\ve) \Big\rangle_{\Omega^\delta\times \Gamma_T} \\
 +&\Big\langle \sum_{n=1}^{N_\ve} \frac{\sqrt{g_{x_n^\ve}}}{\sqrt{g}|Y_{x_n^\ve}|} 
 \mathcal T_{\mathcal L}^{b,\ve}(\alpha^\ve) \mathcal T_{\mathcal L}^{b,\ve} (l^\ve) \mathcal T_{\mathcal L}^{b,\ve} (r^\ve_f)\chi_{\Omega_n^\ve}, \mathcal T_{\mathcal L}^{b,\ve}(\psi^\ve) \Big\rangle_{(\Omega\setminus \Omega^\delta)\times \Gamma_T} 
 = I_1 + I_2.
\end{aligned}
\end{equation*}
Using the \textit{a priori} estimates for $l^\ve$ and $r^\ve_j$,   the weak convergence of $\mathcal T_{\mathcal L}^{\ve}(l^\ve)$ in $L^2(\Omega_T; H^1(Y))$ and  the strong convergence  in $L^2(0,T; L^2_{\rm loc}(\Omega; H^1(Y)))$ we obtain 
\begin{eqnarray}\label{conver_11}
\begin{aligned}
&\lim_{\delta \to 0} \lim_{\ve \to 0} I_1 = \left\langle \frac{ \sqrt{g_x}}{\sqrt{g} |Y_x|}\alpha(x, D_x K_x \hat y) \, r_f (x, D_x K_x \hat y) \, l(x), \psi_1(x) \right\rangle_{\Omega_T\times \Gamma}, \\  
&\lim_{\delta \to 0} \lim_{\ve \to 0} I_2 = 0.
\end{aligned}
\end{eqnarray}
To obtain \eqref{conver_11} we also used    the strong convergence and  boundedness  of  $\mathcal T_{\mathcal L}^{b,\ve}(\alpha^\ve)$, the weak convergence and  boundedness of $\mathcal T_{\mathcal L}^{b,\ve} (r^\ve_f)$, the regularity of $D$ and $K$, and the strong convergence of 
$\mathcal T_{\mathcal L}^{b,\ve}(\psi^\ve)$.
Similar arguments along  with the Lipschitz continuity of $F$ and the strong convergence of $\hat F^\ve(x, \tilde y, l)$ and $\mathcal T_{\mathcal L}^{\ve}( \chi^\ve_{\Omega_{\ve, K}^\ast}) = \mathcal T_{\mathcal L}^{\ve}(\mathcal L^\ve_0(\chi_{Y_{x,K}^\ast}))$ ensure 
\begin{eqnarray*}
 \langle\mathcal T_{\mathcal L}^{\ve}( \chi^\ve_{\Omega_{\ve,K}^\ast})\,  \hat F^\ve(x,\tilde y,  \mathcal T_{\mathcal L}^{\ve}(l^\ve)),  \mathcal T_{\mathcal L}^{\ve}(\psi^\ve) \rangle_{\Omega_T\times Y} \rightarrow 
  \langle \chi_{Y_{x,K}^\ast}(x, D_x \tilde y) F(x, D_x \tilde y, l), \psi_1  \rangle_{\Omega_T\times Y}
 \end{eqnarray*}
 as $\ve \to 0$ and $\delta \to 0$. 
Using  the  convergence results \eqref{conver_parab_2}, the strong convergence of $\mathcal T_{\mathcal L}^{\ve}(\psi^\ve)$ and $\mathcal T_{\mathcal L}^{\ve}(\nabla \psi^\ve)$ and the fact that $|\Lambda^\ast_{\ve, K}| \to 0$ as $\ve \to 0$,     taking the limit as $\ve \to 0$, and considering the transformation of variables $y=D_x \tilde y$  for $\tilde y \in Y$ and $y = D_x K_x \hat y$ for $\hat y \in \Gamma$ yield
\begin{eqnarray*}
&&\langle|Y_{x}|^{-1} l , \psi_1 \rangle_{Y^\ast_{x,K}\times \Omega_T} + \langle |Y_{x}|^{-1} A(x, y) (\nabla l + \nabla_y l_1), \nabla \psi_1  + \nabla_y \psi_2 \rangle_{Y^\ast_{x,K}\times \Omega_T} \\
&&  + \langle |Y_x|^{-1} \big[\alpha(x,  y) \, r_f\,  l - \beta(x,  y) \, r_b\big], \psi_1 \rangle_{\Gamma_x \times \Omega_T} =  \langle  |Y_x|^{-1} F(x,  y, l),  \psi_1 \rangle_{Y^\ast_{x,K}\times \Omega_T}.  
\end{eqnarray*}
Considering $\psi_1(t,x)=0$ for $(t,x) \in \Omega_T$ we obtain   $l_1(t,x,y) = \sum_{j=1}^d \partial_{x_j} l(t,x) \omega^j (x,y), $
where $\omega^j$ are solutions of  the unit cell problems \eqref{unit_receptor_1}. 
Choosing  $\psi_2(t,x,y)=0$ for $x\in \Omega_T$ and $y\in Y_x$  implies the macroscopic equation for $l$. 
Applying the l-p boundary unfolding operator  to the equations on $\Gamma^\ve$ we obtain  
\begin{eqnarray}\label{unfold_surface}
\begin{aligned}
 \partial_t \T_{\mL}^{b,\ve}(r^\ve_f ) &= \hat p^\ve(x, \hat y, \T_{\mL}^{b,\ve}(r_b^\ve)) -  \T_{\mL}^{b,\ve}(\alpha^\ve) \T_{\mL}^{b,\ve}(l^\ve)\T_{\mL}^{b,\ve}(r_f^\ve)
\\ &\hspace{ 4 cm } +\T_{\mL}^{b,\ve} (\beta^\ve) \T_{\mL}^{b,\ve}(r_b^\ve) -\T_{\mL}^{b,\ve}(d^\ve_{f}) \T_{\mL}^{b,\ve}(r_f^\ve), \\
 \partial_t \T_{\mL}^{b,\ve} (r^\ve_b)  &=  \T_{\mL}^{b,\ve}(\alpha^\ve) \T_{\mL}^{b,\ve}(l^\ve) \T_{\mL}^{b,\ve}(r_f^\ve) - \T_{\mL}^{b,\ve}(\beta^\ve) \T_{\mL}^{b,\ve}(r_b^\ve) - \T_{\mL}^{b,\ve}(d^\ve_{b})  \T_{\mL}^{b,\ve}(r_b^\ve), 
\end{aligned}
\end{eqnarray}
in $\Omega_T\times \Gamma$, where  $\hat p^\ve(x,\hat y, \T_{\mL}^{b,\ve}(r_b^\ve)) = \sum_{n=1}^{N_\ve} p(x_n^\ve, D_{x_n^\ve} K_{x_n^\ve} \hat y, \T_{\mL}^{b,\ve}(r_b^\ve)) \chi_{\hat \Omega_n^\ve}(x)$
 for $\hat y \in \Gamma$ and $x\in \Omega$.   In order to pass to the limit in the nonlinear function $\hat p^\ve(x,\hat y, \T_{\mL}^{b,\ve}(r_b^\ve))$ we have  to show the strong convergence of $\T_{\mL}^{b,\ve}(r_b^\ve)$.  We consider the difference of the equations  for $\T_{\mL}^{b,\ve_k}(r^{\ve_k}_f) $ and $ \T_{\mL}^{b,\ve_m}(r^{\ve_m}_f) $ and use $\T_{\mL}^{b,\ve_k}(r^{\ve_k}_f ) -  \T_{\mL}^{b,\ve_m}(r^{\ve_m}_f )$ as a test function. Applying the Lipschitz continuity of   $p$ along with  the strong convergence of $ \T_{\mL}^{b,\ve}(\alpha^\ve)$,  $\T_{\mL}^{b,\ve}(\beta^\ve)$,  and  $\T_{\mL}^{b,\ve}(d^\ve_j)$, 
and    the non-negativity of $l^\ve$ and $\alpha^\ve$ yields
\begin{eqnarray*}
& \frac d{dt} \|\T_{\mL}^{b,\ve_k}(r^{\ve_k}_f) -\T_{\mL}^{b,\ve_m}(r^{\ve_m}_f) \|^2_{L^2(\Omega\times \Gamma)} &\leq C  \Big[\sum_{j=f,b} \|\T_{\mL}^{b,\ve_k}(r^{\ve_k}_j) -\T_{\mL}^{b,\ve_m}(r^{\ve_m}_j) \|^2_{L^2(\Omega\times \Gamma)} \\ & +  \|\T_{\mL}^{b,\ve_k}(l^{\ve_k}) -\T_{\mL}^{b,\ve_m}(l^{\ve_m}) \|^2_{L^2(\Omega^\delta\times \Gamma)} &+ \delta^{\frac 12}  \|\T_{\mL}^{b,\ve_k}(l^{\ve_k}) -\T_{\mL}^{b,\ve_m}(l^{\ve_m}) \|_{L^2((\Omega \setminus\Omega^\delta)\times \Gamma)} \\
&&   +  \sigma(\ve_k, \ve_m) \Big], 
\end{eqnarray*}
where $\sigma(\ve_k, \ve_m) \to 0$ as $\ve_k, \ve_m \to 0$. Considering the sum of the equations for $\T_{\mL}^{b,\ve_k}(r^{\ve_k}_j) -\T_{\mL}^{b,\ve_m}(r^{\ve_m}_j)$,  with $j=f,b$, using $\sum_{j=f,b}\big( \T_{\mL}^{b,\ve_k}(r^{\ve_k}_j) -\T_{\mL}^{b,\ve_m}(r^{\ve_m}_j) \big)$ as a test function, and applying the Lipschitz continuity of $p$  imply
\begin{eqnarray*}
\|\T_{\mL}^{b,\ve_k}(r^{\ve_k}_b)- \T_{\mL}^{b,\ve_m}(r^{\ve_m}_b)\|^2_{L^2(\Omega\times \Gamma)} \leq 
 C_1 \int_0^\tau  \|\T_{\mL}^{b,\ve_k}(l^{\ve_k}) -\T_{\mL}^{b,\ve_m}(l^{\ve_m}) \|^2_{L^2(\Omega^\delta\times \Gamma)}  dt \\ +  C_2
\int_0^\tau \sum_{j=f,b}\|\T_{\mL}^{b,\ve_k}(r^{\ve_k}_j) -\T_{\mL}^{b,\ve_m}(r^{\ve_m}_j) \|^2_{L^2(\Omega\times \Gamma)}  dt +  \sigma(\ve_k, \ve_m) +C_3 \delta^{\frac 12}. 
\end{eqnarray*}
Using  the \textit{a priori} estimates for $l^\ve$ and  the local strong convergence of $\T_{\mL}^{b,\ve}(l^{\ve})$,  collecting the  estimates from above, 
and  applying  the Gronwall inequality  we obtain
\begin{eqnarray*}
 \|\T_{\mL}^{b,\ve_k}(r^{\ve_k}_j)(\tau) -\T_{\mL}^{b,\ve_m}(r^{\ve_m}_j)(\tau) \|_{L^2(\Omega\times \Gamma)}   \leq C\big(\sigma(\ve_k, \ve_m) + \delta^{\frac 14} \big) \qquad \text{ for } \quad j=f,b, 
\end{eqnarray*}
 where $\sigma(\ve_k, \ve_m) \to 0$ as $\ve_k, \ve_m \to 0$ and $\delta>0$ is arbitrary. Thus,   we conclude that $\{\T_{\mL}^{b,\ve}(r^\ve_j)\}$, for $j=f,b$,  are  Cauchy sequences in $L^2(\Omega_T\times \Gamma)$.  
Using  the strong convergence of   $\T_{\mL}^{b,\ve}(r^\ve_b)$ and the Lipschitz continuity of $p$ we obtain 
$\hat p^\ve(x,\hat y, \T_{\mL}^{b,\ve}(r_b^\ve))   \rightharpoonup  p(x, D_x K_x \hat y, r_b)$ in $L^2(\Omega_T \times \Gamma)$.
Then,  passing in the weak formulation of  \eqref{unfold_surface}   to the limit as $\ve \to 0$ implies the  macroscopic equations~\eqref{macro_model} for $r_f$ and $r_b$. 
This concludes the proof of the convergence up to sub-sequences. The strong convergence of $\T_{\mL}^{b,\ve}(r^\ve_j)$ together with the estimates in Lemma~\ref{unfold_bound_lemma}, the boundedness of $r_j^\ve$, with $j=f,b$,  and the regularity of $D$ and $K$ ensure the strong l-t-s convergence of $r^\ve_j$, i.e.\
$$
\lim_{\ve \to 0} \ve \| r^\ve_j\|^2_{L^2(\Gamma^\ve_T)}   = \int_{\Omega_T}\frac 1{ |Y_x|} \int_{\Gamma_x}  |r_j(t,x,y)|^2 d\sigma_x dxdt , \qquad \text{ for } \; \;  j=f,b.
$$ 

The non-negativity of $l^\ve$ and $r_j^\ve$ and the uniform boundedness of $r_j^\ve$, with $j=f,b$ (see Lemma~\ref{apriori}) along with the weak convergence of  $\mathcal T^\ve_{\mathcal L} (r_j^\ve)$ and $l^\ve$ ensure the non-negativity of  $r_j$ and $l$ and the boundedness of $r_j(t,x,y)$ for a.a.\ $ (t,x) \in \Omega_T$ and  $y \in \Gamma_x$. 
Considering $(l- M_1 e^{M_2 t})^{+}$ as a test function in the weak formulation of  the macroscopic model  \eqref{macro_model} and using the boundedness of $r_f$ and $r_b$ 
we obtain 
$$
\|(l- M_1 e^{M_2t})^{+} \|_{L^\infty(0,T; L^2(\Omega))} + \|\nabla (l- M_1 e^{M_2t})^{+} \|_{L^2(\Omega_T)} \leq 0.
$$
Hence, $0 \leq l(t,x) \leq M_1 e^{M_2T}$ for a.a.\ $(t,x) \in \Omega_T$, where  $M_1\geq \sup_\Omega l_0(x)$ and  $M_1M_2 \geq \big( \| F(x,y,0)\|_{L^\infty(\Omega; L^\infty(Y_x))} +|Y^\ast_{x,K}|^{-1} \|\beta(x,y)\|_{L^\infty(\Omega; L^\infty(Y_x))} \| r_b \|_{L^\infty(\Omega; L^1( \Gamma_x))}\big)$. 

Considering equations for the difference of two solutions of \eqref{macro_model},  taking $l_1-l_2$, $r_{f,1}- r_{f,2}$, and  $r_{b,1} - r_{b,2}$ as test functions in the weak formulation of the macroscopic  problem, and using the Lipschitz continuity of $F$ and $p$ along with boundedness of $r_j$ and $l$, we obtain uniqueness of a weak solution of the  problem~\eqref{macro_model}. 
Thus, we have that the entire sequence of weak solutions $(l^\ve, r_{f}^\ve, r_{b}^\ve)$ of the microscopic problem \eqref{micro_model_1}--\eqref{micro_model_2} 
convergences to the weak solution of the macroscopic equations~\eqref{macro_model}.

Applying the lower-semicontinuity of a norm,  the ellipticity of  $A$, and the strong convergence of $ \mathcal T^\ve_{\mathcal L}( A^\ve)$ and $\mathcal T^\ve_{\mathcal L}( \chi^\ve_{\Omega_{\ve, K}^\ast})$ in $L^p(\Omega_T \times  Y)$ for any $p \in (1, + \infty)$, yields 
\begin{eqnarray*}
&&  
\langle {|Y_x|}^{-1}  A(x, y) (\nabla l +\nabla_y l_1 ), \nabla l + \nabla_y l_1 \rangle_{\Omega_T, Y^\ast_{x,K}}\\
 &&\leq \liminf_{\ve \to 0} {|Y|}^{-1} \langle \mathcal T^\ve_{\mathcal L}( A^\ve)  \mathcal T^\ve_{\mathcal L}( \chi^\ve_{\Omega_{\ve, K}^\ast}) \mathcal T^\ve_{\mathcal L}(\nabla l^\ve),    \mathcal T^\ve_{\mathcal L}( \chi^\ve_{\Omega_{\ve, K}^\ast})  \mathcal T^\ve_{\mathcal L}(\nabla l^\ve)  \rangle_{\Omega_T, Y} \\
&& \leq \limsup_{\ve \to 0}  {|Y|}^{-1} \langle \mathcal T^\ve_{\mathcal L}( A^\ve)\mathcal T^\ve_{\mathcal L}( \chi^\ve_{\Omega_{\ve, K}^\ast}) \mathcal T^\ve_{\mathcal L}(\nabla l^\ve),  \mathcal T^\ve_{\mathcal L}( \chi^\ve_{\Omega_{\ve, K}^\ast})\mathcal T^\ve_{\mathcal L}(\nabla l^\ve) \rangle_{\Omega_T, Y}
 \\
 && \leq  \limsup_{\ve \to 0}  \langle A^\ve \nabla l^\ve,  \nabla l^\ve \rangle_{\Omega^\ast_{\ve, K},T}
  =  \limsup_{\ve \to 0} \Big[ I_1 + I_2 + I_3 \Big], 
\end{eqnarray*}
where 
\begin{eqnarray*}
I_1&=&|Y|^{-1} 
 \big \langle \hat F^\ve(x, \tilde y, \mathcal T^\ve_{\mathcal L}(l^\ve)) -  \partial_t  \mathcal T^\ve_{\mathcal L} (l^\ve), 
    \mathcal T^\ve_{\mathcal L} ( l^\ve) \big \rangle_{\Omega_T, Y}, \\
    I_2&=&  \int_{\Omega_T\times  \Gamma} \sum_{n=1}^{N_\ve} \frac{\sqrt{g_{x_n^\ve}}}{\sqrt{g}|Y_{x_n^\ve}|} \Big[  \mathcal T^{b,\ve}_{\mathcal L} (\beta^\ve)  \mathcal T^{b,\ve}_{\mathcal L} ( r^\ve_b) -  \mathcal T^{b,\ve}_{\mathcal L} (\alpha^\ve)  \mathcal T^{b,\ve}_{\mathcal L} (l^\ve\, r^\ve_f )\Big] \mathcal T^{b,\ve}_{\mathcal L} (l^\ve) \chi_{\Omega_n^\ve} d\sigma_y dx dt, \\
    I_3&=&  \langle F^\ve(x, l^\ve)  - \partial_t l^\ve,  l^\ve \rangle_{\Lambda^\ast_{\ve,K},T}.
\end{eqnarray*}
Using  the  estimates in Lemma~\ref{apriori}, together with  $0 \leq l^\ve \leq M + (l^\ve - M)^+$ and the definition of $\Lambda^\ast_{\ve,K}$,  we obtain 
$\lim\limits_{\ve \to  0} I_3 = 0$.

 Considering the strong convergence  $\mathcal T^{b,\ve}_{\mathcal L} ( r^\ve_j)$, with $j=f,b$, and  the local  strong convergence of $\mathcal T^\ve_{\mathcal L} (l^\ve)$ and $\mathcal T^{b,\ve}_{\mathcal L} (l^\ve)$, 
 together with \eqref{estim_suprem_ve},  taking  $l$ as a test function  in \eqref{sol_weak_l}  and using the fact that $l_1$ is a solution of the unit cell problem yields 
$$\lim\limits_{\ve \to  0} [I_1 + I_2]  =    \langle {|Y_x|}^{-1} A(x, y) (\nabla l +\nabla_y l_1 ), \nabla l + \nabla_y l_1 \rangle_{\Omega_T, Y_{x,K}^\ast}. $$
Hence, we conclude  the convergence of the energy 
\begin{equation}\label{conv_energy}
\lim\limits_{\ve \to 0}  \langle A^\ve \nabla l^\ve,  \nabla l^\ve \rangle_{\Omega^\ast_{\ve, K}, T} =   \langle {|Y_x|}^{-1}  A(x, y) (\nabla l +\nabla_y l_1 ), \nabla l + \nabla_y l_1 \rangle_{\Omega_T, Y_{x,K}^\ast},
\end{equation}
as well as
\begin{eqnarray*}\label{conv_energy2}
\lim\limits_{\ve \to 0}{|Y|}^{-1} \langle \mathcal T^\ve_{\mathcal L}( A^\ve)  \mathcal T^\ve_{\mathcal L}( \chi^\ve_{\Omega_{\ve, K}^\ast})\,  \mathcal T^\ve_{\mathcal L}(\nabla l^\ve),      \mathcal T^\ve_{\mathcal L}(\nabla l^\ve)  \rangle_{\Omega_T, Y} \\  =    \langle {|Y_x|}^{-1} A(x, y) (\nabla l +\nabla_y l_1 ), \nabla l + \nabla_y l_1 \rangle_{\Omega_T, Y_{x,K}^\ast}.
\end{eqnarray*}
This implies also the strong convergence of the unfolded gradient 
\begin{equation}\label{conv_strong_grad}
\mathcal T^\ve_{\mathcal L} (  \chi_{\Omega_{\ve, K}^\ast}) \mathcal T^\ve_{\mathcal L} ( \nabla l^\ve) \to \chi_{Y_{x,K}^\ast}(D_x \cdot) (\nabla l + D_x^{-T} \nabla_{\tilde y} l_1(\cdot, D_x \cdot)) \quad \text{ in } L^2(\Omega_T \times Y).
\end{equation}
To show  the strong convergence  in \eqref{conv_strong_grad} we consider 
\begin{equation*}
\begin{aligned}
& \big \langle \mathcal T^\ve_{\mathcal L}( A^\ve)  \mathcal T^\ve_{\mathcal L}( \chi^\ve_{\Omega_{\ve,K}^\ast})( \mathcal T^\ve_{\mathcal L}(\nabla l^\ve) - \nabla l - D_x^{-T} \nabla_{\tilde y} l_1),  \mathcal T^\ve_{\mathcal L}(\nabla l^\ve)  - \nabla l - D_x^{-T} \nabla_{\tilde y} l_1 \big\rangle_{\Omega_T \times Y} 
\\
&=  \big  \langle \mathcal T^\ve_{\mathcal L}( A^\ve)  \mathcal T^\ve_{\mathcal L}( \chi^\ve_{\Omega_{\ve,K}^\ast}) \mathcal T^\ve_{\mathcal L}(\nabla l^\ve), \mathcal T^\ve_{\mathcal L}(\nabla l^\ve) \big\rangle_{\Omega_T \times Y }
\\
&- \big \langle \mathcal T^\ve_{\mathcal L}( A^\ve)  \mathcal T^\ve_{\mathcal L}( \chi^\ve_{\Omega_{\ve,K}^\ast}) \mathcal T^\ve_{\mathcal L}(\nabla l^\ve) , \nabla l + D_x^{-T} \nabla_{\tilde y} l_1 \big \rangle_{\Omega_T \times Y}
\\
&-  
\big \langle  \mathcal T^\ve_{\mathcal L}( A^\ve)  \mathcal T^\ve_{\mathcal L}( \chi^\ve_{\Omega_{\ve,K}^\ast})( \nabla l + D_x^{-T} \nabla_y l_1) , \mathcal T^\ve_{\mathcal L}(\nabla l^\ve) \big  \rangle_{\Omega_T \times Y}\\
& + \big  \langle  \mathcal T^\ve_{\mathcal L}( A^\ve)  \mathcal T^\ve_{\mathcal L}( \chi^\ve_{\Omega_{\ve,K}^\ast})( \nabla l + D_x^{-T} \nabla_y l_1) ,\nabla l + D_x^{-T} \nabla_{\tilde y} l_1  \big \rangle_{\Omega_T \times Y}.
\end{aligned}
\end{equation*}
Applying the strong convergence of $\mathcal T^\ve_{\mathcal L}( A^\ve)$ and  $ \mathcal T^\ve_{\mathcal L}( \chi^\ve_{\Omega_{\ve,K}^\ast})$ along with the weak convergence of  $ \mathcal T^\ve_{\mathcal L}(\nabla l^\ve)$,  the convergence of the energy  \eqref{conv_energy}, and the  uniform ellipticity of $A(x,y)$, implies the  convergence  \eqref{conv_strong_grad}.
\end{proof}

\textsc{Remark. } 
Since in $\Omega^\ast_{\ve, K}$ we have both   spatial  changes in the periodicity of the microstructure and   in the shape of perforations,  the l-p unfolding operator $\mathcal T^{\ast,\ve}_{\mathcal L}$ is not defined on  $\Omega^\ast_{\ve, K}$ directly and in the derivation of the macroscopic equations we used   a local  extension of $l^\ve$ from $\hat \Omega_{K}^{\ast, \ve}$ to $\hat \Omega^\ve$.  The local extension allows us to apply the l-p unfolding operator $\mathcal T^\ve_{\mathcal L}$   to $l^\ve$. 
If we have changes only in the periodicity and  no additional changes  in the shape of perforations,  then we can apply   the l-p unfolding operator  defined in a perforated domain $\Omega^\ast_\ve$ directly, without considering an  extension from $\hat \Omega^\ast_\ve$ to $\hat \Omega_\ve$, and derive macroscopic equations in the same way as in the proof of Theorem~\ref{th_macro_signaling}. 

\section{Discussions} \label{discussion}
The macroscopic model \eqref{macro_model} derived from the microscopic description of  a signaling process in a domain with locally periodic perforations reflects spatial changes in the microscopic structure of a cell tissue. The  effective coefficients of the macroscopic model describe the impact of changes in the microstructure on the movement (diffusion)  of signaling molecules (ligands) and on interactions between ligands and receptors   in a biological tissue.  
The multiscale analysis  also allows us   to consider the influence of non-homogeneous distribution of receptors in a cell membrane as well as non-homogeneous membrane properties (e.g.\ cells with  top-bottom and  front-back polarities)  on the signaling process. 
The  dependence of the coefficients on the macroscopic  variables represents    the difference in the signaling properties of cells depending on the size and/or position.  For example, the changes in the size and shape of cells in ephitelium tissues are caused by the maturation process and, hence  cells of different age may show  different activity in a signaling process.  Expanding  the microscopic model by including equations for cell  biomechanics and using the   proposed multiscale analysis techniques we can also    consider the impact of mechanical properties of a biological tissue  with a non-periodic microstructure on signaling processes.   

Techniques of  locally periodic homogenization allow us to consider  a wider range of composite and perforated materials than the methods of  periodic homogenization. 
The structures of  macroscopic equations  obtained for microscopic problems posed in domains with  periodic and   locally periodic microstructures  are similar. If we consider  the microscopic model \eqref{micro_model_1}--\eqref{micro_model_2} in a domain with periodic microstructure, i.e.\ $D(x) = {\bf I}$ and $K(x) = {\bf I}$, where ${\bf I}$ denotes the identity matrix,  then the 
macroscopic equations  \eqref{macro_model} with $D(x) = {\bf I}$ and $K(x) = {\bf I}$ correspond to the macroscopic equations obtained  in \cite{Ptashnyk08} by considering the periodic distribution of cells and applying    methods of periodic homogenization.  
For some locally periodic microstructures, e.g.\  domains consisting of periodic cells with smoothly changing perforations, it is possible to derive the same macroscopic equations by applying  periodic and locally periodic homogenization techniques, see e.g.\  \cite{Mascarenhas1, Mascarenhas,Ptashnyk13}. 
However,  as mentioned in the introduction, for the  microscopic  description and homogenization of processes  defined in  domains with e.g.\  plywood-like microstructures  or on oscillating surfaces of  locally periodic microstructures  the techniques of   locally periodic homogenization are essential. 
 Notice that methods  of  locally periodic homogenization are applied to analyse microscopic problems  posed in domains with non-periodic but deterministic   microstructures, in contrast   to    stochastic homogenization techniques used  to derive macroscopic  equations for problems posed in domains with random  microstructures. 

The corrector function  $l_1$    and the macroscopic diffusion coefficient in  the macroscopic problem  \eqref{macro_model}  are determined   by  solutions of the unit cell problems \eqref{unit_receptor_1}, which   depend on the macroscopic variables $x$. This  dependence corresponds to   spatial changes in the  structure of the microscopic domains.  
To compute  solutions of the unit cell problems \eqref{unit_receptor_1} (and hence  the effective macroscopic coefficients and  the corrector $l_1$)   numerically  approaches from the two-scale finite element method \cite{Schwab:2002} or the heterogeneous multiscale method  \cite{Abdulle:2009, Abdulle:2012, Efendiev:2009} can be applied.   
Using heterogeneous multiscale methods  one would  have to compute  the  solutions of   \eqref{unit_receptor_1}  only at the grid points of a discretisation of the  macroscopic domain, which  requires much lower spatial resolution than computing the microscopic problem on the scale of a single cell. 
A similar approach can  be  applied  for numerical simulations of the ordinary differential equations determining  the dynamics of  receptor densities, which  depend on the macroscopic $x$ and the microscopic $y$ variables as parameters.

\end{document}